\newtheorem{theorem}{Theorem}[section]
\newtheorem{proposition}[theorem]{Proposition}
\newtheorem{lemma}[theorem]{Lemma}
\theoremstyle{definition}
\newtheorem{definition}[theorem]{Definition}
\theoremstyle{remark}
\newtheorem{remark}[theorem]{Remark}
\newcommand*{\C}{\mathcal{C}}
\newcommand*{\R}{\mathbb R}
\newcommand*{\N}{\mathbb N}
\renewcommand{\S}{\mathfrak{S}}
\newcommand{\A}{\mathcal{A}}
\newcommand{\D}{\mathcal{D}}
\newcommand{\e}{\mathrm e}
\newcommand*{\diff}{\mathop{}\!\mathrm{d}}
\newcommand{\hhat}{\hat{h}}
\newcommand{\h}{\bar{h}}
\newcommand{\xhat}{\hat{x}}
\newcommand{\zhat}{\hat{z}}
\newcommand{\sre}{\mathcal{E}}
\renewcommand{\L}{\Lambda}
\newcommand{\x}{\mathrm{x}}
\newcommand{\Jbar}{\bar{J}}
\newcommand{\eval}[2]{\left. #1\right|_{\begin{smallmatrix*}[l] #2\end{smallmatrix*}}}
\renewcommand{\ll}{\llbracket}
\newcommand{\rr}{\rrbracket}
\newcommand{\Span}{\mathrm{Span}}
\newcommand{\im}{ \mathrm{im}}
\newcommand{\Jac}{\mathrm{Jac}}
\title{Short geodesics losing optimality in contact sub-Riemannian manifolds and stability of the 5-dimensional caustic
}
\author{
Ludovic Sacchelli \thanks{Aix Marseille Univ, Université de Toulon, CNRS, LIS, Marseille, France ({\tt sacchelli@univ-tln.fr}).
This research has been supported by the ANR SRGI (reference ANR-15-CE40-0018).
The author would like to thank Grégoire Charlot and Luca Rizzi for the many fruitful discussions that lead to the present paper.}}
\date{\today}
\begin{document}

\maketitle
\begin{abstract}
We study the sub-Riemannian exponential for contact distributions on manifolds of dimension greater or equal to 5. We compute an approximation of the sub-Riemannian Hamiltonian flow and show that the conjugate time can have multiplicity 2 in this case. We obtain an approximation of the first conjugate locus for small radii and introduce a geometric invariant to show that the metric for contact distributions typically exhibits an original behavior, different from the classical 3-dimensional case.
We apply these methods to the case of $5$-dimensional contact manifolds. We provide a stability analysis of the sub-Riemannian caustic 
from the Lagrangian point of view and classify the singular points of the exponential map.

\end{abstract}


\section{Introduction}
\label{S:intro_short_geo}

Let $M$ be a smooth ($C^\infty$) manifold of dimension $2n+1$, with $n\geq 1$ integer. A \emph{contact distribution} is a   $2n$-dimensional vector sub-bundle $\Delta\subset TM$ that locally coincides with the kernel of a smooth $1$-form $\omega$ on $M$ such that $\omega\wedge (\diff \omega)^n\neq 0$.
 The sub-Riemannian structure on $M$ is given by a smooth scalar product $g$ on $\Delta$, and we call $(M,\Delta,g)$ a \emph{contact sub-Riemannian  manifold} (see, for instance, \cite{ABB_2018,agrachev_barilari_rizzi_contact_2016}).

The small scale geometry of general 3-dimensional contact sub-Riemannian manifolds is well understood but not much can be said for dimension $5$ and beyond, apart from the particular case of Carnot groups.  We are interested in giving a qualitative description of the local geometry of contact sub-Riemannian manifolds by describing the family of short locally-length-minimizing curves (or geodesics) originating from a given point. In the case of contact sub-Riemannian manifolds, all length-minimizing curves are projections of integral curves of an intrinsic Hamiltonian vector field on $T^*M$, and as such, geodesics are  characterized by their initial point and initial covector. 

By analogy with the Riemannian case, for all $q\in M$, we denote by $\sre_q$ the \emph{sub-Riemannian exponential}, that maps 
a covector $p\in T_{q}^*M$ to the evaluation at time $1$ of the geodesic curve starting at $q$ with initial covector $p$.
An essential observation on 
length minimizing curves in sub-Riemannian geometry is that there exist locally-length-minimizing curves that lose local optimality arbitrarily close to their starting point \cite{Diniz_veloso2009,HughenThesis,montgomery2006tour}.
 Hence the geometry of sub-Riemannian balls of small radii is inherently linked with the geometry of the 
conjugate locus, that is, at $q$, the set of points $\sre_q(p)$ such that $p$ is a critical point of $p\mapsto \sre_q(p)$, \cite{Barilari2013,barilari2018VolumeOfBalls,barilari2016heat}.

The sub-Riemannian exponential has a natural structure of Lagrangian map, since it is the projection of a Hamiltonian flow over $T^*M$,  and its conjugate locus is a Lagrangian caustic. In small dimension, this observation allows  the study of the stability of the caustic and the classification of singular points of the exponential from the point of view of Lagrangian singularities (see, for instance, \cite{Arnold_singularities_diff_maps1}). 

In the $3$-dimensional case, this analysis has been conducted,  with different approaches, in \cite{agrachev_1996_exponential} and \cite{Gauthier_1996_small_SR_balls}. These works describe asymptotics of the sub-Riemannian exponential, the conjugate and cut loci near the starting point (see also \cite{AgrachevCharlotGauthierZakalyukin}). The aim of the present work is to extend this study to higher dimensional contact sub-Riemannian manifolds, following the methodology developed in \cite{Gauthier_1996_small_SR_balls} and \cite{charlot_2002_quasi_contact} (the latter focusing on a similar study of quasi-contact sub-Riemannian manifolds). More precisely, we use a perturbative approach to compute approximations of the Hamiltonian flow. This is made possible by using a general well-suited normal form for contact sub-Riemannian structures. 
The normal form we use has been obtained in \cite{Gauthier_2001_SR_metrics_and_isoperimetric_problems}.
(We recall its properties in Appendix~\ref{A:Gauthier}.)

Finally, it can be noted that classical behaviors displayed by $3$-dimensional contact sub-Riemannian structures may not be typical in larger dimension. The 3-dimensional case is very rigid in the class of sub-Riemannian manifolds and appears to be so even in regard of contact sub-Riemannian manifolds of arbitrary dimension. Therefore, part of our focus is dedicated to highlighting the differences between this classical case and those of larger dimension.

\subsection{Approximation of short geodesics}

\paragraph*{Notation}In the following, for any two integers $m,n\in \N$, $m\leq n$, we denote by $\ll  m,n\rr $ the set of integers $k\in \N$ such that $m\leq k\leq n$.

Let $(M,\Delta,g)$ be a contact sub-Riemannian manifold of dimension $2n+1$, $n\geq 1$ integer.

\paragraph*{Invariants of the nilpotent approximation} 
Consider a $1$-form $\omega$ such that $\ker \omega=\Delta$ and  $\omega\wedge (\diff \omega)^n\neq 0$ ($\omega$ is not unique, this property holds for any $f \omega$ where $f$ is a non-vanishing smooth function). For all $q\in M$, there exists 
a linear map $A(q):\Delta_q\rightarrow \Delta_q$, skew-symmetric with respect to $g_q$, such that for all $X,Y\in \Delta$, $\diff \omega (X,Y)(q)=g_q (A(q) X(q),Y(q))$. Then the eigenvalues of  $A(q)$, $\{\pm i b_1,\dots, \pm i b_n\}$, are invariants of the sub-Riemannian structure at $q$ (up to a multiplicative constant). In the following, we will assume that the invariants $\{b_1,\dots, b_n\}\in \R^+$ are rescaled so that $b_1  \cdots   b_n=\frac{1}{n!}$.

These invariants are parameters of the metric tangent to the sub-Riemannian structure at $q$, or nilpotent approximation at $q$ (see \cite{bellaiche1996TagentSpace}), which admits a structure of Carnot group. 
Notice in particular that the nilpotent approximations of a contact sub-Riemannian structure at two points $q_1,q_2\in M$ may not be isometric if the dimension $2n+1$ is larger than $3$.

For a given $q\in M$, there always exists a set of coordinates $(x_1,\dots, x_{2n},z):\R^{2n+1}\to \R^{2n+1}$ such that a frame $\left(\widehat{X}_1,\dots,\widehat{X}_{2n}\right) $ of the nilpotent approximation at $q$ can be written in the normal form
$$
\widehat{X}_{2i-1}=\partial_{x_{2i-1}} +\frac{ b_i }{2}x_{2i}\partial_z,
\quad
\widehat{X}_{2i}=\partial_{x_{2i}}-\frac{ b_i }{2}x_{2i-1}\partial_z,
\qquad \forall i\in \ll1,n\rr.
$$
Geodesics of such contact Carnot groups can be computed explicitly, and their features have been extensively studied (see, for instance, \cite{IHP-Vol,gaveau1977principe,lerario2017many}).
The central idea we follow is that the sub-Riemannian structure at a point $q\in M$ can be expressed as a small perturbation of the nilpotent structure at $q_0$ for points $q$ close to $q_0$. An important tool we use is the Agrachev--Gauthier normal form, introduced in \cite{Gauthier_2001_SR_metrics_and_isoperimetric_problems}, which asserts, for any given $q_0\in M$, the existence of 
coordinates at $q_0$, $(x_1,\dots x_{2n},z):M\to \R^{2n+1}$, and a frame of $(\Delta,g)$, $(X_1,\dots ,X_{2n})$, such that 
$$
X_i(x,z) =\widehat{X}_i(x,z)+O\left(|x|^2\right).
$$

\paragraph{Asymptotics and covectors}
Let 
$H(p,q)= \frac{1}{2} \sup _{v\in \Delta_q\setminus \{0\}}\frac{{\langle p,v\rangle}^2}{g_q(v,v)}$
be the sub-Rie\-man\-nian Hamiltonian. 
For all $q\in M$, $H(\cdot,q)$ is a positive quadratic form on $T^*_{q}M$ of rank $2n$. Then for all $r>0$, the set $\{H(p,q)=r\mid p\in T_q^*M\}$ is an unbounded subset of $T_q^*M$ with the topology of the cylinder $\mathbb{S}^{2n-1}\times \R$ (see for instance \cite{ABB_2018,agrachev_barilari_rizzi_contact_2016}). 
In the following, for all $q\in M$ and $r\geq 0$, we denote this set by
$$
\mathcal{C}_{q}(r)=\{H(p,q)=r\mid p\in T_q^*M\}.
$$
Abusing notations, for  $V\subset \R^+$, we denote $\mathcal{C}_{q}(V)=\cup_{r\in V}\C_{q}(r)$.
We choose coordinates $p=(h,h_0)$ on $T_q^*M$ where for a given $r>0$,  $h_0$ denotes the unbounded component of $p\in \C_{q}(r)$. 

An important observation is that in the nilpotent case, geodesics losing optimality near their starting point correspond to  initial covectors in $\C_{q}(r)$ such that $|h_0|/r$ is very large (see, for instance, \cite{barilari2012_small_time_heat_kernel,Biggs2016}). The expansions obtained in this paper rely on the same type of  asymptotics.

Section~\ref{S:Normal_extremals} is dedicated to the computation of an approximation of the flow of the Hamiltonian vector field $\vec{H}$ as $h_0\to \infty$. Since $\vec{H}$ is a quadratic Hamiltonian vector field, its integral curves satisfy the symmetry
$$
\e^{t\vec{H}}(p_0,q_0)=\e^{\vec{H}}(t p_0,q_0) ,\qquad \forall q_0\in M, p_0\in T_{q_0}^*M, t\in \R.
$$
Hence it is useful for us to consider the time-dependent exponential that maps the pair $(t,p)\in \R\times \C_{q}(1/2)$ to the geodesic of initial covector $p$ evaluated at time $t$.
Using the approximation of the Hamiltonian flow as $h_0\to \infty$, Section~\ref{S:conjugate_time} is dedicated to the computation of the conjugate time. For a given $q\in M$, the conjugate time $t_c(p)$ is the smallest positive time such that $\sre_q(t_c(p),\cdot)$ is critical at $p$. The computation of the conjugate locus follows once the conjugate time is known.

Notice in particular that for a given initial covector $p\in  \C_{q}(1/2)$, $t_c(p)$ is then an upper bound of the sub-Riemannian distance between $q$ and $\sre_q(t_c(p),p)$ (and we have equality if $\sre_q(t_c(p),p)$ is also in the cut locus).

In the 3D case, it is proven in \cite{agrachev_1996_exponential,Gauthier_1996_small_SR_balls} that for an initial covector $(\cos \theta , \sin \theta ,h_0)\in \C_{q}(1/2)$, the conjugate time at $q$ satisfies as $h_0\to \pm\infty$
\begin{equation}\label{E:tc_expansion_3D}
t_c(\cos \theta , \sin \theta ,h_0)=\frac{2\pi}{|h_0|}-\frac{\pi \kappa }{|h_0|^3}+O\left(\frac{1}{|h_0|^4}\right)
\end{equation}
and the first conjugate point satisfies (in well chosen adapted coordinates at $q$)
$$
\sre_q(t_c(\cos \theta , \sin \theta ,h_0),(\cos \theta , \sin \theta ,h_0))
=
\pm\frac{\pi }{|h_0|^2}(0,0,1)
\pm 
\frac{2\pi \chi}{|h_0|^3}(-\sin^{3}\theta,  \cos^3\theta,  0)+O\left(\frac{1}{|h_0|^4}\right).
$$

The analysis we carry in Sections~\ref{S:Normal_extremals} and \ref{S:conjugate_time} aims at generalizing such expansions.
(Notice that we  focus only on the case $h_0\to +\infty$ but the case $h_0\to -\infty$ is the same.)
 Our results, however,  provide an important distinction between the classical 3D contact case and higher dimensional ones. Indeed, a very useful fact in the analysis of the geometry of the 3D case is that 
a 3D sub-Riemannian contact structure is very well approximated by its nilpotent approximation (as exemplified in \cite{Barilari2013}, for instance). 

This can be illustrated by using the 3D version of the Agrachev--Gauthier normal form, as introduced in \cite{Gauthier_1996_small_SR_balls}.
Let us denote by $\widehat{\sre}_q$ the exponential of the nilpotent approximation of the sub-Riemannian structure at $q_0$ in normal form.
Then as $h_0\rightarrow +\infty$, we have the expansion
\begin{equation}\label{E:sre_exp_3D}
\sre_q(\tau/h_0,(h_1,h_2,h_0))=\widehat{\sre}_q(\tau/h_0,(h_1,h_2,h_0))+O\left(\frac{1}{h_0^3}\right).
\end{equation}
As a result, one immediately obtains a rudimentary version of expansion~\eqref{E:tc_expansion_3D},
\begin{equation}\label{E:tc_expansion_3D_prim}
t_c(\cos \theta , \sin \theta ,h_0)=\frac{2\pi}{|h_0|}+O\left(\frac{1}{|h_0|^3}\right).
\end{equation}

However, expansion~\eqref{E:tc_expansion_3D_prim} is not true in general when we consider contact manifolds of dimension larger than $3$ (that is, the conjugate time  is not a third order perturbation of the nilpotent conjugate time $2\pi/|h_0|$). As an application of Theorem~\ref{T:big_expansion_theorem}, which gives a general second order approximation of the conjugate time in dimension greater or equal to $5$, we are able to prove  that the expansion~\eqref{E:sre_exp_3D} cannot be generalized.

In the rest of this paper, statements refer to generic ($d$-dimensional) sub-Rieman\-nian contact manifolds in the following sense: 
such statements  hold for contact sub-Riemannian metrics  in a countable intersection of open and dense sets of the space of smooth ($d$-dimensional) sub-Riemannian contact metrics endowed with the $C^3$-Whitney topology. As an application of  transversality theory, we then prove statements holding on the complementary of stratified subsets of codimension $d'$ of the manifolds, locally unions of finitely many submanifolds of codimension $d'$ at least.
\begin{theorem}\label{T:sre_is_not_so_simple}
Let $(M,\Delta,g)$ be a generic contact sub-Riemannian manifold. 
\\
There exists a codimension $1$ stratified subset $\S$  of $M$ such that for all $q\in M\setminus \S$, for all linearly adapted coordinates at $q$ and for all $T>0$,
\begin{equation}
\label{E:limit_exp-exphat}
\limsup\limits_{h_0\to +\infty}
\left(
h_0^2 \sup_{\tau\in (0,T)}
\left|
	\sre_{q}\left(\frac{\tau}{h_0},(h_1,\dots ,h_{2n},h_0)\right)-\widehat{\sre}_{q}\left(\frac{\tau}{h_0},(h_1,\dots ,h_{2n},h_0)\right)
\right|
\right)
> 0.
\end{equation}
\end{theorem}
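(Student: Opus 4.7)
The plan is to deduce the theorem from Theorem~\ref{T:big_expansion_theorem} by contradiction, using a transversality argument to control the stratum $\S$. Working in the Agrachev--Gauthier normal form at $q$, the perturbative computation of the Hamiltonian flow carried out in Section~\ref{S:Normal_extremals} yields an asymptotic expansion as $h_0\to +\infty$ of the form
\[
\sre_q\!\left(\tfrac{\tau}{h_0},(h_1,\dots,h_{2n},h_0)\right) = \widehat{\sre}_q\!\left(\tfrac{\tau}{h_0},(h_1,\dots,h_{2n},h_0)\right) + \frac{1}{h_0^2}\, F_q(\tau,h_1,\dots,h_{2n}) + O\!\left(\frac{1}{h_0^3}\right),
\]
uniformly for $\tau$ in compact subsets of $\R$, where $F_q$ is smooth and depends on the $2$-jet at $q$ of the normal-form frame. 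Formula~\eqref{E:sre_exp_3D} shows that $F_q\equiv 0$ in dimension $3$; the present theorem asserts that this degeneracy cannot persist generically in dimension $\geq 5$.

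Suppose that the limsup in~\eqref{E:limit_exp-exphat} vanishes at some $q\in M\setminus \S$, for some $T>0$ and some choice of linearly adapted coordinates. The expansion above then forces $F_q(\tau,h_1,\dots,h_{2n})\equiv 0$ on $(0,T)\times \R^{2n}$. Since $F_q$ is real-analytic in $\tau$ (the underlying flow being obtained by iterated integration of a polynomial Hamiltonian), this vanishing propagates to all $\tau$ of interest, and in particular up to the nilpotent conjugate time. Consequently the first conjugate time of $\sre_q$ along every direction agrees with its nilpotent counterpart up to a remainder of order $O(1/h_0^3)$, which contradicts the explicit $1/h_0^2$ correction in the expansion of $t_c$ provided by Theorem~\ref{T:big_expansion_theorem} as soon as $q\notin \S$.

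To define $\S$, I would read off from the proof of Theorem~\ref{T:big_expansion_theorem} the coefficient $c(q,\cdot)$ of $1/h_0^2$ in the expansion of $t_c$, which is a smooth function of the $2$-jet of the metric at $q$ and a trigonometric function of the angular covector variables. Denoting by $\iota_1(q),\dots,\iota_N(q)$ its finitely many Fourier coefficients in those angular variables, I set $\S=\{q\in M\mid \iota_1(q)=\cdots=\iota_N(q)=0\}$. Thom's jet transversality theorem, applied in the $C^3$-Whitney topology on the space of contact sub-Riemannian metrics, then ensures that for a residual set of metrics the map $(\iota_1,\dots,\iota_N):M\to \R^N$ is transverse to $0$, so $\S$ is a stratified subset of codimension at least $1$. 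The main obstacle, and the step requiring the most care, is to show that at least one $\iota_j$ depends non-trivially on a $2$-jet coefficient of the metric, so that the vanishing condition is of codimension exactly $1$ rather than higher: concretely, this requires exhibiting an explicit $2$-jet perturbation of the normal-form frame that moves $c(q,\cdot)$, and it is expected to rely on a non-trivial interaction between the invariants $b_1,\dots,b_n$ and the second-order coefficients of the frame, precisely the mechanism absent in the $3$-dimensional case.
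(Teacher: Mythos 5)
There is a genuine gap, and it sits exactly at the step where you pass from the contradiction hypothesis to the vanishing of your second-order term $F_q$. The hypothesis only says that the limsup in \eqref{E:limit_exp-exphat} vanishes for \emph{some} choice of linearly adapted coordinates, while $F_q$ is the second-order discrepancy computed in the Agrachev--Gauthier normal form; these are different coordinate systems, and the discrepancy $\sre_q-\widehat{\sre}_q$ at order $1/h_0^2$ is not coordinate-invariant (a quadratic change of linearly adapted coordinates moves the horizontal components by $O(1/h_0^2)$, and a different choice of the transverse field $X_0$ changes the dual coordinate $h_0$ by a linear function of $h$). So you cannot conclude $F_q\equiv 0$ in normal form; at best you get an intrinsic consequence in the hypothesized frame. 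The same coordinate freedom undermines your final contradiction: it is not enough that Theorem~\ref{T:big_expansion_theorem} exhibits a nonzero $1/h_0^2$ correction $t_c^{(2)}$, because a correction that is \emph{affine in $h$} can be created or absorbed by a linear change of the fiber coordinates on $T_q^*M$ (e.g.\ re-expanding $2\pi/(b_1 h_0)$ after replacing $h_0$ by $h_0+\sum c_i h_i$), hence is compatible with the hypothesis. The obstruction must be a non-removable, non-affine feature of $t_c^{(2)}$.

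This is precisely what the paper's proof supplies and your proposal lacks: it introduces the intrinsic set $\mathcal{A}_q=\overline{\{t_c(p)p\mid H(p,q)=1/2\}}\subset T_q^*M$ and its tangent cone $\mathcal{A}_q^+$ at $(0,\dots,0,2\pi/b_1)$, whose planarity is independent of any choice of coordinates or frame (changes of frame act linearly on the fiber). Under the contradiction hypothesis one shows, via the conjugate-time computation of Proposition~\ref{P:conjugate_time_simple_case} run in the hypothesized coordinates, that $\mathcal{A}_q^+$ is an affine hyperplane; computing the same cone in normal form with Theorem~\ref{T:big_expansion_theorem}, planarity would force the limits $\lim_{r_1\to 0^+}t_c^{(2)}(\pm h_1,\pm h_2,h_3,\dots,h_{2n})$ to be odd in $(h_1,h_2)$, whereas the explicit formula \eqref{E:linear_tc2} makes them even; hence they must vanish, i.e.\ $\kappa^{1i}_1=\kappa^{2i}_2=\kappa^{2i}_1+\kappa^{1i}_2=0$ for all $i\in\ll 3,2n\rr$, which by Proposition~\ref{P:summary_kappa} is a positive-codimension condition on the $2$-jets, excluded for $q\notin\S$ (with $\S\supset\S_1\cup\S_3$ already built from the established genericity statements, so no new transversality or ``Fourier coefficient'' construction is needed). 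Without an invariant of this kind, your argument does not rule out that a cleverly chosen linearly adapted coordinate system absorbs the whole second-order discrepancy, which is the very possibility the theorem is asserting cannot happen.
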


This observation needs to be put in perspective with some already observed differences between 3D contact sub-Riemannian manifolds and those of greater dimension. For a given $1$-form $\omega$ such that $\ker \omega=\Delta$ and  $\omega\wedge (\diff \omega)^n\neq 0$, the Reeb vector field is the unique vector field $X_0$ such that $\omega(X_0)=1$ and $\iota_{X_0}\diff \omega=0$. The contact form $\omega$ is not unique (for any smooth non-vanishing function $f$, $f\omega$ is also a contact form),  and neither is $X_0$. In 3D however, the conjugate locus lies tangent to a single line that carries a Reeb vector field that is deemed canonical.
In larger dimension, this uniqueness property is not true in general.
For this reason, we introduce a geometric invariant that plays a similar role in measuring how the conjugate locus lies with respect to the nilpotent conjugate locus and use it to prove Theorem~\ref{E:limit_exp-exphat}.

The main difference seems to be a lack of symmetry in greater dimensions. Indeed the existence of a unique Reeb vector field (up to rescaling) points toward the idea of a natural $\mathrm{SO}(2n)$ symmetry of the nilpotent structure. 
However the actual symmetry of a contact sub-Riemannian manifold (or rather its nilpotent approximation) is $\mathrm{SO}(2)^n$ (on the subject, see, for instance, \cite{Gauthier_2001_SR_metrics_and_isoperimetric_problems}). Of course, when $n=1$,  $\mathrm{SO}(2)^n= \mathrm{SO}(2n)$.
More discussions on this issue can also be found in \cite{boscain2015intrinsic}.

\subsection{Stability in the 5-dimensional case}

We wish to apply these asymptotics to the study of stability of the caustic in the $5$-dimensional case. This study has been carried for 3-dimensional contact sub-Riemannian manifolds in \cite{Gauthier_1996_small_SR_balls} and for 4-dimensional quasi-contact sub-Riemannian manifolds in \cite{charlot_2002_quasi_contact}. To understand the interest of stability in the sense of sub-Riemannian geometry in small dimension, we must first understand stability from the point of view of Lagrangian manifolds. (See, for instance, \cite[Chapters 18, 21]{Arnold_singularities_diff_maps1} and also \cite{Bennequin_caustiques_mystiques,izumiya2016differential}.)

 Let $(E,\sigma)$ be a $2d$-dimensional symplectic manifold. A smooth submanifold $L$ of $M$ is said to be a \emph{Lagrangian submanifold} if $L$ is $d$-dimensional and $\sigma_{|L}=0$. The fiber bundle  $\pi:E\to N$ is said to be a \emph{Lagrangian fibration} if its fibers are Lagrangian submanifolds. 
For $L$ a Lagrangian submanifold of $E$ and $i :L\to E$ an immersion of $L$ into $E$ such that $i^*\sigma=0$, the map $\pi\circ i:L\to N$ is called a \emph{Lagrangian map}.

Let $(E,\sigma)$, $(E',\sigma')$ be two symplectic structures, let $\pi:E\to N$, $\pi' : E'\to N'$  be two Lagrangian fibrations. Two Lagrangian maps $\pi \circ i:L\to N$,  $\pi' \circ i':L'\to N'$ are said to be Lagrange equivalent if there exists two diffeomorphisms $\Phi:E\to E'$ and $\phi:N\to N'$ such that
$\Phi^*\sigma'=\sigma$, $\pi'\circ \Phi=\phi\circ \pi$ (the two Lagrangian fibrations are Lagrange equivalent) and 
$\Phi\circ i(L)=i'(L')$.

The \emph{caustic} of a Lagrangian map  is the set of its critical values. A consequence of the definition of Lagrangian equivalence is that 
if two  Lagrangian maps are Lagrange equivalent then their caustics are diffeomorphic.

A Lagrangian map $f:L\to N$ is said to be \emph{(Lagrange-)stable at $q\in L$} if there exists a neighborhood $V_q$ of $q$ and a neighborhood $V_f$ of $f_{|V_q}$ for the Whitney  $C^{\infty}$-topology such that any Lagrangian map $g\in V_f$ is Lagrange equivalent to $f$ (see \cite{charlot_2002_quasi_contact}).
In the following we may refer to points of a caustic as stable when they are critical values of a stable Lagrangian map.

For dimensions $d\leq 5$,  there exists only a finite number of equivalence classes for stable singularities of Lagrangian maps (for instance, one can find a summary in \cite[Theorem 2]{barilari2016heat}).
\begin{theorem}[Lagrangian stability in dimension $5$]
A generic Lagrangian map $f:\R^5\to \R^5$ has only stable singularities of type $\A_{2},\dots ,\A_6$, $\D_4^{\pm},\D_5^{\pm},\D_6^{\pm}$ and $\mathcal{E}_6^\pm$.
\end{theorem}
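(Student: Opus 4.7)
This theorem is a direct consequence of Arnold's classification of stable Lagrangian singularities, and its proof is not original to the present paper; it reduces to combining three classical ingredients of the theory developed in \cite[Ch.~18--21]{Arnold_singularities_diff_maps1}.

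First, I would represent germs of Lagrangian maps by generating families: every germ $f:(\R^d,0)\to (\R^d,0)$ of Lagrangian map is, up to Lagrange equivalence, the base projection of the Lagrangian submanifold defined by a smooth germ $F:(\R^n\times \R^d,0)\to \R$ via the critical set $\{\partial_x F=0\}$. Two generating families produce Lagrange-equivalent Lagrangian maps if and only if they are \emph{stably $\mathscr{R}^+$-equivalent}, i.e.\ right-equivalent after addition of a function of $q$ alone and stabilization by nondegenerate quadratic forms in auxiliary variables. Lagrange-stability of $f$ at the origin translates into $F$ being an $\mathscr{R}^+$-versal unfolding of the critical germ $F(\cdot,0):\R^n\to \R$.

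Second, I would invoke Arnold's ADE-classification of simple hypersurface singularities: the simple (zero-modality) germs under $\mathscr{R}$-equivalence are exactly $\A_k$ ($k\geq 1$), $\D_k$ ($k\geq 4$), and $\mathcal{E}_6,\mathcal{E}_7,\mathcal{E}_8$, with two real forms $\D_k^\pm$ and $\mathcal{E}_6^\pm$; the codimension of the corresponding stratum in the space of generating families is $\mu-1$, where $\mu$ is the Milnor number. Non-simple germs appear only in codimension at least $7$ (the unimodal families $\mathscr{P}_8,\mathscr{X}_9,\mathscr{J}_{10}$ and $\mathscr{T}_{p,q,r}$ with $1/p+1/q+1/r\leq 1$), and an $\A_1$-critical point of $F(\cdot,0)$ is removed by stabilization and contributes a regular point of $f$. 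Enumerating simple strata of codimension at most $d=5$ therefore yields precisely $\A_k$ with $2\leq k\leq 6$, $\D_k^\pm$ with $4\leq k\leq 6$, and $\mathcal{E}_6^\pm$; the germs $\mathcal{E}_7,\mathcal{E}_8$ have codimensions $6$ and $7$ and do not appear.

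Third, to make the word "generic" precise, I would apply Thom's jet-transversality theorem to the bundle of sufficiently high $k$-jets of generating families. This shows that the set of Lagrangian maps $\R^5\to \R^5$ all of whose singular germs fall on simple strata of codimension at most $5$ is residual in the Whitney $C^{\infty}$-topology, while strata of higher codimension (including every non-simple germ) are avoided outside a meager set. The main technical obstacle, which is the content of Arnold's structural theorems in the above reference, is the identification of Lagrange equivalence with stable $\mathscr{R}^+$-equivalence of generating families and the verification that the $\pm$-decorations of the real forms of $\D_k$ and $\mathcal{E}_6$ persist under this identification rather than collapsing to unsigned types.
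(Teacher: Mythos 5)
The paper does not actually prove this statement: it is quoted as a classical result of Arnold's theory of Lagrangian singularities, with the proof delegated to the references (\cite[Chapters 18, 21]{Arnold_singularities_diff_maps1}, and the summary cited from the heat-kernel paper). Your proposal is a correct reconstruction of that classical argument, and it follows the same route the cited sources take: reduction of Lagrange equivalence to stable $\mathscr{R}^+$-equivalence of generating families, Arnold's ADE list of simple germs with codimension $\mu-1$, enumeration of the strata of codimension at most $5$ (which indeed gives $\A_2,\dots,\A_6$, $\D_4^\pm,\D_5^\pm,\D_6^\pm$, $\mathcal{E}_6^\pm$, while $\mathcal{E}_7,\mathcal{E}_8$ are excluded), and Thom jet-transversality to make genericity precise. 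One small numerical caveat: the first non-simple class, the unimodal $\mathscr{P}_8$ (equivalently $T_{3,3,3}$), meets the caustic stratification in codimension $6$, not $7$ --- this is precisely why dimension $6$ is the threshold beyond which generic Lagrangian maps cease to have only stable singularities. Since all that your argument needs is that non-simple classes have codimension strictly greater than $5$, this slip does not affect the conclusion for $d=5$, but the bound should be stated as $6$ if you want the sharp statement.
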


Sub-Riemannian exponential maps form a subclass of Lagrangian maps and we can define sub-Riemannian stability as Lagrangian stability restricted to the class of sub-Riemannian exponential maps.
Notouriously, the point $q_0$ is an unstable critical value of the sub-Riemannian exponential $\sre_{q_0}$, as the starting point of the geodesics defining $\sre_{q_0}$.

We focus our study of the stability of the sub-Riemannian caustic on the first conjugate locus.
This work can be summarized  in the following theorem (see also Figures~\ref{F:Section_caustic_A4_intro}, \ref{F:Section_caustic_D4+_intro}). 
\begin{theorem}[Sub-Riemannian stability in dimension $5$]\label{T:Stability}
Let $(M,\Delta,g)$ be a generic 5-dimensional contact sub-Riemannian manifold. There exists a stratified set $\S\subset M$ of codimension $1$ for which all $q_0\in M\setminus \S$ admit an open neighborhood $V_{q_0}$  such that for all $U$ open neighborhood of $q_0$ small enough, the intersection of the interior of the first conjugate locus at $q_0$ with $V_{q_0}\setminus U$ is (sub-Riemannian) stable and has only Lagrangian singularities of type $\mathcal{A}_2$, $\mathcal{A}_3$, $\mathcal{A}_4$, $\mathcal{D}_4^+$ and $\A_5$.

\end{theorem}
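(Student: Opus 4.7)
The proof combines the asymptotic expansion of the sub-Riemannian exponential from Theorem~\ref{T:big_expansion_theorem} with Arnold's classification of stable Lagrangian singularities in dimension $5$. I would proceed in four stages.

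\emph{Stage 1 (Setup).} Fix $q_0\in M$ and work in Agrachev--Gauthier normal form; in dimension $5$ the adapted frame is $(X_1,\dots,X_4)$ with $X_i=\widehat{X}_i+O(|x|^2)$ and nilpotent invariants $b_1,b_2$ normalized by $b_1 b_2=1/2$. Away from the fiber over $q_0$ the Lagrangian map $\sre_{q_0}$ admits a smooth generating family $S(q,p)$ on $M\times\C_{q_0}(1/2)$ whose $1/h_0$-expansion is obtained from the expansion of the Hamiltonian flow in Section~\ref{S:Normal_extremals}. I would define the exceptional stratum $\S$ as the union of the closed locus $\{b_1=b_2\}$ with a finite collection of closed algebraic conditions on the $3$-jet of the metric (to be identified in Stages 2 and 3), each of codimension $1$.

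\emph{Stage 2 (Critical locus).} For $q_0\notin\S$, use the expansion of the conjugate time $t_c$ from Section~\ref{S:conjugate_time} to describe the set of $p\in\C_{q_0}(1/2)$ that are critical for $\sre_{q_0}(t_c(p),\cdot)$. In the nilpotent model the first conjugate time of the $b_i$-mode is $2\pi/(b_i h_0)$: when $b_1\neq b_2$ the first conjugate locus is generated by the larger eigenvalue along a codim-$1$ submanifold of $\C_{q_0}(1/2)$ on which $D\sre_{q_0}$ has corank $1$, together with isolated resonant covectors where the two modes align and the corank jumps to $2$. The $O(1/h_0^3)$ perturbation breaks the leftover $\mathrm{SO}(2)$-degeneracy of the corank-$1$ part and, generically, produces finitely many sub-loci of increasing codimension on which further derivatives of the reduced $S$ vanish; catalogue these.

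\emph{Stage 3 (Classification).} At each stratum compute the germ of $S$, reduce it by the splitting lemma to the kernel of $D\sre_{q_0}$, and match against Arnold's normal forms. Generic corank-$1$ points yield $\A_2$ folds; the codim-$1$ condition ``leading residual coefficient vanishes'' gives $\A_3$ cusps; codim-$2$ and codim-$3$ refinements inside the corank-$1$ stratum give $\A_4$ and $\A_5$. At the isolated corank-$2$ covectors the reduced $3$-jet is a binary cubic form whose discriminant is a smooth function of the metric invariants at $q_0$; its non-vanishing with positive sign gives $\D_4^+$, while the structure of the nilpotent perturbation (a consequence of the rotational symmetry broken only to $\mathrm{SO}(2)^2$) precludes $\D_4^-$. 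Each forbidden type ($\A_6$, $\D_5^\pm$, $\D_6^\pm$, $\D_4^-$, $\sre_6^\pm$) requires at least one further closed condition on the jets of the metric; those conditions are absorbed into the definition of $\S$.

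\emph{Stage 4 and main obstacle.} Each of the five listed types is Lagrange-stable, so the classification at $q_0\notin\S$ is preserved under $C^3$-small perturbations of $(g,\Delta)$. Thom's transversality theorem, applied through the normal form so that the relevant jets of $S$ become open functions of the $3$-jet of the metric, then yields the genericity statement; the removal of a small neighborhood $U$ of $q_0$ is needed because $q_0$ itself is always an unstable critical value of $\sre_{q_0}$. The principal difficulty lies in Stage 3 at the corank-$2$ covectors: the reduced $3$-jet must be extracted from the Hamiltonian flow at an order at which several cubic terms of the Agrachev--Gauthier normal form contribute simultaneously, and showing that its discriminant is non-zero with the correct sign for a generic metric (so that $\D_4^+$, rather than $\D_5^\pm$ or $\sre_6^\pm$, appears) is precisely what pins down the codimension-$1$ description of $\S$.
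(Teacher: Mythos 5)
Your outline follows a legitimately different route from the paper (generating families, the splitting lemma and Arnold's normal forms, versus the paper's direct jet criteria of Proposition~\ref{C:sing_phi} applied to blown-up asymptotics of $\sre_{q_0}$ on the three covector domains), and the overall architecture — reduce to large $h_0$, stratify the critical covectors, classify corank~$1$ and corank~$2$ points, absorb degeneracies into a codimension-$1$ set $\S$ — matches the skeleton of Sections~\ref{S:conjugate_time}--\ref{S:Stab}. But as written the plan has a genuine gap: in Stages~3--4 you defer the entire content to ``catalogue these'' degeneracy conditions and to a transversality argument in which ``the relevant jets of $S$ become open functions of the $3$-jet of the metric.'' That openness is precisely what must be proved, and it cannot be taken for granted: sub-Riemannian exponentials are a very thin subclass of Lagrangian maps, genericity is imposed on the metric, not on the Lagrangian map, and the needed surjectivity statement is exactly the explicit invariant computation of Proposition~\ref{P:summary_kappa} (the rank-$7/8$ statement for $\zeta_i$) together with Proposition~\ref{P:def_S_2}. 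Indeed the paper shows the analogous condition \emph{fails} to be generic when $\dim M\geq 7$ ($M=\S_2$ there), so an appeal to Thom transversality without computing the map from metric jets to the jets of the exponential does not close the argument; the same computation is what bounds the corank-$1$ degeneracies at $\A_4$ (away from $S_1\cup S_2$) and $\A_5$ (near $S_1$, $S_2$), rather than allowing $\A_6$.

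Two further points. First, your exclusion of $\D_4^-$ at corank-$2$ covectors is asserted via the broken $\mathrm{SO}(2)^2$ symmetry heuristic; in the paper this is the sign condition $\phi'_{11}\cdot\phi'_{22}>0$ verified by explicit computation on the set $S^+$ (see Remark~\ref{R:2dim_ker} and Appendix~\ref{A:dom2}), and you correctly flag it as the main obstacle without resolving it — so it remains open in your plan. Moreover the corank-$2$ locus is not a set of isolated covectors: after the blowup near $S_1$ it is cut out by three conditions in a four-dimensional covector space, hence is generically one-dimensional, which is consistent with $\D_4$ points forming curves in the caustic; the classification argument must be organized accordingly. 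Second, to pass from statements about covectors with $h_0\to+\infty$ to the statement about the caustic in $V_{q_0}\setminus U$, you need the quantitative fact that short conjugate times force $|h_0|$ large (Proposition~\ref{L:no_singular_near_0}) combined with the homogeneity $\sre^1_{q_0}(tp)=\sre_{q_0}(t,p)$; this bridging step, which is how the paper deduces Theorem~\ref{T:Stability} from Theorem~\ref{T:stability_time_dep_exp}, should be made explicit rather than implicit in the removal of $U$.
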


\begin{figure}[h]
\begin{center}
\includegraphics[width=12cm]{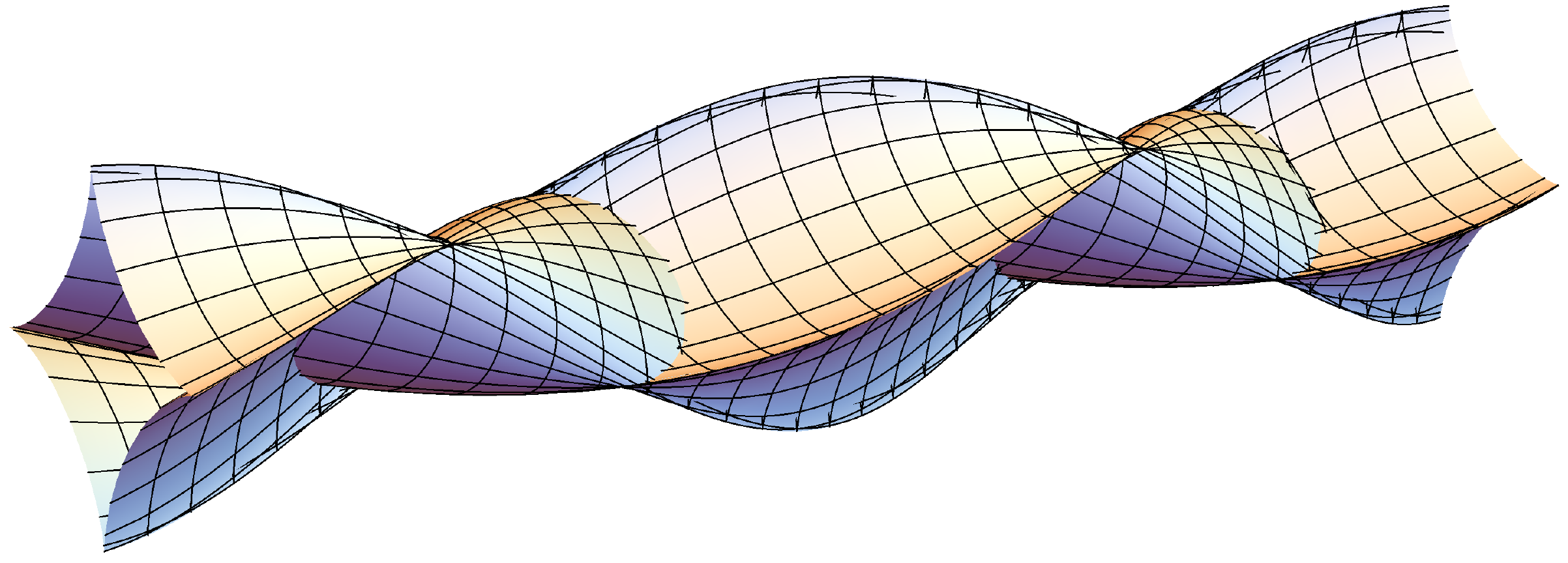}
\end{center}
\caption[Section of the caustic of a $5$-dimensional sub-Riemannian manifold, at a point of the manifold chosen so that it exhibits $\A_4$ singularities]{Section of the caustic of a $5$-dimensional sub-Riemannian manifold, at a point of the manifold chosen so that it exhibits $\A_4$ singularities.
This representation is obtained after sectioning by the hyperplanes $\{z=  z_0\}$, $\{x_3=R_2 \cos \omega\}$, $\{x_4=R_2\sin \omega\}$ (all in Agrachev--Gauthier normal form coordinates), and plotting for all $\omega\in [0,2\pi)$, with fixed $z_0, R_2>0$.
}
\label{F:Section_caustic_A4_intro}
\end{figure}

\begin{figure}[htb]
\begin{center}
\begin{minipage}{6cm}
\begin{center}
\includegraphics[width=5cm]{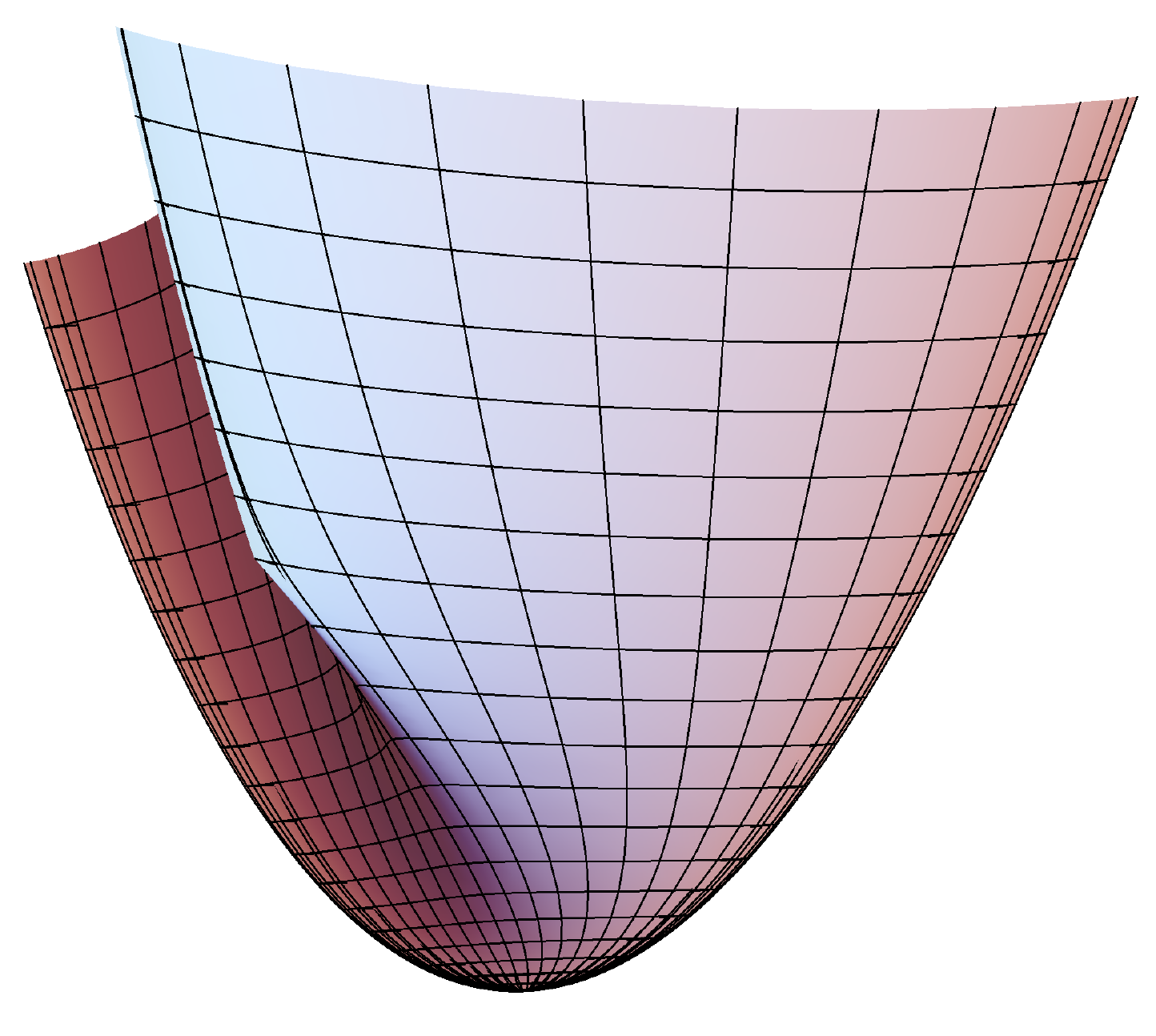}
\end{center}
\end{minipage}
\hspace{.5cm}
\begin{minipage}{6cm}
\begin{center}
\includegraphics[width=4.5cm]{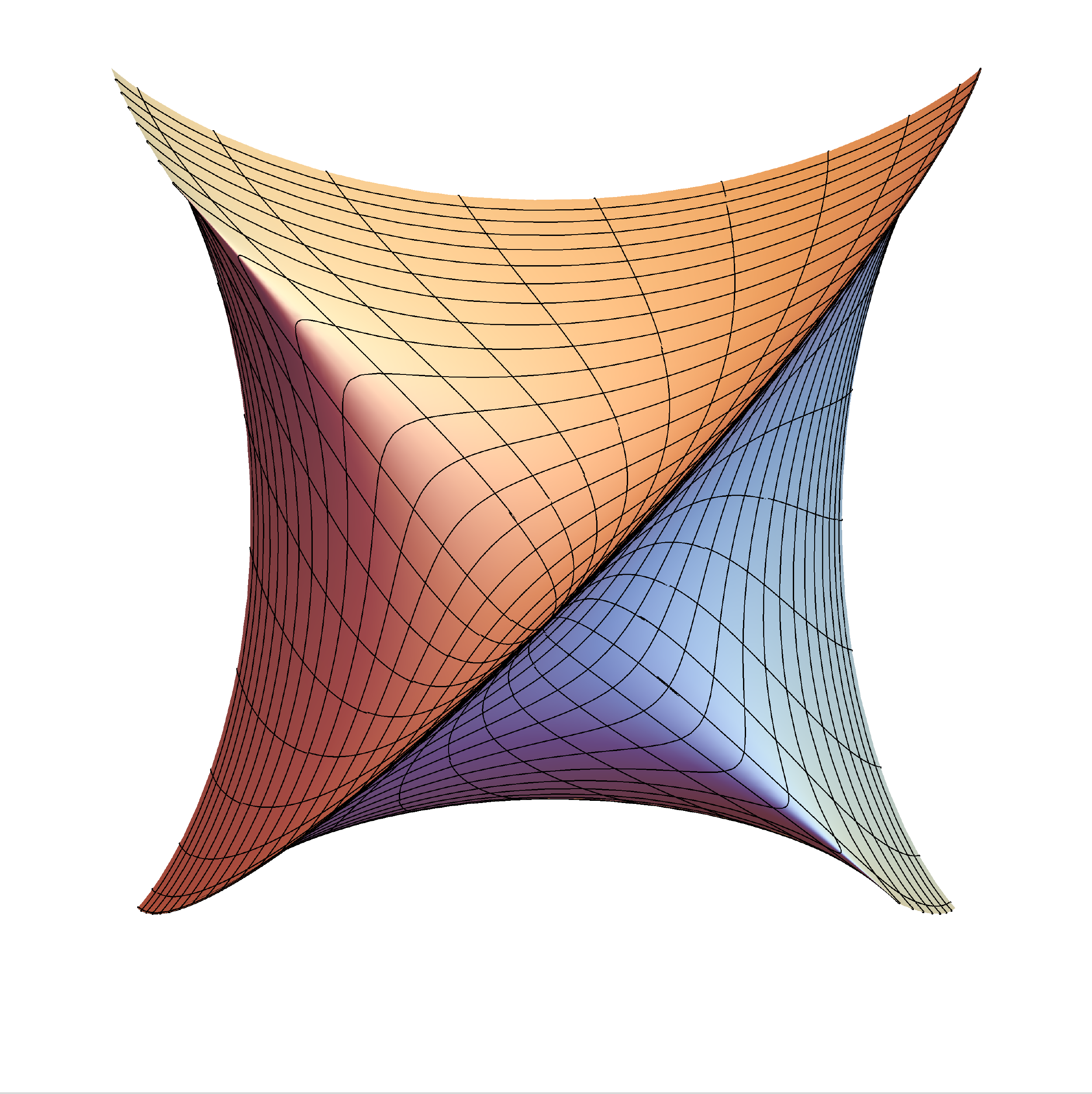}
\end{center}
\end{minipage}
\end{center}

\caption[Section of the caustic of a $5$-dimensional sub-Riemannian manifold, at a point of the manifold chosen so that it exhibits $\mathcal{D}_4^+$ singularities]{Section of the caustic of a $5$-dimensional sub-Riemannian manifold, at a point of the manifold chosen so that it exhibits $\mathcal{D}_4^+$ singularities.
This representation is obtained after sectioning by the hyperplanes $\{z=  z_0\}$, $\{x_3=R_2 \cos \omega\}$, $\{x_4=R_2\sin \omega\}$, and plotting for all $z_0\in [0,\bar{z}_0]$, with fixed $\bar{z}_0, R_2,\omega>0$.}

\label{F:Section_caustic_D4+_intro}

\end{figure}

This result stands on two foundations. On the one hand, a careful study of the problem of conjugate points in contact sub-Riemannian manifolds, and on the other hand, a stability analysis from the point of view of Lagrangian singularities in small dimension.

\subsection{Content}
In Section~\ref{S:Normal_extremals}, we compute an approximation of the exponential map for small time and large $h_0$ (Proposition~\ref{P:expansion}). Using the Agrachev--Gauthier normal form (recalled in Supplementary Materials~\ref{A:Gauthier}), the exponential appears to be a small perturbation of the standard nilpotent exponential. 

Section~\ref{S:conjugate_time}-\ref{S:order_2_approx} are dedicated to the approximation of the conjugate time (as summarized in Theorem~\ref{T:big_expansion_theorem}),   from which an approximation of the conjugate locus can be obtained. A careful analysis of the conjugate time for the nilpotent approximation shows that, under some conditions, the second conjugate time accumulates on the first (Section~\ref{SS:First_approximation}).
We rely on this observation to compute a second order approximation of the conjugate time (Section~\ref{S:order_2_approx}) and treat the problem of a double conjugate time via blow-up (Section~\ref{SS:asymptotics_near_S_1}). 
With the aim of proving stability of the caustic, we conclude the section by computing a third order approximation of the conjugate time for a small subset of initial covectors (Section~\ref{SS:third_order_conjugate}).

Hence we have devised three domains of initial convectors where a stability analysis must be carried (Section~\ref{S:Stab}). We show that we can tackle this analysis relying on  a Lagrangian equivalence classification (Section~\ref{SS:SRtoLagrangian})  and show that only stable Lagrangian singularities appear on the three domains (Section~\ref{SS:classification_three}).

\section{Normal extremals}\label{S:Normal_extremals}
\subsection{Geodesic equation in perturbed form}
In this section we establish the dynamical system satisfied by geodesics in terms of small perturbations of the nilpotent case.

Let $(M,\Delta,g)$ be a $(2n+1)$-dimensional contact sub-Riemannian manifold. Let $V$ be an open subset of $M$ and $(X_1,\dots ,X_{2n})$ be a frame of $(\Delta,g)$ on $V$, that is, a family of vector fields such that  $g_q(X_i(q),X_j(q))=\delta_i^j$ for all $i,j\in \ll1,2n\rr$ and all $q\in V$ (such a family always exists for $V$ sufficiently small). The sub-Riemannian Hamiltonian can be written
$$
H(p,q)= \frac{1}{2} \sum_{i=1} ^{2n}\langle p, X_i(q)\rangle^2.
$$ 
In the case of contact distributions, 
locally-length-minimizing curves are projections of normal extremals, the integral curves of the Hamiltonian vector field $\vec{H}$ on $T^*M$ (see for instance \cite{ABB_2018,agrachev_barilari_rizzi_contact_2016}). In other words, a  normal extremal $t\mapsto (p(t),q(t))$ satisfies  in coordinates the Hamiltonian ordinary differential equation
\begin{equation}\label{E:ham_ode}
\left\{
\begin{aligned}
&\frac{\diff q}{\diff t}=\sum_{i=1}^{2n}\langle p, X_i(q)\rangle X_i(q),
\\
&\frac{\diff p}{\diff t}=-\sum_{i=1}^{2n}\langle p, X_i(q)\rangle \; {}^tp \,D_q X_i(q).
\end{aligned}
\right.
\end{equation}

For $V$ sufficiently small, we can arbitrarily choose  a non-vanishing vector field $X_0$ transverse to $\Delta$ in order to complete $(X_1(q),\dots ,X_{2n}(q))$ into a basis of $T_qM$ at any point $q$ of $V$.
We use the family $(X_1,\dots, X_{2n},X_0)$ to endow $T^*M$ with dual coordinates $(h_1,\dots,h_{2n},h_0)$ such that
$$
h_i(p,q)=\langle p,X_i(q)\rangle \qquad \forall i\in \ll0,2n\rr,\forall q\in V, \forall p\in T_q^*M.
$$
We also introduce the structural constants $(c_{ij}^k)_{ i,j,k\in \ll 0,2n\rr}$ on $V$, defined by the relations
$$
\left[X_i,X_j\right](q)=\sum_{k=0}^{2n}c_{ij}^k(q) X_k(q),\qquad \forall i,j\in \ll 0,2n\rr, \forall q\in V.
$$
In terms of the coordinates $(h_i)_{i\in \ll 0,2n\rr}$, along a normal extremal, Equation~\eqref{E:ham_ode} yields (see \cite[Chapter 4]{ABB_2018})
$$
\frac{\diff h_i}{\diff t}
=
\{H,h_i\}
=
\sum_{j=0}^{2n}\sum_{k=0}^{2n} c_{j i}^k  h_j h_k, \quad \forall i\in \ll 0,2n\rr.
$$
We set $J:V\to \mathcal{M}_{2n}(\R)$ to be the matrix such that 
$
J_{ij}=c_{ji}^0$, for all $ i,j\in \ll1, 2n\rr$,
and   $Q:V \longrightarrow \left( \R^{2n}\rightarrow  \R^{2n}\right)$ to be the map such that for all $i\in \ll1, 2n\rr$,
$$
Q_i (h_1,\dots h_{2n})=\sum_{j=1}^{2n}\sum_{k=1}^{2n} c_{j i}^k  h_j h_k.
$$
By denoting $h=(h_1,\dots, h_{2n})$ we then have 
$
\dfrac{\diff h}{\diff t}=h_0 J h+Q(h).
$

As stated in Section~\ref{S:intro_short_geo}, we want an approximation of the geodesics for small time when $h_0(0)\to +\infty$, thus we introduce $w=\frac{h_0(0)}{h_0}$ and $\eta={h_0(0)}^{-1}$.
Then 
$
\dfrac{\diff w}{\diff t}=-\eta w^2\dfrac{\diff h_0}{\diff t}.
$

 We  separate the terms containing $h_0$ in the derivative of $w$ to obtain an equation similar to the one of $h$.  We set $L:V\to \mathcal{M}_{1\times 2n}(\R)$ to be the line matrix such that 
$L_{i}=c_{i0}^0$, for all $ i\in \ll1, 2n\rr$,
and   $Q_0:V \rightarrow \left( \R^{2n}\rightarrow  \R\right)$ to be the map such that 
$$
Q_0 (h_1,\dots h_{2n})=\sum_{j=1}^{2n}\sum_{k=1}^{2n} c_{j 0}^k  h_j h_k,
$$
so that 
$
\dfrac{\diff w}{\diff t}=-w Lh-\eta w^2 Q_0(h)
$.

Finally, rescaling time with $\tau=t/\eta$, we obtain
\begin{equation}\label{E:Ham_ode_rescaled}
\left\{
\begin{aligned}
&\frac{\diff q}{\diff \tau }=   \eta \sum_{i=1}^{2n} h_i X_i(q),
\\
&\frac{\diff h}{\diff \tau}= \frac{1}{w} Jh  +\eta \,  Q(h),
\\
&\frac{\diff w}{\diff \tau}=-\eta w Lh-\eta^2 w^2 Q_0(h).
\end{aligned}
\right.
\end{equation}

Hence to the solution of \eqref{E:ham_ode} with initial condition $(q_0,(h(0),\eta^{-1}))$ corresponds  the solution of the parameter depending differential equation \eqref{E:Ham_ode_rescaled} of initial condition $(q_0,h(0),w(0))$ and parameter $\eta$. Since $w(0)=1$, the flow of this ODE is well defined (at least for $\tau$ small enough), and smooth with respect to $\eta\in (-\varepsilon,\varepsilon)$, for some $\varepsilon>0$.

This warrants a power series study of its solutions as $\eta \to 0$.

\subsection{Approximation of the Hamiltonian flow}

Let $q_0\in M$. In the rest of the paper, except when explicitly stated otherwise, 
 we assume that the structure at $q_0$ has been put in the Agrachev--Gauthier normal form introduced in \cite{Gauthier_2001_SR_metrics_and_isoperimetric_problems}. That is, we have an open neighborhood $V\subset M$ of $q_0$, linearly adapted coordinates at $q_0$ $(x_1,\dots x_{2n},z):V \rightarrow \R^{2n+1}$ and a frame of $(\Delta,g)$, $(X_1,\dots, X_{2n})$, satisfying many useful symmetries.
(for instance, see Theorems~\ref{T:Ag_Gau_Frame_simpl}-\ref{T:Ag_Gau_Frame} in \ref{A:Gauthier}).
The family is locally completed as a basis of $TM$ with $X_0=\frac{\partial}{\partial z}$.

Let us introduce a few notations. Let $\bar{J}=J(q_0)$. As a consequence of the choice of frame, 
(in particular, see Equations~\eqref{E:AG_nf_hor3}  and \eqref{E:AG_nf_vert1} in \ref{A:Gauthier}), 
$\bar{J}$ is already in reduced form $\mathrm{diag}(\bar{J}_1,\dots, \bar{J}_n)$, that is, block diagonal with $2\times 2$ blocks
$$ 
\bar{J}_i=
\begin{pmatrix}
0 & b_i
\\
-b_i & 0 
\end{pmatrix}, \qquad\forall i\in \ll 1,n\rr,
$$
where $(b_i)_{i\in \ll 1,n\rr,}$ are the nilpotent invariants of the contact structure at $q_0$. 
Then let $\hhat:\R\times\R^{2n}\rightarrow \R^{2n}$, $\xhat:\R\times\R^{2n}\rightarrow \R^{2n}$ and $\zhat:\R\times\R^{2n}\rightarrow \R^{2n}$ be defined by 
$$
\begin{aligned}
\hhat(t,h)=  \e^{t\bar{J}} h, 
\qquad
\xhat(t,h)= \bar{J}^{-1} (\e^{t\bar{J}}-I_{2n}) h,
\\
 \zhat(t,h)=
\sum_{i=1}^{n}
\left(
h_{2i-1}^2+h_{2i}^2
\right) \frac{b_it-\sin(b_i t)}{2b_i},
\end{aligned}
$$
for all $t\in \R$ and all $h\in \R^{2n}$.

We also set $J^{(1)}:\R^{2n}\rightarrow \mathcal{M}_{2n}(\R)$ such that
$$ 
J^{(1)}_{i,j}(y)=\sum_{k=1}^{2n} \left( \frac{\partial^2 (X_i)_{2n+1}}{\partial x_j\partial x_k}-\frac{\partial^2 (X_j)_{2n+1}}{\partial x_i\partial x_k}\right) y_k,
\qquad 
\forall i,j\in \ll1,2n\rr,
$$
where for any vector field $Y$, we denote by $(Y)_i$, $1\leq i \leq 2n+1$, the $i$-th coordinate of $Y$, written in the basis $(\partial_{x_1},\dots,\partial_{x_{2n}},\partial_z)$.

Finally, let us denote $B_R=\{h\in \R^{2n}\mid \sum_{i=1}^{2n}h_i^2\leq R\}$.
\begin{proposition}\label{P:expansion}
For all $T,R>0$,
normal extremals with initial covector $(h(0),\\ \eta^{-1})$ have the following order 2 expansion at time $\eta \tau$, as  $\eta\to 0^+$, uniformly with respect to $\tau\in [0,T]$ and $h(0)\in B_R$.
In normal form coordinates, we denote
$$
\e^{\eta \tau\vec{H}}
\left(
\left(0,0\right),
\left(h(0),\eta^{-1}\right)\right)=\left(\left(x(\tau),z(\tau)\right),\left(h(\tau),\eta w(\tau)^{-1}\right) \right).
$$
Then
$$
\begin{aligned}
&x( \tau)=\eta \xhat(\tau,h(0))
+
\eta^2
\int_{0}^{\tau}
\int_{0}^{\sigma}
\e^{(\sigma-\rho)\bar{J}} J^{(1)}\left(\xhat(\rho,h(0))\right)\hhat(\rho,h(0))\diff \rho\, \diff \sigma
+O(\eta^3),
\\
&
z(  \tau)
=
\eta^2 
\zhat(\tau,h(0))
+O(\eta^3),
\end{aligned}
$$
and
$$
\begin{aligned}
&
h(  \tau)= \hhat(\tau,h(0))+\eta 
\int_{0}^{\tau}
\e^{(\tau-\sigma)\bar{J}} J^{(1)}\left(\xhat(\sigma,h(0))\right)\hhat(\sigma,h(0))\diff \sigma +O(\eta^2),
\\
&
w( \tau)= 1+O(\eta^2).
\end{aligned}
$$
\end{proposition}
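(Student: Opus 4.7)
The strategy is a perturbation expansion in $\eta$ around the nilpotent case $\eta=0$. Since the right-hand side of the rescaled system~\eqref{E:Ham_ode_rescaled} is smooth in $(\tau,q,h,w,\eta)$ on a neighbourhood of $\{\eta=0\}$ and the initial datum $(q_0,h(0),1)$ is independent of $\eta$, classical smooth dependence on parameters gives that the solution is $C^\infty$ in $\eta$ on any compact time interval. Expanding to the appropriate order and bounding remainders by Gronwall on $[0,T]\times B_R$ delivers the uniformity in $(\tau,h(0))$.

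\textbf{Order $0$.} Setting $\eta=0$ in \eqref{E:Ham_ode_rescaled} gives $\dot q=0$, $\dot w=0$, and $\dot h=(1/w)J(q)h=\bar J h$. Hence $q^{(0)}\equiv q_0=0$, $w^{(0)}\equiv 1$, and $h^{(0)}(\tau)=e^{\tau\bar J}h(0)=\hhat(\tau,h(0))$.

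\textbf{Order $1$.} Differentiating \eqref{E:Ham_ode_rescaled} once in $\eta$ at $\eta=0$ and using the Agrachev--Gauthier normal form (Appendix~\ref{A:Gauthier}), which forces $X_i(0)=\partial_{x_i}$ for $i\in\ll1,2n\rr$, $(X_i)_{2n+1}$ to be linear in $x$ vanishing at $0$, as well as $L(0)=0$ and the horizontal structural constants $c_{ij}^k(0)=0$ for $k\in\ll1,2n\rr$ (so $Q(h^{(0)})|_{q_0}=0$), one reads off
\[
\dot x^{(1)}=h^{(0)},\qquad \dot z^{(1)}=0,\qquad \dot w^{(1)}=-L(0)h^{(0)}=0,
\]
and, for $h^{(1)}$, the linear inhomogeneous equation
\[
\dot h^{(1)}=\bar J h^{(1)}+J^{(1)}\bigl(x^{(1)}\bigr)h^{(0)},
\]
where $J^{(1)}$ encodes the linear-in-$x$ part of $J(q)$, identifying with the matrix introduced just before the proposition via the identity $J(q)_{ij}=\langle\omega,[X_j,X_i](q)\rangle$ Taylor-expanded at $q_0$. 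Integrating the first three gives $x^{(1)}(\tau)=\xhat(\tau,h(0))$, $z^{(1)}\equiv 0$, $w^{(1)}\equiv 0$; variation of parameters with $h^{(1)}(0)=0$ then produces the integral formula for $h^{(1)}(\tau)$ stated in the proposition.

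\textbf{Order $2$.} Differentiating twice in $\eta$ at $\eta=0$, the equation for $x^{(2)}$ reduces, using $\partial_{x_k}(X_i)_j(0)=0$ for $i,j\in\ll1,2n\rr$ (normal form), to $\dot x^{(2)}=h^{(1)}$, and substituting the expression for $h^{(1)}$ and integrating once in $\tau$ gives the claimed double integral. For $z^{(2)}$, the same differentiation yields $\dot z^{(2)}=\sum_i h^{(0)}_i\,\partial_{x}(X_i)_{2n+1}(0)\cdot x^{(1)}$, and the normal-form identities $(X_{2i-1})_{2n+1}=\tfrac{b_i}{2}x_{2i}+O(|x|^2)$, $(X_{2i})_{2n+1}=-\tfrac{b_i}{2}x_{2i-1}+O(|x|^2)$, together with the block-rotation conservation $\hhat_{2i-1}(\tau)^2+\hhat_{2i}(\tau)^2=h_{2i-1}(0)^2+h_{2i}(0)^2$, reduce the computation to elementary trigonometric integrals over each $2\times 2$ block, producing exactly $\zhat(\tau,h(0))$.

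\textbf{Main obstacle.} Everything above is routine once the normal-form simplifications $L(0)=0$, $Q|_{q_0}=0$ on horizontal covectors, $X_i=\widehat X_i+O(|x|^2)$, and the explicit first-order identification of $J(q)-\bar J$ with the matrix $J^{(1)}$ in the statement are available. The genuine work is to check these four facts from Theorems~\ref{T:Ag_Gau_Frame_simpl}--\ref{T:Ag_Gau_Frame} in Appendix~\ref{A:Gauthier}; without them, additional lower-order terms would clutter the $h^{(1)}$ and $w^{(1)}$ equations, and the neat formula for the order-$2$ correction to $x$ would not close.
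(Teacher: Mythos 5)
Your proposal is correct and follows essentially the same route as the paper: expand the rescaled system \eqref{E:Ham_ode_rescaled} as a power series in $\eta$, use the Agrachev--Gauthier normal form to kill the zeroth-order contributions of $L$, $Q$ and the horizontal first-order terms of the $X_i$, and integrate order by order (variation of parameters for $h^{(1)}$, then direct integration for $x^{(2)}$ and $z^{(2)}$). The only cosmetic difference is that the paper secures uniformity via conservation of $H$ along extremals rather than an explicit Gronwall argument, which changes nothing of substance.
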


\begin{proof}
This is a consequence of the integration of the time-rescaled system~\eqref{E:Ham_ode_rescaled}.
Since the system smoothly depends on $\eta$ near $0$, 
we prove this result by successive integration of the terms of the power series in $\eta$ of 
$x=\sum \eta^k x^{(k)}$,
$z=\sum \eta^k z^{(k)}$,
$h=\sum \eta^k h^{(k)}$, and
$w=\sum \eta^k w^{(k)}$.

Let $T,R>0$. All asymptotic expressions are to be understood uniform with respect to $\tau\in [0,T]$ and $h(0)\in B_R$.
Solutions of \eqref{E:Ham_ode_rescaled} are integral curves of a Hamiltonian vector field $\vec{H}$, hence $H$ is preserved along the trajectory, that is, for all $\tau\in [0,T]$,
$$
\sum_{i=1}^{2n}{h_i( \tau)}^{2}=\sum_{i=1}^{2n}{h_i(0)}^{2}.
$$
Furthermore, we have by \eqref{E:Ham_ode_rescaled}
$\dfrac{\diff x}{\diff \tau}=O(\eta)$, $\dfrac{\diff z}{\diff \tau}=O(\eta)$,
and since $x(0)=0$ and $z(0)=0$, we have $x(\tau)=O(\eta)$ and $z(\tau)=O(\eta)$.

As a consequence of the choice of frame,
(in particular conditions \eqref{E:AG_nf_hor3}--\eqref{E:AG_nf_vert1} in \ref{A:Gauthier}), 
$c_{ij}^k(q_0)\neq 0$ if and only if $k=0$ and there exists $l\in \ll1,n\rr$ such that $\{i,j\}=\{2l-1,2l\}$.

Hence  for all $j\in\ll 1, 2n\rr$, $c_{j 0}^0(q(\tau))=O(\eta)$ and $Lh=O(\eta)$.
Similarly, $Q_i(h)=O(\eta)$  for all $i\in \ll0,2n\rr$, and since  $w(0)=1$, we have that 
$
\dfrac{\diff w}{\diff \tau}=O(\eta^2)
$
and $w(\tau)=1+O(\eta^2)$. 

Since $J(q_0)=\bar{J}$, we have $J(q)=\bar{J}+O(\eta)$ and thus
$\dfrac{\diff h}{\diff \tau}=\bar{J}h  +O(\eta)$.
Hence $h$ is a small perturbation of the solution of $\frac{\diff h}{\diff \tau}=\Jbar h $ with initial condition $h(0)$, that is,  $h(\tau)=\hhat(\tau,h(0))+O(\eta)$.

Since $X_i(q_0)=\frac{\partial}{\partial x_i}$ for all $i\in \ll1,2n\rr$ (as a consequence of 
\eqref{E:AG_nf_hor1}),
$$
\frac{\diff x^{(1)}}{\diff \tau}= h^{(0)}(\tau)=\hhat(\tau,h(0)), \quad \frac{\diff z^{(1)}}{\diff \tau}= 0, 
$$ 
and since $x(0)=0$, $z(0)=0$,
$
x(\tau)=\eta \xhat(\tau,h(0))+O(\eta^2)
$
and $z(\tau)=O(\eta^2)$.

The definition of $J^{(1)}$   implies
$
J^{(1)}\left(x^{(1)}\right)=\eval{\frac{\partial J(q)}{\partial \eta}}{\eta=0}.
$
Then, since $Q(h)=O(\eta)$, $h^{(1)}$ is  solution of
$
\dfrac{\diff h^{(1)}}{\diff \tau}= \bar{J} h^{(1)}+J^{(1)}\left(x^{(1)}\right)
$
with initial condition $h^{(1)}(0)=0$. Hence 
$$
h^{(1)}(\tau)=
\int_{0}^{\tau}
\e^{(\tau-\sigma)\bar{J}} J^{(1)}\left(\xhat(\sigma,h(0))\right)\hhat(\sigma,h(0))\diff \sigma.
$$
Since $\frac{\partial {(X_i)}_j}{\partial x_k}=0$ for all $i,j,k\in\ll1, 2n\rr$ (as stated in 
\eqref{E:AG_nf_hor2}), 
$$
X_{2i-1}(q(\tau))=\partial_{x_{2i-1}}+\eta \,\xhat_{2i}(\tau,h(0))\frac{b_i}{2} \partial_{ z}+O(\eta^2),
$$
$$
X_{2i}(q(\tau))=\partial_{ x_{2i}}-\eta \,\xhat_{2i-1}(\tau,h(0))\frac{b_i}{2}  \partial_{ z}+O(\eta^2).
$$
Thus
$
\dfrac{\diff x^{(2)}}{\diff \tau}=
h^{(1)}$, 
$
\dfrac{\diff z^{(2)}}{\diff \tau}=
\sum_{i=1}^n \frac{b_i}{2}
\left( 
\hhat_{2i-1}\xhat_{2i}
-
\hhat_{2i}\xhat_{2i-1}
\right)
$.
Hence the statement by integration.
\end{proof}

\section{Conjugate time}\label{S:conjugate_time}

\subsection{Singularities of the sub-Riemannian exponential}\label{S:Singularities_SR_exponential}
\begin{definition}
Let $q_0\in M$.
We call \emph{sub-Riemannian exponential at $q_0$} the map 
$$
\begin{array}{rccc}
\sre_{q_0}:&\R^+\times T_{q_0}^*M&\longrightarrow& M
\\
& (t,p_0)&\longmapsto& \sre_{q_0}(t,p_0)=\pi\circ \e^{t\vec{H}}(p_0,q_0)
\end{array}
$$
where $\pi:T^*M\rightarrow M$ is the canonical fiber projection.
\end{definition}

Recall that the flow of the Hamiltonian vector field $\vec{H}$ satisfies the equality
$$
\e^{t\vec{H}}(p_0,q_0)=\e^{\vec{H}}(t p_0,q_0) ,\qquad \forall q_0\in M, p_0\in T_{q_0}^*M, t\in \R.
$$
We use this property to our advantage to compute  the sub-Riemannian caustic. Indeed, the caustic at $q_0$ is defined as the set of critical values of $\sre_{q_0}(1,\cdot)$. But for any time $t>0$, the caustic is also the set of critical values of $\sre_{q_0}(t,\cdot)$. Hence instead of classifying the covectors  $p_0$ such that $\sre_{q_0}(1,\cdot)$ is critical at  $p_0$, we compute for a given $p_0$ the conjugate time $t_c(p_0)$ such that $\sre_{q_0}(t_c(p_0),\cdot)$ is critical at $p_0$.

\begin{definition}
Let $q_0\in M$, and $p_0\in T_{q_0}^*M$. A \emph{conjugate time for $p_0$} is a positive time $t>0$ such that the map $\sre_{q_0}(t,\cdot)$ is critical at $p_0$. The \emph{conjugate locus of $q_0$} is the subset of $M$
$$
\left\{
\sre_{q_0}(t,p_0)
\mid t \text{ is a conjugate time for } p_0\in T_{q_0}M
\right\}.
$$
The \emph{first conjugate time for $p_0$}, denoted $t_c(p_0)$, is the minimum of conjugate times for $p_0$. The \emph{first conjugate locus of $q_0$} is the subset of $M$
$$
\left\{
\sre_{q_0}(t,p_0)
\mid t \text{ is the first conjugate time for } p_0\in T_{q_0}M
\right\}.
$$
In the following, we restrict our study of the sub-Riemannian caustic to the first conjugate locus. 
\end{definition}

From now on, let us index the nilpotent invariants in descending order $b_1\geq b_2\geq \cdots \geq b_n>0$. 
Let $\S_1\subset M$ be the set of points of $M$  such that two invariants coincide, $b_i=b_j$, with $i\neq j$. Assuming genericity of the sub-Riemannian manifold, $\mathfrak{S}_1$ is a stratified subset of $M$ of codimension $3$ (see \cite{charlot_2002_quasi_contact} for instance). 

\begin{remark}
This is  a consequence of Thom's transversality theorem applied  to the jets of the sub-Riemannian structure, seen as  a smooth map.

Furthermore, for a given $q_0\in M$, if the sub-Riemannian structure at $q_0$ is  in Agrachev--Gauthier normal form 
(see Appendix~\ref{A:Gauthier})
then the jets of order $k$ at $q_0$ of the sub-Riemannian structure are given by the jets at $0$ of the vector fields $X_1,\dots, X_{2n}$.
\end{remark}

As stated in the introduction,  the study of the sub-Riemannian caustic near its starting point requires  considering initial covectors in $\C_{q}(1/2)$ such that  $h_0$ is near infinity. Recall that geodesics with initial covectors in $\C_{q}(1/2)$ are parametrized by arclength, hence short conjugate time imply that the conjugate point is close  to the starting point of the caustic. Then one can check that a short conjugate time corresponds only to covectors with large $h_0$. 
From the point of view of the exponential at time $1$, this means that singular points close to the origin of the caustic must belong to a sufficiently narrow cone containing $\C_{q_0}(0)$ (again, because $\sre_{q_0}(t,p)=\sre^1_{q_0}(tp)$).

This observation can be stated in the following way 
(a proof can be found in Appendix~\ref{A:Gauthier}, see Proposition~\ref{L:no_singular_near_0_bis}, as an application of the Agrachev--Gauthier normal form).

\begin{proposition}\label{L:no_singular_near_0}
Let $(M,\Delta,g)$ be a contact sub-Riemannian manifold and $q_0\in M$. For all $\bar{h}_0>0$, there exists $\varepsilon >0$ such that all $p\in \C_{q}(1/2)$ with $t_c(p)<\varepsilon$ have $|h_0(p)|>\bar{h}_0$.
\end{proposition}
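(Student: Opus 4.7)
My plan is to prove the contrapositive: that the first conjugate time $t_c$ admits a uniform positive lower bound on the compact set $K:=\C_{q_0}(1/2)\cap\{|h_0|\le\bar h_0\}$. Since at $q_0$ the Hamiltonian depends only on $(h_1,\dots,h_{2n})$ (the coordinates dual to the horizontal frame), $\C_{q_0}(1/2)$ is the cylinder $S^{2n-1}\times\R_{h_0}$, so $K\cong S^{2n-1}\times[-\bar h_0,\bar h_0]$ is compact. Once $t_c\ge\varepsilon$ on $K$, any $p$ with $t_c(p)<\varepsilon$ automatically satisfies $|h_0(p)|>\bar h_0$, which is exactly the statement.

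First I would check pointwise that $t_c(p)>0$ for every $p\in K$. This is a classical feature of sub-Riemannian geodesics: short arcs are length-minimizing, and length-minimality on an open segment rules out a conjugate point. A more hands-on verification uses the Agrachev--Gauthier normal form: a $\tau$-expansion of the Hamiltonian flow at fixed $p$ gives a Jacobian of $\sre_{q_0}(\tau,\cdot)$ with a horizontal $(2n)\times(2n)$ block of the form $\tau I_{2n}+O(\tau^2)$ and a $\partial z/\partial h_0$ entry of order $\tau^3$ whose leading coefficient is strictly positive on $|h|=1$; a Schur complement then shows the full determinant is nonzero for all sufficiently small $\tau>0$.

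Next I would pass from the pointwise statement to a uniform bound on $K$ using lower semi-continuity of $t_c$. Smoothness of the Hamiltonian flow makes $(\tau,p)\mapsto\det D_p\sre_{q_0}(\tau,p)$ continuous on $\R\times T^*_{q_0}M$, so if $p_n\to p$ in $K$ with $t_c(p_n)=\tau_n\to\tau^\ast$, continuity of the determinant forces $\tau^\ast$ to be a conjugate time for $p$, hence $t_c(p)\le\tau^\ast=\liminf_n t_c(p_n)$. A positive lower semi-continuous function on a compact set attains a strictly positive infimum, which is the $\varepsilon>0$ we need.

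The main technical point is the pointwise positivity step near the stratum $\{h_0=0\}$ of $K$: there the nilpotent geodesic is a horizontal straight line carrying no conjugate point at all, so a lower bound cannot be extracted from the nilpotent model alone and the perturbative expansion of Proposition~\ref{P:expansion} (tailored to $\eta\to 0$, i.e.\ $|h_0|\to\infty$) does not apply. One must therefore either invoke the abstract SR length-minimality argument, or verify directly that the leading $\tau^{2n+3}$-coefficient of the Jacobian determinant survives the limit $h_0\to 0$, using the explicit symmetries of the Agrachev--Gauthier normal form recalled in Appendix~\ref{A:Gauthier}.
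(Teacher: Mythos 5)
Your reduction to the compact piece $K=\C_{q_0}(1/2)\cap\{|h_0|\le\bar h_0\}$ and your ``hands-on'' nondegeneracy computation are in substance the paper's proof: the paper argues by contradiction, rescales a sequence of singular covectors with $H\to 0$ onto this cylinder, extracts a convergent subsequence, and then expands $q(t)$ to third order in the Agrachev--Gauthier normal form, finding the horizontal block $tI_{2n}+O(t^2)$ and a $(2n+1,2n+1)$ entry whose $t^3$-coefficient is $2|\bar J h(0)|^2>0$, so the Jacobian determinant behaves like a positive multiple of $t^{2n+3}$ and cannot vanish for small $t>0$, uniformly near the limit covector.

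The genuine gap is in your bridge from pointwise positivity of $t_c$ to a uniform bound via lower semi-continuity. Your argument for lsc (``if $t_c(p_n)=\tau_n\to\tau^\ast$ then continuity of $\det D_p\sre_{q_0}$ forces $\tau^\ast$ to be a conjugate time for $p$'') only works when $\tau^\ast>0$. In the case you actually need to exclude --- a sequence $p_n\to p$ in $K$ with $t_c(p_n)\to 0$ --- the limit determinant vanishes trivially, because $\sre_{q_0}(0,\cdot)\equiv q_0$ makes $\det D_p\sre_{q_0}(\tau,p)$ vanish to high order at $\tau=0$ for \emph{every} $p$; so no contradiction with $t_c(p)>0$ is obtained, and lsc of $t_c$ at such points is precisely equivalent to the proposition you are proving. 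The same objection applies to the purely ``abstract SR length-minimality'' variant of step 1: it yields only pointwise $t_c(p)>0$, with no uniformity. The repair is exactly your alternative route, carried out with uniformity: the Taylor coefficients of the flow depend continuously (indeed smoothly) on the covector, so the leading term $c(p)\,\tau^{2n+3}$ of the Jacobian determinant, with $c(p)$ bounded away from $0$ on $K$ (since $\bar J$ is invertible and $|h|=1$ there), gives a $\bar\tau>0$ with no conjugate time in $(0,\bar\tau)$ for all $p\in K$ at once --- after which the lsc machinery is unnecessary. Note also that this coefficient does not degenerate as $h_0\to 0$ (it does not involve $h_0$ at all), so no separate treatment of the stratum $\{h_0=0\}$ is needed beyond this computation.
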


In coordinates, conjugate points satisfy the following equality
\begin{equation}\label{E:exp_det}
\eval{
\det
\left(
\frac{\partial \sre_{q_0}}{\partial{h_1}},
\dotsc,
\frac{\partial \sre_{q_0}}{\partial{h_{2n}}},
\frac{\partial \sre_{q_0}}{\partial{h_{0}}}
\right)
}
{(t,p_0)}=0.
\end{equation}
To use this equation in relation with the results of Proposition~\ref{P:expansion}, we introduce 
$$
F(\tau,h,\eta)=\sre_{q_0} (\eta \tau ; (h,\eta^{-1})), \quad \forall \tau>0, h\in \R^{2n},\eta>0.
$$
Then  
$$
\frac{\partial \sre_{q_0}}{\partial{h_0}}(\eta \tau ; (h,\eta^{-1}))=-\eta
\left(
\eta \frac{\partial F}{\partial \eta}(\tau,h,\eta)-\tau \frac{\partial F}{\partial \tau}(\tau,h,\eta)
\right)
$$
and \eqref{E:exp_det} equates to 
\begin{equation}\label{E:F_det}
\eval{
\det
\left(
\frac{\partial F}{\partial{h_1}},
\dotsc,
\frac{\partial F}{\partial{h_{2n}}},
\eta \frac{\partial F}{\partial \eta}-\tau \frac{\partial F}{\partial \tau}
\right)
}
{(\tau,h,\eta)}=0.
\end{equation}

We have shown in Proposition~\ref{P:expansion}, as $\eta\to 0$, that the map $F$ is a perturbation of the map $(\tau,h,\eta)\mapsto (\xhat,\zhat)$, the nilpotent exponential map. Hence the conjugate time is expected to be a perturbation of the conjugate time for $(\xhat,\zhat)$.
To get an approximation of the conjugate time for a covector $(h,\eta^{-1})$ as $\eta\to 0$, we use expansions from Proposition~\ref{P:expansion} to derive equations on a power series expansion of the conjugate time.

\subsection{Nilpotent order and doubling of the conjugate time}\label{SS:First_approximation}

Let us define
\begin{equation}\label{E:def_Phi}
\Phi(\tau,h,\eta)=\eval{
\det
\left(
\frac{\partial F}{\partial{h_1}},
\dotsc,
\frac{\partial F}{\partial{h_{2n}}},
\eta \frac{\partial F}{\partial \eta}-\tau \frac{\partial F}{\partial \tau}
\right)
}
{(\tau,h,\eta)}
\end{equation}
and its power series expansion
$\Phi(\tau,h,\eta)=\sum_{k\geq 0} \eta^k\Phi^{(k)}(\tau,h)$.

As a first application of Proposition~\ref{P:expansion}, notice that $F_i=O(\eta)$ for all $i\in \ll1, 2n\rr$, while $F_{2n+1}=O(\eta^2)$.
Hence, one gets $\Phi^{(k)}=0$ for all $k\in \ll0, 2n+1\rr$, and 
$\Phi^{(2n+2)}$ is the first non-trivial term in the power series.

To study $\Phi^{(2n+2)}$, let us introduce the set
$
Z= \left\{2k \pi/b_i\mid {i\in \ll1, n\rr, k\in \N}\right\}
$
and the map $\psi : (\R^+\setminus Z)\times \R^{n}\rightarrow \R$ defined by
$$
\psi(\tau,r)
=
\sum_{i=1}^{n}
\frac{r_i^2}{2}
\left(
3 \tau
-  b_i \tau ^2\, \frac{ \cos (b_i \tau /2)}{\sin (b_i \tau /2) }-\frac{\sin (b_i \tau )}{ b_i}
\right), 
\quad 
\forall (\tau,r)\in (\R^+\setminus Z)\times \R^{n}.
$$

We first need the following result on the zeros of $\psi$ (see, for instance, Appendix~\ref{A:proof_lemma_psi}).

\begin{lemma}\label{L:lim_r1_to_0}
Assume $b_1>b_2\geq \dots \geq b_n$.  For all $r\in{(\R^+)}^{n}$, let
 $\tau_1(r)$ be the first positive time in $\R^+\setminus Z$ such that $\psi(\tau_1,r)=0$. Then
 $
 \tau_1(r_1,\dots, r_n)>2\pi/b_1 
 $
 and there exists $f(r_2, \dots ,r_n)>0$ such that,  as $r_1\to 0^+$,
 \begin{equation}\label{E:equiv_tau_1}
 \tau_1(r_1,\dots, r_n)=2\pi/b_1+f(r_2, \dots ,r_n) r_1^2 +o(r_1^2).
\end{equation}

\end{lemma}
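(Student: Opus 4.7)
The plan is to write $\psi(\tau,r) = \tfrac{1}{2}\sum_{i=1}^n r_i^2\, g_i(\tau)$ with $g_i(\tau) := 3\tau - b_i\tau^2 \cot(b_i\tau/2) - \sin(b_i\tau)/b_i$, and to study each mode separately. The first step is to establish $g_i > 0$ on the entire interval $(0, 2\pi/b_i)$. Substituting $u = b_i\tau/2 \in (0,\pi)$, a direct computation factors
$$
g_i(\tau) \;=\; \frac{\tau}{u\sin u}\bigl(3u\sin u - 2u^2\cos u - \sin^2 u\cos u\bigr) \;=\; \frac{\tau}{u\sin u}\,A(u),
$$
with $A(0) = 0$ and
$$
A'(u) \;=\; (\sin u - u\cos u) + 2u^2\sin u + 3\sin^3 u.
$$
Each of the three summands is non-negative on $(0,\pi)$ (the first vanishes at $0$ with positive derivative $u\sin u$), and their sum is strictly positive, so $A > 0$ on $(0,\pi)$, hence $g_i > 0$ on $(0, 2\pi/b_i)$. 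Because $b_1 > b_i$ for every $i \geq 2$, the interval $(0, 2\pi/b_1)$ sits inside each $(0, 2\pi/b_i)$, so $\psi(\cdot,r) > 0$ on $(0, 2\pi/b_1)$ and $\tau_1(r) \geq 2\pi/b_1$.

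Next I would isolate the pole of $g_1$ at $\tau = 2\pi/b_1$. Setting $\tau = 2\pi/b_1 + s$ and using $\cot(\pi + b_1 s/2) = \cot(b_1 s/2) = 2/(b_1 s) - b_1 s/6 + O(s^3)$, a routine expansion gives
$$
g_1\!\left(\frac{2\pi}{b_1} + s\right) \;=\; -\frac{8\pi^2}{b_1^2\, s} - \frac{2\pi}{b_1} + O(s), \qquad s \to 0.
$$
In particular $\psi(\tau, r) \to -\infty$ as $\tau \to (2\pi/b_1)^+$ whenever $r_1 > 0$, so by continuity on $\R^+\setminus Z$ the first zero exists, is strictly greater than $2\pi/b_1$, and tends to $2\pi/b_1$ as $r_1 \to 0^+$. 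Moreover $\partial_\tau \psi = \tfrac{r_1^2}{2}\cdot 8\pi^2/(b_1^2 s^2) + O(1)$ is strictly positive in a one-sided neighborhood of $\tau = 2\pi/b_1$, so $\psi$ is strictly monotone there and $\tau_1(r)$ is characterized as the unique zero in that neighborhood.

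For the asymptotic expansion I would set $G(\tau) := \sum_{i=2}^n \tfrac{r_i^2}{2}\, g_i(\tau)$, which is smooth and strictly positive at $\tau = 2\pi/b_1$ by the first step. The equation $\psi(2\pi/b_1 + s, r) = 0$, multiplied by $s$, becomes
$$
-\frac{4\pi^2 r_1^2}{b_1^2} + s\,G(2\pi/b_1) + O(s^2) + O(r_1^2 s) = 0,
$$
and a direct bootstrap (or the implicit function theorem applied to this equation in the variable $s$ with parameter $r_1^2$) yields
$$
s \;=\; \frac{4\pi^2 r_1^2}{b_1^2\, G(2\pi/b_1)} + o(r_1^2) \;=\; \frac{8\pi^2}{b_1^2\sum_{i=2}^n r_i^2\, g_i(2\pi/b_1)}\, r_1^2 + o(r_1^2).
$$
Setting $f(r_2,\dots,r_n) = 8\pi^2/\bigl(b_1^2\sum_{i=2}^n r_i^2\, g_i(2\pi/b_1)\bigr)$ gives the claimed expansion, and $f > 0$ follows from the positivity of each $g_i(2\pi/b_1)$ proved in the first step. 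The only delicate point I anticipate is the bookkeeping in the Laurent expansion of $g_1$, from which the coefficient $8\pi^2/b_1^2$ that pins down $f$ is extracted; the positivity argument for $g_i$ and the final asymptotic inversion are otherwise soft.
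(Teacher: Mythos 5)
Your proof is correct, and it reaches the same coefficient $f=8\pi^2/\bigl(b_1^2\sum_{i\geq 2}r_i^2\,\psi_i(2\pi/b_1)\bigr)$ as the paper, but the execution differs in two places. For the sign information, the paper asserts that each mode $\psi_i$ has positive derivative between consecutive poles and combines this with the limits at the poles; it then shows $r_1\mapsto\tau_1(r)$ is increasing, deduces $\tau_1\to 2\pi/b_1$, and obtains the expansion by asymptotically inverting $\psi_1$ using $\psi_1(2\pi/b_1+\delta t)\sim-8\pi^2/(b_1^2\delta t)$. You instead prove only what is needed — strict positivity of each $g_i$ on $(0,2\pi/b_i)$ — via the explicit factorization $A'(u)=(\sin u-u\cos u)+2u^2\sin u+3\sin^3 u\geq 0$ (which I checked), and you extract the expansion by a purely local argument: Laurent-expand the resonant mode at its simple pole, multiply the equation by $s$ to remove the pole, and apply the implicit function theorem in $(s,r_1^2)$. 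This is arguably cleaner for pinning down the coefficient and even supplies a positivity proof that the paper leaves implicit, while the paper's route yields extra structural facts you do not get (uniqueness of the zero on all of $(2\pi/b_1,\min(2\pi/b_2,4\pi/b_1))$, hence existence of $\tau_1$ for every $r$, and monotonicity of $\tau_1$ in $r_1$). Two small points to tighten: the one-sided region where $\partial_\tau\psi>0$ has width of order $r_1$, so you should say explicitly that the IFT zero, being of order $r_1^2$, lies inside it — that is what identifies it with the \emph{first} zero; and your appeal to "continuity" for existence of $\tau_1$ at fixed $r$ silently uses that $\psi\to+\infty$ before the next element of $Z$ (as the paper checks), though for the asymptotic statement the IFT existence for small $r_1$ suffices.
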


The zeros of $\Phi^{(2n+2)}$ can be deduced from the zeros of $\psi$, as shown in the following proposition.
\begin{proposition}\label{P:first_order_time}
Assume $b_1>b_2> \dots  > b_n$. Let $h\in \R^{2n}\setminus \{0\}$ and $r\in \R^n$ be such that $r_i=\sqrt{h_{2i-1}^2+h_{2i}^2}$ for all $i\in \ll1 , n\rr$.
Then $\Phi^{(2n+2)}(\tau,h)=0$ if and only if $\tau\in Z$ or $\psi(\tau,r)=0$.
In particular
$$
\Phi^{(2n+2)}(\tau,h)\neq 0 \quad \forall \tau\in (0,2\pi/b_1), \forall h\in \R^{2n}\setminus \{0\}.
$$
\end{proposition}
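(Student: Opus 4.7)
My plan is to extract $\Phi^{(2n+2)}$ in closed form via the block structure of the nilpotent flow, then analyze its vanishing. From Proposition~\ref{P:expansion} one has $F_i=\eta\hat{x}_i+O(\eta^2)$ for $i\le 2n$ and $F_{2n+1}=\eta^2\hat{z}+O(\eta^3)$, so $\Phi^{(k)}$ vanishes for $k\le 2n+1$ while
$$
\Phi^{(2n+2)}(\tau,h)=\det\begin{pmatrix}\partial_h\hat{x}&\hat{x}-\tau\partial_\tau\hat{x}\\\partial_h\hat{z}&2\hat{z}-\tau\partial_\tau\hat{z}\end{pmatrix}.
$$
It is useful to note the Euler-type identity $\eta\partial_\eta F=\sum_j h_j\partial_{h_j}F$ coming from the flow scaling $\sre(t,\lambda p)=\sre(\lambda t,p)$; a single column operation on the defining determinant replaces the last column by $-\tau\partial_\tau F$, giving the symmetric form $\Phi^{(2n+2)}(\tau,h)=-\tau\det(\partial_h\hat{x},\partial_\tau\hat{x};\partial_h\hat{z},\partial_\tau\hat{z})$. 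Crucially, $\hat{x}=\bar{J}^{-1}(e^{\tau\bar{J}}-I)h$ and $\partial_\tau\hat{x}=e^{\tau\bar{J}}h$ are block-diagonal in the structure of $\bar{J}$, so $\partial_h\hat{x}$ decomposes into $2\times 2$ blocks $M_i$ of determinant $4\sin^2(b_i\tau/2)/b_i^2$.

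When $\tau\in Z$, say $b_{i_0}\tau=2k\pi$, both $\sin(b_{i_0}\tau)$ and $1-\cos(b_{i_0}\tau)$ vanish so $M_{i_0}\equiv 0$ and $e^{\tau\bar{J}_{i_0}}=I$; rows $2i_0-1$ and $2i_0$ of the matrix reduce to $(0,\dots,0,h_{2i_0-1})$ and $(0,\dots,0,h_{2i_0})$, which are proportional, forcing $\Phi^{(2n+2)}(\tau,h)=0$. When $\tau\notin Z$, every $M_i$ is invertible and Schur complement yields
$$
\Phi^{(2n+2)}(\tau,h)=-\tau\Bigl(\prod_{i=1}^n\det M_i\Bigr)\Bigl(\partial_\tau\hat{z}-\sum_{i=1}^n(\partial_h\hat{z})_i M_i^{-1}(\partial_\tau\hat{x})_i\Bigr).
$$
Using the identity $M_i^{-1}e^{\tau\bar{J}_i}=(b_i/2)\cot(b_i\tau/2)\,I+(b_i/2)R$ (with $R$ the skew rotation generator of the block) and the antisymmetry $h_i^T R h_i=0$, each summand $(\partial_h\hat{z})_i M_i^{-1}(\partial_\tau\hat{x})_i$ collapses to a scalar multiple of $r_i^2$. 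Assembling with $\partial_\tau\hat{z}$ and applying the half-angle identity $\sin(b_i\tau)\cot(b_i\tau/2)=1+\cos(b_i\tau)$ exhibits the Schur complement as a multiple of $\psi(\tau,r)$. I expect this trigonometric reduction to be the main technical obstacle.

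The ``in particular'' claim then reduces to a one-variable sign analysis of $f(u)=3u-u^2\cot(u/2)-\sin u$ on $(0,2\pi)$. A Taylor expansion at $u=0$ gives $f(u)=u^3/3+O(u^5)>0$; the cotangent pole drives $f\to+\infty$ as $u\to 2\pi^-$; and an interior check (for example $f(\pi)=3\pi>0$ combined with a monotonicity bound on $u\cot(u/2)$) rules out internal zeros. Since $b_i\le b_1$ for every $i$, we have $b_i\tau\in(0,2\pi)$ whenever $\tau\in(0,2\pi/b_1)$, so each term $r_i^2 f(b_i\tau)/2$ of $\psi$ is nonnegative with strict positivity when $r_i>0$. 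As $h\neq 0$ forces some $r_i>0$, we obtain $\psi(\tau,r)>0$, and combined with $\prod_i\det M_i\neq 0$ on $(0,2\pi/b_1)$ we conclude $\Phi^{(2n+2)}(\tau,h)\neq 0$ on that interval.
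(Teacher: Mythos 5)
You follow essentially the paper's route: identify $\Phi^{(2n+2)}$ with the determinant of the block matrix built from $D_h\xhat$, $D_h\zhat$ and the rescaled last column, use the block-diagonal structure of $D_h\xhat=\Jbar^{-1}(\e^{\tau\Jbar}-I_{2n})$ to treat $\tau\in Z$ (your proportional-rows argument there is fine), invert it off $Z$ and reduce the vanishing of the determinant to a scalar trigonometric condition, then conclude by a one-variable sign analysis on $(0,2\pi/b_1)$. The preparatory facts you state are correct: the Euler identity $\eta\partial_\eta F=\sum_j h_j\partial_{h_j}F$, the vanishing of $\Phi^{(k)}$ for $k\leq 2n+1$, $\det M_i=4\sin^2(b_i\tau/2)/b_i^2$, and the decomposition of $M_i^{-1}\e^{\tau\Jbar_i}$ into $\frac{b_i}{2}\cot(b_i\tau/2)\,I$ plus a skew part.

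The gap is exactly the step you defer as ``the main technical obstacle'': the assertion that the Schur complement is a multiple of $\psi(\tau,r)$. It is not proved, and it does not hold for the matrix you derived. Completing your own reduction (quadratic form of the symmetric part of $M_i^{-1}\e^{\tau\Jbar_i}$, then $\sin(b_i\tau)\cot(b_i\tau/2)=1+\cos(b_i\tau)$) gives $\partial_\tau\zhat-D_h\zhat\,(D_h\xhat)^{-1}\hhat=\tfrac12\sum_{i=1}^{n}r_i^2\bigl(2-b_i\tau\cot(b_i\tau/2)\bigr)$, and this is not proportional to $\psi$: taking $h_3=\dots=h_{2n}=0$, its zeros in $\R^+\setminus Z$ are the solutions of $\tan(b_1\tau/2)=b_1\tau/2$, at which $\psi=\frac{r_1^2}{2}\bigl(\tau-\sin(b_1\tau)/b_1\bigr)>0$, while at a zero of $\psi$ your factor equals $\frac{r_1^2}{2}\bigl(\sin(b_1\tau)/(b_1\tau)-1\bigr)\neq 0$; the two zero sets are disjoint off $Z$. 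The source of the mismatch is the last column: by homogeneity of $\zhat$ (equivalently, by your own Euler identity) the order-$\eta^2$ bottom entry is $2\zhat-\tau\partial_\tau\zhat$, whereas the matrix $M$ in the paper's proof carries $\zhat-\tau\frac{\diff}{\diff\tau}\zhat$ in that slot, and it is with that entry that the difference of the two sides comes out equal to $\psi$. Until you either prove the identification you invoke or reconcile this factor-of-two discrepancy with the paper's $M$, the equivalence ``$\Phi^{(2n+2)}(\tau,h)=0$ iff $\tau\in Z$ or $\psi(\tau,r)=0$'' is not established by your argument. What does survive is the ``in particular'' clause: both $3u-u^2\cot(u/2)-\sin u$ and $2-u\cot(u/2)$ are positive on $(0,2\pi)$ (for the former, the argument in Lemma~\ref{L:lim_r1_to_0} uses that each summand vanishes at $0$ and has positive derivative, which is tighter than your sketched interior check), so the non-vanishing of $\Phi^{(2n+2)}$ on $(0,2\pi/b_1)$ for $h\neq 0$ follows with either scalar factor.
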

\begin{proof}
By factorizing powers of $\eta$ in $\Phi$, we obtain that $\Phi^{(2n+2)}$ is given by the determinant of the matrix
$$
M=
\left(
\begin{array}{c|c}
D_h \xhat(\tau)
&
\xhat(\tau)-\tau \hhat(\tau) 
\\
\hline
D_h \zhat(\tau) & \zhat(\tau)-\tau \frac{\diff}{\diff \tau} \zhat(\tau)
\end{array}
\right).
$$
The Jacobian matrix 
$
D_{h} \xhat= \bar{J}^{-1} (\e^{\tau \bar{J}}-I_{2n})
$
is invertible for $\tau\in \R^+\setminus Z$ and of rank $2n-2$ for $\tau \in Z$.
Hence, the matrix $M$ is not invertible for $\tau\in \R^+\setminus Z$ if and only of we have the linear dependance of the family 
$$
\left\{
\frac{\partial}{\partial h_1}
\left(
\begin{array}{c}
\xhat(\tau)
\\
\hline
 \zhat(\tau) 
\end{array}
\right),
\dotsc,
\frac{\partial}{\partial h_{2n}}
\left(
\begin{array}{c}
\xhat(\tau)
\\
\hline
 \zhat(\tau) 
\end{array}
\right),
\left(
\begin{array}{c}
\xhat(\tau)-\tau \hhat(\tau) 
\\
\hline
 \zhat(\tau)-\tau \frac{\diff}{\diff \tau} \zhat(\tau)
\end{array}
\right)
\right\}.
$$

This implies the existence of  $\mu \in \R^{2n}$ such that both
$D_h \xhat(\tau) \mu =\xhat(\tau)-\tau \hhat(\tau) $ and $D_h \zhat(\tau) \mu = \zhat(\tau)-\tau \frac{\diff}{\diff \tau} \zhat(\tau)$. That is
$$
D_h \zhat(\tau)\left(D_h \xhat(\tau)\right)^{-1}  \left(\xhat(\tau)-\tau \hhat(\tau) \right)= \zhat(\tau)-\tau \frac{\diff}{\diff \tau} \zhat(\tau).
$$
We explicitly have
$
 \zhat(\tau)-\tau \frac{\diff}{\diff \tau} \zhat(\tau)
=
\sum_{i=1}^{n}
\frac{r_i^2}{2}
\left(
\tau \cos b_i \tau - \frac{\sin b_i\tau }{b_i}
\right)
$
and
$$
D_h \zhat(\tau)\left(D_h \xhat(\tau)\right)^{-1}  \left(\xhat(\tau)-\tau \hhat(\tau) \right)
=
\sum_{i=1}^{n}r_i^2\,
(\sin b_i \tau-b_i \tau)
\tfrac{
b_i \tau \cos ( b_i \tau /2  )-2\sin( b_i \tau /2)
}{2 b_i \sin( b_i \tau/2)} 
.
$$
Hence
$
D_h \zhat \left(D_h \xhat \right)^{-1}  \left(\xhat -\tau \hhat  \right)
-
\left(\zhat -\tau \frac{\diff \zhat}{\diff \tau} \right)
=
\psi(\tau,r)
$,
and times $\tau\in \R^+$ such that $\Phi_k^{(2n+2)}(\tau,h)=0 $ are either multiples of $2\pi b_i$, $i\in \ll1, n\rr$, or zeros of $\psi$.
Under the assumption that  $h\in \R^{2n}\setminus \{0\}$ and $\tau\in (0,2b_i\pi)$, we have $\psi(\tau,r)> 0$, hence the result.
\end{proof}

We can draw some conclusions regarding our analysis of the conjugate locus via a perturbative approach. From Proposition~\ref{P:first_order_time}, we have that $2\pi/b_1$ is the first zero of $\Phi^{(2n+2)}(\cdot ,h)$   for all $h\in \R^{2n}\setminus \{0\}$.
From Lemma~\ref{L:lim_r1_to_0} we also know that $2\pi/b_1$ is a simple zero if $r_1>0$ and a double zero otherwise (see Figure~\ref{F:Phi_zoom_r1_0}). Zeros of order larger than $1$ can be unstable under perturbation and this case requires a separate analysis, either by high order approximation or by blowup. We choose the latter for computational reasons.

\begin{figure}[h]
\begin{center}
\begin{minipage}{.45\textwidth}
\subfloat[$\Phi^{(2n+2)}$ as $r_1=\dfrac{r_2}{4}$.]{
  \centering
  \begingroup%
  \makeatletter%
  \ifx\svgwidth\undefined%
    \setlength{\unitlength}{.9\linewidth}%
    \ifx\svgscale\undefined%
      \relax%
    \else%
      \setlength{\unitlength}{\unitlength * \real{\svgscale}}%
    \fi%
  \else%
    \setlength{\unitlength}{\svgwidth}%
  \fi%
  \global\let\svgwidth\undefined%
  \global\let\svgscale\undefined%
  \makeatother%
  \begin{picture}(1,0.62305987)%
    \put(0,0){\includegraphics[width=\unitlength,page=1]{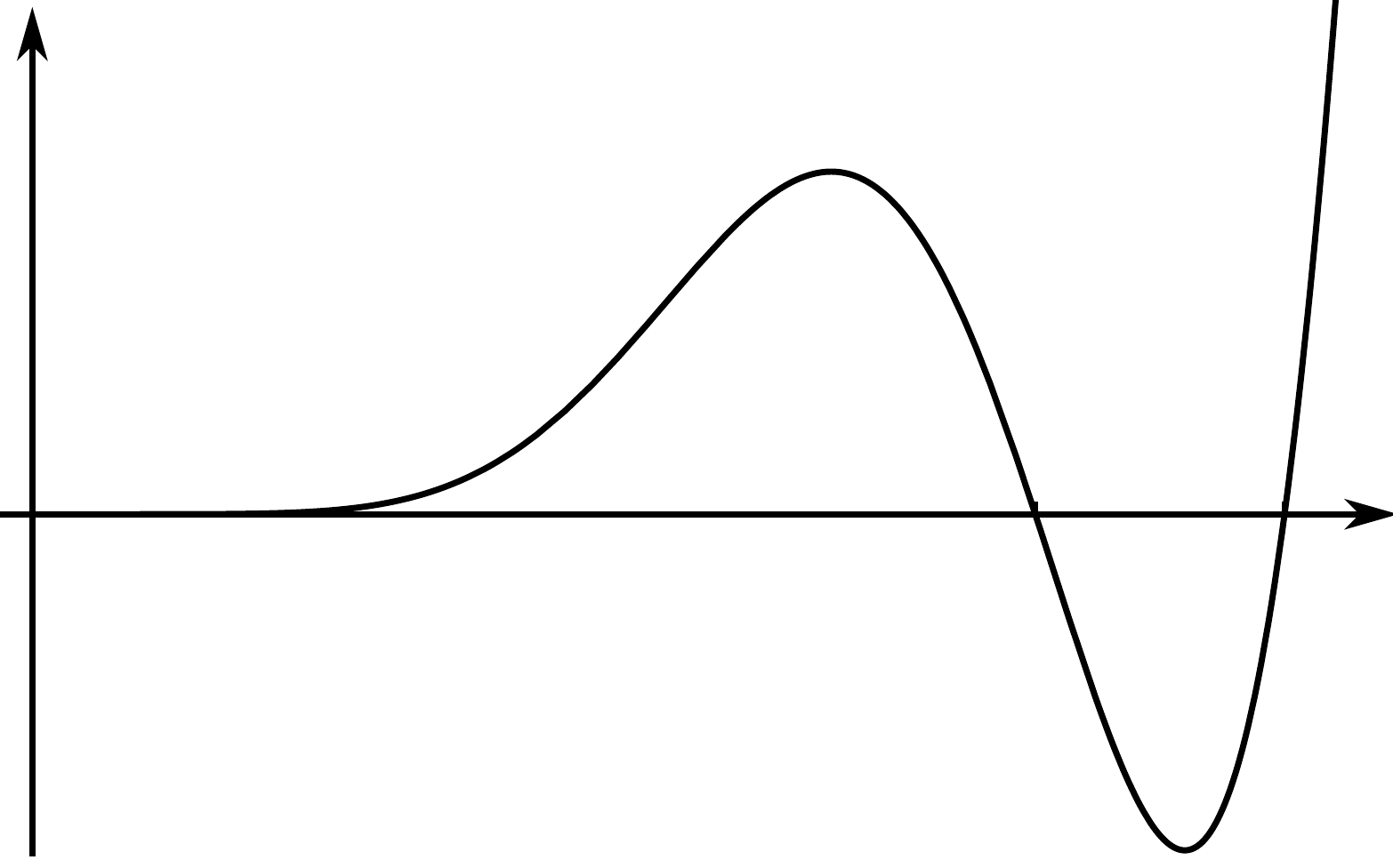}}%
    \put(0.67,0.19){\color[rgb]{0,0,0}\makebox(0,0)[lb]{\smash{$\frac{2\pi}{b_1}$}}}%
    \put(0.925,0.21){\color[rgb]{0,0,0}\makebox(0,0)[lb]{\smash{$\tau_1$}}}%
    \put(0.035,0.6){\color[rgb]{0,0,0}\makebox(0,0)[lb]{\smash{$\Phi^{(2n+2)}$}}}%
    \put(.98,0.27){\color[rgb]{0,0,0}\makebox(0,0)[lb]{\smash{$\tau$}}}%
  \end{picture}%
\endgroup%
}
  
\end{minipage}
\hspace{.08\textwidth}
\begin{minipage}{.45\textwidth}
\begin{center}
\subfloat[$\Phi^{(2n+2)}$ as $r_1=0$.]{
  \centering
  \begingroup%
  \makeatletter%
  \ifx\svgwidth\undefined%
    \setlength{\unitlength}{.9\linewidth}%
    \ifx\svgscale\undefined%
      \relax%
    \else%
      \setlength{\unitlength}{\unitlength * \real{\svgscale}}%
    \fi%
  \else%
    \setlength{\unitlength}{\svgwidth}%
  \fi%
  \global\let\svgwidth\undefined%
  \global\let\svgscale\undefined%
  \makeatother%
  \begin{picture}(1,0.62305987)%
    \put(0,0.0115){\includegraphics[width=\unitlength,page=1]{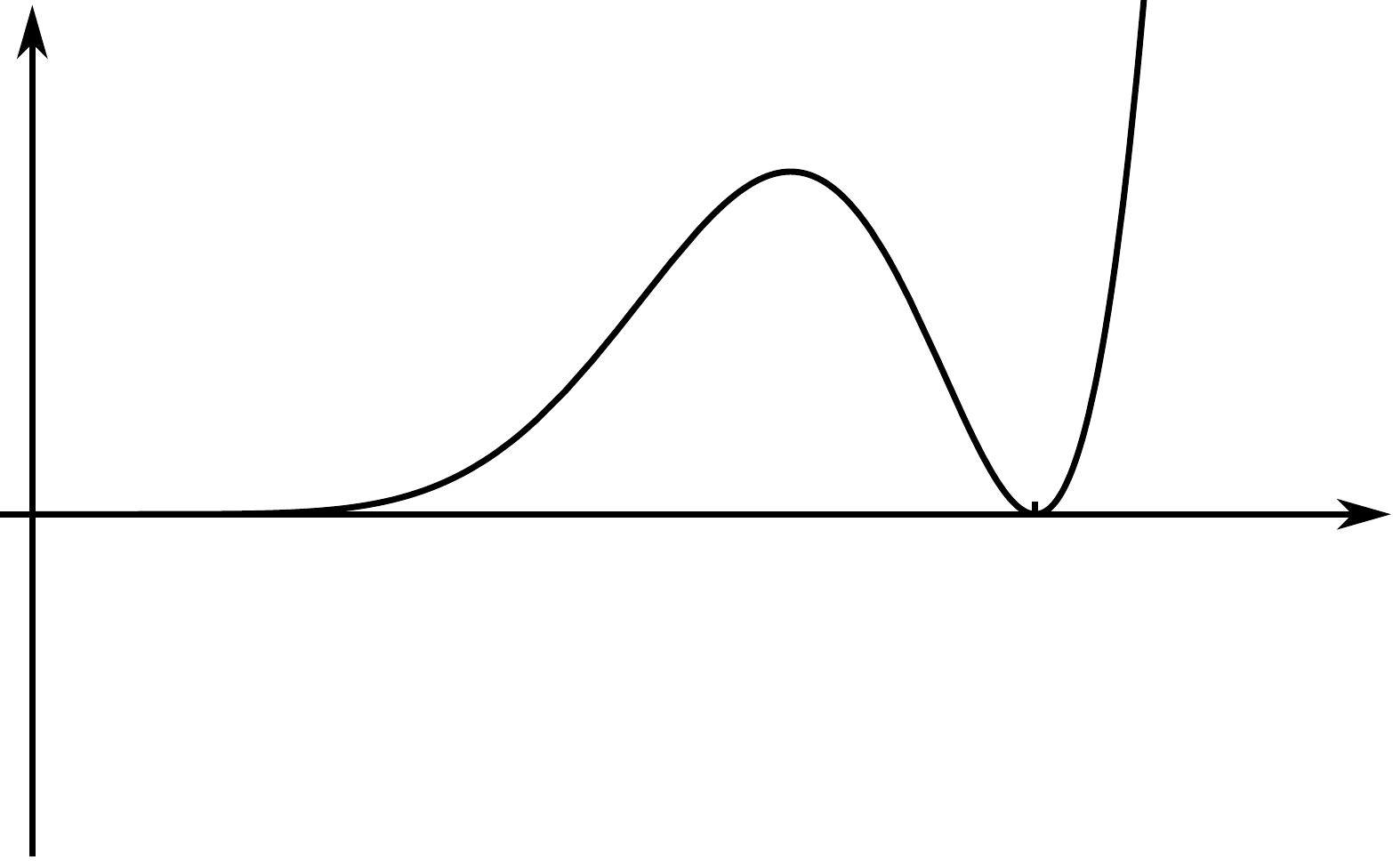}}%
    \put(0.715,0.2){\color[rgb]{0,0,0}\makebox(0,0)[lb]{\smash{$\frac{2\pi}{b_1}$ }}}%
    \put(0.035,0.6115){\color[rgb]{0,0,0}\makebox(0,0)[lb]{\smash{$\Phi^{(2n+2)}$}}}%
    \put(.98,0.2815){\color[rgb]{0,0,0}\makebox(0,0)[lb]{\smash{$\tau$}}}%
  \end{picture}%
\endgroup%
}
\end{center}
\end{minipage}
\end{center}

\caption[Representation of $\Phi^{(2n+2)}$ as a function of $\tau$ in the case $n=2$, as $r_1\neq 0$ and $r_1=0$.]{Representation of $\Phi^{(2n+2)}$ as a function of $\tau$ in the case $n=2$, as $r_1\neq 0$ and $r_1=0$ (with $b_1=2$, $b_2=1/4$ and $r_2=1$).}
\label{F:Phi_zoom_r1_0}
\end{figure}

From Equation~\eqref{E:equiv_tau_1} in Lemma~\ref{L:lim_r1_to_0},  the blowup $r_1\leftarrow {\eta}^\alpha r_1$ corresponds to 
$$
\tau_1(\eta^\alpha r_1,r_2,\dots ,r_n)=2\pi/b_1+\eta^{2\alpha} f(r_2,\dots ,r_n)r_1^2+o(\eta^{2\alpha}).
$$
Since we have an approximation of the exponential that is a perturbation of order $\eta$ of the nilpotent exponential, we expect the conjugate time to be a perturbation of order $\eta$ of the nilpotent conjugate time. Hence it is natural to chose $\alpha=1/2$ in hope to capture  a perturbation of comparable order in $\eta$.

We separate the cases in the following way. 
\begin{itemize}
\item We can compute the conjugate time assuming $r_1>\varepsilon$ for some arbitrary $\varepsilon$ (in Section~\ref{SS:asymptotics_away_S});
\item we use the blowup $r_1\leftarrow \sqrt{\eta} r_1$  to get the conjugate time near $r_1=0$ (in Section~\ref{SS:asymptotics_near_S_1}).
\end{itemize}

\subsection{Statement of the conjugate time asymptotics}

The focus of this paper is now devoted to the proof of the following asymptotic expansion theorem for the conjugate time on $M\setminus \S_1$, that is, at points such that $b_1>b_2> \cdots> b_n$. 
Let $S_1$ be the subspace of  $T_{q_0}^*M$ defined by
$$
S_1
=
\left\{
(h_1,\dots ,h_{2n},h_0)\in T_{q_0}^*M \setminus \C_{q_0}(0)\mid h_1=h_2=0,H\neq 0
\right\},
$$
and for all $\varepsilon>0$, let us denote by $S_1^\varepsilon$ the subset of $T_{q_0}^*M$ containing $S_1$:
$$
S_1^\varepsilon
=
\left\{
(h_1,\dots ,h_{2n},h_0)\in T_{q_0}^*M\setminus \C_{q_0}(0) \mid h_1^2+h_2^2<\varepsilon H(h_1,\dots ,h_{2n},h_0)
\right\}.
$$
\begin{theorem}\label{T:big_expansion_theorem}
Let $q_0\in M\setminus \S_1$.
There exist real valued invariants $(\kappa^{ij}_{k})_{\begin{smallmatrix}
i,k\in \ll 1,2\rr,
\\
j\in \ll3, 2n\rr
\end{smallmatrix}}$, $\alpha$, $\beta$, 
 such that we have the following asymptotic behavior for initial covectors $p_0 \in T_{q_0}^*M$ with $h_0\to +\infty$.
 
\item[(Away from $S_1$.)] For all $R>0,\varepsilon\in (0,1)$, uniformly with respect to $p_0=(h_1,\dots ,h_{2n},\\h_0)$ in $ \C_{q_0}((0,R))\setminus S_1^\varepsilon$, we have as $h_0\to +\infty$
$$
t_c\left( h_1,\dots ,h_{2n},h_0\right)
=
\frac{2\pi}{b_1 h_0}
+
\frac{1}{h_0^2}t_c^{(2)}(h_1,\dots ,h_{2n})
+
O\left(\frac{1}{h_0^3}\right)
$$
where $t_c^{(2)}$ satisfies
\begin{equation}\label{E:linear_tc2}
(h_1^2+h_2^2)t_c^{(2)}(h)
=
-2(\alpha h_1+\beta h_2)\left(h_1^2+h_2^2\right)
+
	(\gamma_{12}+\gamma_{21})
h_1 h_2
-
	\gamma_{22}	h_1^2 
-
	\gamma_{11}	h_2^2  ,
\end{equation}
denoting
$$
\gamma_{ij}=\sum_{k=3}^{2n}\kappa^{jk}_{i} h_k,		
 \qquad 
\forall  i,j\in \ll1,2n\rr.
$$

\item[(Near $S_1$.)] The asymptotic expansion
$$
t_c\left(\frac{h_1}{\sqrt{h_0}},\frac{h_2}{\sqrt{h_0}},h_3,\dots ,h_{2n},h_0\right)
=
\frac{2\pi}{b_1 h_0}
+
O\left(\frac{1}{h_0^2}\right)
$$
 holds if and only if the quadratic polynomial equation in $X$
\begin{multline*}
X^2 K- X \left[\frac{2\pi}{b_1}(h_1^2+h_2^2)-K\left(\gamma_{1 1}+\gamma_{2 2}\right)\right]
\\
+
\frac{2\pi}{b_1}  \left[	
	(\gamma_{1 2}+\gamma_{2 1}) h_1 h_2	
-
	\gamma_{2 2}	h_1^2 
-
	\gamma_{1 1}	h_2^2  
\right]
+
K \left(\gamma_{11}\gamma_{22}-\gamma_{12}\gamma_{21}\right)=0
\end{multline*}
admits a real solution, where $K= \sum\limits_{i=2}^{n}(h_{2i-1}^2+h_{2i}^2)\left(1 -\frac{b_i}{b_1} \pi \cot \frac{b_i \pi}{b_1}\right)>0$.

If that is the case, denote by $\tilde{t}_c^{(2)}( h_1,\dots ,h_{2n})$ the smallest of its two (possibly double) solutions.
Then, for all $R>0,\varepsilon\in (0,1)$, uniformly with respect to $p_0=\left(\frac{h_1}{\sqrt{h_0}},\frac{h_2}{\sqrt{h_0}},h_3,\dots ,h_{2n},h_0\right) \in \C_{q_0}((0,R))\cap  S_1^\varepsilon$,
 we have 
$$
t_c \left(\frac{h_1}{\sqrt{h_0}},\frac{h_2}{\sqrt{h_0}},h_3,\dots ,h_{2n},h_0\right)
=
\frac{2\pi}{b_1h_0}
+
\frac{1}{h_0^2}\tilde{t}_c^{(2)}(h_1,\dots ,h_{2n}) 
+
O\left(\frac{1}{h_0^3}\right).
$$

\end{theorem}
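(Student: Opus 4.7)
The starting point is the conjugate-point equation $\Phi(\tau,h,\eta)=0$ of \eqref{E:F_det}--\eqref{E:def_Phi}. Proposition~\ref{P:expansion} gives a smooth expansion of the Hamiltonian flow in $\eta$, hence a power series $\Phi(\tau,h,\eta)=\sum_{k\geq 0}\eta^k\Phi^{(k)}(\tau,h)$ with $\Phi^{(k)}\equiv 0$ for $k\leq 2n+1$. By Proposition~\ref{P:first_order_time}, the leading term $\Phi^{(2n+2)}(\cdot,h)$ has $\tau=2\pi/b_1$ as its first positive zero for every $h\neq 0$. I will locate the first root of $\Phi(\cdot,h,\eta)$ near $\tau=2\pi/b_1$ by perturbation in $\eta$, splitting according to whether the zero of $\Phi^{(2n+2)}$ is simple (away from $S_1$) or double (near $S_1$), as anticipated at the end of Section~\ref{SS:First_approximation}.

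\emph{Away from $S_1$.} On $\C_{q_0}((0,R))\setminus S_1^\varepsilon$, the quantity $r_1^2=h_1^2+h_2^2$ is bounded below, so Lemma~\ref{L:lim_r1_to_0} combined with Proposition~\ref{P:first_order_time} gives
\[
A(h)\;:=\;\partial_\tau\Phi^{(2n+2)}\!\left(\tfrac{2\pi}{b_1},h\right)\neq 0
\]
uniformly on the considered set. Writing $\tau=\tfrac{2\pi}{b_1}+\sigma$ and expanding,
\[
\eta^{-(2n+2)}\Phi\!\left(\tfrac{2\pi}{b_1}+\sigma,h,\eta\right)=\sigma A(h)+\eta B(h)+O\!\left(\sigma^2+\sigma\eta+\eta^2\right),
\]
with $B(h)=\Phi^{(2n+3)}\!\left(\tfrac{2\pi}{b_1},h\right)$, the implicit function theorem gives a unique small root $\sigma=-\eta B(h)/A(h)+O(\eta^2)$. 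Unrolling the rescalings $t=\eta\tau$, $\eta=1/h_0$ then yields
\[
t_c=\frac{2\pi}{b_1 h_0}+\frac{1}{h_0^2}\,t_c^{(2)}(h)+O\!\left(\tfrac{1}{h_0^3}\right),\qquad t_c^{(2)}(h)=-\frac{B(h)}{A(h)}.
\]
The expression \eqref{E:linear_tc2} is then obtained by an explicit cofactor expansion of the block matrix $M$ appearing in the proof of Proposition~\ref{P:first_order_time}, augmented by the first-order corrections of $\hhat,\xhat,\zhat$ from Proposition~\ref{P:expansion}. The Agrachev--Gauthier normal form guarantees that $B(h)$ depends on the $2$-jets of $X_1,\ldots,X_{2n}$ only through the entries of $J^{(1)}$ at $0$, which collapse into the finite family $\kappa_k^{ij}$, together with two scalar invariants $\alpha,\beta$ coming from the purely $(h_1,h_2)$-directional terms; the factor $h_1^2+h_2^2=r_1^2$ on the left of \eqref{E:linear_tc2} reflects $A(h)\propto r_1^2$ as $r_1\to 0$.

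\emph{Near $S_1$.} When $r_1\to 0$, $A(h)$ vanishes: by Lemma~\ref{L:lim_r1_to_0}, $2\pi/b_1$ becomes a double zero and the implicit function theorem fails. Following the heuristic of Section~\ref{SS:First_approximation}, I perform the blow-up $(h_1,h_2)=(\sqrt{\eta}\,\tilde h_1,\sqrt{\eta}\,\tilde h_2)$, which exactly matches the scaling $\tau_1-2\pi/b_1 = O(r_1^2)$ of \eqref{E:equiv_tau_1}, and seek the root in the form $\tau=\tfrac{2\pi}{b_1}+\eta s$. Expanding $\Phi$ at this order, the $\sigma^2$ and $\eta\sigma$ contributions of the IFT analysis now enter at the same order as $\eta^2$ corrections, and the coefficient
\[
K\;=\;\tfrac{1}{2}\partial_\tau^2\Phi^{(2n+2)}\!\left(\tfrac{2\pi}{b_1},h\right)\Big/(\text{normalization}),
\]
computed by differentiating the explicit formula for $\psi$ in Proposition~\ref{P:first_order_time}, evaluates by a standard trigonometric identity to $\sum_{i=2}^{n}(h_{2i-1}^2+h_{2i}^2)(1-\tfrac{b_i}{b_1}\pi\cot(b_i\pi/b_1))$, which is positive under $b_1>b_2>\cdots>b_n$. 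Collecting the surviving terms then produces a quadratic polynomial equation in $s$ of exactly the form stated in the theorem. When its discriminant is nonnegative, the smallest real root gives $\tilde t_c^{(2)}$; the uniform $O(1/h_0^3)$ remainder is obtained by standard perturbation of the quadratic, using compactness in $(\tilde h_1,\tilde h_2,h_3,\ldots,h_{2n})$ on $\C_{q_0}((0,R))\cap S_1^\varepsilon$.

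\emph{Main obstacle.} The principal technical difficulty is the explicit computation of $B(h)=\Phi^{(2n+3)}(2\pi/b_1,h)$. This requires assembling the $\eta^2$-corrections to $x(\tau)$, $z(\tau)$, and $h(\tau)$ from Proposition~\ref{P:expansion}, which introduce the second jets of the normal form vector fields through $J^{(1)}$, into the $(2n+1)\times(2n+1)$ determinant of \eqref{E:F_det}, and then simplifying the resulting trigonometric sums at $\tau=2\pi/b_1$. The symmetries built into the Agrachev--Gauthier frame (\ref{A:Gauthier}) are essential to reduce the apparent number of coefficients to the finite set $\{\alpha,\beta,\kappa_k^{ij}\}$ and to produce the compact bilinear form \eqref{E:linear_tc2}. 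A consistency check is obtained by verifying that the quadratic formula of the \emph{near} regime degenerates, as $K/(h_1^2+h_2^2)\to 0$, to the linear formula of the \emph{away} regime, which fixes the precise normalization of $\alpha,\beta$ and identifies the remaining invariants across both statements.
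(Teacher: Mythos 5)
Your plan is, in its architecture, the paper's own proof: expand $\Phi$ in powers of $\eta=1/h_0$, use Proposition~\ref{P:first_order_time} for the leading nilpotent term, solve linearly for the correction when the zero at $2\pi/b_1$ is simple (your IFT step with $A=\partial_\tau\Phi^{(2n+2)}$, $B=\Phi^{(2n+3)}$ is exactly Proposition~\ref{P:conjugate_time_simple_case} via Lemma~\ref{P:mini_det_1}), and blow up $h_1,h_2\sim h_0^{-1/2}$ to extract a quadratic when the zero is double; the paper then assembles Propositions~\ref{P:conjugate_time_simple_case}, \ref{P:summary_kappa}, \ref{P:expansion_r1_eta} and \ref{P:t_c_bu_eta1/2}. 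The one point where you genuinely diverge is the identification of $K$: you read it off as the (normalized) half second $\tau$-derivative of $\Phi^{(2n+2)}$ at the double zero, ``by differentiating the explicit formula for $\psi$'', whereas the paper never differentiates $\psi$ and instead gets $K$ from the block-determinant reduction (Lemmas~\ref{L:det(AW)} and \ref{L:deltat1/2}). Your route is legitimate — in the scaling $h_1^2+h_2^2\sim 1/h_0$, $\tau-2\pi/b_1\sim 1/h_0$ only the nilpotent term can feed the $X^2$ coefficient, the corrections $\Phi^{(2n+3)},\Phi^{(2n+4)}$ entering only the lower coefficients — but be warned that differentiating the formula for $\psi$ as displayed in Proposition~\ref{P:first_order_time} does \emph{not} produce the stated $K$ by any trigonometric identity: the ratio of $\psi_i(2\pi/b_1)$ to $1-\tfrac{\pi b_i}{b_1}\cot\tfrac{\pi b_i}{b_1}$ depends on $i$. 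The reason is that the reduced leading matrix should carry $2\zhat-\tau\partial_\tau\zhat$ (not $\zhat-\tau\partial_\tau\zhat$) in its last entry, since the $z$-component of $F$ starts at order $\eta^2$ and $\eta\partial_\eta$ then contributes a factor $2$; with this correction one finds $\psi_i(\tau)=2\tau-b_i\tau^2\cot(b_i\tau/2)$, whose zeros solve the classical equation $\tan(b_i\tau/2)=b_i\tau/2$ (a Heisenberg cross-check confirms this), and then indeed $\tfrac{b_1}{4\pi}\sum_{i\geq2}(h_{2i-1}^2+h_{2i}^2)\,\psi_i(2\pi/b_1)=K$, so your second-derivative derivation recovers exactly the constant in the statement. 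Finally, the linear and constant coefficients of the quadratic (the $\gamma_{ij}$-combinations), like the explicit form of \eqref{E:linear_tc2} away from $S_1$, cannot be obtained by ``collecting surviving terms'' alone: they require the blown-up flow expansion of Proposition~\ref{P:expansion_r1_eta} and the determinant computations behind Proposition~\ref{P:t_c_bu_eta1/2}, i.e.\ essentially all of Section~\ref{S:order_2_approx} and the appendices; you correctly flag this as the main obstacle, so the plan is sound but the quantitative core of the theorem remains to be executed.
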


\section{Perturbations of the conjugate time}\label{S:order_2_approx}

Thanks to the previous section, we have a sufficiently precise picture of the behavior of the conjugate time for the nilpotent approximation. We now introduce small perturbations of the exponential map in accordance with Proposition~\ref{P:expansion}. As stated previously, we treat separately the case of initial covectors away from $S_1$ and near $S_1$ since $S_1$ corresponds to the set of covectors such that $r_1=\sqrt{h_1^2+h_2^2}=0$. Recall also that we assumed $q_0\in M\setminus \S_1$.

However, rather than computing $t_c$, we compute $\tau_c=t_c/\eta$, the rescaled conjugate time, since we use asymptotics in rescaled time from Proposition~\ref{P:expansion}.

\subsection{Asymptotics for covectors in $T_{q_0}^*M\setminus S_1$}\label{SS:asymptotics_away_S}
In this section we assume that $(h_1,h_2)\neq(0,0)$.
Recall that
$F(\tau,h,\eta)=\sre (\eta \tau ; (h,\eta^{-1}))$, for all $ \tau>0$, $h\in \R^{2n}$, $\eta>0$.
The function $F$ admits a power series expansion 
$$
F(\tau,h,\eta)=\sum_{k\geq 0} \eta^k F^{(k)}(\tau,h),
$$
and for $\delta\! \tau\in \R$, $h\in \R^{2n}$, evaluating $F$ at the perturbed conjugate time $\frac{2\pi}{b_1}+\eta \delta\! \tau$ yields
\begin{equation}\label{E:order_1_F_delta_t}
F\left(\frac{2\pi}{b_1}+\eta \delta\! \tau,h,\eta\right)
=
\eta 
\eval{
	F^{(1)}
}{\tau=\frac{2\pi}{b_1}}
+
\eta^2
\eval{
\left[
 F^{(2)}
 +
 \delta\! \tau
 \frac{\partial F^{(1)}}{\partial \tau}
 \right]
}{\tau=\frac{2\pi}{b_1}}
 + O(\eta^3).
\end{equation}

In the previous section, we highlighted the role of the function $\Phi$ defined by \eqref{E:def_Phi}. Observe that $\tau_c$ must annihilate every term in the Taylor expansion of $\Phi(\tau_c(\cdot ,\eta ),\cdot,\eta)$. 
This first non-trivial term is obtained by straight forward algebraic computations (provided for instance in Appendix~\ref{A:computational_lemmas}, in particular Lemma~\ref{P:mini_det_1}).

\begin{proposition}\label{P:conjugate_time_simple_case}
Let $\tau_c(h,\eta)=\sum_{k=0}^{+\infty} \eta^k\tau_c^{(k)}(h)$ be the formal power series expansion of $\tau_c$, for all $(h,\eta^{-1})\in T_{q_0}^*M$.
Then $\tau_c^{(0)}=2\pi/b_1$ and $\tau_c^{(1)}$ must satisfy
\begin{equation}\label{E:def_tauc1_r1neq0}
(h_1^2+h_2^2)\tau_c^{(1)}(h)
=
-h_1^2  \frac{\partial \left(
			 F^{(2)}
			 \right)_2}{\partial h_2}			
-
h_2^2  \frac{\partial \left(
			 F^{(2)}
			 \right)_1}{\partial h_1}	
+ 
h_1 h_2
\left(  \frac{\partial \left(
			 F^{(2)}
			 \right)_1}{\partial h_2}
			 +
			 \frac{\partial \left(
			 F^{(2)}
			 \right)_2}{\partial h_1}
\right).
\end{equation}
\end{proposition}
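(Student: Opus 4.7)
The plan is to solve the conjugate equation $\Phi(\tau_c(h,\eta),h,\eta)=0$ order by order in the power series $\tau_c=\sum \eta^k\tau_c^{(k)}$. First I would record the leading behavior of $\Phi$: Proposition~\ref{P:expansion} gives that the first $2n$ components of $F$ are $O(\eta)$ while the last is $O(\eta^2)$, so in the $(2n+1)\times(2n+1)$ determinant \eqref{E:def_Phi} every column has its first $2n$ entries of order $\eta$ and its last entry of order $\eta^2$. Hence $\Phi=\eta^{2n+2}\Phi^{(2n+2)}+\eta^{2n+3}\Phi^{(2n+3)}+O(\eta^{2n+4})$, with the first non-trivial coefficient $\Phi^{(2n+2)}$ already computed in the proof of Proposition~\ref{P:first_order_time}.

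The leading equation $\Phi^{(2n+2)}(\tau_c^{(0)},h)=0$ then forces $\tau_c^{(0)}=2\pi/b_1$: by Proposition~\ref{P:first_order_time} this is the first positive zero of $\Phi^{(2n+2)}(\cdot,h)$, and continuity of $\tau_c$ at $\eta=0$ identifies it as the smallest positive one. The assumption $(h_1,h_2)\neq(0,0)$ ensures through Lemma~\ref{L:lim_r1_to_0} that this zero is simple, so $\partial_\tau\Phi^{(2n+2)}(2\pi/b_1,h)\neq 0$. Taylor-expanding $\Phi(\tau_c^{(0)}+\eta\tau_c^{(1)}+\cdots,h,\eta)$ and collecting the coefficient of $\eta^{2n+3}$ then yields the linear equation
\[
\tau_c^{(1)}(h)\,\frac{\partial\Phi^{(2n+2)}}{\partial\tau}\bigl(\tfrac{2\pi}{b_1},h\bigr)+\Phi^{(2n+3)}\bigl(\tfrac{2\pi}{b_1},h\bigr)=0,
\]
which uniquely determines $\tau_c^{(1)}(h)$ as the corresponding ratio.

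The remaining step is to identify this ratio with the right-hand side of \eqref{E:def_tauc1_r1neq0}. The key structural observation is that the first $2n$ rows of the matrix defining $\Phi^{(2n+2)}$ are those of $D_h\xhat=\bar{J}^{-1}(\e^{\tau\bar{J}}-I_{2n})$, and at $\tau=2\pi/b_1$ the first $2\times 2$ block of this matrix vanishes identically. Expanding $\Phi^{(2n+3)}(2\pi/b_1,h)$ by multilinearity in the $\eta$-expansion of $F$, only those contributions in which $F^{(2)}$ (rather than $F^{(1)}$) occupies one of the first two rows survive, selecting precisely the four derivatives $\partial_{h_i}F^{(2)}_j$ with $i,j\in\{1,2\}$ that appear in \eqref{E:def_tauc1_r1neq0}. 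A parallel computation for $\partial_\tau\Phi^{(2n+2)}(2\pi/b_1,h)$ shows it is proportional to $h_1^2+h_2^2$, producing the common denominator in \eqref{E:def_tauc1_r1neq0}.

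The main obstacle will be the combinatorial bookkeeping of this cancellation: one must carefully track how the $\eta$-expansions of the several columns of $\Phi$ interact with the rank drop of $D_h\xhat(2\pi/b_1,\cdot)$ and isolate exactly the combinations $h_1^2\partial_{h_2}F^{(2)}_2$, $h_2^2\partial_{h_1}F^{(2)}_1$, and $h_1h_2(\partial_{h_2}F^{(2)}_1+\partial_{h_1}F^{(2)}_2)$ with the correct signs. I would handle this by reducing the relevant $(2n+1)\times(2n+1)$ determinants to $2\times 2$ minors in the $(h_1,h_2)$-plane via the computational lemma alluded to in the statement (Lemma~\ref{P:mini_det_1} in the appendix), which is tailored for this rank-deficient block structure.
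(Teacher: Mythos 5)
Your proposal is correct and follows essentially the same route as the paper: identify $\tau_c^{(0)}=2\pi/b_1$ from Proposition~\ref{P:first_order_time}, then extract $\tau_c^{(1)}$ from the order-$\eta^{2n+3}$ coefficient of $\Phi$ at the perturbed time, reducing the $(2n+1)\times(2n+1)$ determinant to a $2\times 2$ block in the $(h_1,h_2)$-plane via the rank drop of $D_h\xhat$ at $2\pi/b_1$ (Lemma~\ref{P:mini_det_1}). Your equation $\tau_c^{(1)}\,\partial_\tau\Phi^{(2n+2)}+\Phi^{(2n+3)}=0$ is just a repackaging of the paper's substitution $\tau=2\pi/b_1+\eta\,\delta\!\tau$ into the same $3\times 3$ determinant, whose $\delta\!\tau$-coefficient is the factor proportional to $h_1^2+h_2^2$ you identify.
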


\begin{proof}
As discussed in the previous section, $\tau_c^{(0)}=2\pi/b_1$ is a consequence of Proposition~\ref{P:first_order_time}. 
The first non trivial term of the expansion of the determinant $\Phi\left(2\pi /b_1+\eta \delta\! \tau,h,\eta\right)$, that is, the term of order $2n+3$,
is obtained by algebraic computations.
As a consequence of Proposition~\ref{P:expansion}, 
notice that
$\left(F^{(2)}\right)_{2n+1}=\zhat$, $\frac{\partial F^{(1)}}{\partial \tau}=\hhat$, and that
$\partial_{h_1}\zhat= 2\pi h_1/b_1$, $\partial_{h_2}\zhat= 2\pi h_2/b_1$. Hence we get the stated result by  solving for $\delta\! \tau$
$$
\Phi^{(2n+3)}\left(2\pi /b_1+\eta \delta\! \tau,h,\eta\right)
\propto
\left|
\begin{matrix}
			\frac{\partial}{\partial h_1}
			\left(
			 F^{(2)}
			 \right)_1
			+			 
			\delta\! \tau		 
		 &
			 \frac{\partial}{\partial h_2}
			\left(
			 F^{(2)}
			 \right)_1
		 &
		 h_1
	\\
			\frac{\partial}{\partial h_1}
			\left(
			 F^{(2)}
			 \right)_2
		 &
			 \frac{\partial}{\partial h_2}
			\left(
			 F^{(2)}
			 \right)_2
			 +
			 \delta\! \tau
		 &
		 h_2
	\\
			h_1
		 &
			 h_2
		 &
		 0
\end{matrix}
\right|_{\tau=2\pi/b_1}=0.
$$
(Where we denote, for $f,g:\R^n\to \R$, $f\propto g$ if there exists $h:\R^n\to \R\setminus\{0\}$ such that $f=g h$.)
\end{proof}
\begin{remark}
Relation \eqref{E:def_tauc1_r1neq0} is degenerate at $h_1=h_2=0$. This is another illustration of the behavior we highlighted in the previous section, that is,  $\tau_c^{(1)}$ can be a zero of order 2 at $r_1=0$.
\end{remark}

As a consequence of Proposition \ref{P:expansion}, it appears that for all $k\in \ll1, 2n\rr$ and all $\tau>0$, each function $h\mapsto x_k^{(2)}(\tau)$ can be seen as a quadratic form on $(h_1,\dots, h_{2n})$. Hence we introduce the   invariants $\left(\kappa^{ij}_{k}\right)_{ i,j,k\in \ll1, 2n\rr}$ such that 
$$
	F^{(2)}_k\left(\frac{2\pi}{b_1},h\right)\\
	=
	\sum_{1\leq i\leq j\leq 2n} \kappa^{ij}_{k} h_i h_j \qquad \forall  k\in \ll1, 2n\rr.
$$

These invariants satisfy some useful properties (of which a proof can be found in Appendix~\ref{A:Computation_invariants}, Lemmas~\ref{L:invariants_order_1} through \ref{L:invariants_kappa131}). We give the following summary.

\begin{proposition}\label{P:summary_kappa}

The   invariants $\left(\kappa^{ij}_{k}\right)_{ i,j,k\in \ll1, 2n\rr}$ depend linearly  on the family
$$
\left(
\frac{\partial^2 (X_i)_{2n+1}}{\partial x_j\partial x_k}(q_0)
\right)_{ i,j,k\in \ll1, 2n\rr}.
$$
There exist $\alpha,\beta\in \R$ such that we have the symmetries
$$
\kappa^{1,1}_1=3\alpha,
\quad 
\kappa^{2,2}_1=\alpha, 
\quad 
\kappa^{1,2}_2=2 \alpha,
\qquad
\kappa^{1,1}_2=\beta,
\quad
 \kappa^{2,2}_2=3\beta,
\quad 
\kappa^{1,2}_1=2 \beta
$$
%
and for all $i\in \ll2,n\rr$, $\left(\kappa^{kl}_{m}\right)_{\begin{smallmatrix}k,m\in\ll1,2\rr \\ l\in \ll2i-1,2i\rr \end{smallmatrix}}$ only depend on the family
$$
\left\{
\left(\frac{\partial^2 (X_k)_{2n+1}}{\partial x_l\partial x_m}(q_0)\right)
\mid (k,l,m)\in \ll2i-1,2i\rr\times \ll1,2\rr^2 \cup   \ll1,2\rr^2 \times\ll2i-1,2i\rr
\right\}.
$$
Furthermore, the corresponding linear map $\zeta_{i}:\R^{15}\rightarrow \R^8$ such that 
$$
\zeta_i\left(\left(\frac{\partial^2 (X_k)_{2n+1}}{\partial x_l\partial x_m}(q_0)\right)_{k,l,m\in\{1,2\}\cup\{2i-1,2i\}}\right)
=
\left(\kappa^{k l}_m\right)_{\begin{smallmatrix}k,m\in\{1,2\} \\ l\in \{2i-1,2i\} \end{smallmatrix}}
$$
 is of rank at least $7$ (and of rank $8$ on the complementary of a codimension $1$ subset $\S_3$ of $M$).
\end{proposition}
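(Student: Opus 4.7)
The plan is to reduce everything to the explicit integral formula for $F^{(2)}$ furnished by Proposition~\ref{P:expansion}, and then exploit the block-rotational structure of $\bar{J}$ to extract the symmetries and rank statement through resonance-type arguments in the Fourier basis.

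First I would record that, by Proposition~\ref{P:expansion}, for $k\in\ll 1,2n\rr$,
\[
F^{(2)}_k\!\left(\tfrac{2\pi}{b_1},h\right)=\left[\int_{0}^{2\pi/b_1}\!\!\int_{0}^{\sigma} e^{(\sigma-\rho)\bar J}\,J^{(1)}\!\big(\xhat(\rho,h)\big)\,\hhat(\rho,h)\,\diff\rho\,\diff\sigma\right]_{\!k}.
\]
Since $\xhat(\rho,h)$ and $\hhat(\rho,h)$ are linear in $h$, and $J^{(1)}(y)$ is linear in $y$ with matrix entries that are linear combinations of the second derivatives $\partial^2(X_i)_{2n+1}/\partial x_j\partial x_k(q_0)$, the integrand is bilinear in $h$ with coefficients depending linearly on those derivatives. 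Identifying the coefficient of $h_i h_j$ gives $\kappa^{ij}_k$, establishing the first (linearity) claim.

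Next I would treat the symmetries involving only the first block. Since $\bar J=\mathrm{diag}(\bar J_1,\ldots,\bar J_n)$, the flow $e^{\rho\bar J}$ rotates each pair $(h_{2i-1},h_{2i})$ at frequency $b_i$; in particular $(\hhat_1,\hhat_2)$ and $(\xhat_1,\xhat_2)$ depend only on $(h_1,h_2)$. Restricting to $h_3=\cdots=h_{2n}=0$, only the six derivatives $\partial^2(X_p)_{2n+1}/\partial x_q\partial x_r(q_0)$ with $p,q,r\in\{1,2\}$ enter the first two components of the integrand. The inner integrals reduce to products of three trigonometric functions of common frequency $b_1$ over a full period $[0,2\pi/b_1]$, which collapse under standard product-to-sum identities. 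Regrouping the surviving terms, I expect the output to take the SO(2)-equivariant form
\[
\begin{pmatrix}F^{(2)}_1\\F^{(2)}_2\end{pmatrix}\!\bigg|_{h_3=\cdots=h_{2n}=0}
=(h_1^2+h_2^2)\begin{pmatrix}\alpha\\\beta\end{pmatrix}+2(\alpha h_1+\beta h_2)\begin{pmatrix}h_1\\h_2\end{pmatrix},
\]
where $\alpha,\beta$ are specific linear combinations of the six second derivatives above. Expanding this identity yields immediately $\kappa^{1,1}_1=3\alpha,\ \kappa^{2,2}_1=\alpha,\ \kappa^{1,2}_2=2\alpha$ and $\kappa^{1,1}_2=\beta,\ \kappa^{2,2}_2=3\beta,\ \kappa^{1,2}_1=2\beta$. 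This equivariance is natural: it reflects the invariance of the nilpotent approximation at $q_0$ under the SO(2)-action rotating the first block.

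For the block-decoupling statement, I would inspect which second derivatives can contribute to $\kappa^{k l}_m$ with $k,m\in\{1,2\}$ and $l\in\{2i-1,2i\}$ for $i\ge 2$. Projecting the integrand onto its $m$-th coordinate with $m\in\{1,2\}$ keeps only the first block of $e^{(\sigma-\rho)\bar J}$, so only $J^{(1)}$ entries whose first index lies in $\{1,2\}$ contribute (these involve derivatives of $(X_1)_{2n+1}$ or $(X_2)_{2n+1}$); and selecting the monomials $h_k h_l$ with $l\in\{2i-1,2i\}$ forces one argument among the second-derivative indices to lie in $\{2i-1,2i\}$. Combined with the analogous argument for the contribution through $J^{(1)}(\xhat)$, one reads off exactly the family listed in the statement. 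That $b_1\neq b_i$ guarantees all other contributions either vanish by orthogonality of distinct trigonometric frequencies over $[0,2\pi/b_1]$ or produce coefficients of a controlled rational form in $b_i/b_1$.

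Finally, the rank claim is essentially a direct computation. Writing $\zeta_i$ explicitly as an $8\times 15$ matrix whose entries are integrals of triple trigonometric products at frequencies $b_1$ and $b_i$, I would evaluate these integrals in closed form (they are rational functions of $b_i/b_1$ and of $\sin(2\pi b_i/b_1),\cos(2\pi b_i/b_1)$) and check that a suitable $8\times 8$ minor is a non-identically-zero real-analytic function of $b_i/b_1$ (for $b_i/b_1\in(0,1)$). This yields generic rank $8$ off a zero locus of one such minor, which after pulling back to $M$ via the smooth dependence of the invariants $(b_i)$ on $q\in M$ defines the codimension-$1$ stratified set $\S_3$; the everywhere-lower bound of rank $7$ follows by exhibiting a single non-vanishing $7\times 7$ minor independent of $b_i/b_1$. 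The main obstacle is the explicit verification that the chosen minor does not vanish identically: it is conceptually clear from the resonance argument but requires some care, and this is the step that is best relegated to the technical appendix (Lemmas~\ref{L:invariants_order_1}--\ref{L:invariants_kappa131} referenced in the text).
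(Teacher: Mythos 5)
Your proposal follows essentially the same route as the paper: the integral formula of Proposition~\ref{P:expansion} gives linearity of the $\kappa^{ij}_k$ in the second derivatives, the block-diagonal rotational structure of $\bar J$ (the paper's Lemmas~\ref{L:invariants_order_1} and \ref{L:expression_blocks}) yields both the $\alpha,\beta$ symmetries and the block decoupling, and the rank statement is settled by an explicit evaluation of the resulting matrix as a function of $\rho=b_i/b_1$, exactly as in Lemma~\ref{L:invariants_kappa131}, where the degeneracy locus \eqref{E:eq_rho} defines $\S_3$. The only cosmetic difference is that you phrase the first-block symmetries as an expected $\mathrm{SO}(2)$-equivariant form, which is a heuristic gloss on the same direct trigonometric integration the paper performs, and your rank verification via minors in $b_i/b_1$ matches the paper's computation.
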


\begin{remark}\label{R:codim}
A consequence of the rank of $\zeta_i$ being $7$, for all $2\leq i \leq n$, is that a single condition of codimension $ k\geq 2$ on $\left(\kappa^{kl}_m\right)_{\begin{smallmatrix}k,m\in\ll1,2\rr \\ l\in \ll2i-1,2i\rr \end{smallmatrix}}$ is then a condition of codimension at least $k-1$ on the jets of order 2 of the sub-Riemannian structure at $q_0$.
\end{remark}

Using this notation, we can give a first approximation of the conjugate locus.
\begin{proposition}\label{P:CL_order2}
Let $q_0\in M\setminus \S_1$.
As $\eta\rightarrow 0^+$, uniformly with respect to $p_0=(h_1,\dots ,h_{2n},\eta^{-1})\in \C_{q}((0,R))\setminus S_1^\varepsilon$ for all $R>0,\varepsilon \in(0,1)$, we have (in normal form coordinates)
$$
(F(\tau_c(h,\eta)),h,\eta))_1
=
\frac{ \eta^2
 }{h_1^2+h_2^2}
 \left((\gamma_{11}-\gamma_{22})
	h_1^3
	+
	\gamma_{12}
	h_2^3
	+
	(\gamma_{21}+2 \gamma_{12})
	h_1^2h_2	
	+\delta_1\right)
+O(\eta^3)
$$
$$
(F(\tau_c(h,\eta)),h,\eta))_2
=
\frac{ \eta^2
 }{h_1^2+h_2^2}
 \left(
 	\gamma_{12}
	h_1^3
	-
	(\gamma_{11}-\gamma_{22})
	h_2^3
	+
	(\gamma_{12}+2 \gamma_{21})
	h_1 h_2^2
	+\delta_2
	\right)
+O(\eta^3)
$$
with
$$
\gamma_{ij}=\sum_{k=3}^{2n}\kappa^{jk}_{i} h_k,		
 \qquad 
\forall  i,j\in \ll1,2n\rr,
$$
$$
\delta_1=\alpha(h_1^2+h_2^2)^2+
\sum_{3\leq i<j\leq 2n}^{2n}
\kappa^{ij}_1 h_i h_j,
\qquad
\delta_2=\beta(h_1^2+h_2^2)^2+
\sum_{3\leq i<j\leq 2n}^{2n}
\kappa^{ij}_2 h_i h_j.
$$
\end{proposition}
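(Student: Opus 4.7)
The plan is to substitute the expansion of $\tau_c$ from Proposition~\ref{P:conjugate_time_simple_case} into the $\eta$-power series for $F$ supplied by Proposition~\ref{P:expansion}, and then use the symmetries of Proposition~\ref{P:summary_kappa} to collapse the first two components. The decisive algebraic observation is that $e^{(2\pi/b_1)\bar{J}_1} = I_2$: consequently $\hat{x}_1(2\pi/b_1,h) = \hat{x}_2(2\pi/b_1,h) = 0$ while $\hat{h}_1(2\pi/b_1,h) = h_1$ and $\hat{h}_2(2\pi/b_1,h) = h_2$. Thus $F^{(1)}_i(2\pi/b_1,h) = 0$ for $i \in \{1,2\}$, and a joint Taylor expansion of $F_i(\tau,h,\eta)$ at $\tau = 2\pi/b_1$ in $\tau$ and at $0$ in $\eta$ yields
\[
F_i(\tau_c(h,\eta),h,\eta) = \eta^2 \bigl[ \tau_c^{(1)}(h)\, h_i + F^{(2)}_i(2\pi/b_1,h) \bigr] + O(\eta^3), \qquad i \in \{1,2\}.
\]

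Second, I would expand $F^{(2)}_i(2\pi/b_1,h) = \sum_{1 \leq j \leq k \leq 2n} \kappa^{jk}_i h_j h_k$ and multiply the identity above by $h_1^2 + h_2^2$, substituting the closed form \eqref{E:linear_tc2} for $(h_1^2+h_2^2)\tau_c^{(1)}$. Grouping the $\kappa^{jk}_i$ by type — the pure $(h_1,h_2)$-quadratic piece (parametrized by $\alpha,\beta$), the mixed piece $h_i h_k$ with $i\in\{1,2\}$ and $k \geq 3$ (producing the $\gamma_{ij}$ coefficients), and the $(h_3,\dots,h_{2n})$-quadratic piece (forming the remainder of $\delta_i$) — produces a purely algebraic identity. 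Two cancellations must be checked: first, the pure quartic in $(h_1,h_2)$ collapses to $\alpha(h_1^2+h_2^2)^2$ in the expression for $(h_1^2+h_2^2)F_1/\eta^2$ (and symmetrically to $\beta(h_1^2+h_2^2)^2$ for $F_2$), which hinges on the ratios $\kappa^{11}_1 = 3\alpha,\ \kappa^{22}_1 = \alpha,\ \kappa^{12}_2 = 2\alpha$ of Proposition~\ref{P:summary_kappa}; second, the $\gamma_{11} h_1 h_2^2$ contributions from $h_1 \cdot (h_1^2+h_2^2)\tau_c^{(1)}$ and from $(h_1^2+h_2^2) \cdot h_1 \gamma_{11}$ cancel, leaving exactly the three combinations $(\gamma_{11}-\gamma_{22}) h_1^3$, $\gamma_{12} h_2^3$, $(\gamma_{21}+2\gamma_{12}) h_1^2 h_2$. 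The $(h_3,\dots,h_{2n})$-quadratic piece factors cleanly through $h_1^2+h_2^2$ and delivers the non-quartic part of $\delta_i$.

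For uniformity with respect to $p_0 \in \C_{q_0}((0,R)) \setminus S_1^\varepsilon$, the ratio $(h_1^2+h_2^2)/H$ is bounded below by $\varepsilon$, so the division by $h_1^2+h_2^2$ inherited from \eqref{E:def_tauc1_r1neq0} does not inflate the remainder. The $O(\eta^3)$ bound itself follows from the smoothness of $F$ in $\eta$ near $0$ (Proposition~\ref{P:expansion}) together with the smoothness of $\tau_c(h,\eta)$ in $\eta$, which is granted off $S_1$ because $2\pi/b_1$ is a simple zero of $\Phi^{(2n+2)}(\cdot,h)$ there (Lemma~\ref{L:lim_r1_to_0}) and the implicit function theorem applies.

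The main obstacle is organizational rather than conceptual: it is bookkeeping of roughly a dozen $\kappa^{jk}_i$ indices and the patient verification that the two cancellations above go through. The essential content of the proposition is that once the nilpotent degeneracy $\hat{x}_1=\hat{x}_2=0$ at the first conjugate time is peeled off, the leading horizontal displacement of the conjugate point is controlled by the second jet at $q_0$ of the transverse components $(X_i)_{2n+1}$ of the frame, coupled to $h$ through $\alpha,\beta$ and the $\gamma_{ij}$.
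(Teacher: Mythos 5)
Your proposal is correct and is exactly the argument the paper leaves implicit: Proposition~\ref{P:CL_order2} is stated without a separate proof and is meant to follow by inserting $\tau_c=2\pi/b_1+\eta\,\tau_c^{(1)}+O(\eta^2)$ into the expansion \eqref{E:order_1_F_delta_t}, using $\hat{x}_1=\hat{x}_2=0$ and $\hat{h}_i=h_i$ at $\tau=2\pi/b_1$, the definition of the $\kappa^{jk}_i$, formula \eqref{E:def_tauc1_r1neq0} (equivalently \eqref{E:linear_tc2}), and the symmetries of Proposition~\ref{P:summary_kappa}. The two cancellations you single out (the pure $(h_1,h_2)$-quartic collapsing to $\alpha(h_1^2+h_2^2)^2$, resp.\ $\beta(h_1^2+h_2^2)^2$, and the disappearance of the extra $\gamma$-terms leaving $(\gamma_{11}-\gamma_{22})$, $\gamma_{12}$, $\gamma_{21}+2\gamma_{12}$) are precisely the computational content, and your justification of uniformity away from $S_1^\varepsilon$ via the simple zero and the implicit function theorem is consistent with the paper's framework.
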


If there exists a covector such that $\gamma_{11}-\gamma_{22}=\gamma_{12}=  \gamma_{21}=0$ then this first order approximation of the conjugate locus is not sufficient to prove stability and more orders of approximation are necessary. This occurs for instance when $h_3=\dots=h_{2n}= 0$, and
$$
\begin{aligned}
(F(\tau_c(h,\eta)),h,\eta))_1
=
\eta^2 \alpha (h_1^2+h_2^2)
+O(\eta^3),
\\
(F(\tau_c(h,\eta)),h,\eta))_2
=
 \eta^2\beta (h_1^2+h_2^2)
+O(\eta^3).
\end{aligned}
$$

\begin{proposition}\label{P:def_S_2}
Let $M$ be a generic contact sub-Riemannian manifold of dimension $2n+1\geq 5$. Let $\mathfrak{S}_2\subset M$ be the set of points at which the linear system in $(h_3,\dots h_{2n})$
$$
\left\{
\begin{array}{l}
\sum_{i=3}^{2n}
(\kappa^{1,i}_1 -\kappa^{2,i}_2 )h_{i}=0,
\\
\sum_{i=3}^{2n}
\kappa^{1,i}_2 h_{i}=0,
\\
\sum_{i=3}^{2n}
\kappa^{2,i}_1 h_{i}=0,
\end{array}
\right.
$$
admits non-trivial solutions. If $\dim M\geq 7$, then $M=\S_2$. However if $\dim M=5$, the set $\mathfrak{S}_2$ is codimension 1 stratified subset of $M$.
\end{proposition}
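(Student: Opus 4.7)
The plan is to split the argument according to $\dim M$.

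If $\dim M \geq 7$, i.e.\ $n \geq 3$, the system consists of $3$ linear equations in $2n-2 \geq 4$ unknowns $(h_3, \dots, h_{2n})$, so by rank-nullity its kernel has dimension at least $2n-5 \geq 1$ at every $q\in M$. Hence $\S_2 = M$, which settles this case without any generic condition on the structure.

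If $\dim M = 5$, i.e.\ $n=2$, the system reduces to the $3\times 2$ linear problem
$$
A(q)\begin{pmatrix}h_3\\h_4\end{pmatrix}=0,\qquad
A(q)=\begin{pmatrix}
\kappa^{1,3}_1-\kappa^{2,3}_2 & \kappa^{1,4}_1-\kappa^{2,4}_2\\
\kappa^{1,3}_2 & \kappa^{1,4}_2\\
\kappa^{2,3}_1 & \kappa^{2,4}_1
\end{pmatrix},
$$
and $q\in\S_2$ iff $\mathrm{rk}\, A(q)\leq 1$, equivalently iff the three $2\times 2$ minors of $A$ vanish simultaneously. This cuts out a determinantal subvariety $V$ of codimension $2$ in $\R^{3\times 2}\cong\R^6$.

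To push this codimension back to the manifold, I would invoke Proposition~\ref{P:summary_kappa}: the $8$ invariants $\{\kappa^{kl}_m\}_{k,m\in\{1,2\},\,l\in\{3,4\}}$ are the image of the linear map $\zeta_2:\R^{15}\to\R^8$ applied to the order-$2$ jets of the frame at $q_0$, with $\mathrm{rk}\,\zeta_2\geq 7$. The natural linear projection from these $8$ invariants onto the $6$ entries of $A(q)$ is surjective, its $2$-dimensional kernel being spanned by the simultaneous shifts $(\kappa^{1,i}_1,\kappa^{2,i}_2)\mapsto(\kappa^{1,i}_1+c_i,\kappa^{2,i}_2+c_i)$ for $i=3,4$, which leave the first row of $A$ unchanged. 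Applying Remark~\ref{R:codim}, the codimension-$2$ determinantal condition on $A$ then descends to a condition of codimension at least $1$ on the $2$-jets of the sub-Riemannian structure at $q_0$. Thom's transversality theorem, applied to a generic structure $(M,\Delta,g)$, finally ensures that $\S_2$ is, locally, a finite union of submanifolds of $M$ of codimension at least $1$, i.e.\ a codimension-$1$ stratified subset in the paper's sense.

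The main obstacle is the careful tracking of codimensions along the composition $\text{2-jets}\xrightarrow{\zeta_2}\R^8\to\R^6$: the drop from the generically expected codimension $2$ (when $\mathrm{rk}\,\zeta_2=8$, i.e.\ outside $\S_3$) to the merely guaranteed codimension $1$ (when $\mathrm{rk}\,\zeta_2=7$, i.e.\ on $\S_3$) is exactly the loss built into Remark~\ref{R:codim}, and this worst-case bound is what the statement of the proposition records.
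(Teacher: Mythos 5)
Your proposal is correct and follows essentially the same route as the paper: for $n\geq 3$ a rank--nullity count gives $\S_2=M$, and for $n=2$ the non-trivial-kernel condition is the codimension-$2$ collinearity (rank $\leq 1$) constraint on the family $\left(\kappa^{kl}_m\right)_{k,m\in\{1,2\},\,l\in\{3,4\}}$, which Remark~\ref{R:codim} converts into a codimension-$\geq 1$ condition on the $2$-jets, whence a codimension-$1$ stratified set by transversality. Your explicit factorization through the $3\times 2$ matrix $A(q)$ and the surjection $\R^8\to\R^6$ is just a more detailed bookkeeping of the same codimension argument the paper states directly.
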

\begin{proof}
If we assume $(r_2,\dotsc,r_n)\neq 0$ then $\gamma_{11}-\gamma_{22}=\gamma_{12}=\gamma_{21}=0$ reduces to the existence of a non-zero vector of $\R^{2n-2}$ in the intersection
$$
{\Span\{(
\kappa^{1,3}_1 -\kappa^{2,3}_2 ,\dotsc,\kappa^{1,2n}_1 -\kappa^{2,2n}_2 )
\}}^{\perp}
 \cap\,
{\Span\{(
\kappa^{2,3}_1  ,\dotsc,\kappa^{2,2n}_1  )
\}}^{\perp}
 \cap\,
{\Span\{(
\kappa^{1,3}_2  ,\dotsc,\kappa^{1,2n}_2  )
\}}^{\perp}
.
$$
This space is never reduced to a single point for $n>2$, hence $M=\mathfrak{S}_2$. However for $n=2$, this requires the three vectors
\begin{equation}
(\kappa^{1,3}_1 -\kappa^{2,3}_2 ,\kappa^{1,4}_1 -\kappa^{2,4}_2 ),
\quad
(\kappa^{2,3}_1,\kappa^{2,4}_1  ),
\quad
(\kappa^{1,3}_2 ,\kappa^{1,4}_2  ),
\end{equation}
to be co-linear, which is a constraint of codimension $2$ on the family $\left(\kappa^{k l}_m\right)_{\begin{smallmatrix}k,m\in\{1,2\} \\ l\in \{3,4\} \end{smallmatrix}}$. By Remark~\ref{R:codim}, this is  a codimension 1 (at least) constraint on the jets of order $2$ of the sub-Riemannian structure at $q_0$, hence the result.
\end{proof}

\subsection{Asymptotics for covectors near $S_1$} \label{SS:asymptotics_near_S_1}

We repeat the previous construction for a special class of initial covector in the vicinity of  $S_1=
\{
(h_1,\dots ,h_{2n},h_0)\in T_{q_0}^*M \mid\\ h_1=h_2=0
\}$, in accordance with the discussion of Section~\ref{SS:First_approximation}.

Let $\h\in\R^{2n}$ be such that  $(\h_3,\dots,\h_{2n})\neq (0,\dotsc,0)$. We blowup the singularity at $h_1=h_{2}=0$ by computing an approximation of the conjugate locus for
\begin{equation}\label{E:special_covector}
h(0)=
(\sqrt{\eta}\h_1,\sqrt{\eta}\h_2, \h_3,\dotsc, \h_{2n}).
\end{equation}
Let $\L $ be the square $2n\times 2n$ matrix such that 
\begin{equation}\label{E:Lambda}
\L _{i,j}=
\begin{cases}
1 &\text{ if } i=j=1 \text{ or } i=j=2,
\\
0& \text{otherwise,}
\end{cases}
\end{equation}
so that $h(0)=\sqrt{\eta}\L \h+  (I_{2n}-\L) \h$.

Recall the power series notation
$f(\eta \tau ,h(0))=\sum \eta^k f^{(k)}(\tau, h(0))$. As a consequence of Proposition~\ref{P:expansion}, we can give a new expansion of the Hamiltonian flow for the special class of initial covectors of type \eqref{E:special_covector} in terms of coefficients of the power series of $x,z,h,w$.
(Recall that for all $R>0$, $B_R$ denotes the set $\{h\in \R^{2n}\mid \sum_{i=1}^{2n}h_i^2\leq R\}$.)

\begin{proposition}\label{P:expansion_r1_eta}
For all $T,R>0$,
normal extremals with initial covector 
$$
(\sqrt{\eta}\L \h+ (I_{2n}-\L) \h,\eta^{-1})
$$
 have the following order 3 expansion at time $\eta \tau$, as  $\eta\to 0^+$, uniformly with respect to $\tau\in [0,T]$ and $\h\in B_R$:
\begin{multline*}
	x(\eta \tau)=
	\eta \xhat(\tau,( I_{2n}-\L) \h)
	+
	\eta^{3/2}
		\left[
			\xhat\left(\tau,  \L \h\right)
		\right]
	+
	\eta^{2}
		\left[
			x^{(2)} \left(\tau,  (I_{2n}- \L) \h\right)
		\right]
	\\
	+
	\eta^{5/2}
		\left[
			x^{(2)} \left(\tau, \h\right)-x^{(2)} \left(\tau,  (I_{2n}- \L) \h\right)-x^{(2)} \left(\tau,  \L\h\right)
		\right]
		+O(		\eta^3),
\end{multline*}
$$
	z(\eta \tau)
	=
	\eta^2 
	\zhat(\tau,(I_{2n}-\L)\h)
	+\eta^3
	\left[
	z^{(3)}(\tau,(I_{2n}-\L)\h)+\zhat(\tau, \L \h)
	\right]
	+O(\eta^4).
$$

Likewise, the associated covector has the  expansion
\begin{multline*}
h(\eta \tau)= \hhat(\tau,(I_{2n}-\L) \h)
+\sqrt{\eta }
	\left[
		\hhat(\tau, \L \h)	
	\right]
+
\eta 
	\left[
		h^{(1)}(\tau,(I_{2n}-\L) \h)
	\right]
\\
+
\eta^{3/2}
	\left[
		h^{(1)}(\tau, \h)	
		-
		h^{(1)}(\tau, \L \h)	
		-
		h^{(1)}(\tau,(I_{2n}-\L) \h)	
	\right]
+O(\eta^2),
\end{multline*}
$$
w(\eta \tau)= 1+ O(\eta^2).
$$

\end{proposition}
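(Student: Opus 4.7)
The plan is to derive this refined expansion directly from Proposition~\ref{P:expansion} by exploiting the polynomial dependence in $h(0)$ of each Taylor coefficient in $\eta$, then substituting the rescaled initial covector $h(0)=(I_{2n}-\L)\h+\sqrt{\eta}\,\L\h$ and re-collecting terms by powers of $\sqrt{\eta}$.

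First I would observe that the rescaled Hamiltonian system~\eqref{E:Ham_ode_rescaled} is polynomial in $(h,w)$ and smooth in $\eta$, so its flow admits a smooth power series expansion in $\eta$ whose coefficients $x^{(k)},h^{(k)},z^{(k)},w^{(k)}$ are polynomial in $h(0)$ of controlled degree. Reading off from the proof of Proposition~\ref{P:expansion}, the maps $\xhat$ and $\hhat$ are \emph{linear} in $h(0)$; the integrals defining $x^{(2)}$ and $h^{(1)}$ are \emph{quadratic} in $h(0)$ (because $J^{(1)}$ is linear and $\xhat,\hhat$ are linear); $\zhat=z^{(2)}$ is quadratic but of the decoupled form $\sum_i (h_{2i-1}^2+h_{2i}^2)\,c_i(\tau)$; and $z^{(3)}$ is cubic.

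Next I would substitute $h(0)=(I_{2n}-\L)\h+\sqrt{\eta}\,\L\h$ into each coefficient. Linearity of $\xhat$ and $\hhat$ immediately yields a splitting with an extra factor $\sqrt{\eta}$ on the $\L\h$-piece. For the quadratic coefficients $x^{(2)}$ and $h^{(1)}$, the polarization identity gives
\[
f(A+\sqrt{\eta}\,B)=f(A)+\sqrt{\eta}\bigl[f(A+B)-f(A)-f(B)\bigr]+\eta\,f(B),
\]
which, applied to $A=(I_{2n}-\L)\h$ and $B=\L\h$, produces exactly the cross term $x^{(2)}(\tau,\h)-x^{(2)}(\tau,(I_{2n}-\L)\h)-x^{(2)}(\tau,\L\h)$ written in the statement, and analogously for $h^{(1)}$. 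The crucial simplification in the $z$-expansion is that $\zhat$ is diagonal in the block structure $r_i^2$: since $\L$ only touches indices $\{1,2\}$, no cross term between $\L\h$ and $(I_{2n}-\L)\h$ appears, so $\zhat(\tau,h(0))=\zhat(\tau,(I_{2n}-\L)\h)+\eta\,\zhat(\tau,\L\h)$ exactly, which, after multiplication by $\eta^2$, explains the two $\zhat$ contributions in the statement.

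Finally, I would multiply each Taylor coefficient by its $\eta^k$ prefactor and collect terms in powers of $\sqrt{\eta}$, keeping only those up to the claimed truncation orders. Uniformity with respect to $(\tau,\h)\in[0,T]\times B_R$ is inherited from the uniformity already provided by Proposition~\ref{P:expansion}. The main (mild) obstacle is pure bookkeeping: one must verify that higher-degree coefficients such as $z^{(3)}$ contribute no cross term of order lower than the announced remainders. This reduces to the elementary remark that a degree-$d$ polynomial in $h(0)$, evaluated at $A+\sqrt{\eta}B$, produces contributions of orders $\eta^{0},\eta^{1/2},\dots,\eta^{d/2}$, so that multiplying the expansion of the $k$-th coefficient by $\eta^k$ yields terms down to at most $\eta^{k+d/2}$; checked degree by degree, this accounts for each of the remainder estimates $O(\eta^3)$, $O(\eta^4)$, $O(\eta^2)$ and $O(\eta^2)$ in the statement.
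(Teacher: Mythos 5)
Your argument is essentially the paper's own proof: substitute $h(0)=(I_{2n}-\L)\h+\sqrt{\eta}\,\L\h$ into the $\eta$-expansion provided by Proposition~\ref{P:expansion}, use linearity of $\xhat,\hhat$ and the polarization identity for the quadratic coefficients $x^{(2)},h^{(1)},z^{(2)}$ (together with $w^{(1)}=0$), and regroup by powers of $\sqrt{\eta}$, the uniformity in $(\tau,\h)$ being inherited from Proposition~\ref{P:expansion}. Your extra remarks — the block-diagonal structure of $\zhat$ annihilating the cross term between $\L\h$ and $(I_{2n}-\L)\h$, and the polynomial-degree bookkeeping for higher coefficients such as $z^{(3)}$ — only make explicit what the paper's shorter proof leaves implicit.
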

\begin{proof}
Let $h,h'\in \R^{2n}$ and
let $\psi :\R^{2n}\rightarrow \R$ be a quadratic form, we have by polarization identity
$
\psi \left(h+\sqrt{\eta}h'\right)=\psi(   h)
+\sqrt{\eta}
\left[ \psi(  h+h') - \psi(  h) -\psi( h') \right]
+
\eta \psi( h')$.
Applying this identity with $h=\L\h$  and $h'=(I_{2n}-\L) \h$, we get the statement since we proved in Proposition~\ref{P:expansion} that 
$x^{(1)}(\eta \tau, \cdot)$, $h^{(0)}(\eta \tau, \cdot)$ are linear and $x^{(2)}(\eta \tau, \cdot)$, $h^{(1)}(\eta \tau, \cdot)$, $z^{(2)}(\eta \tau, \cdot)$ are quadratic, coordinate-wise. The  case of $w$ comes from the fact that $w^{(1)}=0$.
\end{proof}

We set $G(\tau,\h,\eta)=F\left(\tau ,\sqrt{\eta}\L \h+  (I_{2n}-\L) \h,\eta\right)$,  for all  $\tau>0$, $\h\in \R^{2n}$ and $\eta>0$.
The function $G$ admits a power series expansion in $\sqrt{\eta}$
$$
G(\tau,\h,\eta)=\sum_{k\geq 0} \eta^{k/2} G^{(k/2)}(\tau,\h).
$$
We prove the following proposition on the conjugate time for such initial covectors.
\begin{proposition}\label{P:t_c_bu_eta1/2}
Let us define the quadratic polynomial in $\delta\! \tau$
\begin{multline*}
P(\delta\! \tau)=-\delta\! \tau^2 K+ 
\delta\! \tau 
\left(\frac{2\pi}{b_1}\left(\h_1^2+\h_2^2\right)-K\left(\frac{\partial G^{(5/2)}_1}{\partial \h_1}+\frac{\partial G^{(5/2)}_2}{\partial \h_2}\right)\right)
\\
+
\frac{2\pi}{b_1}  \left(\h_2^2 \frac{\partial G^{(5/2)}_1}{\partial \h_1}+\h_1^2 \frac{\partial G^{(5/2)}_2}{\partial \h_2}-\h_1 \h_2 \left(\frac{\partial G^{(5/2)}_2}{\partial \h_1}+\frac{\partial G^{(5/2)}_1}{\partial \h_2}\right)\right)
\\
+
K \left(
			\frac{\partial G^{(5/2)}_2}{\partial \h_1} \frac{\partial G^{(5/2)}_1}{\partial \h_2}
			-
			\frac{\partial G^{(5/2)}_1}{\partial \h_1} \frac{\partial G^{(5/2)}_2}{\partial \h_2}
\right),
\end{multline*}
and let $\Delta(\h)$ be its discriminant.
We have the following cases:
\begin{itemize}
	\item If $\Delta (\h)\geq 0$, let $\delta\! \tau^*$ be the smallest of the (possibly equal) two roots of $P$. Then 
	$$
	\tau_c(\sqrt{\eta}\L \h+  (I_{2n}-\L) \h)=2\pi/b_1+\eta \delta  t^*+o(\eta).
	$$
	\item If $\Delta (\h)<0$, 
	$$
	\limsup\limits_{\eta\to 0} \left|\tau_c(\sqrt{\eta}\L \h+  (I_{2n}-\L) \h)-2\pi /b_1 \right|>0,
	$$	
	that is,
	the first conjugate time is not a perturbation of $2 \pi/b_1$.
\end{itemize}
\end{proposition}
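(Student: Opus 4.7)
The plan is to mirror the strategy of Proposition~\ref{P:conjugate_time_simple_case}, but with the refined Puiseux-type expansion from Proposition~\ref{P:expansion_r1_eta} replacing the integer-power expansion of Proposition~\ref{P:expansion}. The guiding principle is Lemma~\ref{L:lim_r1_to_0}: at $r_1 = 0$, the nilpotent determinant $\Phi^{(2n+2)}$ has a \emph{double} zero at $\tau = 2\pi/b_1$, so the naive first-order perturbation of Proposition~\ref{P:conjugate_time_simple_case} degenerates. The blowup $h_i \mapsto \sqrt{\eta}\,\bar h_i$ for $i=1,2$ is calibrated precisely so that the quadratic opening of this double zero competes at the same order in $\eta$ as the first non-nilpotent correction, motivating the ansatz $\tau_c = 2\pi/b_1 + \eta\,\delta\!\tau + o(\eta)$.

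First I would substitute the ansatz and the blown-up covector into the determinant $\Phi$ of \eqref{E:def_Phi}, using Proposition~\ref{P:expansion_r1_eta} to expand each entry of the Jacobian as a series in $\eta^{1/2}$. The nilpotent contribution $\Phi^{(2n+2)}$, evaluated at $(I_{2n}-\Lambda)\bar h$, has a double zero at $\tau = 2\pi/b_1$; its second-order Taylor expansion in $\delta\!\tau$ produces a $\delta\!\tau^2$ term whose coefficient is $\tfrac12\partial_\tau^2\psi(2\pi/b_1,(0,\bar r_2,\dots,\bar r_n))$. A direct computation from the explicit form of $\psi$ given in Section~\ref{SS:First_approximation} yields exactly the coefficient
\[
K = \sum_{i=2}^{n}(\bar h_{2i-1}^2+\bar h_{2i}^2)\Bigl(1 - \tfrac{b_i}{b_1}\pi\cot \tfrac{b_i\pi}{b_1}\Bigr),
\]
with strict positivity following from monotonicity of $x\cot x$ on $(0,\pi)$ together with the strict ordering $b_1 > b_i$ for $i\ge 2$ (guaranteed by $q_0 \notin \mathfrak{S}_1$).

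The linear and constant terms in $\delta\!\tau$ arise from the interaction of the nilpotent columns of the Jacobian with the first non-nilpotent correction, which for the blown-up covector is encoded in $G^{(5/2)}$ rather than in $F^{(2)}$. Reproducing the column-reduction that collapses the $(2n+1)\times(2n+1)$ determinant to the $3\times 3$ determinant of Proposition~\ref{P:conjugate_time_simple_case}, but now with the first two rows weighted by the $\sqrt{\eta}$ coming from the blowup, one isolates a block in which only the partial derivatives $\partial G^{(5/2)}_k/\partial \bar h_i$ for $i,k\in\{1,2\}$ appear; expanding this block reproduces exactly the polynomial $P(\delta\!\tau)$ of the statement. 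The main obstacle is the bookkeeping of $\eta$-powers: one must verify that all contributions beyond those appearing in $P$ are genuinely of higher order in $\eta$, which rests on Proposition~\ref{P:expansion_r1_eta} providing enough terms together with the structural vanishing identities (coming from the Agrachev–Gauthier normal form) that were already exploited in Proposition~\ref{P:first_order_time}.

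Once $P(\delta\!\tau)$ is isolated as the leading obstruction, the two alternatives follow from standard perturbative reasoning. If $\Delta(\bar h) > 0$, each root of $P$ is simple, so the implicit function theorem yields smooth branches of solutions to the full determinantal equation, and the smallest one gives the stated first conjugate time; the borderline $\Delta(\bar h) = 0$ case is handled by a Puiseux expansion at the double root, preserving the same leading behavior $\eta\,\delta\!\tau^*$. If $\Delta(\bar h) < 0$, then $|P(\delta\!\tau)|$ is bounded below by a positive constant on any compact $\delta\!\tau$-interval, so for $\eta$ small the full determinantal equation admits no zero within such an interval; hence $\tau_c$ cannot lie in an $O(\eta)$-window of $2\pi/b_1$, yielding the $\limsup$ estimate.
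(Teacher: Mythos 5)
Your overall strategy—substitute $\tau=2\pi/b_1+\eta\,\delta\!\tau$ for the blown-up covector, expand $\Phi$ of \eqref{E:def_Phi} via Proposition~\ref{P:expansion_r1_eta}, isolate the leading determinant and read off $P(\delta\!\tau)$—is indeed the paper's, but two of your steps are genuinely flawed. First, your derivation of the quadratic coefficient is wrong. When $r_1=0$ the function $\psi$ does \emph{not} degenerate at $\tau=2\pi/b_1$: the singular behaviour there comes only from the $i=1$ term, so $\psi(2\pi/b_1,(0,\bar r_2,\dots,\bar r_n))$ is finite and strictly positive. The double zero of $\Phi^{(2n+2)}$ is produced by the factor $\det D_h\hat{x}=\prod_i 4\sin^2(b_i\tau/2)/b_i^2$, whose first block vanishes to second order; consequently $\tfrac12\partial_\tau^2\Phi^{(2n+2)}(2\pi/b_1,(I_{2n}-\Lambda)\bar h)$ is a nonzero multiple of $\psi(2\pi/b_1,(0,\bar r_2,\dots,\bar r_n))$, not $\tfrac12\partial_\tau^2\psi$, and neither of these quantities is proportional to $K$ (already for $n=2$ the ratio of $\psi(2\pi/b_1,(0,r_2))$ to $1-\tfrac{b_2\pi}{b_1}\cot\tfrac{b_2\pi}{b_1}$ depends on $b_2/b_1$). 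More fundamentally, after the blowup the leading term of the determinant is \emph{not} the Taylor expansion of the nilpotent $\Phi^{(2n+2)}$ at the limiting covector: the columns $\partial_{h_1},\partial_{h_2}$ are rescaled by $\eta^{-1/2}$, which is precisely what places $\delta\!\tau$ and the derivatives of $G^{(5/2)}$ at the same order in the upper-left block, and the coefficient $-K$ of $\delta\!\tau^2$ comes from the $(2n-1)$-dimensional block in the $h_3,\dots,h_{2n},h_0$ directions, i.e.\ the determinant equal to $-\tfrac{2\pi}{b_1^2}KK'$ computed in Lemma~\ref{L:deltat1/2} via Lemma~\ref{L:det(AW)}, which the paper then combines with the $2\times2$ and $3\times3$ minors in Lemma~\ref{P:mini_det_3}. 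As written, your computation would produce an incorrect polynomial $P$.

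Second, you omit the intermediate-scale analysis that forms the first half of the paper's proof: evaluating $\Phi$ at $\tau=2\pi/b_1+\sqrt{\eta}\,\delta\!\tau$ (expansion \eqref{E:G_sqrt_eta}) and showing that its leading term is proportional to $\delta\!\tau^2$ with the nonzero coefficient $-\tfrac{2\pi}{b_1^2}KK'$, so that no conjugate time occurs at the $\sqrt{\eta}$ scale. Without this, your conclusion in the case $\Delta<0$ does not follow: a lower bound on $|P|$ only excludes zeros of the determinant in an $O(\eta)$-window around $2\pi/b_1$, whereas the statement asserts $\limsup_{\eta\to 0}|\tau_c-2\pi/b_1|>0$, so you must also rule out conjugate times approaching $2\pi/b_1$ at slower rates such as $\sqrt{\eta}$. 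The same step is needed when $\Delta\ge 0$ to guarantee that the smallest root of $P$ gives the \emph{first} conjugate time and not merely some conjugate time.
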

\begin{proof}
We first have to check that the conjugate time is not a perturbation of order $\sqrt{\eta}$ of the nilpotent conjugate time $2\pi/b_1$.
We apply the same method as before to evaluate
$\Phi
\left(
2\pi /b_1+\sqrt{\eta}\delta\! \tau ,  \sqrt{\eta}\L\h+ (I_{2n}-\L)\h  ,\eta
\right)$,  $\delta\! \tau \in \R$, $\bar{h}\in \R^{2n}$. Notice that 
$$
\frac{\partial F}{\partial{h_i}}=\frac{1}{\sqrt{\eta}} \frac{\partial G}{\partial{\h_i}},\quad  \forall i\in \ll 1,2 \rr,
\quad \text{ and } \quad 
\frac{\partial F}{\partial{h_i}}=\frac{\partial G}{\partial{\h_i}}
\quad 
\forall i \in \ll 3, 2n\rr.
$$

With $\delta\! \tau\in \R$, $\h\in \R^{2n}$, we have
\begin{equation}\label{E:G_sqrt_eta}
G\left(\frac{2\pi}{b_1}+\sqrt{\eta}\delta\! \tau ,\h,\eta\right)
=
\eta 
\eval{
	G^{(1)}
}{\tau=\frac{2\pi}{b_1}}
+
\eta^{3/2}
\eval{\left(
	G^{(3/2)}
	+\delta\! \tau
	\frac{\partial G^{(1)}}{\partial \tau}
	\right)
}{\tau=\frac{2\pi}{b_1}}
+ O(\eta^{5/2}).
\end{equation}
Hence $\Phi
\left(
2\pi /b_1+\sqrt{\eta}\delta\! \tau ,  \sqrt{\eta}\L\h+ (I_{2n}-\L)\h  ,\eta
\right)=O(\eta^{2n+3})$ (see, for instance, Appendix~\ref{A:Conjugate_time_equations}).
By capturing the first non trivial term in the expansion of $\Phi$, one has
 $$
\Phi^{(2n+3)}
\left(
2\pi /b_1+\sqrt{\eta}\delta\! \tau,  \sqrt{\eta}\L\h+ (I_{2n}-\L)\h  ,\eta
\right) \propto \delta\! \tau^2
 $$
 (see also Lemma~\ref{L:deltat1/2} in the appendix).
Hence perturbations of the nilpotent conjugate time $2\pi/b_1$ must be of order 1 in $\eta$ at least for $\Phi$ to vanish.

Computing the perturbation of the conjugate time is then a matter of computing 
$\Phi$ at time $2\pi /b_1+\eta \delta\! \tau$.
Regarding $G$, we have 
\begin{equation}\label{E:G_sqrt_eta_bis}
\begin{aligned}
G\left(\frac{2\pi}{b_1}+\eta \delta\! \tau,\h,\eta\right)
=
&
\eta 
\eval{
	G^{(1)}
}{\tau=\frac{2\pi}{b_1}}
+
\eta^{3/2}
\eval{ 
	G^{(3/2)}
}{\tau=\frac{2\pi}{b_1}}
+
\eta^2
\eval{
\left[
	 G^{(2)}
	 +
	 \delta\! \tau
	 \frac{\partial G^{(1)}}{\partial \tau}
 \right]
}{\tau=\frac{2\pi}{b_1}}
\\
& +
\eta^{5/2}
\eval{
\left[
	 G^{(5/2)}
	 +
	 \delta\! \tau
	 \frac{\partial G^{(3/2)}}{\partial \tau}
 \right]
}{\tau=\frac{2\pi}{b_1}}
+ O(\eta^{3}).
 \end{aligned}
\end{equation}
Thus 
$\Phi
\left(
2\pi /b_1+\eta \delta\! \tau ,  \sqrt{\eta}\L\h+ (I_{2n}-\L)\h  ,\eta\right)=O(\eta^{2n+5})$. 
Again, computing the first nontrivial term in the expansion yields 
(for instance, see Lemma~\ref{P:mini_det_3})
$$
\Phi^{(2n+5)}
\left(
2\pi /b_1+\eta \delta\! \tau,h,\eta
\right)
\propto
P(\delta\! \tau).
$$
This implies the statement: either $P$ admits real roots, of which the smallest is $\tau_c^{(1)}$, or the system does not admit a perturbation of $2\pi/b_1$ as a first conjugate time.
\end{proof}
\begin{remark}
Contrarily to \eqref{E:def_tauc1_r1neq0}, the equation $P(\delta\! \tau)=0$ not degenerate at $\h_1=\h_2=0$.
\end{remark}

\subsection{Proof of Theorems~\ref{T:sre_is_not_so_simple} and \ref{T:big_expansion_theorem}}
It appears now that proving Theorem~\ref{T:big_expansion_theorem} is a matter of summarizing what we know about the conjugate time from the previous results of Section~\ref{S:order_2_approx}.
\begin{proof}[Proof of Theorem~\ref{T:big_expansion_theorem}]
In the previous section we computed the rescaled conjugate time $\tau_c$. We have for all covector $p_0=(\h_1,\dots ,\h_{2n},\eta^{-1})\in T_{q_0}^*M$,
$$
t_c(\h,\eta^{-1})=\eta\tau_c(\h,\eta^{-1})
$$
From Proposition~\ref{P:first_order_time}, we deduce that under the assumption $(\h_1,\h_2)\neq (0,0)$, we have as $\eta\to 0^+$ that $\tau_c(\h,\eta^{-1})=2\pi/b_1+O(\eta)$. From Proposition~\ref{P:conjugate_time_simple_case}, we deduce the existence of $t_c^{(2)}=\eta\tau_c^{(1)}$ that satisfies the given equation, using the  invariants introduced in Proposition~\ref{P:summary_kappa}.

On the other hand, by performing the blow up at $(0,0,\h_3\dots,\h_{2n})$, we compute an approximation of 
$$
t_c(\sqrt{\eta}\h_1,\sqrt{\eta}\h_2,\h_3,\dots ,\h_{2n},\eta^{-1})=\eta\tau_c(\sqrt{\eta}\h_1,\sqrt{\eta}\h_2,\h_3,\dots ,\h_{2n},\eta^{-1}).
$$
Again, from Proposition~\ref{P:first_order_time}, we deduce that under the assumption $(\h_1,\h_2)\neq (0,0)$, a possible approximation is $\tau_c(\sqrt{\eta}\L \h+(I_{2n}-\L)\h,\eta^{-1})=2\pi/b_1+O(\eta)$. However from Lemma~\ref{L:lim_r1_to_0}, we now know that in the nilpotent case, $2\pi/b_1$ is a zero of order two at $(\h_1,\h_2)= (0,0)$. Thus computing a perturbation of the conjugate time, one gets the statement for $\tilde{t}^{(2)}_c$ from Proposition~\ref{P:t_c_bu_eta1/2} and the expression in terms of invariants from Proposition~\ref{P:expansion_r1_eta}.
\end{proof}

Having proved Theorem~\ref{T:big_expansion_theorem}, we can introduce a geometrical invariant that will help us prove Theorem~\ref{T:sre_is_not_so_simple}.
For all $q\in M\setminus \S_1$, let 
$$
\mathcal{A}_{q}=\overline{\left\{t_c(p)p\mid H(p,q)=1/2\right\}}.
$$ 
By the usual property of the Hamiltonian flow, the first conjugate locus at $q$ is given by $\sre_{q}(1,\mathcal{A}_q)$. Furthermore, the set $\mathcal{A}_q$ is an immersed hypersurface of $T^*_{q}M$ and $\mathcal{A}_q\cap \C_{q}(0)$ is reduced to the two points  $p^+=(0,\dots, 0,2\pi/b_1)$, $p^-=(0,\dots, 0,-2\pi/b_1)$.
Then let $\mathcal{A}_{q}^+$ be the tangent cone to $\mathcal{A}_q$ at $p^+$. 

Observe that $\mathcal{A}_{q}^+$ is a geometrical invariant independent of the choice of coordinates on $M$. It can be computed once the asymptotics of the conjugate time are known.

\begin{proof}[Proof of Theorem~\ref{T:sre_is_not_so_simple}]
We prove the theorem by contradiction.
Assume there exists a set of coordinates for which \eqref{E:limit_exp-exphat} does not hold, {\it i.e.}
$$
\lim_{h_0\to +\infty}
\left(
h_0^2 \sup_{\tau\in (0,T)}
\left|
	\sre_{q}\left(\frac{\tau}{h_0},(h_1,\dots ,h_{2n},h_0)\right)-\widehat{\sre}_{q}\left(\frac{\tau}{h_0},(h_1,\dots ,h_{2n},h_0)\right)
\right|
\right)
= 0.
$$
Then we have  that uniformly with respect to $\tau\in (0,T)$,
$$
\sre_{q}\left(\eta \tau,(h_1,\dots ,\h_{2n},\eta^{-1})\right)=\widehat{\sre}_{q}\left(\eta \tau,(h_1,\dots ,\h_{2n},\eta^{-1})\right)+o(\eta^2).
$$
That is, the exponential is a second order perturbation of the nilpotent exponential. If that is the case, as a consequence of Section~\ref{S:order_2_approx}, and in particular Proposition~\ref{P:conjugate_time_simple_case}, we have that for $p_0=(h_1,\dots,h_{2n},\eta^{-1}) \in T_{q}^*M$, 
$$
t_c(p_0)=\frac{2\pi}{b_1}\eta+o(\eta^2).
$$
Then
$$
t_c(p_0)p_0= 
\left(
0,\dots, 0,\frac{2\pi}{b_1}
\right)
+
\eta
\left(
 \frac{2\pi}{b_1}h_1,\dots , \frac{2\pi}{b_1}h_{2n},0
\right)
+o(\eta)
$$
and the cone $\mathcal{A}_{q}^+$ is the affine plane $\{h_0=2\pi/b_1\}$.

However, as a consequence of Theorem~\ref{T:big_expansion_theorem}, the cone $\mathcal{A}_{q}^+$ can be computed using the Agrachev--Gauthier frame, where  we have for $p_0=(h_1,\dots,h_{2n},\eta^{-1}) \in T_{q}^*M\setminus S$,
$$
t_c(p_0)p_0= 
\left(
0,\dots, 0,\frac{2\pi}{b_1}
\right)
+
\eta 
\left(
\frac{2\pi}{b_1}h_1,\dots ,\frac{2\pi}{b_1}h_{2n}, t_c^{(2)}(h_1,\dots ,h_{2n})
\right)
+o(\eta).
$$
For $\mathcal{A}_{q}^+$ to be planar, the following symmetry for $t_c^{(2)}$ is needed (with $r_1^2=h_1^2+h_2^2$):
$$
\lim_{r_1\to 0^+} t_c^{(2)}(h_1,h_2,h_3,\dots ,h_{2n})=-\lim_{r_1\to 0^+} t_c^{(2)}(-h_1,-h_2 ,h_3,\dots ,h_{2n})
$$
for all $(h_3,\dots, h_{2n})\in \R^{2n-2}$.
Given the expression \eqref{E:linear_tc2}, we have rather
$$
\lim_{r_1\to 0^+} t_c^{(2)}(h_1,h_2,h_3,\dots ,h_{2n}) = \lim_{r_1\to 0^+} t_c^{(2)}(-h_1,-h_2 ,h_3,\dots ,h_{2n}),
$$
which is not everywhere zero unless $\gamma_{11}=\gamma_{22}=\gamma_{12}+\gamma_{21}=0$ for all $(h_3,\dots, h_{2n})\in \R^{2n-2}$. That is 
$\kappa^{1i}_1=\kappa^{2i}_2=\kappa^{2i}_1+\kappa^{1i}_2=0$ for all $i\in \ll 3,2n\rr$, which is not generic with respect to the sub-Rieman\-nian structure at $q\in M\setminus (\S_1\cup \S_3)$  (see Proposition~\ref{P:summary_kappa} and Appendix~\ref{A:Computation_invariants}).

In consequence, we have proven that generically with respect to the sub-Rieman\-nian structure at $q\in M\setminus \S$, there does not exist a set of privileged coordinates at $q$ and $T>0$ such that the limit \eqref{E:limit_exp-exphat} holds.
\end{proof}
\begin{remark}
Regarding the non-genericity of $\kappa^{1i}_1=\kappa^{2i}_2=\kappa^{2i}_1+\kappa^{1i}_2=0$, notice that it constitutes $6(n-1)$ independent conditions on the family $\left( \kappa^{ij}_{k}\right)_{\begin{smallmatrix} i,k\in \ll 1,2\rr, \\ j\in \ll 3,2n\rr\end{smallmatrix}}$ and thus a codimension $5(n-1)$ condition (at least) on the $2$-jets of the sub-Riemannian structure at $q$.

Notice that $5n-5> 2n+1$ if $n>2$ and $5n-5= 2n+1$ when $n=2$. Hence in the $n=2$ case, assuming  $q\in M\setminus\S_3$  (see Proposition~\ref{P:summary_kappa}), we ensure the codimension of the condition on the $2$-jets of the sub-Riemannian structure to be $6$.
\end{remark}

\subsection{Next order perturbations}\label{SS:third_order_conjugate}
\label{SS:third_order}

As observed in Section~\ref{SS:asymptotics_away_S}, there exists a subset of initial covectors in $T_{q_0}^*\setminus S_1$ for which our approximation of the conjugate locus is degenerate (this makes the  second order approximation unstable as a Lagrangian map). In particular, for all $q_0\in M$,  this set contains $S_2=\{(h_1,h_2,0,\dots, 0,\eta^{-1})\in T_{q_0}^*M\}$.
As proved in Proposition~\ref{P:def_S_2},  this set is reduced to $S_2$ at points $q_0$ in the complement of a startified codimension $1$ subset $\S_2$ of $M$ if $n=2$.

Hence in preparation of the stability analysis of Section~\ref{S:Stab}, we compute here a third order approximation of the conjugate time in the case of covectors near $S_2$. When $n=2$, we get  a complete description of the sub-Riemannian caustic at points of $M\setminus \S_2$ as a result.

We use a blowup technique similar to the one of  Section~\ref{SS:asymptotics_near_S_1}.
Let $\h\in\R^{2n}$ be such that $(\h_1,\h_2)\neq (0,0)$. We blowup the singularity at $(\h_1,\h_2,0,\dots,0)$ by computing an approximation of the conjugate locus with 
$$
h(0)=
(\h_1,\h_2,\eta \h_3,\dotsc,\eta \h_{2n}).
$$
With $\L $  the square $2n\times 2n$ matrix defined in \eqref{E:Lambda}, $h(0)=\L \h+\eta (I_{2n}-\L) \h$.

We give an equivalent of 
Proposition~\ref{P:expansion_r1_eta} for this case. 
\begin{proposition}\label{P:expansion_r2_eta}
For all $T,R>0$,
normal extremals with initial covector $(\L \h+\eta (I_{2n}-\L) \h,\eta^{-1})$ have the following order 3 expansion at time $\eta \tau$, as  $\eta\to 0^+$, uniformly with respect to $\tau\in [0,T]$ and $h(0)\in B_R$:
\begin{multline*}
	x(\eta \tau,\L \h+\eta (I_{2n}-\L) \h)=
	\eta \xhat(\tau, \L \h)
	+
	\eta^2
		\left[
			x^{(2)} \left(\tau,  \L \h\right)
			+
			\xhat\left(\tau,  (I_{2n}- \L) \h\right)
		\right]
	\\
		+
		\eta^3
		\left[
			x^{(3)} \left(\tau,  \L \h\right)
			+
			x^{(2)} \left(\tau,   \h\right)
			-
			x^{(2)} \left(\tau,   \L\h\right)
			-
			x^{(2)} \left(\tau, (I_{2n}-\L) \h\right)
		\right]
	+O(\eta^4),
	\end{multline*}
$$
	z(\eta \tau)
	=
	\eta^2 
	\zhat(\tau, \L\h)
	+\eta^3
	z^{(3)}(\tau,\L\h)
	+O(\eta^4).
$$

Likewise, the associated covector has the following expansion:
\begin{multline*}
h(\eta \tau,\L \h+\eta (I_{2n}-\L) \h)= \hhat(\tau,\L \h)
+
\eta 
	\left[
		h^{(1)}(\tau,\L \h)
		+
		\hhat(\tau,(I_{2n}-\L) \h)	
	\right]
\\
+
\eta^2
	\left[
		h^{(2)}(\tau, \L \h)
		+
		h^{(1)}(\tau,  \h)	
		-
		h^{(1)}(\tau,  \L \h)	
		-
		h^{(1)}(\tau,(I_{2n}-\L) \h)	
	\right]
+O(\eta^3),
\end{multline*}
$$
w(\eta \tau)= 1+\eta^2 w^{(2)}(\tau, \L\h)+ O(\eta^4).
$$

\end{proposition}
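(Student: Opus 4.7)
The plan is to follow the template of Proposition~\ref{P:expansion_r1_eta}, substituting the new blowup $h(0) = \L\h + \eta(I_{2n}-\L)\h$ into a one-order extension of Proposition~\ref{P:expansion} and applying the polarization identity for quadratic forms.

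First I would extend Proposition~\ref{P:expansion} by an additional order in $\eta$. Since the right-hand side of~\eqref{E:Ham_ode_rescaled} is smooth in $\eta$, iterating the same integration scheme yields coefficient functions $x^{(3)}(\tau,h(0))$, $z^{(3)}(\tau,h(0))$, $h^{(2)}(\tau,h(0))$, $w^{(2)}(\tau,h(0))$, all polynomial in $h(0)$ for each fixed $\tau$. In particular, the coefficients $\widehat{x}, \widehat{h}$ are linear in $h(0)$, while $x^{(2)}, h^{(1)}, \widehat{z}, w^{(2)}$ are quadratic, and $x^{(3)}, z^{(3)}, h^{(2)}$ are cubic.

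Next I would apply the polarization identity
\[
\psi(\L\h + \eta(I_{2n}-\L)\h) = \psi(\L\h) + \eta\bigl[\psi(\h) - \psi(\L\h) - \psi((I_{2n}-\L)\h)\bigr] + \eta^2 \psi((I_{2n}-\L)\h),
\]
valid for any quadratic form $\psi$, to each quadratic coefficient in the $\eta$-expansion of $x, z, h, w$, and use linearity for $\widehat{x}, \widehat{h}$. For the cubic coefficients $x^{(3)}, z^{(3)}, h^{(2)}$, the fact that $h(0) = \L\h + \eta(I_{2n}-\L)\h$ equals $\L\h + O(\eta)$ implies that evaluating these at $h(0)$ gives their value at $\L\h$ modulo $O(\eta)$, which is absorbed once multiplied by the outer $\eta^3$. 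Regrouping by total $\eta$-order then yields the claimed expansions for $x$ and $h$ directly.

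The simplification specific to this case — compared to Proposition~\ref{P:expansion_r1_eta} — is that the cross term in the polarization of $\widehat{z}$ vanishes identically: the explicit formula
\[
\widehat{z}(\tau, h) = \sum_{i=1}^{n} (h_{2i-1}^2 + h_{2i}^2)\,\tfrac{b_i \tau - \sin(b_i \tau)}{2 b_i}
\]
is block-diagonal on the splitting $\R^{2n} = \L\R^{2n} \oplus (I_{2n}-\L)\R^{2n}$, so $\widehat{z}(\tau, \h) = \widehat{z}(\tau, \L\h) + \widehat{z}(\tau, (I_{2n}-\L)\h)$. The same block structure — traceable, via $Q_0$ and $L$ in~\eqref{E:Ham_ode_rescaled}, to the Agrachev--Gauthier-imposed vanishing of the structural constants coupling the $(x_1,x_2)$-block to the rest at $q_0$ — applies to $w^{(2)}$, which is the crucial ingredient allowing the statement to retain the compact form $\zhat(\tau,\L\h)$ and $w^{(2)}(\tau,\L\h)$. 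The most delicate step I expect is justifying the $w$ expansion to order $\eta^4$: one must check that $w^{(1)} \equiv 0$ (already done in Proposition~\ref{P:expansion}) and that $w^{(3)}(\tau, \L\h) = 0$, which should follow from conservation of $H$ applied at the corresponding order together with the same vanishing of cross structural constants. Once this is in hand, the remaining terms fall out by direct collection of coefficients.
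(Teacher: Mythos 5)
Your overall strategy is exactly the paper's: as for Proposition~\ref{P:expansion_r1_eta}, one substitutes $h(0)=\L\h+\eta(I_{2n}-\L)\h$ into the $\eta$-power series of Proposition~\ref{P:expansion} (pushed one order further), uses linearity of $\xhat,\hhat$, the polarization identity for the quadratic coefficients $x^{(2)},h^{(1)},\zhat$, and continuity of the third-order coefficients to replace $h(0)$ by $\L\h$ at top order. Your collection of terms for $x$, $z$ and $h$ is correct, including the observation that the polarization cross term of $\zhat$ vanishes because $\zhat$ is a sum over the $2\times2$ blocks, which is why the $z$-expansion keeps the compact form of the statement.

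The one step that does not hold up as written is your justification of the $w$-estimate with remainder $O(\eta^4)$. Conservation of $H$ gives $\sum_i h_i(\tau)^2=\sum_i h_i(0)^2$ and carries no information about $w=h_0(0)/h_0$, since $h_0$ is not an integral of the motion; so it cannot yield $w^{(3)}(\tau,\L\h)=0$. Likewise, the absence of a cross term in $w^{(2)}$ is governed not by the vanishing at $q_0$ of the structural constants $c_{i0}^0$, $c_{i0}^k$ (which only gives $w^{(1)}=0$), but by their first derivatives, i.e.\ by quantities of the type $\partial_z\partial_{x_k}(X_i)_{2n+1}(q_0)$ with $k\in\ll1,2\rr$ and $i\in\ll3,2n\rr$, which the Agrachev--Gauthier conditions do not force to vanish; establishing the claimed $\eta^3$-cancellation (the cross term of $w^{(2)}$ plus $w^{(3)}(\tau,\L\h)$) would therefore require an explicit computation rather than an appeal to the normal form at $q_0$. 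Note that the polarization argument, and everything used downstream (only $w^{(2)}(\tau,\L\h)$ enters $h^{(2)}$, and hence $x^{(3)}$, through the term $-w^{(2)}\bar{J}\hhat$), only needs $w(\eta\tau)=1+\eta^2 w^{(2)}(\tau,\L\h)+O(\eta^3)$, which does follow at once from $w^{(1)}=0$ and continuity of $w^{(2)}$ in the initial covector; the sharper remainder stated in the proposition is not delivered by the paper's one-line proof either, so if you want it you must compute that $\eta^3$ coefficient explicitly.
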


\begin{proof}
The proof relies on the same arguments as that of Proposition~\ref{P:expansion_r1_eta}.
\end{proof}

We aim to obtain a second order approximation of $\tau_c$ in the case of an initial covector of the form $(\L \h+\eta (I_{2n}-\L) \h,\eta^{-1})$, for $\h\in \R^{2n}$.
The previous section, together with Proposition~\ref{P:expansion_r2_eta}, applies to give us 
$$
\tau_c^{(1)}( \L\h+\eta (I_{2n}-\L)\h)=\tau_c^{(1)}( \L\h),\qquad \forall \h\in \R^{2n}.
$$ 
Similarly to Section~\ref{SS:asymptotics_near_S_1}, for all $\tau>0$, $h\in \R^{2n}$ and $\eta>0$, we denote $F(\tau,h,\eta)=\sre (\eta \tau ; (h,\eta^{-1}))$,
and we set
$$
G(\tau,\h,\eta)=F\left(\tau ,\L \h+\eta (I_{2n}-\L) \h,\eta\right) , \quad \forall \tau>0, \h\in \R^{2n},\eta>0.
$$

The function $G$ admits a formal power series expansion in $\eta$:
$
G(\tau,\h,\eta)=\\\sum_{k\geq 0} \eta^k G^{(k)}(\tau,\h)$. Techniques similar to those introduced in Sections~\ref{SS:asymptotics_away_S} and \ref{SS:asymptotics_near_S_1}  yield the following statement on second order approximations of the conjugate time $\tau_c$.
\begin{proposition}
The second order perturbation of $\tau_c$ with initial covector \\$h(0)= \L\h+\eta (I_{2n}-\L)\h$ satisfies the equation
\begin{multline*}
(\h_1^2+\h_2^2)\tau_c^{(2)}(h(0))
=
-\h_1^2  \frac{\partial \left(
			 G^{(3)}
			 \right)_2}{\partial \h_2}			
-
\h_2^2  \frac{\partial \left(
			 G^{(3)}
			 \right)_1}{\partial \h_1}	
+
\h_1 \h_2
\left(  \frac{\partial \left(
			 G^{(3)}
			 \right)_1}{\partial \h_2}
			 +
			 \frac{\partial \left(
			 G^{(3)}
			 \right)_2}{\partial \h_1}
\right)
\\
+
 (\h_1^2+\h_2^2)(\alpha \h_2-\beta \h_1)\left(
\frac{b_1}{2\pi}(\beta \h_1-\alpha \h_2)
+
4b_1(\alpha \h_1 +\beta \h_2)
\right)
+\sum_{i=3}^{2n} d_i,
\end{multline*}
where $\alpha$ and $\beta$ are the second order invariants introduced in Proposition~\ref{P:summary_kappa} and
$$
d_{k}=
\frac{2\pi^2}{b_1^2}e_k \left(-h_2 \partial_{h_{k}}\left(G^{(3)}\right)_1+h_1 \partial_{h_{k}}\left(G^{(3)}\right)_2\right)\qquad \forall k\in\ll 3,2n\rr,
$$
with $e\in \R^{2n-2}$ the vector such that $Ae= \left(h_2 \partial_{h_1}G^{(2)}-h_1  \partial_{h_2}G^{(2)}\right)_{3,\dots,2n} $, where $A\in \mathcal{M}_{2n-2}(\R)$  is the matrix introduced in Lemma~\ref{P:mini_det_1} and where we denote $\left(v\right)_{3,\dots,2n}=(v_3,\dots,v_{2n})\in \R^{2n-2}$ for all $v\in \R^{2n+1}$.
\end{proposition}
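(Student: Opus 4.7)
The strategy mirrors that of Proposition~\ref{P:conjugate_time_simple_case}: expand the determinant $\Phi$ defined in \eqref{E:def_Phi} at the putative conjugate time and isolate the first non-trivial coefficient. Specifically, I write
$$
\tau_c(h(0),\eta)=\frac{2\pi}{b_1}+\eta\,\tau_c^{(1)}(\L\h)+\eta^2\,\tau_c^{(2)}(h(0))+O(\eta^3),
$$
where the absence of a $(I_{2n}-\L)\h$ contribution at order $\eta$ follows by applying Proposition~\ref{P:conjugate_time_simple_case} to the expansion of Proposition~\ref{P:expansion_r2_eta}: the coordinates $\h_3,\dots,\h_{2n}$ enter the initial covector only at order $\eta$, so they feed into $\Phi$ one power of $\eta$ higher, giving $\tau_c^{(1)}(h(0))=\tau_c^{(1)}(\L\h)$. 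Using \eqref{E:linear_tc2} with $\gamma_{ij}=0$, this reduces to $\tau_c^{(1)}(\L\h)=-2(\alpha\h_1+\beta\h_2)$, which is what will produce the $(\alpha,\beta)$-dependent cross term in the target formula.

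Next, I would substitute the above ansatz into $\Phi(\tau_c,h(0),\eta)=0$, using the rescaled derivatives $\partial_{h_i}F = \partial_{\h_i}G$ for $i\in \ll 1,2 \rr$ and $\partial_{h_i}F = \eta^{-1}\partial_{\h_i}G$ for $i\in \ll 3,2n\rr$, together with the expansion of $G$ in Proposition~\ref{P:expansion_r2_eta}. By the same power counting as in Propositions~\ref{P:conjugate_time_simple_case}--\ref{P:t_c_bu_eta1/2}, the determinant vanishes to a known order in $\eta$, and the first non-trivial coefficient is a linear function of $\tau_c^{(2)}$. The coefficient of $\tau_c^{(2)}$ must equal $(\h_1^2+\h_2^2)$ for exactly the same structural reason as in Proposition~\ref{P:conjugate_time_simple_case}: it comes from the two identities $\partial_{\h_1}\zhat(2\pi/b_1,\L\h)=2\pi\h_1/b_1$ and $\partial_{\h_2}\zhat(2\pi/b_1,\L\h)=2\pi\h_2/b_1$, combined with the last column of $\Phi$ being $(\h_1,\h_2,\ldots)$ at leading order. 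The three inhomogeneous contributions are then identified as: (i) the derivatives of $G^{(3)}$ in $\h_1,\h_2$ (the direct analogue of \eqref{E:def_tauc1_r1neq0} but one order higher); (ii) a contribution of the form $(\partial_\tau^2 \zhat)\bigl(\tau_c^{(1)}(\L\h)\bigr)^2/2$ together with the Taylor shift of $\partial_{\h_i}G^{(2)}$ at time $2\pi/b_1+\eta\tau_c^{(1)}$, whose algebra regroups into the $(\alpha\h_2-\beta\h_1)\bigl(\tfrac{b_1}{2\pi}(\beta\h_1-\alpha\h_2)+4b_1(\alpha\h_1+\beta\h_2)\bigr)$ factor; and (iii) the coupling with the $\h_3,\dots,\h_{2n}$ columns of the expanded matrix, which produces the $d_k$ terms.

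The hard part is clearly step~(iii): the $\h_k$ columns ($k\geq 3$) are of order $\eta$ but their derivatives of $G^{(2)}$ feed back into the $(\h_1,\h_2)$-block via a linear system whose matrix $A$ is precisely the one already isolated in Lemma~\ref{P:mini_det_1}. Concretely, after cofactor expansion along the last column, one obtains a $2n\times 2n$ minor whose Schur complement on the $(\h_3,\dots,\h_{2n})$-rows is governed by $A$. Solving $Ae=(\h_2\partial_{h_1}G^{(2)}-\h_1\partial_{h_2}G^{(2)})_{3,\dots,2n}$ for $e$ and reinjecting yields exactly $d_k=\tfrac{2\pi^2}{b_1^2}e_k\bigl(-\h_2\partial_{h_k}(G^{(3)})_1+\h_1\partial_{h_k}(G^{(3)})_2\bigr)$. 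Everything else is a careful but mechanical expansion of the $2n\times 2n$ determinant, collecting powers of $\eta$ and using the explicit values of $\hhat$, $\xhat$, $\zhat$ and $\partial_\tau\zhat$ at $\tau=2\pi/b_1$.
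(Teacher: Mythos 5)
Your proposal follows essentially the same route as the paper: expand $\Phi$ at $2\pi/b_1+\eta\,\tau_c^{(1)}(\L\h)+\eta^2\delta\tau_2$ using the rescaled derivatives $\partial_{h_i}F=\partial_{\h_i}G$ for $i\in\ll1,2\rr$, $\partial_{h_i}F=\eta^{-1}\partial_{\h_i}G$ for $i\in\ll3,2n\rr$, extract the first nontrivial coefficient of the determinant, and recover the $d_k$ via the Cramer/Schur mechanism with the matrix $A$ — exactly the content of the paper's Lemmas~\ref{L:mini_det_4} and \ref{L:expression_d_i}. One signpost in your item (ii) is inaccurate but harmless: $\partial_\tau^2\zhat(2\pi/b_1,\L\h)=0$, so that term contributes nothing, and in the paper the $(\alpha,\beta)$ block arises instead from $z^{(3)}$, $h^{(1)}$, $x^{(2)}$ and the shift $\tau_c^{(1)}\partial_\tau G^{(2)}$ (Lemma~\ref{L:expression_d_i}); since you defer to the mechanical expansion anyway, these terms would emerge correctly.
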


\begin{proof}

With $\delta\! \tau_1,\delta\! \tau_2 \in \R$, $\h\in \R^{2n}$, we have
\begin{multline*}
G\left(\frac{2\pi}{b_1}+\eta\delta\! \tau_1+\eta^2 \delta\! \tau_2,\h,\eta\right)
=
\eta 
\eval{
	G^{(1)}
}{\tau=\frac{2\pi}{b_1}}
+
\eta^2 
\eval{\left(
	G^{(2)}
	+\delta\! \tau_1
	\frac{\partial G^{(1)}}{\partial \tau}
	\right)
}{\tau=\frac{2\pi}{b_1}}
\\
+
\eta^3
\eval{
\left[
 G^{(3)}
 +
 \delta\! \tau_2
 \frac{\partial G^{(1)}}{\partial \tau}
 +
 \frac{\delta\! \tau_1^2}{2}
 \frac{\partial^2 G^{(1)}}{\partial \tau^2}
 +
  \delta\! \tau_1
 \frac{\partial G^{(2)}}{\partial \tau}
 \right]
}{\tau=\frac{2\pi}{b_1}}
 + O(\eta^3).
\end{multline*}

To evaluate $\Phi
\left(
2\pi /b_1+\eta\delta\! \tau_1+\eta^2 \delta\! \tau_2,  \L\h+\eta (I_{2n}-\L)\h  ,\eta
\right)$,  $\delta\! \tau_1,\delta\! \tau_2\in \R$, $\bar{h}\in \R^{2n}$, notice that 
$$
\frac{\partial F}{\partial{h_i}}=\frac{\partial G}{\partial{\h_i}},\quad \forall i\in \ll 1,2\rr
\quad\text{ and }\quad
\frac{\partial F}{\partial{h_i}}=\frac{1}{\eta} \frac{\partial G}{\partial{\h_i}},\quad \forall i\in \ll 3,2n\rr .
$$
Hence with  $\delta\! \tau_1=\tau_c^{(1)}(\L \h)$, one has
 $
 \Phi
\left(
2\pi /b_1+\eta \delta\! \tau_1+\eta^2 \delta\! \tau_2,  \L\h+\eta (I_{2n}-\L)\h  ,\eta
\right)
= O(\eta^{4n+2})$.
The result is again obtained by computing the first nontrivial term in the expansion of the determinant $\Phi$ (see Lemma~\ref{L:mini_det_4}). We obtain the stated result by refining this evaluation thanks to Lemma~\ref{L:expression_d_i}.
\end{proof}

Up to the computation of $G^{(3)}$, which is carried out in Appendix~\ref{A:third_order}, we have enough information to compute the conjugate time, similarly to Proposition~\ref{P:conjugate_time_simple_case}.

\begin{remark}
By definition of the invariants $\chi_{11},\chi_{12},\chi_{22}$ introduced in Appendix~\ref{A:Computation_invariants}, the third dimensional case would correspond to the case $\kappa^{ij}_{k}=0$ if $3\leq i,j,k\leq 2n$, $\alpha=\beta	=0$.
Under these conditions, one has 
$\tau_c^{(1)}(\h)=0$, $\tau_c^{(2)}(\h)=-3 (\chi_{11}+\chi_{22})(\h_1^2+\h_2^2)$
and
$$
\begin{aligned}
\left[\sre (\eta \tau_c ; (h,\eta^{-1}))\right]_1
=
\eta^3\left(2 \h_1^3 (\chi_{22}-\chi_{11})+3 \h_1^2 \h_2 \chi_{12}+\h_2^3 \chi_{12}\right)+O(\eta^4),
\\
\left[\sre (\eta \tau_c ; (h,\eta^{-1}))\right]_2
=
\eta^3\left(2 \h_2^3 (\chi_{11}-\chi_{22})+3 \h_1 \h_2^2 \chi_{12}+\h_1^3 \chi_{12}\right)+O(\eta^4).
\end{aligned}
$$
This expression corresponds to the classical astroidal caustic expansion observed in the $3$-dimensional contact case.
\end{remark}


\section{Stability of the sub-Riemannian caustic}
\label{S:Stab}

\subsection{Sub-Riemannian to Lagrangian stability}
\label{SS:SRtoLagrangian}
The aim of the whole classification is to prove Theorem~\ref{T:Stability}. 
Recall we denote by $\sre_{q_0}^1:T_{q_0}^*M\to M$ the sub-Riemannian exponential at time $1$, that is $\sre_{q_0}^1=\sre_{q_0}(1,\cdot)$.
We first  observe the following immediate fact.

\begin{proposition}\label{P:Lag_impl_Sub}
Let $(M,\Delta,g)$ be a sub-Riemannian manifold and  let $q_0\in M$. If the exponential map at time $1$,  $\sre_{q_0}^1:T_{q_0}^*M\to M$, is Lagrange stable at $p\in T_{q_0}M$, then 
 $\sre_{q_0}^1$ is sub-Riemannian stable at $p$.
\end{proposition}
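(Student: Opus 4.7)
The plan is to observe that this is essentially a restriction-of-perturbations argument: Lagrange stability is stability with respect to an a priori larger class of perturbations than sub-Riemannian stability, so the former should imply the latter, provided the natural map from sub-Riemannian structures to Lagrangian maps is continuous for the relevant Whitney topologies.

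First, I would unpack the two notions of stability at $p$. Lagrange stability means that there exist neighborhoods $V_p\subset T_{q_0}^*M$ and $V_{\sre_{q_0}^1}$ of ${\sre_{q_0}^1}_{|V_p}$ in the Whitney $C^\infty$-topology on the space of Lagrangian maps $V_p\to M$ such that every Lagrangian map $g\in V_{\sre_{q_0}^1}$ is Lagrange equivalent to ${\sre_{q_0}^1}_{|V_p}$. Sub-Riemannian stability at $p$ means the same property, but where $g$ is required to be the exponential at $q_0$ of a sub-Riemannian structure close to $(\Delta,g)$ in the Whitney topology on contact sub-Riemannian metrics, restricted to a neighborhood of $p$.

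Next, the key intermediate step is to check that the assignment which sends a contact sub-Riemannian structure $(\Delta',g')$ on $M$ to its sub-Riemannian exponential $\sre_{q_0}^{1,(\Delta',g')}$ at $q_0$ is continuous as a map from the space of sub-Riemannian structures (with Whitney $C^\infty$-topology) to the space of Lagrangian maps $T_{q_0}^*M\to M$ (also endowed with its Whitney $C^\infty$-topology). This follows from smooth dependence of the Hamiltonian $H$ on the sub-Riemannian data, smooth dependence of solutions of the Hamiltonian ODE~\eqref{E:ham_ode} on parameters, and the fact that the exponential is the projection of the time-$1$ Hamiltonian flow, which is automatically a Lagrangian map since the Hamiltonian flow preserves the canonical symplectic form on $T^*M$.

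Given this continuity, the implication is immediate: let $V_p$ and $V_{\sre_{q_0}^1}$ be the neighborhoods witnessing Lagrange stability. By continuity, the preimage $\mathcal V$ of $V_{\sre_{q_0}^1}$ in the space of sub-Riemannian structures is an open neighborhood of $(\Delta,g)$. For any $(\Delta',g')\in \mathcal V$, the corresponding exponential, restricted to $V_p$, is a Lagrangian map lying in $V_{\sre_{q_0}^1}$, hence Lagrange equivalent to ${\sre_{q_0}^1}_{|V_p}$. This is exactly sub-Riemannian stability at $p$.

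I do not expect a genuine obstacle here; the statement is a soft general principle. The only point requiring mild care is to justify that one can work with germs at $p$ (or with the restriction to a small neighborhood $V_p$) so that the Whitney topology on Lagrangian maps $V_p\to M$ is fine enough to make the continuity assertion above completely routine, but this is a standard consequence of the smooth dependence of ODE flows on parameters.
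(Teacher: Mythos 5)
Your underlying principle is the right one, but note that with the paper's actual definition the proposition needs no continuity argument at all: the paper defines sub-Riemannian stability as Lagrange stability \emph{restricted to the class of sub-Riemannian exponential maps}, i.e.\ one keeps the same Whitney neighborhood $V_f$ of ${\sre_{q_0}^1}_{|V_p}$ and merely shrinks the class of test maps $g\in V_f$ from all Lagrangian maps to those that are sub-Riemannian exponentials. Since every such exponential is in particular a Lagrangian map in $V_f$, Lagrange stability trivially implies sub-Riemannian stability; this is why the paper records the statement as an ``immediate fact'' with no proof. Your argument instead reads sub-Riemannian stability as stability under perturbations of the sub-Riemannian structure $(\Delta',g')$, and bridges the two notions by a continuity claim for the assignment $(\Delta',g')\mapsto \sre_{q_0}^{1,(\Delta',g')}$. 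That buys you a genuinely stronger-sounding conclusion (structure-level stability), but it also imports a technical point you should not gloss over: as literally stated, continuity into the Whitney $C^\infty$-topology on maps defined on all of $T_{q_0}^*M$ fails, because even a compactly supported perturbation of the metric changes the exponential at covectors with arbitrarily large $h_0$ by an amount that does not decay, while a Whitney neighborhood may impose tolerance functions shrinking at infinity in the fiber. The claim is only valid after localizing to a relatively compact neighborhood $V_p$ of $p$ (equivalently, working with germs), which your final remark acknowledges; with that localization made explicit your argument is sound, but it is doing extra work that the paper's definition renders unnecessary.
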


As a consequence of Proposition~\ref{L:no_singular_near_0}, classifying Lagrangian stable singularities of the sub-Riemannian exponential  near the starting point $q_0$ requires  considering  inital covectors in $\C_{q}(1/2)$ such that $h_0$ is very large. 
As stated in the previous sections, some restrictions on the starting point are necessary to prove stability. Hence we consider points on the complementary of a codimension $1$ stratified subset $\S$ of $M$, containing $\S_1$, $\S_2$ and $\S_3$, introduced in Section~\ref{S:Singularities_SR_exponential}, Proposition~\ref{P:def_S_2} and Proposition~\ref{P:summary_kappa} respectively. In Section~\ref{SS:classification_three}, we prove the following theorem, of which  Theorem~\ref{T:Stability} is a corollary.

\begin{theorem}\label{T:stability_time_dep_exp}
Let $(M,\Delta,g)$ be a generic $5$-dimensional contact sub-Riemannian manifold and let $q_0\in M\setminus  \S$. There exist $\bar{\eta}>0$ such that for all 
$(h_1,h_2,h_3,h_4,h_0)\in \C_{q}(1/2)\cap \{|h_0|> \bar{\eta}^{-1}\}$, the first conjugate point of $\sre_{q_0}$ with initial covector $(h_1,h_2,h_3,\\h_4,h_0)$ is a Lagrange stable singular point of type  $\mathcal{A}_2$, $\mathcal{A}_3$, $\mathcal{A}_4$, $\mathcal{D}_4^+$ or $\A_5$.
\end{theorem}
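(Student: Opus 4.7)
The strategy is to apply Proposition~\ref{P:Lag_impl_Sub} and prove Lagrange stability of $\sre_{q_0}^1$ at first conjugate points arising from covectors with large $h_0$. By Proposition~\ref{L:no_singular_near_0}, choosing $\bar{\eta}>0$ small enough ensures that all initial covectors $p\in \C_{q_0}(1/2)$ whose first conjugate time is short enough to produce a singularity near $q_0$ have $|h_0|>\bar{\eta}^{-1}$. This places us inside the asymptotic regime of Section~\ref{S:order_2_approx}, so that the exponential map is a controlled perturbation of the nilpotent exponential as described by Proposition~\ref{P:expansion}. All Lagrangian structure on $\sre_{q_0}^1$ comes for free, since the map is the projection of the Hamiltonian flow.

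The next step is to cover $\C_{q_0}(1/2)\cap \{|h_0|>\bar{\eta}^{-1}\}$ by three overlapping domains, each corresponding to one of the asymptotic regimes analyzed in Section~\ref{S:order_2_approx}: the generic region $\Omega_{\mathrm{gen}}=\C_{q_0}(1/2)\setminus S_1^{\varepsilon}$ where Proposition~\ref{P:CL_order2} applies; a neighborhood $\Omega_{S_1}$ of $S_1$ where the blowup of Proposition~\ref{P:expansion_r1_eta} and the polynomial $P$ of Proposition~\ref{P:t_c_bu_eta1/2} control the caustic; and a neighborhood $\Omega_{S_2}$ of $S_2$ where the third-order analysis of Section~\ref{SS:third_order_conjugate} supplies the needed expansion. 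On each domain, the formulas of Sections~\ref{SS:asymptotics_away_S}, \ref{SS:asymptotics_near_S_1} and \ref{SS:third_order_conjugate} yield an explicit leading jet for the Lagrangian map $p\mapsto (t_c(p)p,\sre_{q_0}(t_c(p),p))$ in terms of the invariants $(\kappa^{ij}_k)$, $\alpha$, $\beta$ and the higher-order invariants of Section~\ref{SS:third_order_conjugate}.

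On each domain, the Lagrangian singularity type is identified by extracting a generating family germ and applying Arnold's classification of stable Lagrangian singularities in dimension~$5$. On $\Omega_{\mathrm{gen}}$, the first-order caustic from Proposition~\ref{P:CL_order2} has corank $1$ (on the complement of $\S_2$), so only singularities of the $\mathcal{A}$-series arise; counting vanishing order of the appropriate Jacobian in the invariants gives $\mathcal{A}_2$ generically, and $\mathcal{A}_3,\mathcal{A}_4,\mathcal{A}_5$ on subsets of codimension $1,2,3$ of the covector sphere (so of the caustic hypersurface), with $\mathcal{A}_6$ requiring codimension $4$ and therefore absent. On $\Omega_{S_2}$, the expansion of Section~\ref{SS:third_order_conjugate} shows the caustic map has corank $2$ with Hessian controlled by $(\alpha,\beta)$ and the invariants $\chi_{ij}$; genericity (via Proposition~\ref{P:def_S_2}) makes the cubic form non-degenerate, and a sign analysis on its discriminant picks out the hyperbolic umbilic $\mathcal{D}_4^+$. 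On $\Omega_{S_1}$, the quadratic polynomial $P$ of Proposition~\ref{P:t_c_bu_eta1/2} encodes two sheets of the caustic meeting along the $S_1$ stratum, yielding only $\mathcal{A}_2$ and $\mathcal{A}_3$ singularities.

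The main obstacle is twofold: ruling out the remaining stable types $\mathcal{A}_6, \mathcal{D}_5^{\pm},\mathcal{D}_6^{\pm},\mathcal{E}_6^{\pm}$ and $\mathcal{D}_4^{-}$, and matching the three asymptotic descriptions on their overlaps. For the first, one shows that each excluded type imposes more than $5$ independent conditions on the $2$- and $3$-jets of the sub-Riemannian structure, hence occurs only on strata of codimension $>5$ in $M$ which can be absorbed by enlarging $\S$; the distinction between $\mathcal{D}_4^{+}$ and $\mathcal{D}_4^{-}$ is settled by the sign of the discriminant of the cubic form, using positivity of the quantity $K$ and a generic non-vanishing of $\alpha^2+\beta^2$. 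For the matching problem, one uses that Lagrange-equivalence type is an open condition in the $C^3$-Whitney topology (cf.\ Arnold's classification theorem) together with uniqueness of the singularity type at a given point, so agreement on the overlaps is automatic once each regime is handled correctly. Theorem~\ref{T:Stability} is then an immediate consequence by restricting $\sre_{q_0}^1$ to $V_{q_0}\setminus U$ and Proposition~\ref{P:Lag_impl_Sub}.
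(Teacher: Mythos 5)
Your overall scaffolding (Proposition~\ref{P:Lag_impl_Sub} plus Proposition~\ref{L:no_singular_near_0} to reduce to large $h_0$, then a three-domain analysis matching Section~\ref{S:order_2_approx}) agrees with the paper, but the core of your classification mislocates the singularity types, and this is a genuine gap rather than a cosmetic difference. The corank-$2$ points, hence the $\mathcal{D}_4^+$ singularities, occur on the domain near $S_1$, not near $S_2$: they correspond exactly to the doubling of the conjugate time when $r_1=\sqrt{h_1^2+h_2^2}$ is small, i.e.\ to double roots of the quadratic $P(\delta\tau)$ of Proposition~\ref{P:t_c_bu_eta1/2}, where $\ker\Jac_{p_0}\sre_{q_0}(t_c(p_0))$ becomes $2$-dimensional (the set $S^+$, cf.\ Remark~\ref{R:2dim_ker} and Proposition~\ref{L:kernel_r1=0}). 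Your claim that $\Omega_{S_1}$ only carries $\A_2,\A_3$ therefore misses precisely the covectors the theorem's $\D_4^+$ case is about, and also misses that $\A_5$ occurs there (and on the third domain). Conversely, your proposed corank-$2$ ``cubic form controlled by $\chi_{ij}$'' analysis near $S_2$ rests on a false premise: Proposition~\ref{L:kernel_r2=0} shows the kernel is always $1$-dimensional on that domain, so only $\A$-series singularities (up to $\A_5$) appear there; the $\chi_{ij}$ enter the third-order expansion of the conjugate time, not a Hessian of an umbilic point. On the generic domain the paper in fact gets only $\A_2,\A_3,\A_4$ (a two-parameter scaling symmetry reduces the map there to an effectively $3$-dimensional Lagrangian map), whereas you allow $\A_5$ there and exclude it near $S_1$ — again the opposite of what the asymptotics give.

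Your mechanism for ruling out the remaining stable classes is also not the paper's and, as stated, does not work: arguing that $\A_6,\D_5^{\pm},\D_6^{\pm},\mathcal{E}_6^{\pm}$ impose conditions of codimension $>5$ ``in $M$'' conflates strata in the manifold with conditions on covectors at a fixed generic point. The paper instead computes leading-order approximations $\Psi_k$, $\Phi_k$, $\Gamma_k$ of the jet functions $\phi_{i_1\dots i_k}$ of Proposition~\ref{C:sing_phi} on each domain, checks (partly by computer algebra) that their nonvanishing certifies the normal-form jet conditions, and shows that for $q_0\notin\S$ they cannot all vanish simultaneously at any admissible covector; the analysis of $\S_2$ (Proposition~\ref{P:def_S_2}) and of the rank of $\zeta_i$ (Proposition~\ref{P:summary_kappa}) is what makes this a codimension-$1$ exclusion on $M$. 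Without an argument of this kind — or a correctly executed generating-family analysis on each domain, including the corank-$2$ locus near $S_1$ — your proposal does not establish the stated list of types.
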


Assuming Theorem~\ref{T:stability_time_dep_exp} holds, we can now Theorem~\ref{T:Stability}.
\begin{proof}[Proof of Theorem~\ref{T:Stability}]
As a consequence of Proposition~\ref{P:Lag_impl_Sub}, we prove the Lagrange stability of the singular points of $\sre_{q_0}^1$. For all $t>0$, $p_0\in T_{q_0}^*M$, $\sre_{q_0}^1(t p_0)=\sre_{q_0}(t,p_0)$. Hence for a given covector $p_0 \in \{ H\neq 0 \}$,  $t_c(p_0) p_0$ is a critical point of $\sre_{q_0}^1$.

Recall that for all $q\in M$, we have set $\mathcal{A}_{q_0}=\overline{\left\{t_c(p_0)p_0\mid H(p_0,q_0)=1/2\right\}}$, and the caustic is the set $\sre_{q_0}^1\left(\mathcal{A}_{q_0} \right)$.

Since $\sre_{q_0}^1\left(\C_{q}(0)\right)=q_0$, to prove the statement it is sufficient to show the existence of $V_{q_0}$ neighborhood of $q_0$ such that $\sre_{q_0}^1$ is Lagrange stable at every point of 
$\mathcal{A}_{q_0}\cap  {\left(\sre_{q_0}^1\right)}^{-1}\left(V_{q_0}\right)\cap \{H>0\} $ (and satisfies the stated classification). As a result of Theorem~\ref{T:stability_time_dep_exp}, what remains to prove is that there exists $R>0$ such that for all covectors $p\in \mathcal{A}_{q_0}\cap\C_{q_0}((0,R))$, 
$$
\frac{p}{\sqrt{2H(p,q_0)}}\in  \C_{q}(1/2)\cap \{|h_0|> \bar{\eta}^{-1}\}
$$
 with $\bar{\eta}>0$ as in the statement of Theorem~\ref{T:stability_time_dep_exp}, but this is Proposition~\ref{L:no_singular_near_0}.
\end{proof}

\subsection{Classification methodology}

We first recall normal forms for the stable singularities that appear in Theorem~\ref{T:stability_time_dep_exp}.
\begin{definition}
Let $f:\R^5\rightarrow \R^5$ be a smooth map singular at $q\in \R^5$.
Assume there exist variables $\x$ centered at $q$ and  and variables centered at $f(q)$ such that 
\begin{itemize}
\item
$f(\x_1,\dots, \x_5)=(\x_1^2,\,\x_2,\,\x_3,\,\x_4,\,\x_5)$, then the singularity is of type $\A_2$;

\item
$f(\x_1,\dots, \x_5)=(\x_1^3+\x_1\x_2 , \, \x_2, \, \x_3, \, \x_4, \, \x_5)$,  then the singularity is  of type $\A_3$;

\item
$f(\x_1,\dots, \x_5)=(\x_1^4+\x_1^2\x_2+\x_1\x_3, \, \x_2, \, \x_3, \, \x_4, \, \x_5)$,  then the singularity is  of type $\A_4$;

\item
$f(\x_1,\dots, \x_5)=(\x_1^5+\x_1^3\x_2+\x_1^2\x_3+\x_1\x_4, \, \x_2, \, \x_3, \, \x_4, \, \x_5)$,  then the singularity is   of type $\A_5$;

\item
$f(\x_1,\dots, \x_5)=(\x_1^2+\x_2^2+\x_1\x_3, \, \x_1\x_2, \, \x_3, \, \x_4, \, \x_5)$,  then the singularity is of type $\mathcal{D}_4^+$.

\end{itemize}
\end{definition}

We use these normal forms to characterize the singularities in terms of jets.
Let $M$ be a $5$-dimensional manifold, let $q_0\in M$ and let $g:T^*_{q_0}M\to M$ be a Lagrangian map. Let $p_0$ be a critical point of $g$. We transpose the normal form definition of stable singularities to condition on the jets of $g$. Given a set of coordinates $\x$ on $T^*_{q_0}M$,
let us introduce the functions
 (depending on whether the kernel of the Jacobian matrix of $g$ is of dimension 1 or 2)
$$
\phi_{i_1\dots i_k}(p_0)=\det\left(\partial_{\x_{i_1}}\dots \partial_{\x_{i_k}} g, V_2,V_3,V_4,V_5\right),
\quad 
\text{ if }\dim \ker \Jac_{p_0} g=1,
$$
$$
\phi'_{i_1\dots i_k}(p_0)=\det\left(\partial_{\x_{i_1}}\dots \partial_{\x_{i_k}} g, \partial_{\x_1}\partial_{\x_2}g,V'_3,V'_4,V'_5\right),
\quad 
\text{ if } \partial_{\x_1}g=\partial_{\x_2}g=0.
$$
(Where we denote by $V_2,V_3,V_4,V_5$, linearly independent vectors, depending smoothly on $p_0$, generating $\im \Jac_{p_0} g$ if $\dim \ker \Jac_{p_0} g=1$ and likewise
$V'_3,V'_4,V'_5$, linearly independent vectors, depending smoothly on $p_0$, generating $\im \Jac_{p_0} g$ if $\dim \ker \Jac_{p_0} g=2$.)

In terms of $\phi_{i_1,\dots i_k}$, we have the following characterization of Lagrangian equivalence classes.

\begin{proposition}\label{C:sing_phi}
Let $M$ be a $5$-dimensional manifold, let $g:T^*_{q_0}M\to M$ be a Lagrangian map and let $p_0\in T_{q_0}^*M$. Assume  $\ker \Jac_{p_0} g $ is $1$-dimensional, if there exists coordinates $(\x_1,\x_2,\x_3,\x_4,\x_5)$ such that $\partial_{\x_1}g(p_0)=0$ and the following holds at $p_0$
\begin{itemize}
\item $ \phi_{11} \neq 0$, then $p_0$ is a singular point of type $\A_2$;

\item  $ \phi_{11} =0$, $ \phi_{111}\cdot \phi_{12} \neq 0$, then $p_0$ is a singular point of type $\A_3$;

\item  $ \phi_{11} =\phi_{111}  = \phi_{12} =0$, $ \phi_{1111} \cdot \phi_{112} \cdot \phi_{13} \neq 0$, then $p_0$ is a singular point of type $\A_4$;

\item $ \phi_{11} =\phi_{111}   = \phi_{12} = \phi_{1111} = \phi_{112} = \phi_{13} =0$, $ \phi_{11111}  \cdot \phi_{1112}  \cdot \phi_{113}  \cdot \phi_{14} \neq 0$, then $p_0$ is a singular point of type $\A_5$.
\end{itemize}
Assume   $\ker \Jac_{p_0} g$ is $2$-dimensional, if there exists coordinates $(\x_1,\x_2,\x_3,\x_4,\x_5)$ such that $\partial_{\x_1}g=\partial_{\x_2}g=0$ and 
$\phi'_{11}\cdot \phi'_{22}(p_0)>0 $, $\phi'_{13}(p_0)\neq 0$ then $p_0$ is a singular point of type $\D_4^+$.
\end{proposition}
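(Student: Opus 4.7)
The plan is to prove the characterization in three steps: (i) show that each $\phi_{i_1\dots i_k}$ and $\phi'_{i_1\dots i_k}$ has a vanishing locus that is intrinsic, depending neither on the choice of the complementary vectors $V_j, V'_j$ nor on the choice of adapted coordinates $(\x_1,\dots,\x_5)$; (ii) verify by direct computation that the listed conditions are precisely realized on the corresponding normal forms; (iii) invoke finite determinacy for Lagrangian map-germs to upgrade the jet-level characterization into a statement of Lagrangian equivalence. Steps (i) and (ii) are algebraic checks; step (iii) is the only place where the specifically Lagrangian (rather than merely right--left) structure enters.

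For the $\A_k$ cases, the kernel of $\Jac_{p_0}g$ is one-dimensional and, by hypothesis, spanned by $\partial_{\x_1}$. The image $\im \Jac_{p_0}g$ is the four-dimensional subspace $\Span(V_2,V_3,V_4,V_5)\subset T_{g(p_0)}M$, and $\phi_{i_1\dots i_k}(p_0)$ is, up to a nonvanishing factor, the component of $\partial_{\x_{i_1}}\dots\partial_{\x_{i_k}}g$ transverse to this hyperplane. Replacing $(V_j)$ by another smooth frame of $\im \Jac_{p_0}g$, or changing coordinates on the source while keeping $\x_1$ aligned with the kernel, multiplies $\phi_{i_1\dots i_k}$ by a nonvanishing smooth function plus terms that vanish at $p_0$ on the lower-order vanishing locus; hence each vanishing/non-vanishing condition is well-posed. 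A direct computation on the normal form $f(\x)=(\x_1^{k+1}+\x_1^{k-1}\x_2+\dots+\x_1\x_{k-1},\x_2,\dots,\x_5)$ shows that $\partial_{\x_1}^{j}f$ vanishes at $0$ for $j\leq k$, with $\partial_{\x_1}^{k+1}f$ a nonzero multiple of the first basis vector, and the analogous statement for the mixed derivatives $\partial_{\x_1}^{j-1}\partial_{\x_l}f$. Unwinding the definition of $\phi$, this reproduces exactly the chain of vanishing/non-vanishing conditions listed in the proposition.

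For $\D_4^+$, the kernel is two-dimensional, $\im \Jac_{p_0}g$ is three-dimensional, and $\phi'_{i_1\dots i_k}$ is built by adjoining $\partial_{\x_1}\partial_{\x_2}g$ as a distinguished fifth column. The conditions $\phi'_{11}\phi'_{22}(p_0)>0$ and $\phi'_{13}(p_0)\neq 0$ then encode respectively that the restriction of the Hessian of $g$ to the kernel direction is definite of rank two with determinant of appropriate sign, that $\partial_{\x_1}\partial_{\x_2}g$ is transverse to $\im \Jac_{p_0}g$, and that the kernel splits off a nondegenerate $\x_3$-direction. Direct evaluation on $f(\x)=(\x_1^2+\x_2^2+\x_1\x_3,\x_1\x_2,\x_3,\x_4,\x_5)$ gives $\phi'_{11}(0)=\phi'_{22}(0)=2$, $\phi'_{13}(0)=1$, confirming that the conditions are satisfied by the normal form, and that the opposite sign for $\phi'_{11}\phi'_{22}$ would distinguish $\D_4^-$.

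The main obstacle is step (iii): matching the jet data along the kernel is a priori only a right--left invariant, and we need a Lagrange equivalence. The resolution is to appeal to the classical finite determinacy theorem for Lagrangian map-germs of corank at most two in dimension $\leq 5$ (see \cite[Ch.~21]{Arnold_singularities_diff_maps1}): the classes $\A_2,\dots,\A_5$ and $\D_4^\pm$ are $(k+1)$-determined within the Lagrangian category, and the jet-level conditions captured by the $\phi$'s and $\phi'$'s are exactly the ones characterizing membership in each Lagrangian equivalence class. Since we have shown in step (ii) that these conditions hold simultaneously for the normal form and, by hypothesis, for $g$ at $p_0$, the two map-germs are Lagrange equivalent, which yields the claim.
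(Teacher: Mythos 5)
Your proposal is correct and follows essentially the same route as the paper: reduce each case to checking that the vanishing/non-vanishing conditions on the $\phi$'s and $\phi'$'s force the relevant jet of $g$ to agree, after coordinate changes at $p_0$ and $g(p_0)$, with the $\A_k$ or $\mathcal{D}_4^+$ normal form, and then conclude by finite determinacy/stability of these classes. The only added value is that you make explicit two points the paper leaves implicit — the independence of the conditions from the choice of the frame $V_j$, $V'_j$, and the passage from right--left jet equivalence to Lagrange equivalence via the determinacy theorem for Lagrangian germs — which is a welcome clarification rather than a different argument.
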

\begin{proof}
This is a matter of proving that $g$ has the same $k$-jets as the normal form for $\A_k$ singularities, $k\in \ll 2,5\rr$, and $2$-jet for $\mathcal{D}_4^+$.    For each of the stated cases, the existence of changes of coordinates at $p_0$ and $g(p_0)$ such that it is the case is then warranted by the stated conditions.
\end{proof}
\begin{remark}
The condition $\phi'_{11}\cdot \phi'_{22}(p_0)>0 $ corresponds to the distinction between $\mathcal{D}_4^+$ and $\mathcal{D}_4^-$ singularities, the latter corresponding to the opposite sign.
\end{remark}

Recall that we are considering points $q_0\in M\setminus (\mathfrak{S}_1\cup \mathfrak{S}_2)$, where $\mathfrak{S}_1$ (introduced at the beginning of Section~\ref{S:conjugate_time}) and $\mathfrak{S}_2$ (introduced in Proposition~\ref{P:def_S_2}) are both stratified subsets of $M$ of codimension $1$ at most.

Let $(M,\Delta,g)$ be a contact sub-Riemannian manifold of dimension $5$ and let $q_0\in M$. 
To study the sub-Riemannian caustic at $q_0$, we study for a given $p_0$ the stability at $p_0\in \C_{q}(1/2)$ of $\sre_{q_0}(t_c(p_0),\cdot)$. To apply Proposition~\ref{C:sing_phi}, we first compute an approximation the linear spaces $\ker \Jac_{p_0} \sre_{q_0}(t_c(p_0))$ and  $\im \Jac_{p_0} \sre_{q_0}(t_c(p_0))$. Then we compute approximations of the functions $\phi_{i_1\dots i_k}$ with by approximating the map
$$
v\mapsto \det\left(v,\im \Jac_{p_0} \sre_{q_0}(t_c(p_0))\right),
$$
for a well-chosen representation of $\im \Jac_{p_0} \sre_{q_0}(t_c(p_0))$.

Let $S_1
=
\left\{
p\in T_{q_0}^*M \mid h_1=h_2=0
\right\}$
and 
$
S_2
=
\left\{
p\in T_{q_0}^*M \mid h_3=h_4=0
\right\}
$.
As a consequence of Section~\ref{S:conjugate_time}, this stability analysis is carried independently on the three domains of initial covectors, $T^*_{q_0}M\setminus (S_1\cup S_2)$, near $S_1$ and near $S_2$, after blowup of the exponential map.

\begin{remark}\label{R:method_derivatives}
Let $\tau\in \R^+$ and $(h,\eta)\in \R^5$. The map $\sre_{q_0}(\eta \tau)$ is critical at $(h,\eta^{-1})$ if there exists $v\in \R^5$ such that $\Jac_{p_0} \sre_{q_0}(\eta \tau) \cdot v=0$.

With $F(\tau,h,\eta)=\sre_{q_0} (\eta \tau ; (h,\eta^{-1}))$, for all $\tau>0$, $h\in \R^{4}$, $\eta>0$, we denote $\partial_i=\partial_{h_i}$, for all $i\in \ll 1, 4 \rr $, and $\partial_5=\partial_{h_0}=-\eta^2 \partial_{\eta}+\eta \tau \partial_{\tau}$, we have
$$
\Jac_{p_0} \sre_{q_0}(\eta \tau)= \left(\partial_1 F, \partial_{2}F,\partial_{3}F,\partial_{4}F,\partial_{5}F\right).
$$
Higher order derivations of the map $F$ are then  computed using the chain rule.

\end{remark}

\begin{remark}
Precisely checking the conditions of Proposition~\ref{C:sing_phi} requires explicit computations executed in the computer algebra system Mathematica.
\end{remark}

\subsection{Classification of singular points of the caustic}
\label{SS:classification_three}

Let $p_0\in T_{q_0}^*M$ and let  $(\x_1,\x_2,\x_3,\x_4,\x_5)$ be arbitrary coordinates on a neighborhood of  $p_0\in T_{q_0}^*M$ such that $\partial_{\x_1}\sre_{q_0}(t_c(p_0))=0$. Apart from singularities of type $\D_4^+$ on the second domain, only singularities of corank $1$ are expected. Hence gauging the degree of the singularities is sufficient to classify them, provided that singularities of degree $k$ effectively correspond to singularities of type $\A_k$.

\subsubsection{First domain}

Consider initial covectors of the form $(h_1,h_2,h_3,h_4,{\eta}^{-1})$.
Algebraic computations, similar to those of the previous sections and left as appendix,  lead to the following proposition on the  $\phi$ functions.
(See Appendix~\ref{A:dom1}.)

(With $n=2$, recall that for all $R>0$, $B_R$ denotes the set $\{h\in \R^{4}\mid \sum_{i=1}^{4}h_i^2\leq R\}$.)
\begin{proposition}
Let us denote $p_0=(h_1,h_2,h_3,h_4,\eta^{-1})$.
There exist a family of vectors $(V_2,V_3,V_4,V_5)$,  smoothly  depending on $p_0$, generating $\im \Jac_{p_0} \sre_{q_0}(t_c(p_0))$ for which we have the following.
For all $R>0$, uniformly with respect to $h\in B_R$,  as $\eta\to 0$ 
$$
\phi_{11}(p_0)=O(\eta^8),\qquad \phi_{111}(p_0)=O(\eta^8), \qquad \phi_{1111}(p_0)=O(\eta^8). 
$$
Furthermore, there exists a function $\Psi:\R^4\times \R^5\to \R$ such that for all $V\in \R^5$,
$\Psi(h,V)\neq 0$ implies $V\notin \im \Jac_{p_0} \sre_{q_0}(t_c(p_0))$
and with 
$$
\Psi_k(h)=\Psi\left(h, \partial_{\x_1}^k \sre_{q_0}(t_c(p_0)) \right)^{(2)},\qquad  \forall k\in\ll 2,4\rr,$$
we have
$$
\Psi_2(h)=\phi_{11}^{(8)}(h),
\qquad 
\Psi_3(h)=\phi_{111}^{(8)}(h),
\qquad
\Psi_4(h)=\phi_{1111}^{(8)}(h).
$$
\end{proposition}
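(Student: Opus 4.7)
\medskip
\noindent\textbf{Proof plan.}
The strategy mirrors the computations already carried out for $\Phi$ and for $\tau_c$, only now applied to derivatives of $F$ up to order four in $h_1$. The first step is to use Proposition~\ref{P:expansion} together with the chain rule to produce explicit power-series expansions, in $\eta$, of each component of $\partial_{h_i}F$, $\partial_{h_i}\partial_{h_j}F$, etc., evaluated at the rescaled conjugate time $\tau_c(h,\eta)=2\pi/b_1+\eta\,\tau_c^{(1)}(h)+O(\eta^2)$ provided by Theorem~\ref{T:big_expansion_theorem}. Since the first four components of $F$ are $O(\eta)$ and the $z$-component is $O(\eta^2)$, one obtains for the full Jacobian at $p_0$ the order profile
$$
\partial_{h_i}F=(O(\eta),O(\eta),O(\eta),O(\eta),O(\eta^2)),\quad i\in\ll1,4\rr,
\qquad
\partial_{5}F=(O(\eta^2),O(\eta^2),O(\eta^2),O(\eta^2),O(\eta^3)).
$$
After factoring out a common power of $\eta$ in each column, the image of $\Jac_{p_0}\sre_{q_0}(t_c(p_0))$ is spanned by a smooth family $(V_2,V_3,V_4,V_5)$ whose leading terms are controlled explicitly by the nilpotent approximation and the invariants $(\kappa^{ij}_k,\alpha,\beta)$.

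Next I would pin down the direction $\partial_{\x_1}$. Proposition~\ref{P:CL_order2} and the expression \eqref{E:linear_tc2} for $\tau_c^{(1)}$ give, for $p_0$ in the first domain, an explicit kernel direction at leading order; writing $\partial_{\x_1}=\sum_{i=1}^{4}a_i(h,\eta)\partial_{h_i}+a_5(h,\eta)\partial_{5}$, the condition $\partial_{\x_1}F(p_0)=0$ fixes the coefficients $a_i$ modulo higher-order corrections. One then replaces every occurrence of $\partial_{\x_1}^k F$ by the resulting linear combinations of partial derivatives of $F$ in the original $h$-coordinates and tracks the order of each term carefully. The total factor of $\eta$ arising from the five columns of the determinant defining $\phi_{i_1\dots i_k}$ is $\eta\cdot\eta\cdot\eta\cdot\eta^2=\eta^5$ for the four ``image'' columns, while the first column $\partial_{\x_1}^k F$ contributes at least $\eta^3$ because $\partial_{\x_1}F$ vanishes to leading order; hence the determinant is $O(\eta^8)$.

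The main obstacle will be to show that these $\eta^8$ bounds are generically sharp, that is, that the leading coefficient does not vanish. This is where the function $\Psi$ enters: for any vector $V\in\R^5$, define $\Psi(h,V)$ as the coefficient of $\eta^8$ in the formal expansion of $\det(V,\eta^{-1}V_2,\eta^{-1}V_3,\eta^{-1}V_4,\eta^{-2}V_5)$ (that is, the determinant against the rescaled columns generating $\im\Jac_{p_0}\sre_{q_0}(t_c(p_0))$). By construction $\Psi(h,V)$ is the leading-order obstruction to $V$ lying in the image, so $\Psi(h,V)\ne0$ forces $V\notin\im\Jac_{p_0}\sre_{q_0}(t_c(p_0))$. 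Applying this to $V=\partial_{\x_1}^kF$ and reading off the $\eta^2$-coefficient in the resulting expansion yields $\Psi_k(h)=\phi_{1\dots1}^{(8)}(h)$ for $k=2,3,4$, as stated.

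The remaining work is purely algebraic: substitute the expansions from Proposition~\ref{P:expansion} and the coefficients $a_i$ into the determinantal expressions, expand, and collect the $\eta^8$ coefficient. As in Sections~\ref{SS:asymptotics_away_S} and \ref{SS:third_order_conjugate}, this is most naturally carried out in a computer algebra system; the resulting identities $\Psi_k=\phi_{1\dots1}^{(8)}$ then follow by direct comparison. The details are collected in Appendix~\ref{A:dom1}.
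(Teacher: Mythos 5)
Your overall plan (compute the kernel direction, pick a generating family of the image, expand the determinants, read off the leading coefficient, delegate the algebra to a CAS) is the same as the paper's, but the order bookkeeping that carries the actual content of the statement is wrong in two compensating places. First, the column profile $\partial_{h_i}F=(O(\eta),O(\eta),O(\eta),O(\eta),O(\eta^2))$ for all $i\in\ll1,4\rr$ is not sharp at the conjugate time: since $\partial_{h_1}\xhat$ and $\partial_{h_2}\xhat$ vanish at $\tau=2\pi/b_1$ (this degeneration is precisely why $2\pi/(b_1h_0)$ is the conjugate time), the columns $\partial_1F,\partial_2F$ are $O(\eta^2)$ in \emph{all} components there, so the four image columns contribute $\eta^{6}$, not $\eta^{5}$. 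The choice of generating family also matters for the statement: the paper takes $V_2=h_1\partial_1F+h_2\partial_2F$, $V_3=\partial_3F$, $V_4=\partial_4F$, $V_5=\partial_5F$, which generates the image smoothly on the whole first domain, whereas a vague ``factor out powers of $\eta$ from the columns'' (e.g.\ keeping $\partial_1F$) fails to generate at points where $h_1=0$. Second, the claim that the first column contributes ``at least $\eta^3$ because $\partial_{\x_1}F$ vanishes to leading order'' is false and is not even compatible with what you are proving: vanishing of the first derivative at $p_0$ does not raise the order of higher derivatives, and in fact $\partial_{\x_1}^kF=O(\eta^2)$ with a generically nonzero $\eta^2$ coefficient -- if it were $O(\eta^3)$, the quantity $\Psi_k$, which by definition is built from the order-$2$ coefficient of $\partial_{\x_1}^k\sre_{q_0}(t_c(p_0))$, would vanish identically and the proposition would be vacuous. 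The correct $O(\eta^2)$ bound requires the kernel computation (Lemma on the first domain: kernel generated by $(-h_2,h_1,0,0,\nu)+O(\eta)$, obtained from the block structure $A_1,A_2,A_3,C_1,C_2,L_1,L_2=O(\eta^2)$, $A_4=\eta A_4^{(1)}+\dots$ invertible, $E=O(\eta^3)$), together with linearity in $h$ of the order-$\eta$ terms of $F$; it is not a consequence of Proposition~\ref{P:CL_order2} alone, as you suggest.

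Because of these two errors, your definition of $\Psi$ is miscalibrated: with the rescalings $\eta^{-1},\eta^{-1},\eta^{-1},\eta^{-2}$ on the image columns, the determinant against a fixed $V\in\R^5$ is not $O(\eta^8)$, so ``the coefficient of $\eta^8$'' of that rescaled determinant is not the leading obstruction to $V\in\im\Jac_{p_0}\sre_{q_0}(t_c(p_0))$. The correct normalization is to extract the $\eta^{6}$ coefficient of $\det(V,V_2,V_3,V_4,V_5)$ for fixed $V$; with the paper's choice of $(V_2,\dots,V_5)$ this coefficient equals, up to the positive constant $\frac{8\pi}{b_1b_2^2}\sin^2(\pi b_2/b_1)$, a function of the components $V_1,V_2,V_5$ only (the rows $3,4$ are absorbed by the invertible block $A_4^{(1)}$), and combining this with $\partial_{\x_1}^kF=\eta^2V'(h)+o(\eta^2)$ gives both the sharp bound $\phi_{1\dots1}=O(\eta^{8})$ and the identification of its $\eta^{8}$ coefficient with $\Psi_k$. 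That the two miscounts happen to add up to $8$ does not repair the argument, since the proposition is exactly about the distribution of these orders and the structure of the leading coefficient.
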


As a consequence of this proposition we obtain that  for $\eta$ small enough
$$
\Psi_2(h)\neq 0 \Rightarrow \phi_{11}(p_0)\neq 0,
\qquad
\Psi_3(h)\neq 0 \Rightarrow \phi_{111}(p_0)\neq 0,
$$
$$
\Psi_4(h)\neq 0 \Rightarrow \phi_{1111}(p_0)\neq 0.
$$
We can further numerically check as an application of Proposition~\ref{C:sing_phi} that 
\begin{itemize}
\item if $\Psi_2\neq 0$ then the singularity is of type $\A_2$;
\item if $\Psi_3\neq 0$ and the singularity is not of type $\A_2$ then the singularity is of type $\A_3$;
\item if $\Psi_4\neq 0$ and the singularity is not of type $\A_2,\A_3$ then the singularity is of type $\A_4$.
\end{itemize}
Then we have the following conclusion.

\begin{proposition}\label{P:classification_domaine_1_fin}
Let $(M,\Delta,g)$ be a generic sub-Riemannian structure and let $q_0\in M\setminus \mathfrak{S}$. There exists $\bar{\eta}>0$ such that for all covectors $p_0$ in $(\C_{q}(1/2)\cap \{h_0>\bar{\eta}^{-1}\})\setminus(S_1\cup S_2)$, the singularity at $p_0$ of $\sre_{q_0}(t_c(p_0))$ is a Lagrange stable singular point of type $\mathcal{A}_2$, $\mathcal{A}_3$ or $\mathcal{A}_4$.
\end{proposition}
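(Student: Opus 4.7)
The plan is to combine the asymptotic information about the $\phi$ functions from the preceding proposition with the coordinate-free characterization of Lagrangian singularities given by Proposition~\ref{C:sing_phi}. Fix $p_0 = (h_1,h_2,h_3,h_4,\eta^{-1}) \in (\C_{q}(1/2)\cap\{h_0>\bar\eta^{-1}\})\setminus(S_1\cup S_2)$. First, I would verify that $\ker \Jac_{p_0}\sre_{q_0}(t_c(p_0))$ is one-dimensional on this domain: since $p_0\notin S_1$, the analysis of Section~\ref{SS:asymptotics_away_S} (in particular Proposition~\ref{P:conjugate_time_simple_case} and the fact that $2\pi/b_1$ is a simple zero of $\psi(\cdot,r)$ as soon as $r_1\neq 0$) shows that the conjugate time has multiplicity one, so exactly one direction is annihilated. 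Choose coordinates $(\x_1,\dots,\x_5)$ on a neighborhood of $p_0$ in $T_{q_0}^*M$ for which $\partial_{\x_1}\sre_{q_0}(t_c(p_0))=0$ at $p_0$, and let $(V_2,\dots,V_5)$ be the frame of $\im\Jac_{p_0}\sre_{q_0}(t_c(p_0))$ furnished by the preceding proposition.

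Next, I would exploit the leading-order identities
\[
\phi_{11}(p_0)=\eta^8(\Psi_2(h)+O(\eta)),\quad \phi_{111}(p_0)=\eta^8(\Psi_3(h)+O(\eta)),\quad \phi_{1111}(p_0)=\eta^8(\Psi_4(h)+O(\eta))
\]
to transfer Proposition~\ref{C:sing_phi} into conditions on the explicit polynomials $\Psi_2,\Psi_3,\Psi_4$ in $(h_1,\dots,h_4)$. For $\eta$ sufficiently small, any $h\in B_R$ with $\Psi_2(h)\neq 0$ gives $\phi_{11}(p_0)\neq 0$ and therefore an $\A_2$ singularity. On the zero set $\{\Psi_2=0\}$, the leading term of $\phi_{111}$ is $\eta^8\Psi_3$, so $\Psi_3(h)\neq 0$ yields $\phi_{111}(p_0)\neq 0$; combined with the generic non-vanishing of $\phi_{12}(p_0)$ (which at leading order is a non-trivial polynomial in $h$ and $\kappa$-invariants, see Proposition~\ref{P:summary_kappa}), this produces an $\A_3$ singularity. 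Iterating once more on $\{\Psi_2=\Psi_3=0\}$ gives the $\A_4$ case, provided $\Psi_4(h)\neq 0$ together with the auxiliary non-degeneracy conditions $\phi_{13}\cdot\phi_{112}(p_0)\neq 0$.

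The main obstacle is showing that the cascade terminates at $\A_4$ on this domain, i.e.\ that the common zero set $\{\Psi_2=\Psi_3=\Psi_4=0\}$ inside $\C_{q_0}(1/2)\setminus(S_1\cup S_2)$ is empty for generic sub-Riemannian structures at $q_0\in M\setminus\S$. Here I would proceed as follows. Using the symbolic expressions of $\Psi_2,\Psi_3,\Psi_4$ in terms of the $\kappa$-invariants (and the $\alpha,\beta$ of Proposition~\ref{P:summary_kappa}), the simultaneous vanishing at a covector with $(h_1,h_2)\neq(0,0)$ and $(h_3,h_4)\neq(0,0)$ is encoded by a system of polynomial equations in $h$ whose coefficients depend linearly on the $2$-jets of $(X_1,\dots,X_4)$ at $q_0$. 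A transversality argument, exactly of the type used in the proof of Proposition~\ref{P:def_S_2} and in Theorem~\ref{T:sre_is_not_so_simple}, shows that these coefficients span a linear space of sufficient rank on $M\setminus\S_3$ so that joint vanishing imposes at least one additional condition on the jets; enlarging $\S$ by this stratum (still codimension $\geq 1$) ensures the system has no solution outside $S_1\cup S_2$. The auxiliary non-degeneracies $\phi_{12},\phi_{112},\phi_{13}\neq 0$ along the successive strata $\{\Psi_2=0\}$, $\{\Psi_2=\Psi_3=0\}$ are handled in the same fashion, by adding finitely many codimension-$1$ strata to $\S$.

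Finally, concluding is then mechanical: on each piece of the stratification of $\C_{q_0}(1/2)\setminus(S_1\cup S_2)$ by the successive vanishing of $\Psi_2,\Psi_3,\Psi_4$, exactly one of the Lagrange normal forms $\A_2,\A_3,\A_4$ applies via Proposition~\ref{C:sing_phi}. Taking $\bar\eta$ small enough so that the $O(\eta)$ remainder in each $\phi$-expansion is dominated by the leading $\Psi_k$ term uniformly on the compact ball $B_R\cap\C_{q_0}(1/2)$ yields the statement. The verification of the polynomial identities between $\Psi_k$ and $\phi_{\dots}^{(8)}$, and of the rank statements above, is carried out by explicit symbolic computation, as indicated in the remark following the classification methodology.
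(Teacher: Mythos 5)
Your skeleton coincides with the paper's: reduce everything, via the leading-order identities for the $\phi$ functions and Proposition~\ref{C:sing_phi}, to statements about the explicit polynomials $\Psi_2,\Psi_3,\Psi_4$, and then show the cascade cannot continue past $\A_4$ on this domain. Where you diverge is in how the two remaining verifications are discharged, and this is where the gap lies. For the termination step the paper adds no new stratum: it checks directly that $\Psi_2=\Psi_3=\Psi_4=0$ with $(h_1,h_2)\neq(0,0)$ and $(h_3,h_4)\neq(0,0)$ forces precisely the colinearity condition defining $\S_2$ (Proposition~\ref{P:def_S_2}), so $q_0\notin\S\supset\S_2$ already rules it out. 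Your transversality substitute is incomplete as stated: the dimension count (three equations on the two-dimensional reduced covector space) gives a codimension-$\geq 1$ condition on the $\kappa$'s only after checking that the three linear functionals $\kappa\mapsto\Psi_k(h;\kappa)$ are independent for every admissible $h$, and by Remark~\ref{R:codim} a codimension-$1$ condition on the $\kappa$'s transfers to the $2$-jets only where $\zeta_2$ has full rank, i.e.\ off $\S_3$; moreover it would prove the proposition only for an enlarged $\S$, whereas the statement uses the $\S$ fixed in Section~\ref{SS:SRtoLagrangian}, and the paper's computation shows no enlargement is needed.

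The more serious problem is your treatment of the auxiliary nondegeneracies $\phi_{12}$, $\phi_{112}$, $\phi_{13}$ (and the corresponding ones for $\A_4$), which you propose to secure ``by adding finitely many codimension-$1$ strata to $\S$''. These conditions must hold at every covector on the strata $\{\Psi_2=0\}$, resp.\ $\{\Psi_2=\Psi_3=0\}$, of the covector space. After quotienting by the two scalings $(h_1,h_2)\mapsto\lambda(h_1,h_2)$, $(h_3,h_4)\mapsto\mu(h_3,h_4)$, the covector space is two-dimensional, so the failure set $\{\Psi_2=0\}\cap\{\text{leading term of }\phi_{12}=0\}$ is cut out by two equations in two unknowns: whether it is empty is not decided by genericity of the $2$-jets at $q_0$, and in particular cannot be pushed into a codimension-$1$ stratum in $q_0$. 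What actually saves the classification is the content of the paper's explicit symbolic verification (the bullet list built on Lemma~\ref{L:reduced_cond_Case1} and the $\kappa$-expressions of Appendix~\ref{A:dom1}): the leading terms of the auxiliary $\phi$'s are algebraically tied to the $\Psi_k$'s, so that $\Psi_3\neq0$ (resp.\ $\Psi_4\neq0$) together with the vanishing of the lower-order $\Psi$'s already implies the full $\A_3$ (resp.\ $\A_4$) conditions of Proposition~\ref{C:sing_phi}. Without establishing those relations, your argument leaves open the possibility of covectors with $\phi_{11}=\phi_{12}=0$ but $\Psi_3\neq0$, where the singularity would not be of type $\A_3$, so the classification as stated would not follow.
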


\begin{proof}
As a consequence of our discussion, what remains to be proved is that 
generically with respect to the sub-Riemannian structure, there are no points  $(h_1,h_2,\\h_3,h_4)\in (\R^2\setminus \{0\})\times (\R^2\setminus \{0\})$ such that 
$$
\Psi_2(h_1,h_2,h_3,h_4)=\Psi_3(h_1,h_2,h_3,h_4)= \Psi_4(h_1,h_2,h_3,h_4)=0.
$$
However, one can check that this equation admits solutions in $(\R^2\setminus \{0\})\times (\R^2\setminus \{0\})$ only if $q_0\in \mathfrak{S}_2$. By assumption $\S_2\subset \S$, hence the statement.
\end{proof}

\subsubsection{Second domain}

Consider initial covectors of the form $(\sqrt{\eta}h_1,\sqrt{\eta}h_2, h_3, \\h_4,\eta^{-1})$.
Again, algebraic computations  left as appendix   lead to the following proposition on the  $\phi$ functions.
(See Appendix~\ref{A:dom2}.)
\begin{proposition}\label{P:second_domain_func}
Let us denote $p_0=(\sqrt{\eta}h_1,\sqrt{\eta}h_2,h_3,h_4,\eta^{-1})$.
Let $S^+$ be the subset of $T_{q_0}^*M$ where $\dim \ker\Jac_{p_0} \sre_{q_0}(t_c(p_0))=2$. 

For $p_0\notin S^+$, $\dim \ker\Jac_{p_0} \sre_{q_0}(t_c(p_0))=1$, 
there exist a family of vectors $(V_2,V_3,V_4,V_5)$,  smoothly  depending on $p_0$,  generating $\im \Jac_{p_0} \sre_{q_0}(t_c(p_0))$ for which we have the following.
For all $R>0$, uniformly with respect to $h\in B_R$,  as $\eta\to 0$ 
$$
\phi_{11}(p_0)=O(\eta^{10}),\quad \phi_{111}(p_0)=O(\eta^{10}), \quad \phi_{1111}(p_0)=O(\eta^{10}), \quad \phi_{11111}(p_0)=O(\eta^{10}). 
$$
Furthermore, there exists a function $\Phi:\R^4\times \R^5\to \R$ such that for all $V\in \R^5$,
$\Phi(h,V)\neq 0$ implies $V\notin \im \Jac_{p_0} \sre_{q_0}(t_c(p_0))$
and with 
$$
\Phi_k(h)=\Phi\left(h, \partial_{\x_1}^k \sre_{q_0}(t_c(p_0)) \right)^{(5/2)},\qquad  \forall k\in\ll 2,4\rr,$$
we have
$$
\phi_{11}^{(10)}(h)=\Phi_2(h),\quad \phi_{111}^{(10)}(h)=\Phi_3(h),\quad
\phi_{1111}^{(10)}(h)=\Phi_4(h),\quad \phi_{11111}^{(10)}(h)=\Phi_5(h).
$$

\end{proposition}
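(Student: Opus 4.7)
The proof should follow the same template as the first-domain case (leading into Proposition~\ref{P:classification_domaine_1_fin}), but adapted to the blowup coordinates $(\sqrt{\eta}h_1,\sqrt{\eta}h_2,h_3,h_4,\eta^{-1})$ introduced in Section~\ref{SS:asymptotics_near_S_1}. The plan is to expand $\sre_{q_0}(t_c(p_0))$ and all its derivatives as formal power series in $\sqrt{\eta}$ using Proposition~\ref{P:expansion_r1_eta}, then evaluate at the conjugate time provided by Proposition~\ref{P:t_c_bu_eta1/2}, namely $t_c = \eta(2\pi/b_1 + \eta\,\delta\!\tau^* + o(\eta))$, and carefully track orders in $\sqrt{\eta}$.

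First I would identify the kernel of $\Jac_{p_0}\sre_{q_0}(t_c(p_0))$. Using the chain rule as in Remark~\ref{R:method_derivatives} and the fact that in blowup coordinates $\partial_{h_i}F=\eta^{-1/2}\partial_{\h_i}G$ for $i\in\{1,2\}$ while $\partial_{h_i}F=\partial_{\h_i}G$ for $i\in\{3,4\}$, one obtains a Jacobian whose columns admit leading orders in $\eta^{1/2}$. The nilpotent-level analysis of Section~\ref{SS:First_approximation} shows that $2\pi/b_1$ is the first zero of $\Phi^{(2n+2)}$, so there is one specific direction in which the columns conspire to cancel — this becomes the $\x_1$-direction, and the smooth choice of this direction on $T_{q_0}^*M\setminus S^+$ follows because outside $S^+$ the discriminant $\Delta(\h)$ in Proposition~\ref{P:t_c_bu_eta1/2} is nonzero, so the double-root degeneracy present on $S^+$ does not occur. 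On $T_{q_0}^*M\setminus S^+$ the four remaining columns, after rescaling by the appropriate power of $\sqrt{\eta}$, yield the smooth image basis $(V_2,V_3,V_4,V_5)$.

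Next I would define $\Phi(h,V)$ as the determinant obtained by substituting the candidate vector $V$ in place of $\partial_{\x_1}^k\sre_{q_0}(t_c(p_0))$ in the expression for $\phi_{1\cdots 1}$, after extraction of the common $\eta^{1/2}$ factors from the image columns. By construction $\Phi(h,V)\neq0$ forces $V\not\in\im\Jac_{p_0}\sre_{q_0}(t_c(p_0))$. Writing $\partial_{\x_1}F=\sum_i\lambda_i(\eta)\partial_{h_i}F$ and using that $\x_1$ is the kernel direction, the successive derivatives $\partial_{\x_1}^k F$ inherit power-series expansions in $\sqrt{\eta}$; substituting them into the determinant $\Phi$ and multilinearity give identities of the form $\phi_{1\cdots 1}(p_0) = c\,\eta^{N_k}\Phi(h,\partial_{\x_1}^k\sre)^{(5/2)} + o(\eta^{N_k})$. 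A bookkeeping of the orders, taking into account that the image columns contribute a combined factor of $\eta^{1/2}\cdot\eta^{1/2}\cdot\eta\cdot\eta^{2}\cdot\eta^{2}$ (from the explicit asymptotics of Proposition~\ref{P:expansion_r1_eta} and the differentiation of $t_c$), yields precisely $N_k=10$ for all $k\in\ll2,5\rr$. The cancellation of all lower-order terms follows from the critical-point identities (the vanishing of the rescaled Jacobian determinant at $\tau_c$) together with the definition of the $\x_1$-direction.

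The main obstacle is the combinatorial-algebraic step showing that every term of order strictly less than $\eta^{10}$ vanishes. Unlike the first-domain case, here one has simultaneous expansions in $\sqrt{\eta}$ for the covector and in $\eta$ for the conjugate-time perturbation, so many cross-terms appear when expanding each $\phi_{1\cdots 1}$ as a sum of determinants via multilinearity. The key observation that saves the argument is that, on $T_{q_0}^*M\setminus S^+$, the critical direction $\x_1$ can be chosen so that $\partial_{\x_1}F$ has an expansion beginning at $\eta^{3/2}$ rather than $\eta^{1/2}$ (one extra order of vanishing coming from the choice of $\delta\!\tau^*$ annihilating $P$); each further differentiation in $\x_1$ adds another order of vanishing. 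Combined with the $\eta$-orders of the image basis, the total order is pushed up to $\eta^{10}$ uniformly in $h\in B_R$, and the leading coefficient is precisely $\Phi_k(h)$ as claimed. This is ultimately a bookkeeping verification most conveniently carried out in a computer algebra system, parallel to Appendix~\ref{A:dom2}.
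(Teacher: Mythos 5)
Your overall template (mirror the first-domain argument in the blowup coordinates, expand in $\sqrt{\eta}$ via Proposition~\ref{P:expansion_r1_eta}, and read off the leading coefficient of the determinant) is the right one, but the quantitative core of the statement — the uniform $O(\eta^{10})$ bound and the identification of its coefficient with $\Phi_k$ — is not actually established by your sketch, and the bookkeeping you give is internally inconsistent. You claim that ``each further differentiation in $\x_1$ adds another order of vanishing'' and simultaneously that all four functions $\phi_{11},\dots,\phi_{11111}$ are exactly of order $\eta^{10}$ with nontrivial leading terms $\Phi_2,\dots,\Phi_5$; these two claims contradict each other. What is true (and what the paper uses in Appendix~\ref{A:dom2}) is that only the \emph{first} derivative gains extra vanishing from the kernel direction, while $\partial_{\x_1}^k F=O(\eta^{5/2})$ with fifth component $O(\eta^{3})$ for \emph{every} $k\geq 2$, i.e. all higher derivatives scale like $\mathfrak{d}_\eta$; this is why a single order $\eta^{10}$ appears for all four functions. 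Your stated product of column orders $\eta^{1/2}\cdot\eta^{1/2}\cdot\eta\cdot\eta^{2}\cdot\eta^{2}$ neither equals $\eta^{10}$ nor matches Proposition~\ref{P:expansion_r1_eta}; the correct count comes from a block decomposition in which the columns spanning the $(1,2,5)$-directions contribute $\eta^{5/2}\cdot\eta^{5/2}\cdot\eta^{3}=\eta^{8}$ and the $(3,4)$-block $\eta A_4^{(1)}$ contributes $\eta^{2}$.

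The second gap is the image basis. Taking ``the four remaining columns, after rescaling'' does give \emph{some} generating family, but it does not produce the block structure on which the whole estimate rests, and without it the claimed cancellations and the explicit form of $\Phi$ do not follow. The paper's proof (Lemma~\ref{L:image_Jac_r1=0}) constructs specific combinations $V$ and $W$ of $\partial_1 F,\dots,\partial_5 F$ — with $W$ built from $\partial_5 F$ corrected by $\left(A_4^{(1)}\right)^{-1}C_2^{(2)}$, and $V$ built from the complementary direction to the kernel in the $(h_1,h_2,h_0)$-plane — precisely so that $(V_3,V_4)$, $(W_3,W_4)$ and $V_5$ are of higher order; only then does $\det(\mathfrak{d}_\eta(U),V,W,\partial_3F,\partial_4F)$ factor into a $3\times3$ determinant in the $(1,2,5)$-components and the $2\times2$ determinant of $A_4^{(1)}$, yielding both the $\eta^{10}$ order and the formula for $\Phi$ (Lemma~\ref{L:reduced_cond_Case2}). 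Finally, a side error: to justify the smooth choice of the critical direction off $S^+$ you invoke ``$\Delta(\h)\neq 0$ outside $S^+$'', which reverses the implication of Remark~\ref{R:2dim_ker} (a two-dimensional kernel forces $\Delta=0$, but $\Delta$ may vanish at points with one-dimensional kernel); this is not needed anyway, since the proposition restricts to $p_0\notin S^+$, where the kernel is one-dimensional and its generator $\lambda_{\theta_1}v_{\theta_1}+\lambda_{r_1}v_{r_1}$ is provided by Proposition~\ref{L:kernel_r1=0}.
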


As a consequence of Remark~\ref{R:2dim_ker}, we can check that the singularity is of type $\D_4^+$ if $p_0\in S^+$ and that that singular points of the exponential of the such that $(h_1,h_2)=(0,0)$ are of type $\A_3$.

As an application of Proposition~\ref{P:second_domain_func}, we  obtain that  for $\eta$ small enough, if $p_0\notin S^+$, 
$$
\Phi_2(h)\neq 0 \Rightarrow \phi_{11}(p_0)\neq 0,
\qquad
\Phi_3(h)\neq 0 \Rightarrow \phi_{111}(p_0)\neq 0,
$$
$$
\Phi_4(h)\neq 0 \Rightarrow \phi_{1111}(p_0)\neq 0
\qquad
\Phi_5(h)\neq 0 \Rightarrow \phi_{11111}(p_0)\neq 0,
$$
We can further numerically check as an application of Proposition~\ref{C:sing_phi} that 
\begin{itemize}
\item if $\Phi_2\neq 0$ then the singularity is of type $\A_2$;
\item if $\Phi_3\neq 0$ and the singularity is not of type $\A_2$ then the singularity is of type $\A_3$;
\item if $\Phi_4\neq 0$ and the singularity is not of type $\A_2,\A_3$ then the singularity is of type $\A_4$;
\item if $\Phi_5 \neq  0$and the singularity is not of type $\A_2,\A_3,\A_4$ then the singularity is of type $\A_5$.
\end{itemize}
Then we have the following conclusion.

\begin{proposition}
Let $(M,\Delta,g)$ be a generic sub-Riemannian structure and let $q_0\in M\setminus \mathfrak{S}$. There exists $\bar{\eta}>0$ such that for all covectors $p_0$ in $\C_{q}(1/2)\cap \{h_0>\bar{\eta}^{-1}\}\cap\{h_1^2+h_2^2<\bar{\eta} \}$, the singularity at $p_0$ of $\sre_{q_0}(t_c(p_0))$ is a Lagrange stable singular point of type $\mathcal{A}_2$, $\mathcal{A}_3$, $\mathcal{A}_4$, $\mathcal{A}_5$ or $\D_4^+$.
\end{proposition}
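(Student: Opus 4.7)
The plan is to mimic the classification argument of Proposition~\ref{P:classification_domaine_1_fin} on the first domain, but on the blown-up second domain of covectors of the form $p_0=(\sqrt{\eta}h_1,\sqrt{\eta}h_2,h_3,h_4,\eta^{-1})$ with $(h_3,h_4)\neq(0,0)$, using Proposition~\ref{P:second_domain_func} together with Proposition~\ref{C:sing_phi}. First I would isolate the degenerate locus where the analysis of Proposition~\ref{P:second_domain_func} does not apply: the subset $S^+$ where $\dim\ker\Jac_{p_0}\sre_{q_0}(t_c(p_0))=2$, and the stratum $\{h_1=h_2=0\}$. On $S^+$, by Remark~\ref{R:2dim_ker} and the criterion of Proposition~\ref{C:sing_phi} for the two-dimensional kernel case, I would check (via the sign of $\phi'_{11}\phi'_{22}$ and non-vanishing of $\phi'_{13}$) that the singularity is of type $\mathcal{D}_4^+$; and on $\{h_1=h_2=0\}$ the blowup provides exactly the covectors for which the approximation of Proposition~\ref{P:t_c_bu_eta1/2} yields an $\A_3$-type singularity, which I would verify directly from the leading jets.

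Next, away from $S^+\cup\{h_1=h_2=0\}$, by Proposition~\ref{P:second_domain_func} I know that each $\phi_{1\cdots1}$ is of order $\eta^{10}$ and its leading coefficient is precisely $\Phi_k(h)$, so for $\eta$ small enough non-vanishing of $\Phi_k(h)$ is equivalent to non-vanishing of $\phi_{1\cdots 1}(p_0)$. Ordering the criteria of Proposition~\ref{C:sing_phi} by increasing degree, the singularity at $p_0$ falls into the class $\A_k$ for the smallest $k\in\ll 2,5\rr$ such that $\Phi_k(h)\neq 0$ (after numerically checking that the mixed transversality conditions of Proposition~\ref{C:sing_phi}, such as $\phi_{12}\neq 0$, $\phi_{112}\cdot\phi_{13}\neq 0$, etc., hold automatically on this blown-up stratum for $\eta$ small).

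The crux of the proof is then to show that, generically with respect to the sub-Riemannian structure at $q_0\in M\setminus\mathfrak{S}$, the system
\[
\Phi_2(h)=\Phi_3(h)=\Phi_4(h)=\Phi_5(h)=0
\]
admits no solution in the relevant range $(h_1,h_2,h_3,h_4)$ with $(h_3,h_4)\neq (0,0)$ and away from $S^+$. Since each $\Phi_k$ is a polynomial in the coordinates $h_i$ whose coefficients are polynomial expressions in the invariants $\alpha,\beta$ and $(\kappa^{ij}_k)$ (together with the higher invariants appearing in $G^{(5/2)}$), joint vanishing of the four $\Phi_k$ imposes algebraic conditions on the $2$-jet of the structure at $q_0$. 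Using Proposition~\ref{P:summary_kappa} and Remark~\ref{R:codim} to count codimensions — and absorbing the exceptional jet strata into the stratified set $\mathfrak{S}$ — I would conclude that the joint vanishing locus is empty at $q_0\notin\mathfrak{S}$, so one of $\Phi_2,\ldots,\Phi_5$ is non-zero and Proposition~\ref{C:sing_phi} classifies the singularity as one of $\A_2,\A_3,\A_4,\A_5$ or $\mathcal{D}_4^+$.

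The main obstacle I anticipate is the fourth paragraph of the argument: explicitly verifying, via the algebraic-system computation performed in Mathematica (in the spirit of Appendix~\ref{A:dom2}), that the ideal generated by $\Phi_2,\ldots,\Phi_5$ forces a codimension $\geq 1$ condition on the jets of the structure, so that the corresponding subset can indeed be incorporated into the stratified set $\mathfrak{S}$. The bookkeeping is made heavier than in the first-domain case because the blowup of the $(h_1,h_2)$-variables shifts the relevant order of the expansion by $1/2$, so one more invariant layer from $G^{(5/2)}$ enters the $\Phi_k$, which is precisely why an extra $\A_5$ stratum (absent from the first domain) can and must appear here.
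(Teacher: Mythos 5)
Your proposal follows essentially the same route as the paper: handle the degenerate strata ($S^+$ giving $\mathcal{D}_4^+$ via Remark~\ref{R:2dim_ker}, and $\{h_1=h_2=0\}$ giving $\A_3$), use Proposition~\ref{P:second_domain_func} so that non-vanishing of the leading coefficients $\Phi_k$ forces non-vanishing of the corresponding $\phi_{1\cdots1}$ for small $\eta$, apply the jet criteria of Proposition~\ref{C:sing_phi} (checked by explicit computation), and finally exclude the joint vanishing $\Phi_2=\cdots=\Phi_5=0$ on the complement of $\S$ exactly as in the first-domain Proposition~\ref{P:classification_domaine_1_fin}. This matches the paper's argument, including the genericity/codimension bookkeeping absorbed into the stratified set $\S$.
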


\begin{proof} 
As a consequence of our discussion and Proposition~\ref{P:second_domain_func}, what remains to be proved is that  
there are no element  $(h_1,h_2,h_3,h_4)\in (\R^2\setminus \{0\})\times (\R^2\setminus \{0\})$ such that $\Phi_2(h)=\Phi_3(h)=\Phi_4(h)=\Phi_5(h)=0$.

Similarly to the proof of Proposition~\ref{P:classification_domaine_1_fin}, this is excluded on the complementary of $\S$.
\end{proof}

\begin{remark}
An intuition can be given on the reason $\A_5$ singularities can appear on the second (and third) domain but not the first one.  In the first domain, our approximation of the exponential presents symmetries that do not appear in the other domains. For instance these symmetries appear in the computations of the approximations of the $\phi$ functions of Proposition~\ref{C:sing_phi}. 

Indeed, we have on the first domain a two-parameter symmetry: for all $\lambda,\mu>0$, $h\in \R^4$,  
$$
\Psi_i(\lambda h_1, \lambda h_2 , \mu h_3, \mu h_4)= \lambda^2\mu \Psi_i(h_1,h_2,h_3,h_4), \qquad i\in\ll2,4\rr.
$$ 
On the second domain on the other hand, we only have a one-parameter symmetry:
$$
\Phi_i(\lambda^3 h_1,\lambda^3 h_2,\lambda^2 h_3,\lambda^2 h_4)=\lambda^{14}\Phi_i(h_1,h_2,h_3,h_4),
\qquad i\in\ll2,5\rr.
$$

In other words, the exponential map reduces to a 3-dimensional Lagrangian map on the first domain and only singularities of type $\A_2$ to $\A_4$ should appear.
Conversely, the symmetry on the second domain implies that the exponential reduces to a 4-dimensional Lagrangian map and $\A_5$ singularities can be expected.

A similar argument can be made in the 3-dimensional contact case for the presence of $\A_2$ and $\A_3$ singularities (see \cite{ABB_2018} for instance).
\end{remark}

\subsubsection{Third domain}
Consider initial covectors of the form $(h_1,h_2,\eta h_3,\eta  h_4,\\ \eta^{-1})$.
Algebraic computations  left as appendix  lead to the following proposition on the  $\phi$ functions.
(See Appendix~\ref{A:dom3}.)

\begin{proposition}
Let us denote $p_0=(h_1,h_2,\eta h_3, \eta h_4,\eta^{-1})$. There exist a family of vectors $(V_2,V_3,V_4,V_5)$,  smoothly  depending on $p_0$,  generating $\im \Jac_{p_0} \sre_{q_0}(t_c(p_0))$ for which we have the following.
For all $R>0$, uniformly with respect to $h\in B_R$,  as $\eta\to 0$,
$$
\phi_{11}(p_0)=O(\eta^{11}),\quad \phi_{111}(p_0)=O(\eta^{11}), \quad \phi_{1111}(p_0)=O(\eta^{11}),\quad  \phi_{11111}(p_0)=O(\eta^{11}). 
$$
Furthermore,
there exists a function $\Gamma:\R^4\times \R^5\to \R$ such that for all $V\in \R^5$,
$\Gamma(h,V)\neq 0$ implies $V\notin \im \Jac_{p_0} \sre_{q_0}(t_c(p_0))$ 
and with 
$$
\Gamma_k(h)=\Gamma\left(h, \partial_{\x_1}^k \sre_{q_0}(t_c(p_0)) \right)^{(3)},\qquad  \forall k\in\ll 2,5\rr,$$
we have
$$
\phi_{11}^{(11)}(h)=\Gamma_2(h),\quad \phi_{111}^{(11)}(h)=\Gamma_3(h),\quad \phi_{1111}^{(11)}(h)=\Gamma_4(h), \quad \phi_{11111}^{(11)}(h)=\Gamma_5(h).
$$
\end{proposition}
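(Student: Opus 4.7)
The plan is to reproduce for the third domain the analysis that yielded the analogous propositions for the first two domains. The starting point is Proposition~\ref{P:expansion_r2_eta}, which provides the power series in $\eta$ of the Hamiltonian flow for initial covectors of the form $h(0)=\Lambda h+\eta(I_{2n}-\Lambda)h$. Combined with the second-order conjugate-time asymptotics computed in Section~\ref{SS:third_order_conjugate}, namely $\tau_c(h(0),\eta)=2\pi/b_1+\eta\tau_c^{(1)}(\Lambda h)+\eta^{2}\tau_c^{(2)}(h(0))+O(\eta^3)$, we get a power series expansion of $F(\tau_c(h,\eta),h(0),\eta)$ from which the columns of $\Jac_{p_0}\sre_{q_0}(t_c(p_0))$ and the higher $\x_1$-derivatives are extracted via the chain rule recalled in Remark~\ref{R:method_derivatives}.

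First I would compute each of the five columns $\partial_i F$ up to a suitable order in $\eta$, keeping track of the asymmetric scaling imposed by the blowup $(h_3,h_4)\leftarrow (\eta h_3,\eta h_4)$: for $i\in\ll3,4\rr$ the differentiation in $h_i$ produces a $1/\eta$ factor, while $\partial_5$ expands as $-\eta^2\partial_\eta+\eta\tau\partial_\tau$. The chain rule also picks up the perturbative correction $\partial_i\tau_c\cdot \partial_\tau F$. By collecting leading orders one identifies a one-dimensional approximate kernel (the $\x_1$-direction, obtained as usual from the smooth change of coordinates diagonalizing the principal part of $\Jac$) and constructs a smooth family $(V_2,V_3,V_4,V_5)$ spanning $\im\Jac_{p_0}\sre_{q_0}(t_c(p_0))$ by rescaling the four remaining columns by appropriate powers of $\eta$ so they admit non-trivial limits as $\eta\to 0^+$.

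Once the $V_i$ are in hand, the functions $\phi_{1\cdots 1}$ are $5\times 5$ determinants whose entries have controlled $\eta$-orders; multiplying the orders along any permutation of the five columns in the Leibniz formula shows that $\phi_{1\cdots 1}(p_0)=O(\eta^{11})$, exactly one power of $\eta$ higher than in the second domain (this shift being attributable to the additional $\eta$ absorbed by the rescaling of $h_3,h_4$). I would then set
\[
\Gamma(h,V)=\det\bigl(V,\bar V_2(h),\bar V_3(h),\bar V_4(h),\bar V_5(h)\bigr),
\]
where $\bar V_i$ denotes the leading coefficient in the $\eta$-expansion of $V_i$; by construction $\Gamma(h,V)\neq 0$ forces $V$ to be transverse to $\im\Jac_{p_0}\sre_{q_0}(t_c(p_0))$. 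Isolating the coefficient of $\eta^{11}$ in the determinant defining $\phi_{1\cdots 1}$ and matching it term-by-term with $\Gamma\bigl(h,\partial_{\x_1}^k\sre_{q_0}(t_c(p_0))\bigr)^{(3)}$ then yields the four identities $\phi_{1\cdots 1}^{(11)}=\Gamma_k$.

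The main obstacle is purely computational: executing the chain-rule bookkeeping through two orders of $\tau_c$, tracking the cancellations that push the vanishing order from the naive count to exactly $\eta^{11}$, and verifying that the leading-order vectors $\bar V_i$ are indeed linearly independent on a Zariski-open set of $h$. As indicated in the remark following Proposition~\ref{C:sing_phi}, this step is carried out symbolically in Mathematica, exactly as for the first and second domains, and is the same method by which the identity $\phi_{1\cdots 1}^{(11)}=\Gamma_k$ is confirmed.
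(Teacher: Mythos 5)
There is a genuine gap, and it sits exactly at the point where the third domain differs from the first two. Your function $\Gamma(h,V)=\det\bigl(V,\bar V_2(h),\bar V_3(h),\bar V_4(h),\bar V_5(h)\bigr)$, built only from the \emph{leading} coefficients of the image-spanning columns and a single vector $V$, is the same construction that works on the first and second domains; on the third domain it reproduces the paper's leading-order function $\Psi'$ of Lemma~\ref{L:reduced_cond_Case3}, not its $\Gamma$. A straight Leibniz order count on the columns (each of order $\eta^2$ in the relevant rows) gives $\phi_{1\cdots 1}=O(\eta^{10})$, not $O(\eta^{11})$: the extra power of $\eta$ is not "absorbed by the rescaling of $h_3,h_4$" but comes from a cancellation, namely that $\Psi'_k\equiv 0$ for $k\in\ll 2,5\rr$ when evaluated on the leading coefficients of $\partial_{\x_1}^k\sre_{q_0}(t_c(p_0))$. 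Once that leading term vanishes, the $\eta^{11}$ coefficient of $\phi_{1\cdots 1}$ is \emph{not} a determinant of leading coefficients only: it collects the subleading ($\eta^3$) corrections of the differentiated column \emph{and} of each image-spanning column (the terms $d_1,\dots,d_5$ in the proof of Lemma~\ref{L:reduced_cond_Case3}), which is why the paper's $\Gamma$ takes two arguments $(u,u')$, i.e.\ both the $\eta^2$ and $\eta^3$ coefficients of $\partial_{\x_1}^k F$, and contains the explicit invariant-laden correction terms. Your proposal, as written, would produce $\Gamma_k\equiv 0$ and cannot match $\phi_{1\cdots 1}^{(11)}$.

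The same issue propagates backwards into your kernel step. Saying the kernel direction is "obtained as usual from the smooth change of coordinates diagonalizing the principal part of $\Jac$" overlooks that on this domain the first-order data only give $\det=o(\eta)$, so identifying the kernel to the accuracy needed for a two-order determinant expansion requires the second-order analysis of Proposition~\ref{L:kernel_r2=0}, which itself rests on the algebraic Lemma~\ref{L:rank_kernel} ($B\cdot\ker A\subset\im A$ when $\mathrm{rank}\,A=n-1$ and $\det(A+\eta B)=o(\eta)$); the order-$\eta$ correction $\tfrac{-5\nu}{4}(h_1,h_2,\ast,\ast,0)+\mu(\dots)$ to the kernel vector enters the $\eta^{11}$ coefficient and cannot be dropped. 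Likewise the image basis must be taken as in Lemma~\ref{L:image_Jac_r2=0} (with $V'$ chosen so that $(V'_3,V'_4)=O(\eta^3)$) and expanded to order $\eta^3$, not merely to its leading term. In short, the missing idea is the two-order bookkeeping forced by the identical vanishing of the naive leading term; without it both the claimed order $O(\eta^{11})$ and the identification $\phi_{1\cdots1}^{(11)}=\Gamma_k$ are unjustified.
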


As a consequence of this proposition we obtain that  for $\eta$ small enough
$$
\Gamma_2(h)\neq 0 \Rightarrow \phi_{11}(p_0)\neq 0,
\qquad
\Gamma_3(h)\neq 0 \Rightarrow \phi_{111}(p_0)\neq 0,
$$
$$
\Gamma_4(h)\neq 0 \Rightarrow \phi_{1111}(p_0)\neq 0,
\qquad
\Gamma_5(h)\neq 0 \Rightarrow \phi_{11111}(p_0)\neq 0.
$$
We can further numerically check as an application of Proposition~\ref{C:sing_phi} that 
\begin{itemize}
\item if $\Gamma_2\neq 0$ then the singularity is of type $\A_2$;
\item if $\Gamma_3\neq 0$ and the singularity is not of type $\A_2$ then the singularity is of type $\A_3$;
\item if $\Gamma_4\neq 0$ and the singularity is not of type $\A_2,\A_3$ then the singularity is of type $\A_4$;
\item if $\Gamma_5\neq 0$ and the singularity is not of type $\A_2,\A_3,\A_4$ then the singularity is of type $\A_5$.
\end{itemize}
Then we have the following conclusion.
\begin{proposition}
Let $(M,\Delta,g)$ be a generic sub-Riemannian structure and let $q_0\in M\setminus \mathfrak{S}$. There exists $\bar{\eta}>0$ such that for all covectors $p_0$ in $\C_{q}(1/2)\cap \{h_0>\bar{\eta}^{-1}\}\cap\{h_3^2+h_4^2<\bar{\eta}^2 \}$, the singularity at $p_0$ of $\sre_{q_0}(t_c(p_0))$ is a Lagrange stable singular point of type $\mathcal{A}_2$, $\mathcal{A}_3$, $\mathcal{A}_4$ or $\A_5$.
\end{proposition}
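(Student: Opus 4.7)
The plan is to mirror the structure of the proofs already carried out on the first and second domains. First I would invoke the preceding proposition to reduce the stability question to a pointwise asymptotic check: since $\phi_{11}(p_0), \phi_{111}(p_0), \phi_{1111}(p_0)$ and $\phi_{11111}(p_0)$ are all $O(\eta^{11})$ with leading coefficients equal to $\Gamma_2,\Gamma_3,\Gamma_4,\Gamma_5$, respectively, it suffices to prove that, for $p_0$ in the prescribed domain and $\bar{\eta}$ small enough, at least one of these four $\Gamma_k$ is nonzero at $h=(h_1,h_2,h_3,h_4)$.

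Second, I would match the lowest-order non-vanishing $\Gamma_k$ to the corresponding singularity type: if $\Gamma_2 \neq 0$ the singularity is of type $\A_2$, and if $\Gamma_2 = \cdots = \Gamma_{k-1} = 0$, $\Gamma_k \neq 0$ then Proposition~\ref{C:sing_phi} gives type $\A_k$, provided the companion non-degeneracy conditions on $\phi_{12},\phi_{13},\phi_{14}$ hold. These companion conditions can be checked numerically by computing the relevant mixed derivatives of $F$ at the candidate $p_0$, exactly as on the first two domains.

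The core of the proof, and the main obstacle, is then the purely algebraic statement that the system
\begin{equation*}
\Gamma_2(h)=\Gamma_3(h)=\Gamma_4(h)=\Gamma_5(h)=0
\end{equation*}
has no solution with $(h_1,h_2)\neq 0$ and $(h_3,h_4)\neq 0$ when $q_0 \in M\setminus \mathfrak{S}$. The $\Gamma_k$ are polynomials in $h$ whose coefficients are built from the invariants $\kappa^{ij}_k$, $\alpha$, $\beta$ of Proposition~\ref{P:summary_kappa} together with the third-order invariants coming from $G^{(3)}$ (Section~\ref{SS:third_order_conjugate}). The plan is to show, as in the treatment of the first two domains, that any common zero forces a codimension $\geq 1$ relation on the $2$-jets of the sub-Riemannian structure at $q_0$, and to enlarge the stratified set $\mathfrak{S}$ (if necessary, still inside the codimension-$1$ stratified set already tolerated in the statement) so as to absorb it.

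The hard part is the explicit algebraic verification of this non-simultaneous-vanishing property. As in Proposition~\ref{P:classification_domaine_1_fin} and its analogue on the second domain, I would perform this check through computer algebra on the explicit expression of $G^{(3)}$, exploiting the polynomial dependence on $h$ (and the invariance of the constraints under the $\mathrm{SO}(2)^2$ symmetry of the nilpotent model) to reduce the computation to finitely many resultant conditions on the family of invariants. The expected outcome is that the only $q_0$ at which simultaneous vanishing persists lie in the (already excluded) stratum $\mathfrak{S}$, which concludes the argument.
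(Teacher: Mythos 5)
Your overall route is the same as the paper's: reduce, via the asymptotics $\phi_{1\cdots1}^{(11)}=\Gamma_k$ and Proposition~\ref{C:sing_phi}, to showing that $\Gamma_2,\Gamma_3,\Gamma_4,\Gamma_5$ cannot vanish simultaneously on the relevant set of blown-up covectors, and then dispose of that algebraic statement by explicit (computer-assisted) computation together with genericity, absorbing any exceptional locus into the codimension-one stratified set. The identification of the lowest non-vanishing $\Gamma_k$ with the type $\A_k$, including the companion conditions checked numerically, is also exactly how the paper proceeds on all three domains.

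There is, however, one concrete gap: you formulate the key non-vanishing claim only for $(h_1,h_2)\neq(0,0)$ \emph{and} $(h_3,h_4)\neq(0,0)$, whereas the paper needs (and states) it for all $h\in(\R^2\setminus\{0\})\times\R^2$, i.e.\ including $(h_3,h_4)=(0,0)$. On this third domain the blown-up covector is $(h_1,h_2,\eta h_3,\eta h_4,\eta^{-1})$, so the excluded case $(h_3,h_4)=(0,0)$ is exactly the set $S_2$ itself; these covectors lie inside the domain $\{h_3^2+h_4^2<\bar{\eta}^2\}$ of the proposition and are precisely the degenerate covectors for which the second-order (first-domain) analysis breaks down — they are the reason the third-order expansion $G^{(3)}$ and the functions $\Gamma_k$ were computed in the first place. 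The restriction $(h_3,h_4)\neq 0$ is the correct one on the second domain (where $H\neq 0$ forces it), but carrying it over here leaves the central case unproved. At $h_3=h_4=0$ the $\Gamma_k$ reduce to polynomials in $(h_1,h_2)$ whose coefficients involve the third-order invariants coming from $G^{(3)}$ (the $\chi$-type quantities), and it is the generic non-degeneracy of those that handles this case; your argument must include it. A minor further point: the paper does not enlarge $\mathfrak{S}$ at this stage — the statement refers to the already-fixed stratified set containing $\mathfrak{S}_1,\mathfrak{S}_2,\mathfrak{S}_3$ — so the verification should show that the exceptional locus is already contained in that set rather than appeal to a further enlargement.
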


\begin{proof}
The argument is the same as in the other two cases, that is, as a consequence of our discussion, there are no points  $h\in (\R^2\setminus \{0\})\times (\R^2)$ such that 
$\Gamma_2(h)=\Gamma_3(h)= \Gamma_4(h)=\Gamma_5(h)=0$. Again, this is excluded on the complementary of $\S$.
\end{proof}


\newpage

\appendix


\section{Agrachev--Gauthier normal form}\label{A:Gauthier}
Let $(M,\Delta,g)$ be a contact sub-Riemannian manifold of dimension $2n+1$.
In \cite{Gauthier_2001_SR_metrics_and_isoperimetric_problems}, the authors prove the existence at any $q_0\in M$ of a set of coordinates and vector fields for which the contact sub-Riemannian structure satisfies interesting symmetries. Here we recall the properties of this normal form, that we call Agrachev--Gauthier normal form.

On a contact manifold, there exists a $1$-form $\omega$ such that $\omega \wedge (\diff \omega)^n$ never vanishes and  $\ker \omega =\Delta$. Notice that for any smooth non-vanishing function  $f:M\to \R$, $\ker f\omega =\Delta$. Hence $\omega$ can be chosen so that 
$$
\left(\diff \omega\right)^n_{|\Delta}= \mathrm{vol}_g
$$
where $\mathrm{vol}_g$ is the volume form induced by $g$ on $\Delta$. Then there exists a unique vector field $X_0$, the Reeb vector field, such that 
$$
\omega(X_0)=1\quad \text{ and } \quad \iota_{X_0} \diff \omega =0.
$$

In the following, for any vector field $Y$, for all $i\in \ll1, 2n+1\rr$, we denote by $(Y)_i$ the $i$-th coordinate of $Y$ written in the basis $(\partial_{x_1},\dots,\partial_{x_{2n}},\partial_z)$.

\begin{theorem}[{\cite[Section 6]{Gauthier_2001_SR_metrics_and_isoperimetric_problems}}]\label{T:Ag_Gau_Frame_simpl}
Let $(M,\Delta,g)$ be a contact sub-Riemannian manifold of dimension $2n+1$ and $q_0\in M$. There exist privileged coordinates at $q_0$, $(x_1,\dots x_{2n},z):M\to \R^{2n+1}$, and a frame of $(\Delta,g)$, $(X_1,\dots ,X_{2n})$, that satisfy the following properties on a small neighborhood of $q_0=(0,\dots,0)$.
\begin{enumerate}[label={\bf (\arabic*)}]
\item \label{C:Gauthier_frame_hor} The horizontal components of the vector fields $X_1,\dots, X_{2n}$ satisfy the following two symmetries:
for all $1\leq i,j\leq 2n$, we have 
$$
\left(X_i\right)_j=\left(X_j\right)_i
$$ 
and 
$$
\sum_{j=1}^{2n} \left(X_j\right)_i x_j=x_i.
$$

\item \label{C:Gauthier_frame_vert} The vertical components of $X_1,\dots, X_{2n}$ satisfy the  symmetry
$$
\sum_{j=1}^{2n} \left(X_j\right)_{2n+1} x_j = 0.
$$

\item \label{C:Gauthier_frame_Reeb}$X_0=\frac{\partial }{\partial z}$, $ \omega(X_0)=1$ and $\iota_{X_0} \diff \omega =0$.
\end{enumerate}
\end{theorem}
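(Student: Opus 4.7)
My plan is to enforce the three conditions in sequence, verifying at each step that the new adjustment preserves what has already been arranged. The structural fact used throughout is the Reeb identity $\mathcal{L}_{X_0}\omega = 0$, which makes $\omega$ (and hence $\Delta = \ker\omega$) invariant under the flow of $X_0$. Concretely, I first straighten the Reeb field to obtain \ref{C:Gauthier_frame_Reeb}, then construct a privileged frame on a transverse slice via a Riemannian-type normal-coordinate argument to achieve \ref{C:Gauthier_frame_hor}, and finally adjust the $z$-coordinate by a function of $x$ alone to secure \ref{C:Gauthier_frame_vert}.

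For the first step, the flow-box theorem yields coordinates $(y_1,\dots,y_{2n},z)$ centered at $q_0$ with $X_0 = \partial_z$, giving \ref{C:Gauthier_frame_Reeb}. The Reeb relations $\iota_{X_0}\omega = 1$ and $\mathcal{L}_{X_0}\omega = 0$ then force $\omega = \diff z + \alpha$, where $\alpha$ is a $1$-form on the transverse slice $\Sigma := \{z=0\}$ depending only on $y$. Crucially, \ref{C:Gauthier_frame_Reeb} is preserved by any subsequent change of the form $(y,z)\mapsto(\tilde{y}(y),\,z + f(y))$, which is the kind of change employed in the later steps.

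For \ref{C:Gauthier_frame_hor}, I rewrite both subconditions in terms of the matrix $A$ defined by $A_{ij} := (X_i)_j$: the first is symmetry $A = A^T$, and the second (after using symmetry) is the radial identity $A(x)\,x = x$. I extend $g$ to an auxiliary Riemannian metric $\tilde{g}$ on a neighborhood (declaring $X_0 \perp \Delta$ with unit length, say), choose $\tilde{g}$-normal coordinates $(x_1,\dots,x_{2n})$ on $\Sigma$ centered at $q_0$ adapted to an orthonormal basis of $\Delta_{q_0}$, and invoke the Gauss lemma to conclude $G(x)\,x = x$, where $G$ is the matrix of $\tilde{g}|_{\Sigma}$. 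Setting $A(x) := G(x)^{-1/2}$ (the unique positive symmetric square root), I define the horizontal components of the frame by $(X_i)_j := A_{ij}$ and complete each $X_i$ into $\ker\omega$ by $(X_i)_{2n+1} := -\sum_k \alpha_k(x)\,A_{ik}(x)$. Then $A = A^T$ and $A\,x = x$ are immediate, while $A^{T} G A = I$ ensures $g(X_i,X_j) = \delta_{ij}$ on $\Sigma$. The same coordinate formulas extend $(X_1,\dots,X_{2n})$ off $\Sigma$; because $\omega$ (and hence $\Delta$) is $z$-independent, the result is a $g$-orthonormal frame of $\Delta$ satisfying \ref{C:Gauthier_frame_hor} on the whole neighborhood.

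For \ref{C:Gauthier_frame_vert}, substituting $(X_i)_{2n+1} = -\sum_k \alpha_k A_{ik}$ and using $A = A^T$ with $A\,x = x$, one computes $\sum_j (X_j)_{2n+1} x_j = -\sum_k \alpha_k \sum_j A_{jk} x_j = -\sum_k \alpha_k x_k = -\iota_R \alpha$, where $R := \sum_j x_j \partial_{x_j}$ is the Euler field on $\Sigma$. Thus \ref{C:Gauthier_frame_vert} is equivalent to $\iota_R \alpha = 0$. The ODE $R(f) = \iota_R \alpha$ with $f(q_0)=0$ admits a unique smooth solution (a formal power-series argument suffices, since $\iota_R\alpha$ vanishes at $q_0$), and replacing $z \mapsto z + f(x)$ sends $\alpha$ to $\alpha - \diff f$, killing $\iota_R\alpha$. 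This substitution leaves $\partial_z$ and each horizontal entry $(X_i)_j$ unchanged, so both \ref{C:Gauthier_frame_hor} and \ref{C:Gauthier_frame_Reeb} survive. The main obstacle in the plan is precisely this compatibility between the three normalizations: only because the Reeb identity forces $z$-independence of $\omega$ does the slice-based normal-coordinate construction of Step 2 extend to a neighborhood without breaking the symmetries of \ref{C:Gauthier_frame_hor}, and only because the final $z$-shift depends on $x$ alone does it preserve both the horizontal frame and the straightening of $X_0$.
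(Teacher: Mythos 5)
You are trying to prove a statement that the paper itself does not prove: Theorem~\ref{T:Ag_Gau_Frame_simpl} is recalled from \cite{Gauthier_2001_SR_metrics_and_isoperimetric_problems}, so there is no internal proof to compare with; judged on its own, your construction has a genuine gap in Step 2, while Steps 1 and 3 (flow-box for the Reeb field, and the shift $z\mapsto z+f(x)$ solving $R(f)=\iota_R\alpha$, which indeed preserves $\partial_z$ and the horizontal entries) are sound. The first problem is the orthonormality claim on the slice. Writing $\partial_{x_k}=v_k+\alpha_k\partial_z$ with $v_k:=\partial_{x_k}-\alpha_k\partial_z\in\Delta$, your frame is exactly $X_i=\sum_k A_{ik}v_k$, so its Gram matrix with respect to $g$ is $A\,G_\Delta\,A$ with $(G_\Delta)_{kl}=g(v_k,v_l)$, i.e. the matrix of the pushforward of $g$ along $\partial_z$. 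But the matrix of the induced metric $\tilde g|_\Sigma$ is $G=G_\Delta+\alpha\,{}^t\alpha$ (since $\tilde g(\partial_{x_k},\partial_{x_l})=g(v_k,v_l)+\alpha_k\alpha_l$ and $\tilde g(\partial_z,\partial_{x_l})=\alpha_l$). With $A=G^{-1/2}$ you therefore get $g(X_i,X_j)=\delta_{ij}-(G^{-1/2}\alpha)_i(G^{-1/2}\alpha)_j$, which is not $\delta_{ij}$ wherever $\alpha\neq0$; and $\alpha$ cannot vanish on a neighborhood because $d\alpha=d\omega$ is nondegenerate on $\Delta$. You have applied the Gauss lemma to one metric ($G$) while orthonormality requires another ($G_\Delta$); this can be repaired on the slice by taking normal coordinates of $G_\Delta(\cdot,0)$ instead, but not as written.

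The second and more serious problem is the extension off $\Sigma$. Conditions \ref{C:Gauthier_frame_hor}--\ref{C:Gauthier_frame_vert} and orthonormality are required at every point of a neighborhood, and although $\omega$ (hence $\Delta$) is $z$-independent in the flow-box coordinates, the metric $g$ on $\Delta$ is not. A matrix $A(x)$ defined from slice data and extended $z$-independently keeps $A={}^tA$ and $Ax=x$ but cannot be $g$-orthonormal for $z\neq0$; the sentence ``because $\omega$ (and hence $\Delta$) is $z$-independent, the result is a $g$-orthonormal frame'' only guarantees that the extended fields remain sections of $\Delta$, not that they are orthonormal. Conversely, letting $A$ depend on $z$, say $A(x,z)=G_\Delta(x,z)^{-1/2}$, restores orthonormality but destroys the radial identity $A(x,z)x=x$ away from $z=0$, since the Gauss lemma was only invoked on the slice; and re-choosing the $x$-coordinates slice-by-slice (fiberwise normal coordinates $x\mapsto\tilde x(x,z)$) to recover it conjugates $\partial_z$ into $\partial_z+\sum_i(\partial_z\tilde x_i)\partial_{\tilde x_i}$ and so breaks \ref{C:Gauthier_frame_Reeb}. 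Reconciling the straightened Reeb field with the radial symmetries at \emph{all} points is exactly the nontrivial content of the Agrachev--Gauthier normal form, and your argument does not address it.
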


This is further detailed by evaluating the elements $\left(X_i\right)_j$ at some well chosen points. 
Let us denote by $V_1,\dots, V_n$ the 3-dimensional subspaces of $M$ defined by
$$
V_i=\cap_{j\neq i} \left\{ x_{2j-1}=0\right\}\cap \left\{x_{2j}=0\right\}\qquad \forall i\in \ll 1,n\rr.
$$
\begin{theorem}[{\cite[Theorem 6.6]{Gauthier_2001_SR_metrics_and_isoperimetric_problems}}]\label{T:Ag_Gau_Frame}
Let $(M,\Delta,g)$ be a contact sub-Rieman\-nian manifold of dimension $2n+1$ and $q_0\in M$. 
Let $(x_1,\dots x_{2n},z):M\to \R^{2n+1}$ be privileged coordinates at $q_0$, and $(X_1,\dots ,X_{2n})$ be a frame of $(\Delta,g)$, both as in statement of Theorem~\ref{T:Ag_Gau_Frame_simpl}.
Then
\begin{enumerate}[label={ {\normalfont (\roman*)}}]
\item \label{C:Gauthier_frame_hor_coord}For all $i,j\in \ll1,2n\rr$,  
\begin{equation}\label{E:AG_nf_hor1}
\left(X_i\right)_j(0,z)
=
\begin{cases}
	1 \text{ if } i=j,
	\\
	0 \text{ otherwise}
\end{cases}
\end{equation}
and for all $k\in \ll1,2n\rr$ 
\begin{equation}\label{E:AG_nf_hor2}
\partial_{x_k}\left(X_i\right)_j(0,z)
=
0.
\end{equation}
Furthermore, there exist $\beta_1,\dots, \beta_n:\R^3\rightarrow \R$ such that for all $i\in \ll1,n\rr$,  $\beta_i(0,0,z)=0$ and 
\begin{equation}\label{E:AG_nf_hor3}
\begin{aligned}
\left\{
\begin{aligned}
&\left.\left(X_{2i-1}\right)_{2i-1}\right|_{V_i}&= &1+x_{2i}^2\beta_i(x_{2i-1},x_{2i},z),
\\
&\left.\left(X_{2i-1}\right)_{2i}\right|_{V_i}&=&-x_{2i-1}x_{2i}\beta_i(x_{2i-1},x_{2i},z),
\end{aligned}\right.
\\
\left\{
\begin{aligned}
&\left.\left(X_{2i}\right)_{2i-1}\right|_{V_i}&=&-x_{2i-1}x_{2i}\beta_i(x_{2i-1},x_{2i},z),
\\
&\left.\left(X_{2i}\right)_{2i}\right|_{V_i}&=&1+x_{2i-1}^2\beta_i(x_{2i-1},x_{2i},z).
\end{aligned}
\right.
\end{aligned}
\end{equation}

\item \label{C:Gauthier_frame_vert_coord}There exist $\alpha_1,\dots, \alpha_n:\R^3\rightarrow \R$ such that for all $i\in \ll1,n\rr$,
\begin{equation}\label{E:AG_nf_vert1}
\begin{aligned}
\left.\left(X_{2i-1}\right)_{2n+1}\right|_{V_i}=x_{2i}\alpha_i(x_{2i-1},x_{2i},z)/2,
\\
\left.\left(X_{2i}\right)_{2n+1}\right|_{V_i}=-x_{2i-1}\alpha_i(x_{2i-1},x_{2i},z)/2.
\end{aligned}
\end{equation}

\item\label{C:Gauthier_frame_Reeb_coord} We have
$$
\prod_{i=1}^n \alpha_i(0,0,z)=\frac{1}{n!},
$$ 
and for all $i\in \ll1,n\rr$, we denote
$$
\widetilde{L}_i=\frac{\partial (X_{2i})_{2n+1}}{\partial x_{2i-1}}-\frac{\partial (X_{2i-1})_{2n+1}}{\partial x_{2i}}.
$$
Then for all $ i\in \ll 1, n\rr$,
$$
\left.\widetilde{L}_i\right|_{V_i}= \alpha_i, \qquad \forall i\in \ll 1, n\rr,
$$
and 
$$
\sum_{j=1}^{n}   \partial_{x_{2k-1}}\widetilde{L}_j(0,z) \prod_{i\neq j}\alpha_i(0,z) 
=
\sum_{j=1}^{n}   \partial_{x_{2k}}\widetilde{L}_j(0,z) \prod_{i\neq j}\alpha_i(0,z) 
=0.
$$
\end{enumerate}
\end{theorem}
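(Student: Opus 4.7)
The plan is to derive each of (i), (ii), (iii) directly from the symmetries guaranteed by Theorem~\ref{T:Ag_Gau_Frame_simpl}, namely the horizontal symmetries $(X_i)_j=(X_j)_i$ and $\sum_j (X_j)_i x_j = x_i$, the vertical symmetry $\sum_j (X_j)_{2n+1}x_j=0$, and the Reeb conditions $X_0=\partial_z$, $\omega(X_0)=1$, $\iota_{X_0}\diff\omega=0$. The approach is algebraic: Taylor-expand the symmetric relations at $x=0$, restrict them to the three-dimensional slices $V_i$, and exploit a smooth division lemma whenever a relation of the form $f\,x_{2i-1}+g\,x_{2i}\equiv 0$ appears.

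\textbf{Step 1 (part (i)).} Evaluating $\partial_{x_k}\bigl(\sum_j (X_j)_i x_j - x_i\bigr)$ at $x=0$ gives $(X_k)_i(0,z)=\delta_{ik}$, i.e.\ \eqref{E:AG_nf_hor1}. Differentiating a second time yields the antisymmetry $\partial_{x_l}(X_k)_i(0,z)+\partial_{x_k}(X_l)_i(0,z)=0$. Combining this with the symmetry $(X_i)_j=(X_j)_i$ on the 3-tensor $T_{ikl}:=\partial_{x_k}(X_i)_l(0,z)$ (so that $T_{ikl}=T_{lki}$ and $T_{ikl}=-T_{ilk}$) and iterating these two transpositions forces $T_{ikl}=-T_{ikl}$, proving \eqref{E:AG_nf_hor2}. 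To obtain \eqref{E:AG_nf_hor3}, restrict the relation $\sum_j(X_j)_k x_j=x_k$ to $V_i$ for $k\in\{2i-1,2i\}$; the resulting linear system asserts that the symmetric matrix $\binom{\,(X_{2i-1})_{2i-1}-1\;\;(X_{2i-1})_{2i}\,}{(X_{2i-1})_{2i}\;\;(X_{2i})_{2i}-1\,}$ annihilates $(x_{2i-1},x_{2i})$. A smooth division lemma (iteratively factoring $x_{2i}$ and $x_{2i-1}$ out of off-diagonal and diagonal entries) then produces the scalar $\beta_i(x_{2i-1},x_{2i},z)$ in the stated form, and $\beta_i(0,0,z)=0$ follows from \eqref{E:AG_nf_hor2}.

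\textbf{Step 2 (part (ii)).} Restrict $\sum_j (X_j)_{2n+1}x_j=0$ to $V_i$ to get $(X_{2i-1})_{2n+1}x_{2i-1}+(X_{2i})_{2n+1}x_{2i}=0$. A direct application of the smooth division lemma (``if $f\,u+g\,v=0$ smoothly then $(f,g)=\tfrac12\alpha(v,-u)$ for a unique smooth $\alpha$'') supplies $\alpha_i$ and yields \eqref{E:AG_nf_vert1}.

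\textbf{Step 3 (part (iii)).} The identity $\widetilde L_i|_{V_i}=\alpha_i$ (up to a convention) is a direct computation: plug \eqref{E:AG_nf_vert1} into $\widetilde L_i=\partial_{x_{2i-1}}(X_{2i})_{2n+1}-\partial_{x_{2i}}(X_{2i-1})_{2n+1}$ and evaluate on $V_i$. For the normalization $\prod_i \alpha_i(0,0,z)=1/n!$, write $\omega=\diff z+\sum_i a_i \diff x_i$; the condition $\omega(X_j)=0$ combined with \eqref{E:AG_nf_hor1} gives $a_j(0,z)=0$ and $\partial_{x_k}a_j(0,z)=-\partial_{x_k}(X_j)_{2n+1}(0,z)$, while $\iota_{X_0}\diff\omega=0$ forces the $a_i$ to be independent of $z$. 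Thus at $q_0$,
\[
\diff\omega|_{q_0}=\sum_{k<l}\bigl(\partial_{x_l}(X_k)_{2n+1}-\partial_{x_k}(X_l)_{2n+1}\bigr)(0,z)\,\diff x_k\wedge\diff x_l,
\]
and by \eqref{E:AG_nf_vert1} this is block-diagonal with $(2i-1,2i)$-block equal to $\pm\alpha_i(0,0,z)$. The calibration $(\diff\omega)^n|_\Delta=\mathrm{vol}_g$ (using that $X_j|_{q_0}=\partial_{x_j}$) expands to $n!\prod_i\alpha_i(0,0,z)\,\diff x_1\wedge\cdots\wedge\diff x_{2n}$, whence $\prod_i\alpha_i(0,0,z)=1/n!$. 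Finally, the derivative relations $\sum_j\partial_{x_{2k-1}}\widetilde L_j(0,z)\prod_{i\neq j}\alpha_i(0,z)=0$ (and the analogous $\partial_{x_{2k}}$ statement) are obtained by differentiating the identity $(\diff\omega)^n|_\Delta=\mathrm{vol}_g$ once at $q_0$ in the transverse directions and using that the off-diagonal blocks of $\diff\omega$ vanish at $q_0$.

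\textbf{Main obstacle.} Steps 1 and 2 are essentially linear-algebraic and should be routine. The delicate part is Step 3: the derivative conditions on $\widetilde L_j$ encode a higher-order constraint hidden in the volume normalization, and extracting them cleanly requires carefully keeping track of which cross-block entries of $\diff\omega$ vanish at $q_0$ versus to first order in $x$, and organizing the Leibniz expansion of $\partial_{x_k}(\diff\omega)^n$ so that only the diagonal $\widetilde L_j$ terms survive upon restriction to $\Delta_{q_0}$. Getting the sign/convention right in $\widetilde L_i|_{V_i}=\alpha_i$ is also a minor bookkeeping point that I expect to resolve by fixing the orientation of $\mathrm{vol}_g$ consistently with the basis $(\partial_{x_1},\dots,\partial_{x_{2n}})$.
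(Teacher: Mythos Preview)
The paper does not actually prove this theorem: it is cited verbatim from \cite[Theorem~6.6]{Gauthier_2001_SR_metrics_and_isoperimetric_problems}, and the only indication of a proof is the one-line remark following the statement, namely that ``points (i), (ii), (iii) are respectively consequences of points \textbf{(1)}, \textbf{(2)}, \textbf{(3)} of Theorem~\ref{T:Ag_Gau_Frame_simpl}.'' Your proposal is precisely an elaboration of that remark --- you derive each item from the corresponding symmetry via Taylor expansion at $x=0$, restriction to $V_i$, and a smooth division argument --- so the approach matches the paper's intended one and the details you sketch are sound.

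One small comment on Step~3: your parenthetical ``(up to a convention)'' is prudent. If you plug \eqref{E:AG_nf_vert1} directly into the definition of $\widetilde L_i$ you obtain $\widetilde L_i|_{V_i}=-\alpha_i-\tfrac12(x_{2i-1}\partial_{x_{2i-1}}+x_{2i}\partial_{x_{2i}})\alpha_i$, not $+\alpha_i$; the identity as stated therefore involves either a sign convention or an implicit redefinition of $\alpha_i$ as $-\widetilde L_i|_{V_i}$ (which is consistent with how the paper actually uses these quantities downstream, e.g.\ in identifying $b_i=\alpha_i(0,0,0)$). This is a bookkeeping issue in the statement rather than a flaw in your argument, and it does not affect the product normalization or the derivative relations you extract from $(\diff\omega)^n|_\Delta=\mathrm{vol}_g$.
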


\begin{remark}
A few observations on Theorem~\ref{T:Ag_Gau_Frame}.
\begin{itemize}
\item Notice that points \ref{C:Gauthier_frame_hor_coord}, \ref{C:Gauthier_frame_vert_coord}, \ref{C:Gauthier_frame_Reeb_coord} are respectively consequences of points \ref{C:Gauthier_frame_hor}, \ref{C:Gauthier_frame_vert}, \ref{C:Gauthier_frame_Reeb} of Theorem~\ref{T:Ag_Gau_Frame_simpl}.

\item The nilpotent invariants  $b_1,\dots ,b_n$ at $q_0$ satisfy (up to reordering)
$$
b_i=\alpha_i(0,0,0), \qquad \forall i\in \ll 1, n\rr.
$$

\item In the Agrachev--Gauthier normal form, the frame $(X_1,\dots,X_{2n})$ naturally appears as a perturbation of the frame of a nilpotent contact structure over $\R^{2n+1}$, $\left(\widehat{X}_1,\dots,\widehat{X}_{2n}\right) $, written in the normal form
$$
\widehat{X}_{2i-1}=\partial_{x_{2i-1}} +\frac{ b_i }{2}x_{2i}\partial_z,
\quad
\widehat{X}_{2i}=\partial_{x_{2i}}-\frac{ b_i }{2}x_{2i-1}\partial_z,
\qquad \forall i\in \ll1,n\rr.
$$

\item We can deduce from \ref{C:Gauthier_frame_hor_coord} the following equalities. For all $r,s\in \N$,
\begin{equation}\label{E:AG_nf_hor4}
\begin{aligned}
2\left(\partial_{x_{2i-1}}\right)^r\left(\partial_{x_{2i}}\right)^{s}\beta_i(0,z)
&
=
\left(\partial_{x_{2i-1}}\right)^r\left(\partial_{x_{2i}}\right)^{s+2}\left(X_{2i-1}\right)_{2i-1}(0,z)
\\&=
\left(\partial_{x_{2i-1}}\right)^{r+2}\left(\partial_{x_{2i}}\right)^{s}\left(X_{2i}\right)_{2i}(0,z)
\\&=
-2
\left(\partial_{x_{2i-1}}\right)^{r+1}\left(\partial_{x_{2i}}\right)^{s+1}\left(X_{2i-1}\right)_{2i}(0,z)
\\&=
-2
\left(\partial_{x_{2i-1}}\right)^{r+1}\left(\partial_{x_{2i}}\right)^{s+1}\left(X_{2i}\right)_{2i-1}(0,z).
\end{aligned}
\end{equation}
In particular,
\begin{equation}\label{E:AG_nf_hor5}
\begin{aligned}
0=\beta_i(0,0,z)&
=\left(\partial_{x_{2i}}\right)^{2}\left(X_{2i-1}\right)_{2i-1}(0,z)
\\&=
\left(\partial_{x_{2i-1}}\right)^{2}\left(X_{2i}\right)_{2i}(0,z)
\\&=
-2
\left(\partial_{x_{2i-1}}\right)\left(\partial_{x_{2i}}\right)\left(X_{2i-1}\right)_{2i}(0,z)
\\&=
-2
\left(\partial_{x_{2i-1}}\right)\left(\partial_{x_{2i}}\right)\left(X_{2i}\right)_{2i-1}(0,z).
\end{aligned}
\end{equation}
\end{itemize}
\end{remark}

As an application of these results, we give a proof of the following classical observation. Using notations of Section~\ref{S:conjugate_time}.

\begin{proposition}\label{L:no_singular_near_0_bis}
Let $(M,\Delta,g)$ be a contact sub-Riemannian manifold and $q_0\in M$. For all $\alpha>0$, there exists $R>0$ such that the set of singular points of the exponential at time $1$ in $\C_{q_0}((0,R))$ is a subset of $\{h_0^2>\alpha H\}$.

Equivalently, for all $\bar{h}_0>0$, there exists $\varepsilon >0$ such that all $p\in \C_{q}(1/2)$ with $t_c(p)<\varepsilon$ have $|h_0(p)|>\bar{h}_0$.
\end{proposition}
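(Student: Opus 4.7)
The plan is first to observe that the two formulations are equivalent via the quadratic homogeneity $\sre_{q_0}(t,p) = \sre_{q_0}(1, tp)$: writing any $p \in \C_{q_0}((0,R))$ as $p = s\tilde p$ with $\tilde p \in \C_{q_0}(1/2)$ and $s = \sqrt{2H(p)}>0$, the map $\sre_{q_0}(1,\cdot)$ is critical at $p$ iff $s$ is a conjugate time for $\tilde p$, while $h_0(p)^2/H(p) = 2\,h_0(\tilde p)^2$. With the correspondences $\bar h_0 = \sqrt{\alpha/2}$ and $\varepsilon = \sqrt{2R}$, both forms reduce to the contrapositive claim that for every $\bar h_0 > 0$, the first conjugate time $t_c$ admits a positive uniform lower bound on the compact set
\begin{equation*}
K = \{\tilde p \in \C_{q_0}(1/2) : |h_0(\tilde p)| \le \bar h_0\}.
\end{equation*}
I would prove this by showing $\det D_{\tilde p}\sre_{q_0}(t, \tilde p) \ne 0$ for all sufficiently small $t > 0$, uniformly in $\tilde p \in K$.

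Working in the Agrachev--Gauthier normal form at $q_0$ (Theorem~\ref{T:Ag_Gau_Frame_simpl}), I would Taylor expand $\sre_{q_0}(t, \tilde p)$ in $t$ around $0$ directly from the Hamiltonian ODE. The normal-form cancellations \eqref{E:AG_nf_hor1}--\eqref{E:AG_nf_vert1} give $X_i(q_0) = \partial_{x_i}$, $\partial_{x_k}(X_i)_j(q_0) = 0$ for $i,j \in \ll 1,2n\rr$, and $\partial_{x_k}(X_i)_{2n+1}(q_0) = \bar J_{ik}/2$; moreover, as recalled in the proof of Proposition~\ref{P:expansion}, the only nonzero structure constants at $q_0$ are $c_{ji}^0(q_0) = \bar J_{ji}$. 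From these, $\dot h_i(0) = h_0(\bar J h)_i$ and $\ddot x_i(0) = h_0(\bar J h)_i$, while the antisymmetry of $\bar J$ forces $\dot z(0) = \ddot z(0) = 0$; one further derivation yields
\begin{equation*}
\frac{d^3 z}{dt^3}(0) = \tfrac12\,h_0\,|\bar Jh|^2 + R(h), \qquad |\bar J h|^2 = \sum_{i=1}^n b_i^2\,(h_{2i-1}^2 + h_{2i}^2),
\end{equation*}
with $R$ an $h_0$-independent cubic polynomial (consistent with the small-$\tau$ expansion of $\zhat$ in Proposition~\ref{P:expansion}). Differentiating, the Jacobian has block structure $\partial x_i/\partial h_j = t\delta_{ij} + O(t^2)$, $\partial x_i/\partial h_0 = O(t^2)$, $\partial z/\partial h_j = O(t^3)$, $\partial z/\partial h_0 = \frac{t^3}{12}|\bar Jh|^2 + O(t^4)$.

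Applying the Schur complement to this block-Jacobian then gives
\begin{equation*}
\det D_{\tilde p}\sre_{q_0}(t, \tilde p) = \frac{t^{2n+3}}{12}\,|\bar Jh|^2 + O(t^{2n+4}),
\end{equation*}
uniformly for $\tilde p$ in compact subsets of $T^*_{q_0}M$. On $\C_{q_0}(1/2)$ the constraint $|h|^2 = 1$ yields the $h_0$-independent lower bound $|\bar J h|^2 \ge \min_i b_i^2 > 0$, and on $K$ the remainder coefficient is uniformly bounded; choosing $\varepsilon > 0$ so that the remainder is dominated by half of the leading term for all $(\tilde p, t) \in K \times (0, \varepsilon]$, the determinant is nonzero throughout, giving $t_c(\tilde p) \ge \varepsilon$ on $K$, as required. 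The main obstacle is the identification, via the antisymmetric cancellations in the Agrachev--Gauthier normal form, of the leading coefficient of $\det D_{\tilde p}\sre_{q_0}(t,\tilde p)$ at order $t^{2n+3}$ as a positive, $h_0$-independent quadratic form in $h$: without this independence, the uniform lower bound on the compact set $K$---which mixes small and large values of $h_0$---would not be available.
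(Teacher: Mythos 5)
Your proof is correct and takes essentially the same route as the paper's: both work in the Agrachev--Gauthier normal form, Taylor-expand the flow at small time, and hinge on the observation that the $(z,h_0)$ entry of the Jacobian equals $\tfrac{t^3}{12}|\bar{J}h|^2+O(t^4)$ with $|\bar{J}h|^2\geq \min_i b_i^2>0$ on $\C_{q_0}(1/2)$, so that $\det D_{\tilde p}\,\sre_{q_0}(t,\tilde p)\sim \tfrac{t^{2n+3}}{12}|\bar{J}h|^2\neq 0$ for small $t$, uniformly on the compact set $\{|h_0|\leq \bar h_0\}$. The only difference is presentational: the paper argues by contradiction along a sequence of singular points with $H\to 0$, while you make the uniform Taylor/Schur-complement estimate explicit, which is fine.
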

\begin{proof}
Notice that both statements are equivalent since any $p\in \C_{q}(1/2)$ satisfies $t_c(p)=\sqrt{2 H(t_c(p)p,q_0)}$.

We prove this statement by contradiction. Assume there exist $\alpha>0$ and 
a sequence of singular points for $\sre_{q_0}^1$, $(p_k)_{k\in \N}\in \{H>0\}$, such that $H(p_k,q_0)=\frac{1}{2k^2}$ and  $h_0(p_k)^2\leq \alpha H(p_k,q_0)$.

Then $k p_k =\frac{p_k}{\sqrt{2 H(p_k,q_0)}}\in  \C_{q}(1/2)\cap \{h_0^2\leq  \alpha/2\}$. The sequence $(k p_k)_{k\in \N}$ converges up to extraction and there exist $(k_n)_{n\in \N}\in \N$, $p'_\infty\in \C_{q}(1/2)\cap \{h_0^2 \leq  \alpha/2\}$ such that $k_n p_{k_n}\to p'_\infty$.

Hence there exists a converging sequence $(p_{k_n})_{n\in \N}\in \C_{q}(1/2)\cap \{|h_0| \leq  \alpha'\}$ that admits as conjugate time $t_c(p_{k_n})=1/{k_n}$. Let us prove that this is contradictory with the assumptions on the contact sub-Riemannian structure.

Since the sequence $(p_{k_n})_{n\in \N}$ converges towards $p_\infty'$, we can chose an arbitrarily small neighborhood of $p_\infty'$, $V\subset T_{q_0}^*M$, and assume the sequence $(p_{k_n})_{n\in \N}$ stays in $V$. Then we use the expansion of $q(t)=\sre_{q_0}(t,h_1,\dots ,h_{2n},h_0)$, uniform with respect to $p\in V$,
$$
q(1/k)
=
\sum_{l=1}^3 \frac{q^{(l)}(0)}{k^l l!} +o(1/k^4).
$$
We use the Agrachev--Gauthier normal form to prove that this map cannot be singular for $p\in V$ and $k$ large enough.

Indeed, notice first that the Jacobian of $\dot{q}(0)=\sum_{i=1}^{2n} h_i(0) X_i(q_0)$ is just the diagonal matrix $\mathrm{diag}(1,\dots ,1,0)$.
Furthermore, for all $i\in \ll 1,n\rr$, as a consequence of \eqref{E:AG_nf_hor1}-\eqref{E:AG_nf_vert1},
$$
\begin{aligned}
h_{2i-1} D_{q_0}X_{2i-1} \dot{q}(0)=(0, \dots, 0,2b_i h_{2i} h_{2i-1})
\\
h_{2i} D_{q_0}X_{2i} \dot{q}(0)=(0, \dots, 0,-2b_i h_{2i} h_{2i-1}),
\end{aligned}
$$
hence the last line of the Jacobian of $\ddot{q}(0)$ is empty. Thus the Jacobian matrix has the form
$$
\Jac_{p} q(1/k)=
\frac{1}{k}\mathrm{diag}(1,\dots ,1,0)+
\frac{1}{k^2}
\begin{pmatrix}
*&\cdots&*
\\
\vdots&*&\vdots
\\
*&\cdots&*
\\
0&\cdots&0
\end{pmatrix}
+O\left(\frac{1}{k^3}\right).
$$
Hence if the $(2n+1,2n+1)$-coefficient is not a $o(1/k^3)$, the Jacobian matrix has a non-zero determinant for $k$ large enough.

Then for $i\in\ll1,2n\rr$,  
$$
\partial_{h_0}\partial_t^2(h_i(t)X_i(q(t))_{|t=0}=\partial_{h_0}\dot{h}_i(0)D_{q_0}X_i \cdot h(0) = \left(\bar{J}h(0)\right)_i\left(2 \bar{J}h(0)\right)_i
$$
 and the $(2n+1,2n+1)$-coefficient is $2|J h(0)|^2_2>0$, hence the result.
\end{proof}

\section{Computation of invariants}
\label{A:Computation_invariants}
\subsection{Second order invariants}
For all $l\in \ll1,2n\rr$, let $J_l\in \mathcal{M}_{2n}(\R)$ be the matrix such that 
$$ 
(J_l)_{k,m}=\frac{\partial^2 (X_l)_{2n+1}}{\partial x_k\partial x_m}(q_0)-\frac{\partial^2 (X_k)_{2n+1}}{\partial x_l\partial x_m}(q_0),
\qquad \forall k,l,m\in \ll1,2n\rr,
$$
so that for all $x,y\in \R^{2n}$, the vector $J^{(1)}(x)\,y$ satisfies $(J^{(1)}(x)\, y)_l=J_l  x\cdot y$.

Let $V_{i,j}(\sigma)\in \R^{2n}$ be the vector such that
$$
\left(V_{i,j}(\sigma)\right)_l=
\left(
	\left(
		 \e^{-\sigma \Jbar} - I_{2n}  
	\right)	
 	\Jbar^{-1}
	\;{}^t J_l 
	\,
	 \e^{\sigma \Jbar}
	\right)_{i,j}
+
\left(
	\left(
		 \e^{-\sigma \Jbar} - I_{2n}  
	\right)	
 	\Jbar^{-1}
	\;{}^t J_l 
	\,
	 \e^{\sigma \Jbar}
	\right)_{j,i}.
$$

\begin{lemma}\label{L:invariants_order_1}
For all $i,j,k\in \ll 1,2n\rr$
$$
\kappa^{ij}_k= \varepsilon(i,j)
\int_{0}^{\frac{2\pi}{b_1}}
\int_{0}^{\tau}
\left[\e^{(\tau-\sigma)\Jbar}
V_{i,j}(\sigma)
\right]_k
\diff \sigma
\diff \tau,
$$
where 
$$
\varepsilon(i,j)
=
\left\{
\begin{array}{ll}
1 & \text{ if }i\neq j,
\\
1/2& \text{ if }i= j.
\end{array}
\right.
$$
\end{lemma}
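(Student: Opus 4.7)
The plan is to carry out a direct calculation starting from the integral formula for $x^{(2)}$ given in Proposition~\ref{P:expansion} and match it coefficient-by-coefficient against the definition of the invariants $\kappa^{ij}_k$. Recall that $F^{(2)}_k(\tau,h)=x^{(2)}_k(\tau,h)$ for $k\in\ll 1,2n\rr$, so evaluating at $\tau=2\pi/b_1$ gives
$$F^{(2)}_k\!\left(\tfrac{2\pi}{b_1},h\right)=\int_0^{2\pi/b_1}\!\int_0^\sigma\sum_l\bigl(\e^{(\sigma-\rho)\bar J}\bigr)_{k,l}\bigl(J^{(1)}(\hat x(\rho,h))\,\hat h(\rho,h)\bigr)_l\,\diff\rho\,\diff\sigma.$$
By definition of $\kappa^{ij}_k$ this quantity equals $\sum_{i\le j}\kappa^{ij}_k h_i h_j$, so it suffices to compute the coefficient of $h_i h_j$ in the integrand and then recognize the result as $\varepsilon(i,j)\,[\e^{(\tau-\sigma)\bar J}V_{i,j}(\sigma)]_k$ after the renaming $\sigma\leftrightarrow\tau$, $\rho\leftrightarrow\sigma$ in the integration variables.

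Substituting $\hat x(\rho,h)=\bar J^{-1}(\e^{\rho\bar J}-I_{2n})h$ and $\hat h(\rho,h)=\e^{\rho\bar J}h$, and using the identity $(J^{(1)}(u)v)_l=v^{\,T} J_l\,u$ that is immediate from the defining formula $(J^{(1)}(x)y)_l=J_l x\cdot y$, the integrand expands as a quadratic form
$$\bigl(J^{(1)}(\hat x)\hat h\bigr)_l=h^{T}\!\left[(\e^{\rho\bar J})^{T} J_l\,\bar J^{-1}(\e^{\rho\bar J}-I_{2n})\right]\!h.$$
For each $l$, I symmetrize the resulting matrix $M_l(\rho)$ in order to read off the coefficient of $h_i h_j$ in $i\le j$ form: the diagonal terms contribute $M_l(\rho)_{i,i}$ while off-diagonal pairs contribute $M_l(\rho)_{i,j}+M_l(\rho)_{j,i}$. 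This is exactly the source of the prefactor $\varepsilon(i,j)$, since the common expression $M_l(\rho)_{i,j}+M_l(\rho)_{j,i}$ double-counts the diagonal.

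Finally I identify this symmetrized expression with $(V_{i,j}(\sigma))_l$. Here the skew-symmetry $\bar J^{T}=-\bar J$ is the only non-trivial ingredient: it gives $(\e^{\rho\bar J})^{T}=\e^{-\rho\bar J}$ and $(\bar J^{-1})^{T}=-\bar J^{-1}$, and hence
$$\bigl((\e^{\rho\bar J})^{T} J_l\,\bar J^{-1}(\e^{\rho\bar J}-I_{2n})\bigr)^{T}=\bigl(\bar J^{-1}(\e^{\rho\bar J}-I_{2n})\bigr)^{T} J_l^{T}\,\e^{\rho\bar J}=-(\e^{-\rho\bar J}-I_{2n})\bar J^{-1}\,{}^{t}\!J_l\,\e^{\rho\bar J}.$$
Taking this transpose and adding the $(i,j)$ and $(j,i)$ entries reproduces, up to the convention in the statement, the two-term expression $((\e^{-\sigma\bar J}-I_{2n})\bar J^{-1}\,{}^{t}\!J_l\,\e^{\sigma\bar J})_{i,j}+(\cdots)_{j,i}$ defining $(V_{i,j}(\sigma))_l$. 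Combining everything and reinstating the outer exponential $\e^{(\tau-\sigma)\bar J}$ acting on the $l$-index yields exactly the formula claimed.

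The proof is essentially a bookkeeping exercise: no analytic subtlety is involved once Proposition~\ref{P:expansion} is in hand. The part most easily mishandled—and hence the only real "obstacle"—is keeping track of the transposes and the skew-symmetry-induced signs produced by $(\bar J^{-1})^{T}=-\bar J^{-1}$, together with the factor $\varepsilon(i,j)$ that distinguishes diagonal from off-diagonal contributions when reading the quadratic form in triangular form. Once these are pinned down, the identification of the two expressions is just a matter of matching matrix products in the correct order.
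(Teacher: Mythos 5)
Your proof is correct and follows essentially the same route as the paper's: both start from the expression of $x^{(2)}_k(2\pi/b_1,\cdot)$ in Proposition~\ref{P:expansion}, apply the identity $(J^{(1)}(x)y)_l=J_lx\cdot y$, and use the skew-symmetry of $\bar J$ to transpose the resulting matrix into the $V_{i,j}$ form, the only cosmetic difference being that you read off the coefficients of the quadratic form directly (whence $\varepsilon(i,j)$) while the paper polarizes by computing $\partial^2_{h_ih_j}x^{(2)}$ on basis vectors $e_i,e_j$. The sign you absorb into ``the convention in the statement'' reflects an internal discrepancy of the paper itself — its computation produces $\left(I_{2n}-\e^{-\sigma\bar J}\right)\bar J^{-1}\,{}^tJ_l\,\e^{\sigma\bar J}$ while the displayed definition of $V_{i,j}$ has $\left(\e^{-\sigma\bar J}-I_{2n}\right)$ — so your treatment is consistent with the paper's own proof.
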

\begin{proof}

From Proposition \ref{P:expansion}, we have to compute for all $i,j,k\in \ll 1,2n\rr$,
$$
\varepsilon(i,j)
\frac{\partial^2x_k^{(2)}}{\partial h_i\partial h_j}\left(\frac{2\pi}{b_1},h\right)
=
\kappa^{ij}_k  
$$
Observe that for all $i,j\in \ll 1,2n\rr$,
$$
\frac{\partial^2x^{(2)}}{\partial h_i\partial h_j}\left(\frac{2\pi}{b_1},h\right)
=
\int_{0}^{\frac{2\pi}{b_1}}
\int_{0}^{\tau}
\e^{(\tau-\sigma)\Jbar}
\left(
J^{(1)}\left(\xhat (\sigma ,e_i)\right) \hhat(\sigma , e_j)
+
J^{(1)}\left(\xhat (\sigma ,e_j)\right) \hhat(\sigma , e_i)
\right)
\diff \sigma
\diff \tau,
$$
where, for all $m\in \ll 1,2n\rr$, $e_m\in\R^{2n}$ is the vector such that $(e_m)_l=1$ if $l=m$ and $(e_m)_l=0$ otherwise.
Using the fact that $(J^{(1)}(x)y)_l= (J_l x)\cdot y$, we have
$$
\begin{aligned}
\left[J^{(1)}\left(\xhat (\sigma ,e_i)\right) \hhat(\sigma , e_j)\right]_l
&=
\left(
	J_l  \Jbar^{-1}
	\left(
		 \e^{\sigma \Jbar} - I_{2n}  
	\right)
	e_i 
\right)
\cdot \e^{\sigma \Jbar} e_j
\\
&=
e_i 
\cdot
\left(
	{}^t
	\left(
		 \e^{\sigma \Jbar} - I_{2n}  
	\right)	
 	{}^t\Jbar^{-1}
	\;{}^t J_l 
	\right)
 \e^{\sigma \Jbar} e_j
 \\
&=
e_i 
\cdot
	\left(
 I_{2n}  -		 \e^{-\sigma \Jbar}
	\right)	
 	\Jbar^{-1}
	\;{}^t J_l 
	\,
 \e^{\sigma \Jbar} e_j
 \\
&=
\left(
	\left(
  I_{2n}  -		 \e^{-\sigma \Jbar}
	\right)	
 	\Jbar^{-1}
	\;{}^t J_l 
	\,
	 \e^{\sigma \Jbar}
	\right)_{i,j}.
\end{aligned}
$$
Hence the result.
\end{proof}
To compute $\kappa^{ij}_k$ we use the following lemma.
\begin{lemma}\label{L:expression_blocks}
For all $ r,s\in \ll 1,n\rr $, for all $M\in \mathcal{M}_{2n}(\R)$, let us define the $(r,s)$ $2\times2$ sub-block of $M$, $\mathsf{B}_{rs}\left[M\right]\in \mathcal{M}_{2}(\R)$ by
$$
\mathsf{B}_{rs}\left[M\right]
=
\begin{pmatrix}
M_{2r-1,2s-1} & M_{2r,2s-1}
\\
M_{2r-1,2s} & M_{2r,2s}
\end{pmatrix}.
$$
For all $\theta\in \R$, let 
$$
R(\theta)=
\begin{pmatrix}
\cos \theta& -\sin \theta
\\
\sin \theta & \cos \theta
\end{pmatrix}
\qquad
S(\theta)=
\begin{pmatrix}
\sin \theta& 1-\cos \theta
\\
\cos \theta-1 & \sin \theta
\end{pmatrix}.
$$
Then
$$
\mathsf{B}_{rs}\left[\left(V(\sigma)\right)_l\right]
=\frac{1}{b_r}S(b_r \sigma)
	\mathsf{B}_{rs}\left[{}^t J_l \right]
	\,
	R(b_s\sigma)
	+
	\frac{1}{b_s}S(b_s \sigma)
	\mathsf{B}_{sr}\left[{}^t J_l \right]
	\,
	R(b_r\sigma).
$$
\end{lemma}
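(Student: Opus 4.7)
The plan is to exploit the block-diagonal structure of $\bar{J} = \mathrm{diag}(\bar{J}_1, \ldots, \bar{J}_n)$ coming from the Agrachev--Gauthier normal form and reduce the statement to an explicit $2 \times 2$ block computation. As a first step, I would compute explicitly the diagonal blocks of $\e^{\pm \sigma \bar{J}}$ and of $(\e^{-\sigma \bar{J}} - I_{2n})\bar{J}^{-1}$. Using the identity $\bar{J}_i^2 = -b_i^2 I_2$, the matrix exponential series collapses to $\e^{\sigma \bar{J}_i} = \cos(b_i \sigma) I_2 + (\sin(b_i \sigma)/b_i)\bar{J}_i$, which identifies (up to the transposition convention implicit in the lemma) with the rotation matrix $R(b_i \sigma)$. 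A similarly direct matrix product shows that $(\e^{-\sigma \bar{J}_i} - I_2)\bar{J}_i^{-1}$ equals $S(b_i \sigma)/b_i$, again up to the appropriate transposition.

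Next, I would extract the $(r,s)$ sub-block of the matrix $M^{(l)} := (\e^{-\sigma \bar{J}} - I_{2n}) \bar{J}^{-1}\,{}^t J_l\, \e^{\sigma \bar{J}}$. Since the two outer factors are block-diagonal with $2 \times 2$ blocks, only the $(r,r)$ block of the left factor and the $(s,s)$ block of the right factor contribute to the $(r,s)$ sub-block of the product. Plugging in the formulas from the first step, the sub-block of $M^{(l)}$ takes the shape $\tfrac{1}{b_r} S(b_r \sigma) \cdot ({}^t J_l)[r,s] \cdot R(b_s \sigma)$, up to the conventions of $\mathsf{B}_{rs}$.

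The final step is to account for the symmetrization: by construction, $(V(\sigma))_l = M^{(l)} + (M^{(l)})^T$. The $(r,s)$ block of $(M^{(l)})^T$ is the transpose of the $(s,r)$ block of $M^{(l)}$, so applying $\mathsf{B}_{rs}$ yields two contributions, one from each of $M^{(l)}[r,s]$ and $M^{(l)}[s,r]$. Identifying $\mathsf{B}_{rs}[{}^t J_l] = ({}^t J_l)[r,s]^T$ and $\mathsf{B}_{sr}[{}^t J_l] = ({}^t J_l)[s,r]^T$, these two contributions produce precisely the two summands of the stated formula, with the roles of $r$ and $s$ exchanged in the second term and the prefactor $1/b_r$ replaced by $1/b_s$.

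The main obstacle is bookkeeping rather than any conceptual difficulty: several transpositions enter the calculation (the skew-symmetry of $\bar{J}$ giving $\e^{-\sigma \bar{J}} = (\e^{\sigma \bar{J}})^T$; the nonstandard definition of $\mathsf{B}_{rs}$, which is itself a transposed sub-block; and the identities $R(\theta)^T = R(-\theta)$ and $S(-\theta)^T = -S(\theta)$). Ensuring that all the signs and $r \leftrightarrow s$ swaps cascade into the precise formula stated in the lemma is where the care lies; the essential content is just the block-wise product.
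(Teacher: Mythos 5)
Your proposal is correct and is essentially the paper's own proof: it rests on the block-diagonal factorization of the $(r,s)$ sub-block of $(\e^{-\sigma\bar{J}}-I_{2n})\bar{J}^{-1}\,{}^tJ_l\,\e^{\sigma\bar{J}}$ together with the explicit $2\times 2$ identities identifying the diagonal blocks with $S(b_i\sigma)/b_i$ and $R(b_i\sigma)$, and the symmetrization $(V(\sigma))_l=M^{(l)}+{}^tM^{(l)}$ producing the two summands with $r$ and $s$ exchanged. If anything, you are more explicit than the paper about the symmetrization step and the transposition conventions hidden in the definition of $\mathsf{B}_{rs}$, which is exactly where the only delicate bookkeeping lies.
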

\begin{proof}
Since the matrices $\bar{J}$ and $\e^{\sigma\bar{J}}$ are block-diagonal,
$$
\mathsf{B}_{rs}\left[\left(
		 \e^{-\sigma \Jbar} - I_{2n}  
	\right)	
 	\Jbar^{-1}
	\;{}^t J_l 
	\,
	 \e^{\sigma \Jbar}\right]
=
\mathsf{B}_{rr}
		\left[ 
			\left(
				I_{2n}  -		 \e^{-\sigma \Jbar}
			\right)	
 			\Jbar^{-1}
 		\right]
	\mathsf{B}_{rs}\left[{}^t J_l \right]
	\,
	\mathsf{B}_{ss}\left[\e^{\sigma \Jbar}\right].
$$
Hence the result since 
$$
\mathsf{B}_{rr}
		\left[ 
			\left(
				I_{2n}  -		 \e^{-\sigma \Jbar}
			\right)	
 			\Jbar^{-1}
 		\right]=\frac{1}{b_r}S(b_r \sigma),
\quad
\mathsf{B}_{rr}
		\left[ 
			\left(
\e^{\sigma \Jbar}
			\right)	
 			\Jbar^{-1}
 		\right]=R(b_r \sigma),
\quad
\forall r\in \ll1,n\rr.
$$	
\end{proof}
Some interesting computational properties can be deduced from this result.
\begin{lemma}
Let 
$$
\begin{aligned}
\alpha
&=\frac{\pi}{b_1^3}\left(
\frac{\partial^2 (X_2)_{2n+1}}{\partial x_1\partial x_2}(q_0)-\frac{\partial^2 (X_1)_{2n+1}}{\partial x_2^2}(q_0)\right),
\\
\beta
&=-\frac{\pi}{b_1^3}\left(
\frac{\partial^2 (X_2)_{2n+1}}{\partial x_1^2}(q_0)-\frac{\partial^2 (X_1)_{2n+1}}{\partial x_1\partial x_2}(q_0)\right).
\end{aligned}
$$
Then
$$
\begin{aligned}
&\kappa^{1,1}_1=3\alpha, & &\kappa^{1,1}_2=\beta,
\\
&\kappa^{2,2}_1=\alpha, && \kappa^{2,2}_2=3\beta,
\\
&\kappa^{1,2}_1=2 \beta, && \kappa^{1,2}_2=2 \alpha.
\end{aligned}
$$
\end{lemma}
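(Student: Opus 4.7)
The proof is a direct computation using Lemma~\ref{L:invariants_order_1} and Lemma~\ref{L:expression_blocks}. The plan is to evaluate each of the six integrals
$$
\kappa^{i j}_k = \varepsilon(i,j)\int_0^{2\pi/b_1}\!\!\int_0^\tau\left[\e^{(\tau-\sigma)\Jbar}V_{i,j}(\sigma)\right]_k\,\diff\sigma\,\diff\tau,\qquad (i,j,k)\in\ll1,2\rr^3,\;i\le j,
$$
by exploiting the fact that when $i,j,k$ all lie in $\ll1,2\rr$, only the $(1,1)$ block $\mathsf{B}_{11}[\cdot]$ of every matrix in the integrand contributes. Since $\Jbar$ is block diagonal with upper-left block $\Jbar_1=\begin{pmatrix}0&b_1\\-b_1&0\end{pmatrix}$, the restriction of $\e^{(\tau-\sigma)\Jbar}$ to this block is a plane rotation of angle $-b_1(\tau-\sigma)$.

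By the definition of $J_l$, the row indexed by $l$ of $J_l$ vanishes identically (that row is $\partial^2_{x_l x_m}(X_l)_{2n+1}-\partial^2_{x_l x_m}(X_l)_{2n+1}=0$), so inspection yields
$$
\mathsf{B}_{11}[{}^tJ_1]=\begin{pmatrix}0&-B\\0&-A\end{pmatrix},\qquad \mathsf{B}_{11}[{}^tJ_2]=\begin{pmatrix}B&0\\A&0\end{pmatrix},
$$
where $A=\partial^2_{x_1 x_2}(X_2)_{2n+1}(q_0)-\partial^2_{x_2 x_2}(X_1)_{2n+1}(q_0)$ and $B=\partial^2_{x_1 x_1}(X_2)_{2n+1}(q_0)-\partial^2_{x_1 x_2}(X_1)_{2n+1}(q_0)$, so that $\alpha=\pi A/b_1^3$ and $\beta=-\pi B/b_1^3$. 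Applying Lemma~\ref{L:expression_blocks} with $r=s=1$ then expresses the $(1,1)$ block of $(V(\sigma))_l$ as $\tfrac{2}{b_1}S(b_1\sigma)\mathsf{B}_{11}[{}^tJ_l]R(b_1\sigma)$. Composing with the rotation representing $\e^{(\tau-\sigma)\Jbar}$ on this block and extracting the $k$-th coordinate produces an integrand that is a trigonometric polynomial in $b_1\sigma$ and $b_1\tau$ whose coefficients are linear in $A$ and $B$.

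The two nested integrations then reduce to elementary primitives of the form $\int_0^\theta\sin^p u\cos^q u\,\diff u$ followed by full-period integrals like $\int_0^{2\pi}\theta\sin\theta\,\diff\theta$ after the substitution $\theta=b_1\tau$. Collecting, for each $(i,j,k)$, the coefficients of $A$ and of $B$, multiplying by $\varepsilon(i,j)$, and substituting $A=b_1^3\alpha/\pi$, $B=-b_1^3\beta/\pi$ produces the six identities in the statement. The main obstacle is purely bookkeeping: one must carefully track the symmetrization factor $2/b_1$ coming from the merger of the two terms of Lemma~\ref{L:expression_blocks} when $r=s$, the prefactor $\varepsilon(i,j)$ which distinguishes $i=j$ from $i\neq j$, and the signs produced by the shear matrix $S(b_1\sigma)$. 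The characteristic ratios $3{:}1{:}2$ appearing in the claim emerge from the observation that $\kappa^{1,1}_1$ and $\kappa^{2,2}_2$ pick up contributions from both columns of $\mathsf{B}_{11}[{}^tJ_l]$ after the two-step integration, while the cross-terms $\kappa^{1,2}_k$ receive only the off-diagonal contribution, doubled by $\varepsilon(1,2)=1$ instead of $\varepsilon(1,1)=\varepsilon(2,2)=1/2$.
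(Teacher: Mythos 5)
Your proposal is correct and takes exactly the route the paper intends: the lemma is stated there without proof, as the direct evaluation of the integrals of Lemma~\ref{L:invariants_order_1} via the block reduction of Lemma~\ref{L:expression_blocks}, and carrying out the integration as you outline (only the first $2\times 2$ block survives, the exponentials act as rotations by $b_1$, and the integrand is a trigonometric polynomial linear in $A$ and $B$) does yield the six stated values. Two small remarks: with the paper's transposed convention for $\mathsf{B}_{rs}$ one has $\mathsf{B}_{11}[{}^tJ_1]=\begin{pmatrix}0&0\\-B&-A\end{pmatrix}$, the transpose of what you wrote (harmless here, since only the symmetrized matrix $(V_{i,j})_l$ enters), and the characteristic ratios $3{:}1{:}2$ actually come from the full-period integrals $\int_0^{2\pi}(1-\cos u)^2\,\diff u=3\pi$ versus $\int_0^{2\pi}\sin^2 u\,\diff u=\pi$ (and their difference for the cross terms), not from the $\varepsilon(i,j)$ normalization, which only compensates the double counting in the symmetrization.
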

\begin{lemma}\label{L:invariants_kappa131}
For all $i\in \ll 2,n\rr$, 
$\left(\kappa^{kl}_m\right)_{\begin{smallmatrix}k,m\in\{1,2\} \\ l\in \{2i-1,2i\} \end{smallmatrix}}$ only depend on the family
$$
\left\{
\left(\frac{\partial^2 (X_k)_{2n+1}}{\partial x_l\partial x_m}(q_0)\right)
\mid (k,l,m)\in \{2i-1,2i\}\times \{1,2\}^2 \cup   \{1,2\}^2 \times\{2i-1,2i\}
\right\}.
$$
Let $\zeta_i:\R^{15}\rightarrow \R^8$ be the linear map such that
$$
\zeta_i\left(\left(\frac{\partial^2 (X_k)_{2n+1}}{\partial x_l\partial x_m}(q_0)\right)_{k,l,m\in\{1,2\}\cup\{2i-1,2i\}}\right)
=
\left(\kappa^{kl}_m\right)_{\begin{smallmatrix}k,m\in\{1,2\} \\ l\in \{2i-1,2i\} \end{smallmatrix}}
$$
is of rank $8$ on the complementary of codimension $1$ subset $\mathfrak{S}_3\subset M$, and rank $7$ on  $\mathfrak{S}$.
\end{lemma}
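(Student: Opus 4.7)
The plan is to reduce everything to the explicit integral formula from Lemma~\ref{L:invariants_order_1}, combine it with the block decomposition of Lemma~\ref{L:expression_blocks}, and then analyze the resulting finite-dimensional linear map explicitly.

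First I would exploit the block-diagonal structure of $\Jbar$: since $m \in \{1,2\}$, the $m$-th component of $\e^{(\tau-\sigma)\Jbar} V_{k,l}(\sigma)$ depends only on $(V_{k,l}(\sigma))_1$ and $(V_{k,l}(\sigma))_2$, and each of these two components involves only $J_1$ or $J_2$. Then, since $k \in \{1,2\}$ and $l \in \{2i-1, 2i\}$, the entries $M_{k,l}$ and $M_{l,k}$ defining $(V_{k,l}(\sigma))_p$ lie in the sub-blocks $\mathsf{B}_{1i}$ and $\mathsf{B}_{i1}$ of $M^{(p)} = (I_{2n} - \e^{-\sigma \Jbar})\Jbar^{-1}\,{}^t J_p\, \e^{\sigma \Jbar}$. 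By Lemma~\ref{L:expression_blocks}, these sub-blocks of $M^{(p)}$ depend on ${}^tJ_p$ only through $\mathsf{B}_{1i}[{}^tJ_p]$ and $\mathsf{B}_{i1}[{}^tJ_p]$. Unpacking the definition of $J_p$ and applying the symmetry of mixed partial derivatives, I would then check that the second derivatives appearing in these two blocks for $p \in \{1,2\}$ are precisely those indexed by the union $\ll 2i-1, 2i\rr\times \ll 1,2\rr^2 \cup \ll 1,2\rr^2 \times \ll 2i-1, 2i\rr$. This settles the first assertion of the lemma.

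For the rank statement, I would insert Lemma~\ref{L:expression_blocks} into the integral: the integrand becomes a product of rotations $R$ and matrices $S$ evaluated at $b_1\sigma$, $b_i\sigma$ and $b_1(\tau-\sigma)$, sandwiching $\mathsf{B}_{1i}[{}^tJ_p]$ and $\mathsf{B}_{i1}[{}^tJ_p]$. Integrating over $[0,2\pi/b_1]\times[0,\tau]$ yields elementary trigonometric integrals that are well-defined rational functions of $(b_1,b_i)$ precisely because $b_1\neq b_i$ (which is guaranteed by $q_0\notin \S_1$). This produces an explicit $8 \times 15$ matrix representation of $\zeta_i$. I would then exhibit a specific $7\times 7$ minor of this matrix and verify that it is a nonzero rational function of $(b_1,b_i)$ on the region $b_1>b_i>0$, which gives $\mathrm{rank}\,\zeta_i \geq 7$ everywhere. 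The vanishing of all $8\times 8$ minors is then a single polynomial equation on the remaining second derivatives; by Thom transversality applied to the $2$-jets of the sub-Riemannian structure, the corresponding locus in $M$ forms a codimension 1 stratified subset $\S_3$.

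The hard part will be the rank computation: the $8\times 15$ matrix is too large to analyze by inspection, and exhibiting a nonvanishing $7\times 7$ minor requires a lengthy, mechanical symbolic calculation, best handled by a computer algebra system. The explicit polynomial cutting out $\S_3$ emerges as a byproduct. The essential structural input in the argument is that the trigonometric integrals degenerate when $b_1=b_i$, which is precisely why the hypothesis $q_0 \notin \S_1$ cannot be dropped.
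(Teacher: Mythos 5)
The first half of your plan (reducing to Lemma~\ref{L:invariants_order_1}, exploiting the block-diagonal structure of $\Jbar$ via Lemma~\ref{L:expression_blocks} to see that only the blocks $\mathsf{B}_{1i}[{}^tJ_p]$, $\mathsf{B}_{i1}[{}^tJ_p]$ for $p\in\{1,2\}$ enter) is exactly the paper's argument for the first assertion, and the idea of writing out the matrix of $\zeta_i$ explicitly and studying its minors is also how the paper proceeds (the paper actually restricts to the $8\times 8$ submatrix $\bar\zeta_i$ given by the variables $\partial^2(X_k)_{2n+1}/\partial x_l\partial x_m$ with $k,l\in\{1,2\}$, $m\in\{2i-1,2i\}$, which makes the degeneracy condition a single determinant).

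The genuine gap is in your description of where $\mathfrak{S}_3$ comes from. The map $\zeta_i$ is \emph{linear}, and its coefficient matrix depends only on $(b_1,b_i)$ through the trigonometric integrals coming from $R(b_1\sigma)$, $S(b_i\sigma)$, etc.; the second derivatives $\partial^2(X_k)_{2n+1}/\partial x_l\partial x_m(q_0)$ are the \emph{argument} of $\zeta_i$, not parameters of it. Consequently the rank of $\zeta_i$ cannot depend on those second derivatives: the vanishing of the $8\times 8$ minors is a condition on the ratio $\rho=b_i/b_1$ alone, and it is a transcendental (not polynomial) equation in $\rho$ — this is precisely Equation~\eqref{E:eq_rho} in the paper, involving $\sin(2\pi\rho)$, $\cos(2\pi\rho)$, $\sin(4\pi\rho)$, $\cos(4\pi\rho)$. (For the same reason the entries and minors are not rational functions of $(b_1,b_i)$.) Hence $\mathfrak{S}_3$ is cut out by one equation on the nilpotent invariants, i.e.\ a codimension~$1$ constraint on the $1$-jets of the structure, and the transversality argument must be applied there; your step ``the vanishing of all $8\times 8$ minors is a single polynomial equation on the remaining second derivatives; by Thom transversality applied to the $2$-jets\dots'' conflates the coefficient matrix with its input and would not produce the correct set $\mathfrak{S}_3$. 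The rest of your rank argument (a $7\times 7$ minor nonvanishing for all $b_1>b_i>0$, giving rank $\geq 7$ everywhere, with the explicit verification delegated to a computer algebra system) is consistent with what the paper does, provided the nonvanishing is checked pointwise on the region and not merely as a nonzero function.
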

\begin{proof}
The first part of the result is a direct application of Lemma~\ref{L:expression_blocks}.
Let 
$\bar{\zeta}_i$ be the restriction of $\zeta_i$ to 
$$
\left(\frac{\partial^2 (X_k)_{2n+1}}{\partial x_l\partial x_m}(q_0)\right)_{\begin{smallmatrix}k,l\in\{1,2\}\\ m\in\{2i-1,2i\}\end{smallmatrix}}.
$$
Explicit computation of $\zeta_i$ yields that the rank of $\bar{\zeta}_i$ is $8$, except for when
\begin{equation}\label{E:eq_rho}
\begin{aligned}
0=&2 \pi ^2 \rho^5+2 \pi ^2 \rho^4-2 \pi ^2 \rho^3-2 \pi ^2 \rho^2-2 \rho+1
\\
&+\left(-4 \pi  \rho^3+10 \pi  \rho^2+2 \pi  \rho\right) \sin (2 \pi  \rho)+
\left(2 \pi  \rho^3-6 \pi  \rho^2+4 \pi  \rho\right) \sin (4 \pi  \rho)
\\
&+\left(-4 \pi ^2 \rho^5+8 \pi ^2 \rho^4+4 \pi ^2 \rho^3-8 \pi ^2 \rho^2+3 \rho-3\right) \cos (2 \pi  \rho)+
(2-\rho) \cos (4 \pi  \rho)
\end{aligned}
\end{equation}
where $\rho=b_i/b_1<1$. Furthermore, if $\rho$ satisfies \eqref{E:eq_rho}, then the rank of $\bar{\zeta}_i$  is $7$. Hence the existence of  $\mathfrak{S}_3\subset M$, by the existence of a codimension $1$ constraint on the $1$-jet of the sub-Riemannian structure at $q_0$.
\end{proof}

\subsection{Third order invariants}\label{A:third_order}

In this section we compute a more precise approximation of the exponential map in the case of initial covectors of the form $(h_1,h_2,0,\dots,0,\eta^{-1})\in T_{q_0}^*M$.

\begin{lemma}
For all $T,R>0$,
normal extremals with initial covector $(\L \h ,\eta^{-1})$ have the following order 3 terms at time $\eta \tau$, uniformly with respect to $h(0)\in B_R$ and  $\tau\in [0,T]$, as  $\eta\to 0^+$:
$$
\begin{aligned}
&x^{(3)}(\tau,  \L \h)=
\int_{0}^{\tau}
h^{(2)}(\sigma,\L \h)\diff \sigma,
\\
&z^{(3)}(\tau,  \L \h)=
\int_{0}^{\tau}
\left(
h_2^{(1)} 
\xhat_1 
-
h_1^{(1)} 
\xhat_2 
+
\hhat_1 \left(X^{(2)}_1\right)_{2n+1}
+
\hhat_2 \left( X^{(2)}_2\right)_{2n+1}
\right)(\sigma , \L \h)
\diff \sigma,
\end{aligned}
$$
with
$$
\begin{aligned}
h^{(2)}(\tau, \L \h)=
		\int_{0}^{\tau}
		\e^{(\tau-\sigma)\bar{J}} 
		\left[
		\vphantom{x^{(2)}}
		\right.	
		&
		J^{(1)} (x^{(2)} )
		\hhat 
	+
	J^{(1)}\left(\xhat \right)
		h^{(1)} 
	+
	J^{(2)}\left(\xhat \right)
		\hhat
	+
	J_z\left(\zhat \right)
		\hhat 
\\
& 
	+
	Q^{(1)}\left(\xhat ,\hhat \right)
	-
	\left.
		w^{(2)} \bar{J} \hhat 	
	\right](\sigma,  \L\h)
		\diff \sigma 
\end{aligned}
$$
and
$$
Q^{(1)}(x,h)=\sum_{i=1}^{2n}\frac{\partial Q(h)}{\partial x_i} x_i,
$$
$$
J^{(1)}(x)=\sum_{i=1}^{2n}\frac{\partial J}{\partial x_i} x_i
\qquad
J^{(2)}(x)=\sum_{i=1}^{2n}\sum_{j=1}^{2n}\frac{\partial^2 J}{\partial x_i\partial x_j} x_i x_j,
\qquad
J_z(z)=\frac{\partial J}{\partial z} z.
$$
\end{lemma}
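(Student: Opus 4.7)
The plan is to extend the proof of Proposition~\ref{P:expansion} by one more order in $\eta$, now specialized to the initial covector $h(0)=\L\h$. As in that proof, the rescaled system~\eqref{E:Ham_ode_rescaled} depends smoothly on $\eta$ near $0$, so its solution admits a formal power series whose $\eta^k$-coefficient is determined by a linear inhomogeneous ODE with source built from lower-order coefficients, and the initial data are $x^{(k)}(0)=z^{(k)}(0)=h^{(k)}(0)=0$ for $k\ge 1$ (since $\L\h$ is $\eta$-independent) together with $w^{(0)}(0)=1$.

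The building blocks are the Taylor expansions at $q_0$,
$$
J(q) = \bar J + J^{(1)}(x) + J^{(2)}(x) + J_z(z) + O(|(x,z)|^3),\qquad Q(q,h) = Q^{(1)}(x,h) + O(|x|^2|h|^2),
$$
together with $L(q)h,\,Q_0(q,h) = O(|x|)$. These use the vanishing of $c_{ji}^k(q_0)$ for $(i,j,k)\in\ll 1,2n\rr^3$ recalled in the proof of Proposition~\ref{P:expansion}. Combined with $z=O(\eta^2)$, one finds $w = 1 + \eta^2 w^{(2)} + O(\eta^3)$ and, collecting the $\eta^2$-terms in the $h$-equation via $1/w = 1 - \eta^2 w^{(2)} + O(\eta^3)$,
$$
\tfrac{\diff h^{(2)}}{\diff \tau} = \bar J\, h^{(2)} + \bigl[J^{(1)}(x^{(2)}) + J^{(2)}(\xhat) + J_z(\zhat)\bigr]\hhat + J^{(1)}(\xhat)\, h^{(1)} + Q^{(1)}(\xhat,\hhat) - w^{(2)} \bar J\, \hhat,
$$
with $h^{(2)}(0)=0$. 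The variation-of-constants formula then produces the stated integral expression for $h^{(2)}$.

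For $x^{(3)}$, I expand $\tfrac{\diff x}{\diff \tau} = \eta \sum_i h_i X_i(q)$ at order $\eta^3$. The normal-form identities $(X_i)_k(0,z)=\delta_{ik}$ and $\partial_{x_l}(X_i)_k(0,z)=0$ (from~\eqref{E:AG_nf_hor1}--\eqref{E:AG_nf_hor2}) imply $(X_i)_k(q)-\delta_{ik} = O(|x|^2)$, so beyond the obvious contribution $h^{(2)}_k$, the only possible extra source is of the form $\partial_{x_l}\partial_{x_m}(X_i)_k(0,0)\,\hhat_i \xhat_l \xhat_m$. The choice $h(0)=\L\h$ kills $\hhat_i,\xhat_i$ for $i\ge 3$, so one may restrict to $i,l,m\in\{1,2\}$, and the remaining second derivatives all vanish: for $k\in\{1,2\}$, by the explicit expressions~\eqref{E:AG_nf_hor3} together with $\beta_1(0,0,z)=0$ (see~\eqref{E:AG_nf_hor5}); for $k\ge 3$, by differentiating the normal-form identity $\sum_j (X_j)_i x_j = x_i$ of Theorem~\ref{T:Ag_Gau_Frame_simpl} twice and invoking~\eqref{E:AG_nf_hor2}. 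Integration then gives the claimed formula for $x^{(3)}$. An analogous computation applied to $\tfrac{\diff z}{\diff \tau} = \eta \sum_i h_i (X_i)_{2n+1}(q)$, using~\eqref{E:AG_nf_vert1} for the vertical components and the quadratic parts $(X_i^{(2)})_{2n+1}$ of the Taylor expansions of the $X_i$ at $q_0$, yields the two surviving types of contributions in the integrand for $z^{(3)}$: the cross terms $h_2^{(1)}\xhat_1 - h_1^{(1)}\xhat_2$ coming from the linear vertical parts, and the terms $\hhat_i (X_i^{(2)})_{2n+1}$ coming from the quadratic-in-$x$ corrections.

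The main obstacle is bookkeeping: at order $\eta^3$ many candidate terms contribute in principle, and one must verify, coordinate by coordinate, that those absent from the statement are eliminated either by structural identities of the Agrachev--Gauthier normal form (constraining the low-order jets of the $X_i$ at $q_0$) or by the specific choice $h(0)=\L\h$ (which forces $\hhat_i=\xhat_i=0$ for $i\ge 3$). These checks are elementary but tedious, and are carried out systematically using the identities~\eqref{E:AG_nf_hor1}--\eqref{E:AG_nf_vert1} collected in Appendix~\ref{A:Gauthier}.
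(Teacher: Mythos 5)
Your route is the paper's route: expand the rescaled system order by order, get $h^{(2)}$ by variation of constants from the $\eta^2$-terms of the $h$-equation, and use the Agrachev--Gauthier identities to reduce $\diff x^{(3)}/\diff\tau$ to $h^{(2)}$ and to isolate the two sources in $\diff z^{(3)}/\diff\tau$. The $h^{(2)}$ and $z^{(3)}$ parts, and the horizontal components $k\in\{1,2\}$ of $x^{(3)}$ (via \eqref{E:AG_nf_hor3} together with $\beta_1(0,0,z)=0$, i.e.\ \eqref{E:AG_nf_hor5}), are handled exactly as in the paper and are correct.

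There is, however, a genuine gap at the step where you claim that, for $k\ge 3$, the second derivatives $\partial_{x_l}\partial_{x_m}(X_i)_k(q_0)$ with $i,l,m\in\{1,2\}$ vanish ``by differentiating $\sum_j (X_j)_i x_j=x_i$ twice and invoking \eqref{E:AG_nf_hor2}''. Differentiating that identity twice and evaluating at $x=0$ only reproduces the vanishing of the first derivatives; to reach the $2$-jets you must differentiate three times at $q_0$, and this yields only the cyclic relations
\begin{equation*}
\partial_{x_1}^2(X_1)_k=\partial_{x_2}^2(X_2)_k=0,\qquad 2\,\partial_{x_1}\partial_{x_2}(X_1)_k=-\partial_{x_1}^2(X_2)_k,\qquad 2\,\partial_{x_1}\partial_{x_2}(X_2)_k=-\partial_{x_2}^2(X_1)_k
\end{equation*}
at $q_0$, which leave $\partial_{x_1}^2(X_2)_k(q_0)$ and $\partial_{x_2}^2(X_1)_k(q_0)$ unconstrained. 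Plugging these relations into the candidate source of the $k$-th equation gives
\begin{equation*}
\hhat_1\bigl(X_1^{(2)}\bigr)_k+\hhat_2\bigl(X_2^{(2)}\bigr)_k
=\tfrac12\bigl(\hhat_2\xhat_1-\hhat_1\xhat_2\bigr)\bigl(\partial_{x_1}^2(X_2)_k(q_0)\,\xhat_1-\partial_{x_2}^2(X_1)_k(q_0)\,\xhat_2\bigr),
\end{equation*}
and since $\hhat_2\xhat_1-\hhat_1\xhat_2\not\equiv 0$ along the nilpotent flow, your argument does not eliminate this term: the radial and symmetry conditions recalled in Appendix~\ref{A:Gauthier} simply do not control these cross-block horizontal $2$-jets. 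The paper's own proof asserts the vanishing of the horizontal part of $\hhat_iX_i^{(2)}$ without detail, and only the components $1$, $2$ (and $2n+1$) of the expansion are used afterwards, where the vanishing does hold; so either you must invoke an additional normal-form property for the components $k\ge 3$, or restrict the claim $x^{(3)}=\int_0^\tau h^{(2)}$ to the components actually needed. As written, this step of your proof would fail.
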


\begin{proof}
We have
$$
\frac{\diff q^{(3)}}{\diff \tau}
=
\sum_{i=1}^{2n} \hhat_i(\tau, \L \h)X_i^{(2)}(\xhat(\tau, \L\h))+ h_i^{(1)}(\tau, \L \h)X_i^{(1)}(\xhat(\tau, \L\h))+ h_i^{(2)}(\tau, \L \h)X_i^{(0)}.
$$
Since $\xhat(\tau,\L\h)_i=0$ and $\hhat(\tau,\L \h)=0$ for $3\leq i\leq 2n$, the horizontal part of 
$$
h_i^{(0)}(\tau,\L \h)X_i^{(2)}(\xhat(\tau,\L\h))
$$
 vanishes in the Agrachev--Gauthier frame. The same goes for the horizontal part of $X_i^{(1)}$, $1\leq i\leq 2n$. Thus
$$
\begin{aligned}
&\frac{\diff x^{(3)}}{\diff \tau}= h^{(2)}(\tau,\L \h)
\\
&\frac{\diff z^{(3)}}{\diff \tau}= \sum_{i=1}^{2n}
\left[h^{(1)}\left(X_i^{(1)}\right)_{2n+1}
+\hhat \left(X_i^{(2)}\right)_{2n+1}\right](\tau,\L\h).
\end{aligned}
$$
Regarding $h^{(2)}$, we get the result by computing the order 2 in $\eta$ of $\frac{\diff h}{\diff \tau}$.
We have 
$$
\frac{\diff h}{\diff \tau}
		= \frac{\eta}{w} J h+\eta Q(h)
$$
with
$$
\frac{1}{w}=(1+\eta^2 w^{(2)}+O(\eta^3))^{-1}=1-\eta^2 w^{(2)}+O(\eta^3),
$$
$$
Q(h)=\eta Q^{(1)}(x^{(1)},h^{(0)})+O(\eta^2),
$$
$$
J =\bar{J}+\eta  J^{(1)}\left(x^{(1)}\right) 
+\eta^2\left(
J^{(1)} ( x^{(2)} )
	+
	J^{(2)} ( x^{(1)} ) 
	+
	J_z ( z^{(2)} )
\right)+O(\eta^3).
$$
Then evaluated at $(\tau,\L\h)$, we have
$$
\begin{aligned}
\frac{\diff h^{(2)}}{\diff \tau}
		=& \bar{J} h^{(2)}
		+
		J^{(1)}\left(x^{(2)} \right)
		\hhat
	+
	J^{(1)}\left(\xhat \right)
		h^{(1)} 
	\\&+
	J^{(2)}\left(\xhat \right)
		\hhat 
	+
	J_z\left(\zhat \right)
		\hhat 
	+
	Q^{(1)}\left(\xhat ,\hhat \right)
		\hhat 	
		-
		w^{(2)} \bar{J} \hhat 		.
\end{aligned}
$$
Hence the result.
\end{proof}

We can immediately apply this result to give an expression of $z^{(3)}$, using only the second order invariants introduced in the previous sections.

\begin{lemma}\label{L:z3}
Using the prior notations, we have
$$
z^{(3)}\left(\frac{2\pi}{b_1},\L\h\right)
=
\frac{1}{2}  \left(\h_1^2+\h_2^2\right) (\alpha \h_1+\beta \h_2).
$$
\end{lemma}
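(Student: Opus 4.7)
The plan is to specialize the integral representation of $z^{(3)}(\tau,\L\h)$ established in the previous lemma to the endpoint $\tau=2\pi/b_1$, exploit the block-diagonal structure of $\bar J$ to reduce everything to a computation in the single $(x_1,x_2,z)$-block of the Agrachev--Gauthier normal form, and then match the resulting trigonometric integrals against the defining formulas for $\alpha$ and $\beta$.

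Concretely, for the initial covector $\L\h=(\h_1,\h_2,0,\dots,0)$, the vectors $\hhat(\sigma,\L\h)$ and $\xhat(\sigma,\L\h)$ are supported on their first two components only, with elementary closed forms involving $\sin(b_1\sigma)$ and $\cos(b_1\sigma)$. In the integrand $h_2^{(1)}\xhat_1-h_1^{(1)}\xhat_2+\hhat_1(X_1^{(2)})_{2n+1}+\hhat_2(X_2^{(2)})_{2n+1}$ only the first two components of $h^{(1)}$ and of the vertical parts of $X_1,X_2$ are relevant, and these need only be evaluated on the slice $V_1=\{x_3=\dots=x_{2n}=0\}$. On $V_1$, point (ii) of Theorem~\ref{T:Ag_Gau_Frame} gives the explicit form $(X_1)_{2n+1}|_{V_1}=x_2\alpha_1(x_1,x_2,z)/2$ and $(X_2)_{2n+1}|_{V_1}=-x_1\alpha_1(x_1,x_2,z)/2$, so Taylor-expanding $\alpha_1$ at the origin (the $z$-dependence contributes only at order $\eta^2$) provides $(X_i^{(2)})_{2n+1}$ as a quadratic expression in $\xhat_1,\xhat_2$ whose coefficients are $\partial_{x_k}\alpha_1(0)$, and these in turn are the second partials $\partial^2_{x_ix_j}(X_k)_{2n+1}(q_0)$ which enter the definitions of $\alpha$ and $\beta$. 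The remaining piece $h^{(1)}(\sigma,\L\h)$ is given by Proposition~\ref{P:expansion} as $\int_0^\sigma e^{(\sigma-\rho)\bar J}J^{(1)}(\xhat)\hhat\,d\rho$, whose first two components depend again only on the four second partials $\partial^2_{x_i x_j}(X_k)_{2n+1}(q_0)$ with $i,j,k\in\{1,2\}$.

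Plugging everything in, the computation reduces to elementary trigonometric integrals of the form $\int_0^{2\pi/b_1}\sin^p(b_1\sigma)\cos^q(b_1\sigma)\,d\sigma$ with $p+q\leq 5$. The result is a homogeneous cubic polynomial in $(\h_1,\h_2)$; invariance under simultaneous $\mathrm{SO}(2)$-rotation of the first block (which is a symmetry of the nilpotent structure) forces the answer to take the form $C_1\h_1(\h_1^2+\h_2^2)+C_2\h_2(\h_1^2+\h_2^2)$, and a direct identification of the two scalar coefficients against the definitions of $\alpha$ and $\beta$ in terms of second partials of $(X_1)_{2n+1}$ and $(X_2)_{2n+1}$ yields $C_1=\alpha/2$, $C_2=\beta/2$.

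The principal obstacle is the bookkeeping: delicate cancellations between the ``transport'' contribution $h_2^{(1)}\xhat_1-h_1^{(1)}\xhat_2$ and the ``frame'' contribution $\hhat_i(X_i^{(2)})_{2n+1}$ are essential, and one has to invoke the Agrachev--Gauthier symmetries at the right moment (in particular the relations \eqref{E:AG_nf_hor5} and the equality $\widetilde L_1|_{V_1}=\alpha_1$ from point (iii) of Theorem~\ref{T:Ag_Gau_Frame}) to collapse the a priori larger pool of second partials of the frame to only the four combinations that occur in $\alpha$ and $\beta$. Once that accounting is handled, the remainder is routine trigonometric integration, and the statement of the lemma follows.
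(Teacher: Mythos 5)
Your proposal follows essentially the same route as the paper's proof: specialize the integral formula for $z^{(3)}(\cdot,\L\h)$ from the preceding lemma, restrict to the first block where the Agrachev--Gauthier relations $(X_1)_{2n+1}|_{V_1}=x_2\alpha_1/2$, $(X_2)_{2n+1}|_{V_1}=-x_1\alpha_1/2$ let both $(X_i^{(2)})_{2n+1}$ and the relevant entries of $J^{(1)}$ (hence $h^{(1)}$) be written through the two invariants $\alpha,\beta$, and conclude by integrating trigonometric polynomials over $[0,2\pi/b_1]$. The only cosmetic difference is your closing $\mathrm{SO}(2)$ remark, which should be stated as equivariance (rotating $(\h_1,\h_2)$ together with the pair $(\alpha,\beta)$) rather than invariance, since $z^{(3)}$ depends on the non-nilpotent $2$-jet data; as you carry out the integrals explicitly anyway, this does not affect the argument.
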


\begin{proof}
As stated before, it is a matter of evaluating the terms for the Agrachev--Gauthier frame. We have
$$
\frac{\diff z^{(3)}}{\diff \tau}= \sum_{i=1}^{2n}
\left[h_i^{(1)}\left(X_i^{(1)}\right)_{2n+1}
+\hhat_i \left(X_i^{(2)}\right)_{2n+1}\right](\tau,\L\h).
$$
For $3\leq i\leq 2n$, $\left(X_i^{(1)}\right)_{2n+1}\left(\xhat\left(\tau,\L\h\right)\right)=0$, 
$$
\left(X_1^{(1)}\right)_{2n+1}\left(\xhat\left(\tau,\L\h\right)\right)=\frac{b_1}{2} \xhat_2
\qquad\text{ and }\qquad
\left(X_2^{(1)}\right)_{2n+1}\left(\xhat\left(\tau,\L\h\right)\right)=-\frac{b_1}{2} \xhat_1.
$$
We have
$$
h^{(1)}(\tau, \L \h)=\int_{0}^{\tau}
\e^{(\tau-\sigma)\bar{J}} J^{(1)}\left(\xhat(\sigma,\L \h)\right)\hhat(\sigma,\L \h)\diff \sigma,
$$
with
$$
\left[J^{(1)}\left(\xhat(\tau,\L \h)\right)\right]_{12}=\xhat_1\frac{\partial c_{21}^0}{\partial x_1}+\xhat_2\frac{\partial c_{21}^0}{\partial x_2}.
$$
Since 
$\frac{\partial c_{21}^0}{\partial x_1}=\frac{b_1^3}{\pi}\beta$, and 
$\frac{\partial c_{21}^0}{\partial x_2}=-\frac{b_1^3}{\pi}\alpha$, with $h(0)=\L \h$, 
$$
J^{(1)}\left(\xhat \right)\hhat 
=
\frac{b_1^3}{\pi}
\begin{pmatrix}
\hhat_2 (\beta \xhat_1-\alpha \xhat_2)
\\
-\hhat_1 (\beta \xhat_1-\alpha \xhat_2)
\\
0
\\
\vdots
\\
0
\end{pmatrix}.
$$
Similarly, we have
$$
\begin{aligned}
\left(X_1^{(2)}\right)_{2n+1}=\frac{b_1^3}{2\pi} 
(
 -\beta \xhat_1 \xhat_2+\alpha  \xhat_2^2/2
),
\\
\left(X_2^{(2)}\right)_{2n+1}= \frac{b_1^3}{2\pi} (\beta \xhat_1 ^2/2-\alpha  \xhat_1\xhat_2).
\end{aligned}
$$
We then obtain obtain the result by integration.
\end{proof}

Since we are only interested in the first two coordinates of the exponential map, we state the following result.

\begin{lemma} 
For all $\tau$, for all $\h\in \R^{2n}$,
$$
Q^{(1)}_{1}\left(\xhat(\tau,\L \h),\hhat(\tau,\L\h)\right)=Q^{(1)}_{2} \left(\xhat(\tau,\L \h),\hhat(\tau,\L\h)\right)=0.
$$
\end{lemma}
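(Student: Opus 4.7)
My plan is to reduce the stated identity to the vanishing of certain partial derivatives of horizontal structural constants at $q_0$, and to read off that vanishing from the Agrachev--Gauthier normal form.

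First, since $\L \h=(\h_1,\h_2,0,\dotsc,0)$ and $\bar J$ is block diagonal with first $2\times 2$ block $\bar J_1$, both $\hhat(\tau,\L\h)=\e^{\tau\bar J}\L\h$ and $\xhat(\tau,\L\h)=\bar J^{-1}(\e^{\tau\bar J}-I_{2n})\L\h$ are supported in their first two coordinates. Expanding
\[
Q^{(1)}_l(x,h)=\sum_{i,j,k=1}^{2n}\frac{\partial c_{jl}^{k}}{\partial x_i}(q_0)\,x_i h_j h_k
\]
and plugging in $x=\xhat(\tau,\L\h)$, $h=\hhat(\tau,\L\h)$, only indices $i,j,k\in\{1,2\}$ contribute. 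Hence the claim reduces to showing $\partial_{x_i}c_{jl}^{k}(0)=0$ for all $i,j,k\in\{1,2\}$ and $l\in\{1,2\}$.

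Next I would convert these into derivatives of the frame itself. Using the identity $[X_j,X_l]_k=\sum_{m=1}^{2n}c_{jl}^m(X_m)_k$ (the Reeb term drops since $(X_0)_k=0$ for $k\leq 2n$), differentiating in $x_i$ at $0$, and invoking \eqref{E:AG_nf_hor1}--\eqref{E:AG_nf_hor2}, namely $(X_m)_k(0,z)=\delta_{mk}$ and $\partial_{x_i}(X_m)_k(0,z)=0$, one gets $\partial_{x_i}c_{jl}^{k}(0)=\partial_{x_i}[X_j,X_l]_k(0)$. Expanding the bracket as $[X_j,X_l]_k=\sum_{a}(X_j)_a\partial_a(X_l)_k-(X_l)_a\partial_a(X_j)_k$ with $a$ running over $x_1,\dotsc,x_{2n},z$, and applying in addition $(X_m)_{2n+1}(0)=0$ and $\partial_a(X_m)_k(0)=0$ (for $a=z$ this follows from $(X_m)_k(0,z)\equiv\delta_{mk}$), every mixed term vanishes and one is left with
\[
\frac{\partial c_{jl}^{k}}{\partial x_i}(0)=\partial_{x_i}\partial_{x_j}(X_l)_k(0)-\partial_{x_i}\partial_{x_l}(X_j)_k(0).
\]

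Finally, for $i,j,l\in\{1,2\}$ these pure second derivatives along $\partial_{x_1},\partial_{x_2}$ depend only on the restriction of the frame to $V_1=\{x_3=\cdots=x_{2n}=0\}$. On $V_1$, Theorem~\ref{T:Ag_Gau_Frame} expresses $(X_m)_k$ for $m,k\in\{1,2\}$ as $\delta_{mk}$ plus a homogeneous polynomial of degree $2$ in $(x_1,x_2)$ multiplied by $\beta_1(x_1,x_2,z)$; after any two differentiations in $\partial_{x_1},\partial_{x_2}$ and evaluation at $(0,0,z)$ only a constant multiple of $\beta_1(0,0,z)$ survives, which vanishes by the same theorem. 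Hence $\partial_{x_i}\partial_{x_j}(X_l)_k(0)=0$, so $\partial_{x_i}c_{jl}^{k}(0)=0$, and the lemma follows. The only real obstacle is the bracket-expansion bookkeeping; the substantive content is the normal-form fact that along $V_1$ the horizontal part of the frame agrees with its Euclidean model to order three, which forces the first-order variation of the $(X_1,X_2)$-component of $[X_j,X_l]$ to vanish.
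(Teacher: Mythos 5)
Your proposal is correct and follows essentially the same route as the paper's proof: restrict to indices $1,2$ using the block structure of $\bar J$ and the support of $\L\h$, rewrite $\partial_{x_i}c_{12}^{k}(q_0)$ (and $\partial_{x_i}c_{21}^{k}(q_0)$) as pure second derivatives $\partial_{x_i}\partial_{x_j}(X_l)_k(q_0)$ of the frame, and conclude from the Agrachev--Gauthier normal form. You merely make explicit the two steps the paper only "recalls", namely the bracket computation giving $\partial_{x_i}c_{12}^{j}(q_0)=\partial_{x_i}\partial_{x_1}(X_2)_j(q_0)-\partial_{x_i}\partial_{x_2}(X_1)_j(q_0)$ and the vanishing of these second derivatives via the restriction to $V_1$ and $\beta_1(0,0,z)=0$ (cf.\ \eqref{E:AG_nf_hor3}--\eqref{E:AG_nf_hor5}).
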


\begin{proof}
Recall that $Q:\R^{2n}\rightarrow  \R^{2n}$  is the map such that 
$$
Q_i (h_1,\dots h_{2n})=\sum_{j=1}^{2n}\sum_{k=1}^{2n} c_{j i}^k  h_j h_k, \quad  \quad i\in \ll 1, 2n\rr.
$$
Since $h(\tau)=\hhat(\tau,\L \h)+O(\eta)$ and $x(\tau)=\eta \xhat(\tau,\L \h)+O(\eta^2)$,
$$
\begin{aligned}
&Q_1 (h)=  c_{21}^1(\xhat)  \hhat_1 \hhat_2+ c_{2 1}^2(\xhat) \hhat_2^2+O(\eta^2),
\\
&Q_2 (h)= c_{12}^1(\xhat) \hhat_1^2+ c_{1 2}^2(\xhat)  \hhat_1 \hhat_2+O(\eta^2).
\end{aligned}
$$
Recall that for all $i,j\in \ll 1,  2n\rr$
$$
\frac{\partial c_{12}^j}{\partial x_i}=\frac{\partial {(X_2)}_j}{\partial x_i\partial x_1}-\frac{\partial {(X_1)}_j}{\partial x_i\partial x_2},
$$
and thus in the Agrachev--Gauthier frame, evaluated at $q_0$,
$$
\frac{\partial c_{12}^1}{\partial x_1}=\frac{\partial c_{12}^1}{\partial x_2}=\frac{\partial c_{12}^2}{\partial x_1}=\frac{\partial c_{12}^2}{\partial x_2}=0.
$$ 
Hence
$$
Q^{(1)}_{1}\left(\xhat(\tau,\L \h),\hhat(\tau,\L\h)\right)=Q^{(1)}_{2} \left(\xhat(\tau,\L \h),\hhat(\tau,\L\h)\right)=0.
$$
\end{proof}

Let us introduce the invariant $\xi\in \R$, given in the Agrachev--Gauthier frame by the formula 
$$
\xi=\frac{\pi}{ b_1^3}\frac{\partial^2 X_1}{\partial z\partial x_2}(q_0).
$$
This invariant, which is $0$ in the 3-dimensional contact case,
naturally appears in some terms of the third order expansion of the exponential map.
\begin{lemma} We have
$$
w^{(2)}(\tau,\L\h)= - \frac{2 b_1^2\xi }{\pi} \zhat(\tau,\L\h)
$$
and
$$
J_z(\zhat(\tau,\L\h))\hhat(\tau,\L\h)=- \frac{2 b_1^2\xi }{\pi} \zhat(\tau,\L\h)\bar{J}\L \hhat (\tau).
$$
\end{lemma}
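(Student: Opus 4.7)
The plan is to prove the two identities independently, both via direct integration in the $\eta$-expansion of the rescaled Hamiltonian system~\eqref{E:Ham_ode_rescaled}. The common scalar $-\tfrac{2 b_1^2\xi}{\pi}$ will emerge as the natural combination of second-order data at $q_0$ encoded by $\partial_z\alpha_1(q_0)$; that the same scalar appears twice reflects the fact that both $w^{(2)}$ and $J_z(\zhat)\hhat$ measure the same $z$-derivative of the first block of $J$ at $q_0$, which is the only ``off-nilpotent'' piece active for an initial covector of the form $\L\h$.

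For the first identity, I would expand $w = 1 + \eta w^{(1)} + \eta^2 w^{(2)} + O(\eta^3)$ in $\diff w/\diff\tau = -\eta w Lh - \eta^2 w^2 Q_0(h)$. Using the AG normal form one has $(X_i)_j(0,z)=\delta_{ij}$ by~\eqref{E:AG_nf_hor1} and $(X_i)_{2n+1}(0,z)=0$ by~\eqref{E:AG_nf_vert1}, so $\partial_z X_i(q_0)=0$ and consequently $L(q_0)=0$ and $c_{j0}^k(q_0)=0$ for all $j,k$. This forces $w^{(1)}=0$ and reduces the $\eta^2$ equation to $\diff w^{(2)}/\diff\tau = -L^{(1)}(\xhat)\hhat$, where $L^{(1)}$ is the linearization of $L$ at $q_0$. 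I would then identify $\partial L_j/\partial x_l(q_0)$ with $\partial^2 (X_j)_{2n+1}/\partial x_l\partial z(q_0)$; on $V_1$ formula~\eqref{E:AG_nf_vert1} reduces these to $\pm\tfrac{1}{2}\partial_z\alpha_1(q_0)$ for $(j,l)\in\{(1,2),(2,1)\}$ and zero otherwise. For the covector $\L\h$, only indices $j,l\in\{1,2\}$ contribute, and the integrand becomes proportional to $\hhat_1\xhat_2-\hhat_2\xhat_1 = \tfrac{2}{b_1}\diff\zhat/\diff\tau$ by the expression for $\diff z^{(2)}/\diff\tau$ given in the proof of Proposition~\ref{P:expansion}. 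A single integration from $\tau=0$ (with $w^{(2)}(0)=0$) yields the first identity after rewriting $\tfrac{1}{2}\partial_z\alpha_1(q_0) = b_1^3\xi/\pi$.

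For the second identity, I would compute $\partial_z J(q_0)$ entrywise, since $J_z(\zhat)\hhat = \zhat\,\partial_z J(q_0)\,\hhat$. Writing $J_{ij}(q)=c_{ji}^0(q)$, decomposing $[X_j,X_i]$ in the basis $(X_1,\dots,X_{2n},X_0)$, and using $(X_k)_{2n+1}(0,z)=0$, the correction terms disappear to first order and at $q_0$ one has $\partial_z J_{ij}(q_0)=\partial_z\partial_{x_j}(X_i)_{2n+1}(q_0)-\partial_z\partial_{x_i}(X_j)_{2n+1}(q_0)$. The first $2\times 2$ block then reads off directly from~\eqref{E:AG_nf_vert1} applied on $V_1$ and is $\tfrac{2\mu}{b_1}$ times the first block of $\bar J$, with $\mu=\tfrac{1}{2}\partial_z\alpha_1(q_0)$. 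The cross-block entries (one index in $\{1,2\}$, the other $\geq 3$) vanish as a consequence of condition~(iii) of Theorem~\ref{T:Ag_Gau_Frame}, which constrains the $x$-jets of the $\widetilde L_j$ at $q_0$. Since $\hhat(\tau,\L\h)$ is supported on its first two entries, only the first block contributes, and one recognizes $J_z(\zhat)\hhat = \tfrac{2\mu}{b_1}\,\zhat\,\bar J\L\hhat = -\tfrac{2 b_1^2\xi}{\pi}\,\zhat\,\bar J\L\hhat$ after tracking signs.

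The hardest step will be establishing the vanishing of the cross-block entries of $\partial_z J(q_0)$, as every other step is bookkeeping in the AG normal form plus elementary integration. I would verify it by differentiating the constraints on the $\widetilde L_j$ in Theorem~\ref{T:Ag_Gau_Frame}(iii) and combining with~\eqref{E:AG_nf_hor2}, which kills the first-order $x$-derivatives of horizontal components at $q_0$. The fact that the two identities share the same scalar $-\tfrac{2 b_1^2\xi}{\pi}$ then serves as a useful consistency check on signs and on the Poisson-bracket conventions used in Section~\ref{S:Normal_extremals}; it is also exactly what makes the combination $J_z(\zhat)\hhat - w^{(2)}\bar J\hhat$ vanish on covectors of the form $\L\h$, which is presumably the reason this lemma is wanted for the computation of $h^{(2)}$.
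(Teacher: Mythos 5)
Your argument is essentially the paper's own proof: expand the $w$-equation of \eqref{E:Ham_ode_rescaled} in the Agrachev--Gauthier frame, use $L^{(0)}=Q_0^{(0)}=0$ (and $w^{(1)}=0$) so that only $L^{(1)}(\xhat)\hhat$ survives, identify the surviving derivatives $\partial_{x_2}c_{10}^0=-\partial_{x_1}c_{20}^0$ with $\mp\tfrac12\partial_z\alpha_1=\mp b_1^3\xi/\pi$, recognize $\tfrac{\diff \zhat}{\diff\tau}$ and integrate from $w^{(2)}(0)=0$; the second identity is then the same entrywise computation applied to $\partial_z J$ on the first $2\times2$ block, exactly as in the paper. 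The one place you deviate is the claim that the cross-block entries of $\partial_z J(q_0)$ vanish by Theorem~\ref{T:Ag_Gau_Frame}(iii): that condition only constrains horizontal derivatives of the diagonal quantities $\widetilde L_j$ along the $z$-axis and does not obviously control $\partial_z c^0_{1j},\partial_z c^0_{2j}$ for $j\geq 3$, but this is immaterial in practice --- the paper itself only computes the $(1,2)$ entry, since $\hhat(\tau,\L\h)$ is supported on the first block and every later use (through the block-diagonal $\e^{(\tau-\sigma)\bar J}$ and the restriction to the first two components of $x^{(3)}$) only sees that block.
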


\begin{proof}
As seen in the proof of Proposition~\ref{P:expansion}, $\dfrac{\diff w}{\diff \tau}=-\eta w L h-\eta^2 w^{2}Q_0(h)=O(\eta^2)$.
Then 
$$
\dfrac{\diff w^{(2)}}{\diff  \tau}=
-w^{(1)}L^{(0)}h^{(0)}
-w^{(0)}L^{(1)}h^{(0)}
-w^{(0)}L^{(0)}h^{(1)}-Q_0^{(0)}\left(h^{(0)}\right).
$$

In the Agrachev--Gauthier frame, $c_{i0}^j(q_0)=-\partial_z (X_i)_j$, for all $i,j\in \ll1,2n\rr$.  Hence $c_{i0}^j(q_0)=0$, which implies $Q_0^{(0)}=0$. Likewise, $c_{i0}^0(q_0)=-\partial_z (X_i)_{2n+1}$ for all $i\in \ll1,2n\rr$, hence $c_{i0}^0(q_0)=0$ and $L^{(0)}=0$.

With $h(\tau)=\hhat(\tau,\L \h)+O(\eta)$ and $x(\tau)=\eta\xhat(\tau,\L h)+O(\eta^2)$, we then have 
$$
\frac{\diff w^{(2)}}{\diff  \tau}
= \left(\frac{\partial c_{1 0}^0}{\partial  x_1}\xhat_1+\frac{\partial c_{1 0}^0}{\partial x_2}\xhat_2\right) \hhat_1+\left(\frac{\partial c_{2 0}^0}{\partial  x_1}\xhat_1+\frac{\partial c_{2 0}^0}{\partial x_2}\xhat_2\right) \hhat_2.
$$
Again in the Agrachev--Gauthier frame, at $q_0$, 
$$
\frac{\partial c_{1 0}^0}{\partial  x_1}=\frac{\partial c_{2 0}^0}{\partial  x_2}=0,
\qquad 
\text{ and }
\qquad
\frac{\partial c_{1 0}^0}{\partial  x_2}=-\frac{\partial c_{2 0}^0}{\partial  x_1}=-\frac{1}{2}\frac{\partial b_1}{\partial  z}=-\frac{b_1^3 \xi}{\pi}.
$$
As a result
$$
\frac{\diff w^{(2)}}{\diff  \tau}
=-\frac{b_1^3\xi}{\pi } \left(\xhat_2\hhat_1-\xhat_1  \hhat_2\right),
$$
hence the result by recognizing $\frac{\diff \zhat}{\diff \tau}$ and $w^{(2)}(0)=0$.

The same reasoning applies for $J_z$, where $(J_z)_{1,2}=-\frac{\partial c_{21}^0}{\partial z}=-\frac{2 b_1^3}{\pi}\xi$.
\end{proof}

We now know enough to compute $x^{(3)}(2\pi/b_1,\L\h)$ (or at least its first two coordinates).
By direct integration we have the following expression for the terms of the expansion that depend on $\xi$.
\begin{lemma}
Let
$$
x_{w^{(2)}}=
\int_{0}^{2\pi/b_1}
\int_{0}^{\tau}
		\e^{(\tau-\sigma)\bar{J}} 
		\left[
		-w^{(2)} \bar{J}
		\hhat 
		\right](\sigma,\L\h)
	\diff \sigma
	\diff \tau
$$
and
$$
x_{J_z}=
\int_{0}^{2\pi/b_1}
\int_{0}^{\tau}
		\e^{(\tau-\sigma)\bar{J}} 
		\left[
		J_z\left(\zhat \right)
		\hhat 
		\right](\sigma,\L\h)
	\diff \sigma
	\diff \tau.	
$$
Then $x_{w^{(2)}}=-x_{J_z}$.
\end{lemma}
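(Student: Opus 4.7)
The plan is to recognize that this is a direct consequence of the previous lemma once one exploits the block structure of $\bar{J}$ together with the fact that the initial covector $\L\h$ only activates the first $2\times 2$ block.

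First, I would rewrite both integrands using the previous lemma. From the formulas
$$
w^{(2)}(\tau,\L\h)= - \frac{2 b_1^2\xi }{\pi} \zhat(\tau,\L\h), \qquad J_z(\zhat(\tau,\L\h))\hhat(\tau,\L\h)=- \frac{2 b_1^2\xi }{\pi} \zhat(\tau,\L\h)\,\bar{J}\L \hhat(\tau,\L\h),
$$
the two integrands appearing in $x_{w^{(2)}}$ and $x_{J_z}$ become
$$
-w^{(2)}\,\bar{J}\hhat(\sigma,\L\h)= \frac{2 b_1^2\xi}{\pi}\zhat(\sigma,\L\h)\,\bar{J}\hhat(\sigma,\L\h), \qquad J_z(\zhat)\hhat(\sigma,\L\h)= -\frac{2 b_1^2\xi}{\pi}\zhat(\sigma,\L\h)\,\bar{J}\L\hhat(\sigma,\L\h).
$$
So the two integrands differ only by the presence of the projector $\L$ on the right-hand side of $\bar{J}$.

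The key observation is that $\hhat(\sigma,\L\h)=\e^{\sigma \bar{J}}\L\h$, and since $\bar{J}=\mathrm{diag}(\bar{J}_1,\dots,\bar{J}_n)$ is block diagonal, the flow $\e^{\sigma \bar{J}}$ preserves the decomposition into these $2\times 2$ blocks. As $\L\h$ has only its first two coordinates nonzero, the vector $\hhat(\sigma,\L\h)$ remains supported on the first two coordinates for all $\sigma$, which means
$$
\L\,\hhat(\sigma,\L\h)=\hhat(\sigma,\L\h).
$$
Substituting this equality into the expression for $J_z(\zhat)\hhat$ shows that the integrand in $x_{J_z}$ is exactly the opposite of the integrand in $x_{w^{(2)}}$, so integrating over $\sigma$ and $\tau$ yields $x_{w^{(2)}}=-x_{J_z}$.

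There is essentially no obstacle here: the whole argument amounts to plugging in the formulas from the preceding lemma and invoking the invariance of the $2\times 2$ block structure under the nilpotent flow $\e^{\sigma\bar{J}}$. The only care needed is to confirm that $\L$ commutes with $\bar{J}$ in the sense that $\L\hhat=\hhat$ along this particular trajectory, which follows immediately from the block-diagonal form of $\bar{J}$ in Agrachev--Gauthier coordinates.
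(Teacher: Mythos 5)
Your proof is correct. The paper gives no detailed argument for this lemma: it simply asserts the identity ``by direct integration,'' i.e.\ by computing the two double integrals from the formulas of the preceding lemma and observing that the results cancel. Your route is slightly more economical and more structural: after substituting $w^{(2)}(\sigma,\L\h)=-\tfrac{2b_1^2\xi}{\pi}\,\zhat(\sigma,\L\h)$ and $J_z(\zhat)\hhat=-\tfrac{2b_1^2\xi}{\pi}\,\zhat\,\bar{J}\L\hhat$, you observe that $\hhat(\sigma,\L\h)=\e^{\sigma\bar{J}}\L\h$ remains supported in the first $2\times 2$ block because $\bar{J}$ is block diagonal, hence $\L\hhat(\sigma,\L\h)=\hhat(\sigma,\L\h)$ and the two integrands are pointwise opposite; the identity $x_{w^{(2)}}=-x_{J_z}$ then follows before any integration is carried out. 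This buys an explanation of why the cancellation occurs (in particular it is independent of the integration interval $[0,2\pi/b_1]$) at no extra cost, and everything you invoke --- the formulas of the preceding lemma and the block-diagonal form of $\bar{J}$ in the Agrachev--Gauthier frame --- is indeed available at that point in the paper.
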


We use the same method to compute the other terms of the expansion.
Let 
$$
\chi_{11}=-\frac{b_1^4}{\pi}\frac{\partial^3 X_1}{\partial x_1^2\partial x_2 },
\qquad
\chi_{12}=\frac{2b_1^4}{\pi}\frac{\partial^3 X_1}{\partial x_1\partial x_2^2 },
\qquad
\chi_{22}=-\frac{b_1^4}{\pi}\frac{\partial^3 X_1}{\partial x_2^3}.
$$

\begin{lemma}
Let
$$
x_{J^{(2)}}=
\int_{0}^{2\pi/b_1}
\int_{0}^{\tau}
		\e^{(\tau-\sigma)\bar{J}} 
		\left[
		J^{(2)}\left(\xhat \right)
		\hhat
		\right](\sigma,\L\h)
	\diff \sigma
	\diff \tau.
$$
We have 
$$
\begin{aligned}
\left(x_{J^{(2)}}\right)_1
=
	\left(\chi _{11}+5 \chi _{22}\right) \h_1^3
	+
	3 \chi _{12} \h_2 \h_1^2
	+
	3 \left(\chi _{11}+\chi _{22}\right) \h_2^2 \h_1
	+
	\chi _{12} \h_2^3,
\\
\left(x_{J^{(2)}}\right)_2
=
	\left(5 \chi _{11}+\chi _{22}\right) \h_2^3
	+
	3 \chi_{12} \h_2^2 \h_1
	+
	3 \left(\chi _{11}+\chi _{22}\right) \h_1^2 \h_2
	+
	\chi_{12}  \h_1^3.
\end{aligned}
$$
\end{lemma}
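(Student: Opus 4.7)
The plan is to exploit the fact that the initial covector $\L\h$ is supported in the first $2\times 2$ block together with the block-diagonal structure of $\bar J$, so that both $\hhat(\sigma,\L\h)=\e^{\sigma\bar J}\L\h$ and $\xhat(\sigma,\L\h)=\bar J^{-1}(\e^{\sigma\bar J}-I_{2n})\L\h$ remain supported on their first two coordinates for every $\sigma$. Consequently the only entries of $J^{(2)}(\xhat)$ that can contribute to the first two coordinates of $J^{(2)}(\xhat)\hhat$ are $J^{(2)}_{1,2}$ and $J^{(2)}_{2,1}$, which are skew to each other by the very definition of $J^{(1)}$ and $J^{(2)}$ through the $2n{+}1$-th component of the frame.

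Next I would use the Agrachev--Gauthier normal form to determine exactly which third-order jets of the frame at $q_0$ intervene. By the definition of $J^{(2)}$, the off-diagonal entry $J^{(2)}_{1,2}$ evaluated at $\xhat$ restricted to the first two coordinates is a quadratic form in $(\xhat_1,\xhat_2)$ whose coefficients are linear combinations of $\partial^3 (X_2)_{2n+1}/\partial x_i\partial x_j\partial x_k(q_0)-\partial^3 (X_1)_{2n+1}/\partial x_i\partial x_j\partial x_k(q_0)$ with $i,j,k\in\ll1,2\rr$. Using the normal-form relations \eqref{E:AG_nf_hor3}--\eqref{E:AG_nf_vert1} and their consequences \eqref{E:AG_nf_hor4}, these four third derivatives reduce to exactly three independent invariants, which are (up to normalization) $\chi_{11},\chi_{12},\chi_{22}$ as defined just before the lemma.

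Once $J^{(2)}(\xhat)\hhat$ is written explicitly as a polynomial of degree three in $(\h_1,\h_2)$ with trigonometric coefficients in $b_1\sigma$, the remaining task is to multiply by the propagator $\e^{(\tau-\sigma)\bar J}$ (which in the $2\times 2$ block is just the rotation of angle $b_1(\tau-\sigma)$), integrate over $\sigma\in[0,\tau]$, then over $\tau\in[0,2\pi/b_1]$. All relevant trigonometric integrals over the full period $2\pi/b_1$ kill the oscillatory cross-terms and leave only the polynomial terms in $(\h_1,\h_2)$ displayed in the statement. The symmetry $(\h_1,\h_2)\mapsto(-\h_2,\h_1)$ of the flow exchanges the two coordinates of $x_{J^{(2)}}$ and sends $\chi_{11}\leftrightarrow \chi_{22}$, which can be used as a sanity check on the final expression.

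The bookkeeping of the third-derivative symmetries forced by the Agrachev--Gauthier normal form is the only delicate step: one must verify that among the eight a priori independent derivatives $\partial^3 (X_k)_{2n+1}/\partial x_i\partial x_j\partial x_l(q_0)$ with $k,i,j,l\in\ll1,2\rr$, the relations \eqref{E:AG_nf_hor4} leave precisely three free parameters, matched by $\chi_{11},\chi_{12},\chi_{22}$. The remaining work is a direct, if tedious, trigonometric integration that is purely mechanical and well-suited to a computer algebra system.
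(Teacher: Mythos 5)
Your sketch follows essentially the same route as the paper's proof: since $\xhat(\cdot,\L\h)$ and $\hhat(\cdot,\L\h)$ stay supported in the first $2\times 2$ block and $J^{(2)}$ is skew-symmetric there, everything reduces to the single entry $J^{(2)}_{1,2}=-J^{(2)}_{2,1}$, which the Agrachev--Gauthier normal form expresses as a quadratic form in $(\xhat_1,\xhat_2)$ with coefficients given by the three invariants $\chi_{11},\chi_{12},\chi_{22}$, after which the stated formulas follow by direct trigonometric integration of the double integral --- exactly the two steps of the paper's argument. The only imprecision is your description of the coefficients as combinations of same-multi-index differences $\partial^3(X_2)_{2n+1}/\partial x_i\partial x_j\partial x_k-\partial^3(X_1)_{2n+1}/\partial x_i\partial x_j\partial x_k$ (the relevant combinations have mixed indices, as in the definition of $J^{(1)}$), but this does not affect the reduction: your count of three free parameters among the eight third derivatives, and hence the identification with $\chi_{11},\chi_{12},\chi_{22}$, is correct.
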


\begin{proof}
First notice that
$$
{J^{(2)}\left(\xhat(\tau, \L \h) \right)}_{1,2}
=
-{J^{(2)}\left(\xhat(\tau, \L \h) \right)}_{2,1}=
\frac{\pi}{b_1^4}
\left(
-\chi_{11} \,\xhat_1^2+\chi_{12}\, \xhat_1 \xhat_2 -\chi_{22}\, \xhat_2 ^2
\right)(\tau, \L \h).
$$
The stated result is obtained by direct integration.
\end{proof}

\begin{lemma}
Let 
$$
x_{J^{(1)}}=
\int_{0}^{2\pi/b_1}
\int_{0}^{\tau}
		\e^{(\tau-\sigma)\bar{J}} 
		\left[
		J^{(1)} \left(x^{(2)} \right)
		\hhat 
		+
		J^{(1)}\left(\xhat \right)
		h^{(1)} 
		\right](\sigma,\L\h)
	\diff \sigma
	\diff \tau.
$$
We have 
$$
\begin{aligned}
\left(x_{J^{(1)}}(\tau, \L \h)\right)_1
=&
\frac{1}{2b_1^2} 
\left[
-\h_1^3 \left(15 \alpha ^2+3 \beta ^2\right)
+
\h_1^2 \h_2 \left(4 \pi  \alpha ^2-18 \alpha  \beta \right)
\right.
\\
&
\left.
\qquad\qquad
-\h_1 \h_2^2 \left(9 \alpha ^2-8 \pi  \alpha  \beta +9 \beta ^2\right)
+
\h_2^3 \left( 4 \pi  \beta ^2-6 \alpha  \beta\right)
\right],
\\
\left(x_{J^{(1)}}(\tau, \L \h)\right)_2
=&
-\frac{1}{2b_1^2} 
\left[
\h_1^3 \left(4 \pi  \alpha ^2+6 \alpha  \beta \right)
 +
 \h_1^2 \h_2 \left(9 \alpha ^2+8 \pi  \alpha  \beta +9 \beta ^2\right)
 \right.
 \\
 &\qquad\qquad+
 \left.
 \h_1 \h_2^2 \left(4 \pi  \beta ^2+18 \alpha  \beta \right)
 +
 \h_2^3 \left(3 \alpha ^2+15 \beta ^2\right)
 \right].
\end{aligned}
$$
\end{lemma}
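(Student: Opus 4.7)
The proof will be a direct evaluation of the double integral in the Agrachev--Gauthier normal form, exploiting the fact that the initial covector $\L \h = (\h_1, \h_2, 0, \dots, 0)$ forces the entire nilpotent trajectory to live in the $(1,2)$-plane. Concretely, $\hhat(\sigma, \L \h)$ and $\xhat(\sigma, \L \h)$ have vanishing components in positions $3, \dots, 2n$, with $(\hhat_1, \hhat_2)(\sigma, \L \h)$ the rotation of $(\h_1, \h_2)$ by angle $b_1 \sigma$, and $(\xhat_1, \xhat_2)(\sigma, \L \h)$ its primitive. The same confinement propagates to $h^{(1)}(\sigma, \L \h)$ and (for the purpose of computing the first two coordinates of $x_{J^{(1)}}$) only the rows $1$ and $2$ of $J^{(1)}$ matter.

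The first step is to identify the relevant entries of $J^{(1)}$. From the computations carried out in the proof of Lemma~\ref{L:z3} and from the invariants $\alpha, \beta$ introduced earlier in the appendix, one has in the Agrachev--Gauthier frame the identity $J^{(1)}_{1,2}(y) = -J^{(1)}_{2,1}(y) = \tfrac{b_1^3}{\pi}(\beta y_1 - \alpha y_2)$ whenever $y$ has vanishing components past the second, together with $J^{(1)}_{1,1} = J^{(1)}_{2,2} = 0$ by skew-symmetry. Using Proposition~\ref{P:expansion}, this yields closed-form trigonometric expressions for both $h^{(1)}(\sigma, \L \h)$ and $x^{(2)}(\sigma, \L \h)$ in terms of $\h_1, \h_2, \alpha, \beta$, $b_1$ and $\sigma$.

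The second step is to substitute these expressions into $J^{(1)}(x^{(2)}) \hhat + J^{(1)}(\xhat) h^{(1)}$, producing a vector whose first two components are trigonometric polynomials in $b_1 \sigma$ (possibly multiplied by powers of $\sigma$ coming from the primitive $\xhat$) with coefficients homogeneous of degree $3$ in $(\h_1, \h_2)$ and quadratic in $(\alpha, \beta)$. Multiplying on the left by $\e^{(\tau - \sigma) \bar{J}}$ is, in the $(1,2)$-block, rotation by angle $b_1 (\tau - \sigma)$. The double integral over $0 \leq \sigma \leq \tau \leq 2\pi/b_1$ then reduces to a finite sum of elementary trigonometric integrals of the form $\int_0^{2\pi/b_1} \int_0^{\tau} \sigma^k \sin^p(b_1 \sigma) \cos^q(b_1 \sigma) \, \diff \sigma \, \diff \tau$. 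The key bookkeeping observation is that terms proportional to $\pi$ in the final formula arise precisely from integrands carrying an odd power of $\sigma$, while the $\pi$-free terms come from purely trigonometric integrands whose full-period integrals collapse to rational multiples of $1/b_1^2$; this accounts for why only the coefficients of $\h_1^2 \h_2$ and $\h_1 \h_2^2$ in $(x_{J^{(1)}})_1$ carry a factor of $\pi$ (and symmetrically for $(x_{J^{(1)}})_2$).

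The main obstacle is purely computational: organizing a degree-$3$ homogeneous expression in $(\h_1, \h_2)$ with quadratic coefficients in $(\alpha, \beta)$, for each of the two vector components, and double-integrating roughly a dozen trigonometric monomials without error. There is no conceptual subtlety, but given the volume of algebra, the calculation is best verified in a computer algebra system, consistent with the Mathematica-based verification used elsewhere in the appendices. The symmetry $\h_1 \leftrightarrow \h_2$, $\alpha \leftrightarrow \beta$ between $(x_{J^{(1)}})_1$ and $(x_{J^{(1)}})_2$ (visible in the statement) provides a useful sanity check, as does agreement with the $3$-dimensional contact case limit $\alpha = \beta = 0$ recalled at the end of Section~\ref{SS:third_order_conjugate}.
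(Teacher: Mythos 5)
Your route is the same as the paper's: work in the Agrachev--Gauthier frame, identify the only relevant entries of $J^{(1)}$ on the $(x_1,x_2)$-plane via the invariants $\alpha,\beta$ (your identity $J^{(1)}_{1,2}(y)=-J^{(1)}_{2,1}(y)=\tfrac{b_1^3}{\pi}(\beta y_1-\alpha y_2)$ for in-plane $y$ is exactly the computation used in the proof of Lemma~\ref{L:z3}), substitute the explicit $\hhat,\xhat,h^{(1)},x^{(2)}$ evaluated along $\L\h$, and evaluate the double integral, with the bulk of the algebra delegated to a computer algebra system --- which is also how the paper concludes (``we have the stated result by integration'').

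There is, however, one justification in your write-up that is not correct as stated: the confinement to the $(x_1,x_2)$-plane does \emph{not} propagate to $h^{(1)}(\cdot,\L\h)$ and $x^{(2)}(\cdot,\L\h)$. Indeed, for $k\in\ll 3,2n\rr$ one has $x^{(2)}_k(2\pi/b_1,\L\h)=\kappa^{11}_k\h_1^2+\kappa^{12}_k\h_1\h_2+\kappa^{22}_k\h_2^2$, and these invariants are generically nonzero (they are precisely the quantities driving the terms $d_{2i-1},d_{2i}$ in Lemma~\ref{L:expression_d_i}); the same applies to $h^{(1)}_k$, of which $x^{(2)}_k$ is the primitive. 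What is true, and what your computation actually needs, is that only rows $1$ and $2$ of the integrand matter (the propagator $\e^{(\tau-\sigma)\bar J}$ is block diagonal) and that the \emph{in-plane} components of $h^{(1)},x^{(2)}$ are exactly expressible through $\alpha,\beta$ alone, because $\hhat$ and $\xhat$ are genuinely confined. But rows $1,2$ of $J^{(1)}(x^{(2)})\hhat+J^{(1)}(\xhat)h^{(1)}$ also contain the cross terms coming from the transverse components, namely the contribution of $x^{(2)}_m$, $m\geq 3$, to $J^{(1)}_{1,2}(x^{(2)})$ and the terms $\sum_{m\geq 3}J^{(1)}_{1,m}(\xhat)\,h^{(1)}_m$ (and their analogues in the second row), whose coefficients are the mixed second derivatives of the $(X_i)_{2n+1}$. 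Your stated reason for discarding them fails; the paper's own displayed identities silently perform the same reduction, so your calculation reproduces the paper's formulas, but to stand on its own your argument should either keep these cross terms and verify that their contribution to the two stated components vanishes, or explicitly restrict to the in-plane parts in the same way the paper's proof does.
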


\begin{proof}
Let $\tau \in \R$, $h\in \R^{2n}$. Evaluated at $(\tau, \L \h) $, we have
$$
\left(J^{(1)}\left(x^{(2)} \right)\hhat \right)_1
=
\hhat_2 (\beta x^{(2)}_1 -\alpha x^{(2)}_2 ),
\qquad
\left(J^{(1)}\left(x^{(2)} \right)\hhat \right)_2
=
-\hhat_1 (\beta x^{(2)}_1 -\alpha x^{(2)}_2 )
$$
and
$$
\left(J^{(1)}\left(\xhat\right)h^{(1)}\right)_1
=
h^{(1)}_2 (\beta \xhat_1 -\alpha \xhat_2),
\qquad
\left(J^{(1)}\left(\xhat\right)h^{(1)}\right)_2
=
-h^{(1)}_1 (\beta \xhat_1 -\alpha \xhat_2 ).
$$
Both $h^{(1)}$ and $x^{(2)}$ have been computed before and we have the stated result by integration.
\end{proof}

Summing up, we have proven the following.

\begin{proposition}
We have 
$
\left[
	x^{(3)}\left(\frac{2\pi}{b_1},\L\h\right)
\right]_{1,2}
= 
\left[
	x_{J^{(1)}}+x_{J^{(2)}}
\right]_{1,2}
$.
Explicitly, this yields
$$
\begin{aligned}
\left[x^{(3)}\left(\frac{2\pi}{b_1},\L\h\right)\right]_1=
&
\h_1^3 \left(\frac{3 }{2 b_1^2 } \left(5 \alpha ^2+\beta ^2\right)
+\chi_{11}+5 \chi_{22}\right)\\
&+
\h_1^2 \h_2 \left(\frac{\alpha }{b_1^2} (2 \pi  \alpha -9 \beta )
+3 \chi_{12}\right)
\\
&+
\h_1 \h_2^2 \left(-\frac{1}{2 b_1^2 } \left(9 \alpha ^2-8 \pi  \alpha  \beta +9 \beta ^2\right)
+3 (\chi_{11}+\chi_{22})\right)
\\
&+
\h_2^3 \left(-\frac{\beta  }{b_1^2 } (2 \pi  \beta-3 \alpha  )
+\chi_{12}\right),
\end{aligned}
$$
$$
\begin{aligned}
\left[x^{(3)}\left(\frac{2\pi}{b_1},\L\h\right)\right]_2=
&
\h_1^3 \left(-\frac{\alpha  }{b_1^2} (2 \pi  \alpha +3 \beta )
+\chi_{12}\right)
\\
&+
\h_1^2 \h_2 \left(-\frac{1}{2 b_1^2 } \left(9 \alpha ^2+8 \pi  \alpha  \beta +9 \beta ^2\right)
+3 (\chi_{11}+\chi_{22})\right)
\\
&+
\h_1 \h_2^2 \left(-\frac{\beta }{ b_1^2  } (2 \pi  \beta +9 \alpha )
+3 \chi_{12}\right)
\\
&+
\h_2^3 \left(-\frac{3 }{2 b_1^2 } \left(\alpha ^2+5 \beta ^2\right)
+5 \chi_{11}+\chi_{22}\right).
\end{aligned}
$$
\end{proposition}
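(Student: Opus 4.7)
The plan is to recognize the claim as a bookkeeping consolidation of the lemmas that immediately precede it. Starting from the integral representation
$$
x^{(3)}(\tau,\L\h)=\int_{0}^{\tau} h^{(2)}(\sigma,\L\h)\diff\sigma,
$$
I would substitute the six-term formula for $h^{(2)}$ given in the first lemma of Section~\ref{A:third_order} and split the resulting double integral into six pieces, one per summand of the integrand. Four of these pieces have already been named and evaluated: $x_{J^{(1)}}$, $x_{J^{(2)}}$, $x_{J_z}$, and $x_{w^{(2)}}$. A fifth piece, coming from $Q^{(1)}(\xhat,\hhat)$, contributes nothing to the first two coordinates by the earlier lemma showing $Q^{(1)}_1(\xhat(\tau,\L\h),\hhat(\tau,\L\h))=Q^{(1)}_2(\xhat(\tau,\L\h),\hhat(\tau,\L\h))=0$.

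Next I would use the cancellation lemma $x_{w^{(2)}}=-x_{J_z}$, which eliminates the two pieces involving the invariant $\xi$. So in the first two coordinates the result collapses to
$$
\bigl[x^{(3)}(2\pi/b_1,\L\h)\bigr]_{1,2}=\bigl[x_{J^{(1)}}+x_{J^{(2)}}\bigr]_{1,2},
$$
at which point the statement follows by literally summing the two explicit cubic polynomials in $(\h_1,\h_2)$ supplied by the lemmas computing $x_{J^{(1)}}$ and $x_{J^{(2)}}$. Collecting monomials $\h_1^3$, $\h_1^2\h_2$, $\h_1\h_2^2$, $\h_2^3$ yields the stated coefficients involving $\alpha^2, \alpha\beta, \beta^2$ (coming from $x_{J^{(1)}}$) and $\chi_{11},\chi_{12},\chi_{22}$ (coming from $x_{J^{(2)}}$).

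I expect no substantive obstacle: all the nontrivial integrations were handled in the preceding lemmas, all the vanishings (horizontal components of the vector fields in the Agrachev--Gauthier frame, $Q^{(1)}$ on the first two coordinates, cancellation between $w^{(2)}$ and $J_z$) have been isolated, and the remaining work is algebraic. The only point requiring a small amount of care is the $\xi$-cancellation: one should check that the signs and factors of $2 b_1^2 \xi/\pi$ in $w^{(2)}\bar J\hhat$ and $J_z(\zhat)\hhat$ match precisely so that after multiplication by $\e^{(\tau-\sigma)\bar J}$ and double integration the two contributions are exactly opposite, which is what the cancellation lemma asserts. Once this is observed, the rest is expansion and collection of like terms.
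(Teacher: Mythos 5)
Your proposal is correct and matches the paper's own route: the proposition is stated there precisely as the summation of the preceding lemmas (the integral formula for $x^{(3)}$ via $h^{(2)}$, the vanishing of $Q^{(1)}$ in the first two coordinates, the cancellation $x_{w^{(2)}}=-x_{J_z}$, and the explicit expressions for $x_{J^{(1)}}$ and $x_{J^{(2)}}$), followed by collecting the cubic monomials. The only cosmetic remark is that $x_{J^{(1)}}$ bundles two of the six summands of $h^{(2)}$ (the $J^{(1)}(x^{(2)})\hhat$ and $J^{(1)}(\xhat)h^{(1)}$ terms), which does not affect your argument.
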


\section{Computational lemmas}\label{A:computational_lemmas}

\subsection{Determinant formulas}
In this section we prove some computational results useful in multiple proofs. Let $n\in \N$, $n>1$, and $b_1,\dots ,b_n\in \R$ be  such that $0<b_i<b_1 $ for all $i\in \ll 2, n\rr$. 

Let
$A\in \mathcal{M}_{2n-2}(\R)$ be the block-diagonal square matrix 
$$
\begin{pmatrix}
\dfrac{1}{b_2}
\begin{pmatrix}
\sin (\frac{2 b_2 \pi}{b_1} ) &  1-\cos (\frac{2 b_2 \pi}{b_1})
 \\
\cos (\frac{2 b_2 \pi}{b_1}) -1 &  \sin (\frac{2 b_2 \pi}{b_1}) 
\end{pmatrix}
 &&(0)
 \\
&\ddots&
\\
(0)&&\dfrac{1}{b_n}
\begin{pmatrix}
\sin (\frac{2 b_n \pi}{b_1}) &  1-\cos (\frac{2 b_n \pi}{b_1})
 \\
 \cos (\frac{2 b_n \pi}{b_1})-1  &  \sin (\frac{2 b_n \pi}{b_1}) 
\end{pmatrix}
\end{pmatrix}.
$$

\begin{lemma}\label{L:detA}
We have
$$
\det(A)=2^{2n-2}
\prod_{i=2}^{n}
\frac{
1
}{b_i^2}
 \sin^2\left(
\frac{\pi  b_i}{
b_1
}
\right)>0.
$$
\end{lemma}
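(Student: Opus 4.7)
The plan is to exploit the block-diagonal structure of $A$: since the determinant of a block-diagonal matrix equals the product of the determinants of its blocks, I will reduce the problem to computing the $2\times 2$ determinant of a single block and then multiply.

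First I will compute the determinant of the $i$-th block, for $i \in \ll 2,n\rr$. Writing $\theta_i = 2b_i\pi/b_1$ for brevity, the block is $\frac{1}{b_i}$ times
$$
\begin{pmatrix} \sin\theta_i & 1-\cos\theta_i \\ \cos\theta_i - 1 & \sin\theta_i \end{pmatrix},
$$
so its determinant equals
$$
\frac{1}{b_i^2}\bigl[\sin^2\theta_i + (1-\cos\theta_i)^2\bigr] = \frac{1}{b_i^2}\bigl[2 - 2\cos\theta_i\bigr] = \frac{4}{b_i^2}\sin^2\!\left(\frac{\theta_i}{2}\right) = \frac{4}{b_i^2}\sin^2\!\left(\frac{\pi b_i}{b_1}\right),
$$
where in the penultimate step I used the identity $1-\cos(2\alpha) = 2\sin^2\alpha$ with $\alpha = \theta_i/2 = \pi b_i / b_1$.

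Next, multiplying these $n-1$ block determinants together gives
$$
\det(A) = \prod_{i=2}^{n}\frac{4}{b_i^2}\sin^2\!\left(\frac{\pi b_i}{b_1}\right) = 2^{2n-2}\prod_{i=2}^{n}\frac{1}{b_i^2}\sin^2\!\left(\frac{\pi b_i}{b_1}\right),
$$
which is the claimed formula. Finally, positivity follows from the hypothesis $0 < b_i < b_1$ for $i \geq 2$: this gives $0 < \pi b_i/b_1 < \pi$, so $\sin(\pi b_i/b_1) > 0$ and hence each factor is strictly positive.

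No serious obstacle here—the result is a one-line $2\times 2$ determinant computation followed by trivial bookkeeping; the only care needed is in applying the half-angle identity to recognize the $\sin^2(\pi b_i / b_1)$ form (rather than leaving the answer as $1-\cos(2\pi b_i/b_1)$, which would obscure positivity).
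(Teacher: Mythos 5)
Your proof is correct and follows the same route as the paper: compute each $2\times2$ block determinant as $\frac{4}{b_i^2}\sin^2(\pi b_i/b_1)$ via $2-2\cos\theta = 4\sin^2(\theta/2)$, multiply over the $n-1$ blocks to get the factor $4^{n-1}=2^{2n-2}$, and deduce positivity from $0<b_i<b_1$. No gaps.
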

\begin{proof}
This is a consequence of 
$$
\begin{vmatrix}
\sin (\frac{2 b_i \pi}{b_1} ) &  1-\cos (\frac{2 b_i \pi}{b_1})
 \\
\cos (\frac{2 b_i \pi}{b_1})-1  &  \sin (\frac{2 b_i \pi}{b_1}) 
\end{vmatrix}
=4\sin^2 
\left(
	\frac{\pi  b_i}{b_1}
\right)\qquad \forall i\in \ll 2, n\rr.
$$
Since $0<b_i<b_1$ for all $i\in \ll 2, n\rr$, we have the stated sign.
\end{proof}

\begin{lemma}\label{L:det(AW)}
Let $V,W\in \mathcal{M}_{1\times 2n-2}(\R)$, $v\in \R$. Then 
\begin{multline}
\frac{1}{\det(A)}
\left|
\begin{array}{c|c}	
	A	
	&
W
	\\
	\hline
		 ^{t}V
		 &
		 v
\end{array}
\right|=\\
v+	
\frac{1}{2}
\sum_{i=2}^{n}
b_i\left(
	V_{2i-1}W_{2i}-V_{2i}W_{2i-1}-\left(V_{2i-1}W_{2i-1}+V_{2i}W_{2i}\right)\cot \frac{b_{i}\pi}{b_1} 
	\right).
\end{multline}
\end{lemma}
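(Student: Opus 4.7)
The plan is to reduce the full computation to block-wise $2\times 2$ linear algebra via the Schur complement, exploiting the block-diagonal structure of $A$ recalled in Lemma~\ref{L:detA}.

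First, since $\det(A)\neq 0$ (it is positive by Lemma~\ref{L:detA}), I apply the standard Schur complement identity
\begin{equation*}
\left|\begin{array}{c|c} A & W \\ \hline {}^tV & v\end{array}\right|
= \det(A)\bigl(v - {}^t V\, A^{-1} W\bigr),
\end{equation*}
so that dividing by $\det(A)$ reduces the statement to computing $-{}^tV A^{-1}W$ and checking it equals the stated sum.

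Next, because $A$ is block-diagonal with $2\times 2$ blocks $A_i = \tfrac{1}{b_i}\!\begin{pmatrix}\sin(2b_i\pi/b_1) & 1-\cos(2b_i\pi/b_1)\\ \cos(2b_i\pi/b_1)-1 & \sin(2b_i\pi/b_1)\end{pmatrix}$ for $i\in\ll2,n\rr$, the quadratic form ${}^tV A^{-1}W$ splits as a sum over $i$ of the block contributions ${}^tV^{(i)} A_i^{-1} W^{(i)}$, with $V^{(i)}=(V_{2i-1},V_{2i})^t$ and similarly for $W^{(i)}$. A direct computation using $\det(A_i)=4\sin^2(b_i\pi/b_1)/b_i^2$ together with the half-angle identities $\sin(2b_i\pi/b_1)=2\sin(b_i\pi/b_1)\cos(b_i\pi/b_1)$ and $1-\cos(2b_i\pi/b_1)=2\sin^2(b_i\pi/b_1)$ gives the compact form
\begin{equation*}
A_i^{-1} = \frac{b_i}{2}\begin{pmatrix}\cot(b_i\pi/b_1) & -1\\ 1 & \cot(b_i\pi/b_1)\end{pmatrix}.
\end{equation*}

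Finally, I expand
\begin{equation*}
{}^tV^{(i)} A_i^{-1} W^{(i)} = \tfrac{b_i}{2}\bigl[\cot(b_i\pi/b_1)(V_{2i-1}W_{2i-1}+V_{2i}W_{2i}) - V_{2i-1}W_{2i} + V_{2i}W_{2i-1}\bigr],
\end{equation*}
so that, negating and summing over $i\in \ll 2,n\rr$, one recovers exactly the expression in the statement. Every step is routine; the only mild subtlety is the trigonometric simplification of $A_i^{-1}$, which is what produces the $\cot(b_i\pi/b_1)$ factor appearing in the final formula.
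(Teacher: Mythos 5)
Your proof is correct, but it organizes the computation differently from the paper. The paper proves the identity by a Laplace expansion of the bordered determinant along its last column, which produces $v\det(A)$ plus a sum of $2\times 2$ minors mixing the entries of $V$, $W$ and the blocks of $A$, and then concludes "by trigonometric identification." You instead invoke the Schur complement identity $\det\!\left(\begin{smallmatrix}A & W\\ {}^tV & v\end{smallmatrix}\right)=\det(A)\,(v-{}^tV A^{-1}W)$ (legitimate since $\det A>0$ by Lemma~\ref{L:detA}), reduce to the blocks, and simplify each $A_i^{-1}$ with the half-angle identities to the compact form $\tfrac{b_i}{2}\bigl(\begin{smallmatrix}\cot(b_i\pi/b_1) & -1\\ 1 & \cot(b_i\pi/b_1)\end{smallmatrix}\bigr)$; expanding ${}^tV^{(i)}A_i^{-1}W^{(i)}$ and negating then gives exactly the stated sum, and you correctly keep $V$ on the left of the non-symmetric $A_i^{-1}$, which is where a sign error could otherwise creep in. The two routes are computationally equivalent, but yours has the small advantage that the explicit block inverse you derive is precisely the formula the paper uses again later (end of the proof of Lemma~\ref{L:expression_d_i}), so it is reusable, whereas the paper's cofactor route avoids writing $A^{-1}$ at all. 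One cosmetic point you inherit from the statement itself: the indices $V_{2i-1},V_{2i}$ for $i\in\ll 2,n\rr$ overshoot the length $2n-2$ of $V$; as in the paper's own application (proof of Lemma~\ref{L:deltat1/2}, where the pairs $(V_{2i-3},V_{2i-2})$ appear), they should be read as the components paired with the $i$-th block of $A$, and your block notation $V^{(i)}$ is consistent with that reading.
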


\begin{proof}
To prove this result, we develop along the last column the determinant of 
$$
\left(
\begin{array}{c|c}	
	A	
	&
W
	\\
	\hline
		 ^{t}V
		 &
		 v
\end{array}
\right).
$$
We get
\begin{multline*}
\frac{1}{\det(A)}
\left|
\begin{array}{c|c}	
	A	
	&
W
	\\
	\hline
		 ^{t}V
		 &
		 v
\end{array}
\right|
=
v+
\\
 \sum_{i=2}^n
\frac{
b_i^2}{4\sin^2\left(
\frac{\pi  b_i}{
b_1
}\right)}
\left(
	\frac{W_{2i-1}}{b_i}
	\begin{vmatrix}
\cos (\frac{2 b_i \pi}{b_1})-1  &  \sin (\frac{2 b_i \pi}{b_1}) 
 \\
 V_{2i-1} & V_{2i}
\end{vmatrix}
-	
\frac{W_{2i}}{b_i}
	\begin{vmatrix}
\sin (\frac{2 b_i \pi}{b_1} ) &  1-\cos (\frac{2 b_i \pi}{b_1})
 \\
 V_{2i-1} & V_{2i}
\end{vmatrix}
\right).
\end{multline*}
Hence the result by trigonometric identification.
\end{proof}

\subsection{Conjugate time equations}\label{A:Conjugate_time_equations}

\subsubsection{Proof of Lemma~\ref{L:lim_r1_to_0}}\label{A:proof_lemma_psi}

\begin{proof}[Proof of Lemma~\ref{L:lim_r1_to_0}]
Let $T=\min(2\pi/b_2,4\pi/b_1)$, so that $(2\pi/b_1, T)$ is a connected component of $\R^+\setminus Z$.
For all $ i\in \ll1,  n\rr$, let
$$
\begin{array}{rccc}
\psi_i: 
&
\R\setminus \cup_{k\in \N} \{2k \pi/b_i\} &\longrightarrow & \R
\\
&
\tau
&\longmapsto 
&3 \tau
-  b_i \tau ^2\, \frac{ \cos (b_i \tau /2)}{\sin (b_i \tau /2) }-\frac{\sin (b_i \tau )}{ b_i}.
\end{array}
$$
For all $ i\in \ll1,  n\rr$, $\psi_i$ is smooth and has a positive derivative over $\R\setminus \cup_{k\in \N} \{2k \pi/b_i\}$. Moreover $\psi_i(0)=0$, and for all $k\in \N$, $k>0$, 
$$
\lim\limits_{t\to {2k \pi/b_i}^+}\psi_i(t)=-\infty\quad \text{ and  }\quad\lim\limits_{t\to {2k \pi/b_i}^-}\psi_i(t)=+\infty.$$
This immediately implies that $ \tau_1(r)>2\pi/b_1$. Furthermore, since 
$$
\psi(\tau,r)=\sum_{i=1}^{n}r_i^2 \psi_i(\tau),\qquad  \forall r\in {(\R^{+})}^{n},
$$
both $\lim\limits_{t\to {2 \pi/b_1}^+}\psi(\tau,r)=-\infty$ and
$\lim\limits_{t\to {T}^-}\psi(\tau,r)=+\infty$,
and $\psi(\cdot,r)$ vanishes exactly once on $(2\pi/b_1, T)$, at time $\tau_1(r)$.
Since for all $ i\in \ll2,n\rr $, $\psi_i>0$ on $(2\pi/b_1, T)$, we have that
$$
\psi_1(\tau_1(r))=-\frac{1}{r_1^2}\sum_{i=2}^{n}r_i^2 \psi_i(\tau_1(r))<0.
$$

This equality implies that $r_1\mapsto \tau_1(r)$ is an increasing function. Indeed let
$r,r'\in  {(\R^{+})}^{n}$ be such that $r_1<r'_1$ and $r_i=r'_i$ for  all $i\in \ll 2,  n\rr$, then for all $\tau \in (2\pi/b_1, T)$,
$$
-\frac{1}{r_1^2}\sum_{i=2}^{n}r_i^2 \psi_i(\tau)<- \frac{1}{{r'_1}^2}\sum_{i=2}^{n}{r'_i}^2 \psi_i (\tau)<0.
$$
Since $\tau\mapsto-\frac{1}{r_1^2}\sum_{i=2}^{n}r_i^2 \psi_i$ and $\tau\mapsto- \frac{1}{{r'_1}^2}\sum_{i=2}^{n}{r'_i}^2 \psi_i$
are both decreasing functions over $(2\pi/b_1, T)$, 
since $\psi_1$ is an increasing function over $(2\pi/b_1, T)$, this implies 
$\tau_1(r)<\tau_1(r')$.

In particular, $\tau_1$ being continuous, it  converges towards a limit $l(r_2,\dots r_n)\in [2\pi/b_1,T)$ as $r_1\to 0^+$, and
$$
\lim_{r_1\to 0^+} \sum_{i=2}^{n}{r_i}^2 \psi_i (\tau_1(r)) = \sum_{i=2}^{n}{r_i}^2 \psi_i (l(r_2,\dots r_n)) >0.
$$
Hence $  \lim_{r_1\to 0^+}\psi_{1}( \tau_1(r_1,\dots, r_n))=-\infty$, and by inverting $\psi_1$ we obtain 
$$
\lim\limits_{r_1\to 0^+} \tau_1(r_1,\dots, r_n)=2\pi/b_1.$$

Notice in particular that as $\delta t\rightarrow 0^+$, 
$
\psi_1(2\pi/b_1+\delta t)
\sim 
-\frac{ 8 \pi^2}{b_1^2 \delta t }.
$
Hence we get by inverting $\psi_1$
$$
\psi_1^{-1}\left(-\frac{1}{r_1^2}\sum_{i=2}^{n}r_i^2 \psi_i(\tau_1(r)))\right)-2\pi/b_1
\sim
\frac{8\pi^2}{b_1^2\sum_{i=2}^{n}r_i^2 \psi_i(2\pi/b_1))}r_1^2,
$$
hence  expansion~\eqref{E:equiv_tau_1}.
\end{proof}

\subsubsection{On the first domain}\label{A:On_the_first_domain}

\begin{lemma}\label{P:mini_det_1}
We have
$$
\Phi
\left(
2\pi /b_1+\eta \delta t,h,\eta
\right)
=
\eta^{2n+3}
K'
d
+O(\eta^{2n+4}),
$$
where
$$
K'=2^{2n-2}
\prod_{i=2}^{n}
\frac{
1
}{b_i^2}
 \sin^2\left(
\frac{\pi  b_i}{
b_1
}
\right)>0
$$
and
$$
d=\left|
\begin{matrix}
			\frac{\partial}{\partial h_1}
			\left(
			 F^{(2)}
			 +
			 \delta t
			 \frac{\partial F^{(1)}}{\partial \tau}
			 \right)_1
		 &
			 \frac{\partial}{\partial h_2}
			\left(
			 F^{(2)}
			 +
			 \delta t
			 \frac{\partial F^{(1)}}{\partial \tau}
			 \right)_1
		 &
		 -\tau\left(\frac{\partial F^{(1)}}{\partial \tau}\right)_{1}
	\\
			\frac{\partial}{\partial h_1}
			\left(
			 F^{(2)}
			 +
			 \delta t
			 \frac{\partial F^{(1)}}{\partial \tau}
			 \right)_2
		 &
			 \frac{\partial}{\partial h_2}
			\left(
			 F^{(2)}
			 +
			 \delta t
			 \frac{\partial F^{(1)}}{\partial \tau}
			 \right)_2
		 &
		 -\tau\left(\frac{\partial F^{(1)}}{\partial \tau}\right)_{2}
	\\
			\frac{\partial}{\partial h_1}
			\left(
			 F^{(2)}
			 \right)_{2n+1}
		 &
			 \frac{\partial}{\partial h_2}
			\left(
			 F^{(2)}
			 \right)_{2n+1}
		 &
		 0
\end{matrix}
\right|_{\tau=2\pi/b_1}.
$$
\end{lemma}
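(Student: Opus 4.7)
The plan is to expand $\Phi(2\pi/b_1 + \eta\delta t, h, \eta)$ as the determinant of the $(2n+1)\times(2n+1)$ Jacobian matrix $J$ whose columns are $\partial_{h_1}F, \ldots, \partial_{h_{2n}}F, \eta\partial_\eta F - \tau\partial_\tau F$, and to extract the leading $\eta$-order via a Schur complement with respect to a well-chosen $(2n-2)\times(2n-2)$ sub-block. By Proposition~\ref{P:expansion}, $F = \eta F^{(1)} + \eta^2 F^{(2)} + O(\eta^3)$ with $F^{(1)} = (\xhat, 0)$ depending only on $(\tau, h)$, hence
\[
\eta\partial_\eta F - \tau\partial_\tau F = \eta\bigl(F^{(1)} - \tau\partial_\tau F^{(1)}\bigr) + \eta^2\bigl(2F^{(2)} - \tau\partial_\tau F^{(2)}\bigr) + O(\eta^3),
\]
and $\partial_{h_j}F = \eta\partial_{h_j}F^{(1)} + \eta^2\partial_{h_j}F^{(2)} + O(\eta^3)$. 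Substituting $\tau = \tau_0 + \eta\delta t$ with $\tau_0 := 2\pi/b_1$ and Taylor-expanding around $\tau_0$ yields every entry of $J$ to the required order.

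The decisive simplification is the identity $\e^{\tau_0\Jbar_1} = I_2$, which forces $\xhat_i(\tau_0, h) = 0$ and $\partial_{h_j}\xhat_i(\tau_0, h) = 0$ whenever $i \in \{1,2\}$ or $j \in \{1,2\}$. Consequently, the only sub-block of $J$ whose leading $\eta$-coefficient is nonsingular is the $(2n-2)\times(2n-2)$ block $J_{\mathrm{mid}}$ formed by rows $x_3, \ldots, x_{2n}$ and columns $\partial_{h_3}, \ldots, \partial_{h_{2n}}$, and its leading coefficient is precisely the matrix $A$ of Lemma~\ref{L:detA} times $\eta$. After reordering rows and columns so that $J_{\mathrm{mid}}$ sits in the bottom-right corner (no sign change, as both the $z$-row and the last column move $2n-2$ positions), I would apply the Schur complement with respect to $J_{\mathrm{mid}}$. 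This contributes the factor $\det(J_{\mathrm{mid}}) = \eta^{2n-2}K' + O(\eta^{2n-1})$ and leaves a residual $3\times 3$ determinant.

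The correction to the top-left $3\times 3$ block $A_{\mathrm{TL}}$ is the Schur term $A_{\mathrm{TR}}\,J_{\mathrm{mid}}^{-1}\,A_{\mathrm{BL}}$, whose entries have orders $\eta^2\cdot\eta^{-1}\cdot\eta^2 = \eta^3$ in the first two columns and $\eta^2\cdot\eta^{-1}\cdot\eta = \eta^2$ in the last column. Thus the leading $\eta$-coefficients of the residual matrix are those of $A_{\mathrm{TL}}$ itself: the top-left $2\times 2$ block has order $\eta^2$ with coefficients $\partial_{h_j}(F^{(2)} + \delta t\,\partial_\tau F^{(1)})_i$; the top two entries of the last column have order $\eta$ with coefficients $-\tau\,(\partial_\tau F^{(1)})_i$; the bottom-left $1\times 2$ row has order $\eta^2$ with coefficients $\partial_{h_j}F^{(2)}_{2n+1}$; and the bottom-right scalar has order $\eta^2$ because $F^{(1)}_{2n+1} \equiv 0$. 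A direct combinatorial expansion shows that this $3\times 3$ determinant has leading power $\eta^5$, arising only from products that draw the last-column entry from one of the first two rows; the $(3,3)$ scalar enters only at total order $\eta^6$, justifying its replacement by $0$ in $d$. Multiplying the two factors yields $\Phi = \eta^{2n+3}K'd + O(\eta^{2n+4})$.

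The main obstacle will be the careful order-by-order bookkeeping needed to confirm that the Schur correction remains subdominant in every position contributing to the $\eta^5$ part of the $3\times 3$ determinant, and to verify that the leading coefficients of the reduced matrix coincide with the entries of $d$ as stated, rather than being shifted by the correction.
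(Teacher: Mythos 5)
Your proposal is correct and takes essentially the same route as the paper: both arguments rest on the fact that $\e^{(2\pi/b_1)\bar{J}_1}=I_2$ annihilates the $(h_1,h_2)$-block of $D_h\xhat$ at $\tau=2\pi/b_1$, so that the determinant reduces to the invertible middle $(2n-2)\times(2n-2)$ block (with $\det = K'$ by Lemma~\ref{L:detA}) times the $3\times 3$ determinant $d$, the $(3,3)$ entry being irrelevant because it only enters at order $\eta^{2n+4}$. Your Schur-complement bookkeeping (including the observation that its correction matters only in the $(3,3)$ slot, which does not contribute to the leading order) is just a more explicit phrasing of the paper's factor-$\eta$-and-expand block computation.
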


\begin{proof}

From Proposition~\ref{P:expansion}, we have that $F^{(1)}\left(\frac{2\pi}{b_1},h\right)=\left(\xhat\left(\frac{2\pi}{b_1},h\right),0\right)$, with $\xhat_1\left(\frac{2\pi}{b_1},h\right)=\xhat_2\left(\frac{2\pi}{b_1},h\right)=0$. 
Furthermore, observe that 
$$
\begin{pmatrix}
	\xhat_3(2\pi/b_1,h)
	\\
	\vdots
	\\
	\xhat_{2n}(2\pi/b_1,h)
	\end{pmatrix}
	=A
\begin{pmatrix}
	h_3
	\\
	\vdots
	\\
	h_{2n}
	\end{pmatrix}	
$$
where $A\in \mathcal{M}_{2n-2}(\R)$ is the block-diagonal matrix $\mathrm{diag}(A_2,\dots,A_n)$ of $2\times 2$ blocks
$$
A_i=
\dfrac{1}{b_i}
\begin{pmatrix}
\sin (\frac{2 b_i \pi}{b_1} ) &  1-\cos (\frac{2 b_i \pi}{b_1}) 
 \\
 \cos (\frac{2 b_i \pi}{b_1})-  &  \sin (\frac{2 b_i \pi}{b_1}) 
\end{pmatrix},
\qquad \forall i\in \ll2,n\rr.
$$

Thus  Equation~\eqref{E:order_1_F_delta_t} entails, by factorizing $\eta$,
\begin{multline*}
\Phi
\left(
2\pi /b_1+\eta \delta t,h,\eta
\right)
=\\\eta^{2n+3}
\left|
\begin{array}{cc|c|c}
			\frac{\partial}{\partial h_1}
			\left(
			 F^{(2)}
			 +
			 \delta t
			 \frac{\partial F^{(1)}}{\partial \tau}
			 \right)_1
		 &
			 \frac{\partial}{\partial h_2}
			\left(
			 F^{(2)}
			 +
			 \delta t
			 \frac{\partial F^{(1)}}{\partial \tau}
			 \right)_1
		 &
		 0\cdots 0
		 &
		 -\frac{2\pi}{b_1}\left(\frac{\partial F^{(1)}}{\partial \tau}\right)_{1}
	\\
			\frac{\partial}{\partial h_1}
			\left(
			 F^{(2)}
			 +
			 \delta t
			 \frac{\partial F^{(1)}}{\partial \tau}
			 \right)_2
		 &
			 \frac{\partial}{\partial h_2}
			\left(
			 F^{(2)}
			 +
			 \delta t
			 \frac{\partial F^{(1)}}{\partial \tau}
			 \right)_2
		 &
		 0 \cdots 0
		 &
		 -\frac{2\pi}{b_1}\left(\frac{\partial F^{(1)}}{\partial \tau}\right)_{2}
	\\
	\hline
		\:\:
	\begin{matrix}
	0
	\\
	\vdots
	\\
	0	
	\end{matrix}
	&
		\:\:
	\begin{matrix}
	0
	\\
	\vdots
	\\
	0	
	\end{matrix}
	&
	A
	&
		\:\:
	\begin{matrix}
	0
	\\
	\vdots
	\\
	0	
	\end{matrix}
	\\
	\hline
			\frac{\partial}{\partial h_1}
			\left(
			 F^{(2)}
			 \right)_{2n+1}
		 &
			 \frac{\partial}{\partial h_2}
			\left(
			 F^{(2)}
			 \right)_{2n+1}
		  &
		 0\cdots  0
		 &
		 0
\end{array}
\right|
+O(\eta^{2n+4}).
\end{multline*}
From Lemma~\ref{L:detA} in Appendix~\ref{A:computational_lemmas}, $\det(A)=K'$ and we have the stated result.
\end{proof}

\subsubsection{On the second domain}\label{A:second_domain}
To evaluate 
$$
\Phi
\left(
2\pi /b_1+\sqrt{\eta}\delta t ,  \sqrt{\eta}\L\h+ (I_{2n}-\L)\h  ,\eta
\right)
,$$ with  $\delta t \in \R$, $\bar{h}\in \R^{2n}$, notice that 
$$
\begin{aligned}
\frac{\partial F}{\partial{h_i}}=\frac{1}{\sqrt{\eta}} \frac{\partial G}{\partial{\h_i}},\quad  \forall i\in \ll 1,2 \rr,
\quad \text{ and } \quad 
\frac{\partial F}{\partial{h_i}}=\frac{\partial G}{\partial{\h_i}}
\quad 
\forall i \in \ll 3, 2n\rr.
\end{aligned}
$$
Then for all  $i\in \ll 1,2n\rr$, we set 
$C_i=\dfrac{\partial G}{\partial{\h_i}}$ and $C_{2n+1}=\eta \dfrac{\partial G}{\partial \eta}-\tau \dfrac{\partial G}{\partial \tau}$, evaluated at time $\tau=2\pi /b_1+\sqrt{\eta}\delta t$.
For all $i\in \ll 1,2n+1\rr$, the vector $C_i\in \R^{2n+1}$ also admits a power series  expansion in $\sqrt{\eta}$,
$$
C_i=\sum_{k=0}^{\infty}\eta^{k/2} C_i^{(k/2)}.
$$
Notice that by definition of $(C_i)_{ i\in \ll 1,2n+1\rr}$ we have 
$
C_i^{(0)}=C_i^{(1/2)}=C_i^{(1)}=0 
$
for all $i\in \ll 1, 2n \rr$.
As a consequence we can obtain an equation satisfied by a potential perturbation of order $1/2$ of the nilpotent conjugate time.
\begin{lemma}\label{L:deltat1/2}
Recall
$$
K= 
\sum_{i=2}^{n}(h_{2i-1}^2+h_{2i}^2)\left( 1 -\frac{b_i}{b_1} \pi \cot \frac{b_i \pi}{b_1}\right)>0,
\qquad
K'=2^{2n-2}
\prod_{i=2}^{n}
\frac{
1
}{b_i^2}
 \sin^2\left(
\frac{b_i \pi}{
b_1
}
\right)>0.
$$
We have
$$
\Phi
\left(
2\pi /b_1+\sqrt{\eta}\delta t,  \sqrt{\eta}\L\h+ (I_{2n}-\L)\h  ,\eta
\right)
=
-\frac{2\pi}{b_1^2} \eta^{2n+4}
KK' \delta t ^2
+o(\eta^{2n+4}).
$$
\end{lemma}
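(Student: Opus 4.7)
The argument parallels the proof of Lemma~\ref{P:mini_det_1}, adapted to the $\sqrt\eta$-scaling of the substitution $h = \sqrt\eta\,\L\h + (I_{2n}-\L)\h$. The first step is an algebraic reduction of $\Phi$. Since $\partial F/\partial h_i = \eta^{-1/2}\partial G/\partial\h_i$ for $i\in\{1,2\}$ and $\partial F/\partial h_i = \partial G/\partial\h_i$ for $i\ge 3$, and since the chain rule gives
\[
\eta\,\partial_\eta F - \tau\,\partial_\tau F = \eta\,\partial_\eta G - \tau\,\partial_\tau G - \tfrac{1}{2}(\h_1\partial_{\h_1}G + \h_2\partial_{\h_2}G),
\]
where the extra terms form a linear combination of columns 1 and 2 and hence do not affect the determinant, one obtains $\Phi = \eta^{-1}\det M$ with $M = (\partial_{\h_1}G,\ldots,\partial_{\h_{2n}}G,\,\eta\partial_\eta G - \tau\partial_\tau G)$ evaluated at $(2\pi/b_1+\sqrt\eta\,\delta\!\tau,\h,\eta)$.

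The second step is a careful bookkeeping of leading $\eta$-orders for each column, using Proposition~\ref{P:expansion_r1_eta} together with the Taylor expansion around $\tau = 2\pi/b_1$. The crucial observations are that $G^{(3/2)}(2\pi/b_1,\h) = (\xhat(2\pi/b_1,\L\h),0) = 0$ while $\partial_\tau G^{(3/2)}(2\pi/b_1,\h) = (\L\h,0)\neq 0$. Consequently $\partial_{\h_i}G$ for $i\in\{1,2\}$ starts at $\eta^2\delta\!\tau(e_i,0)$; the columns $\partial_{\h_i}G$ for $i\geq 3$ start at $\eta(\xhat(2\pi/b_1,e_i),0)$; and the last column starts at $\eta\bigl((\xhat-\tfrac{2\pi}{b_1}\hhat)(2\pi/b_1,\mu),0\bigr)$ with $\mu = (I_{2n}-\L)\h$. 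Rescaling columns 1, 2 by $\eta^{-2}$ and columns 3 through $2n+1$ by $\eta^{-1}$ factors out a global $\eta^{2n+3}$ from $\det M$. At leading order every column has zero vertical entry, so $\det\tilde M|_{\eta^0}=0$, forcing one to extract the $\eta^1$ coefficient, where the last row picks up leading nonzero entries $\partial_{\h_i}\zhat(2\pi/b_1,\mu)$ for $i\in\{3,\ldots,2n\}$ and $v := 2\zhat(2\pi/b_1,\mu) - \tfrac{2\pi}{b_1}\partial_\tau\zhat(2\pi/b_1,\mu)$ for the last column.

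The principal computation is then $\det\tilde M|_{\eta^1}$. Laplace-expanding along row $2n+1$ and using the block structure---with $\delta\!\tau\,I_2$ contributed by rows/cols $1,2$ and the matrix $A$ of Lemma~\ref{L:detA} contributed by rows/cols $3,\ldots,2n$---the result factors as $\delta\!\tau^2$ times a bordered $(2n-1)\times(2n-1)$ determinant $\det\begin{pmatrix}A & W \\ {}^tV & v\end{pmatrix}$, where $V_j = \partial_{\h_{j+2}}\zhat(2\pi/b_1,\mu)$ and $W_j$ is the $(j+2)$-th coordinate of $(\xhat-\tfrac{2\pi}{b_1}\hhat)(2\pi/b_1,\mu)$. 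Applying Lemma~\ref{L:det(AW)} reduces this bordered determinant to $K'$ times an explicit sum involving $v$, cotangents, and the cross-products $V_{2i-1}W_{2i}-V_{2i}W_{2i-1}$. The main technical obstacle is the trigonometric simplification of this sum: using the block-diagonal representations $V|_{\text{block }i} = T_i(\h_{2i-1},\h_{2i})^T$ with $T_i = 2\pi/b_1 - \sin(2b_i\pi/b_1)/b_i$, and $W|_{\text{block }i} = C_i(\h_{2i-1},\h_{2i})^T$ with $C_i = A_i - (2\pi/b_1)\e^{(2\pi/b_1)\Jbar_i}$ satisfying the symmetries $C_{i,22}=C_{i,11}$ and $C_{i,21}=-C_{i,12}$, together with the identities $\sin(2\alpha)+\cos(2\alpha)\cot\alpha = \cot\alpha$ and $\sin(2\alpha)\cot\alpha = 2\cos^2\alpha$, each block-$i$ contribution collapses to a term proportional to $\bigl(1 - (b_i\pi/b_1)\cot(b_i\pi/b_1)\bigr)(\h_{2i-1}^2+\h_{2i}^2)$. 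Summing over $i\geq 2$ recovers the factor $K$, and assembling this with the $\delta\!\tau^2$ factor, the $\eta^{2n+3}$ rescaling and the prefactor $\eta^{-1}$ from $\Phi = \eta^{-1}\det M$ yields the claimed asymptotic.
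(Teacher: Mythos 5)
Your proposal follows essentially the same route as the paper's proof: you pass to the columns $\partial_{\h_i}G$ (the paper's $C_i$), use Proposition~\ref{P:expansion_r1_eta} together with $\xhat(2\pi/b_1,\L\h)=0$ to locate the leading order of each column, reduce the leading coefficient to $\delta\!\tau^2$ times the bordered determinant $\det\left(\begin{smallmatrix}A&W\\ {}^tV&v\end{smallmatrix}\right)$, and evaluate it via Lemmas~\ref{L:detA} and~\ref{L:det(AW)} with the same trigonometric collapse to $-\tfrac{2\pi}{b_1^2}KK'$, your identification $v=2\zhat-\tfrac{2\pi}{b_1}\partial_\tau\zhat$ agreeing with the value of $v$ used in the paper. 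The one wrinkle is the final power of $\eta$: keeping your explicit prefactor $\Phi=\eta^{-1}\det M$, your own bookkeeping ($\eta^{2n+3}$ from the column rescaling times $\eta$ from the last row) yields order $\eta^{2n+3}$ rather than the stated $\eta^{2n+4}$ (the latter is the order of the $G$-column determinant $\det M$ itself, which carries one extra power of $\eta$); this off-by-one mirrors an ambiguity already present in the paper, whose main text asserts $\Phi^{(2n+3)}\propto\delta\!\tau^2$, and it is immaterial to how the lemma is used, since only the proportionality to $\delta\!\tau^2$ with the negative coefficient $-\tfrac{2\pi}{b_1^2}KK'$ matters.
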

\begin{proof}
From Proposition~\ref{P:expansion_r1_eta}, we get that neither $G^{(1)}$ nor $G^{(2)}$ depend on $(h_1,h_2)$, hence from expression~\eqref{E:G_sqrt_eta} we deduce
$$
C_1=\eta^2\delta t_{1/2} \partial_{\h_1}  \partial_\tau G^{(3/2)}+O(\eta^{5/2}) ,	\qquad C_2=\delta t_{1/2} \partial_{\h_2} \partial_\tau G^{(3/2)}+O(\eta^{5/2}).
$$
Hence, evaluating $\Phi$ at $\left(
2\pi /b_1+\sqrt{\eta}\delta t,  \sqrt{\eta}\L\h+ (I_{2n}-\L)\h  ,\eta
\right)$ and eliminating higher order terms, there exist $V,W\in \mathcal{M}_{1\times 2n-2}(\R)$, $v\in \R$ such that 
$$
\Phi
=\eta^{2n+4}\left|
\begin{array}{cc|c|c}
			\delta t
		 &
			 0
		 &
		 0\cdots 0
		 &
		 0
	\\
			0
		 &
			 \delta t
		 &
		 0 \cdots 0
		 &
		 0
	\\
	\hline
	\:\:
	\begin{matrix}
	0
	\\
	\vdots
	\\
	0	
	\end{matrix}
	&
	\:\:
	\begin{matrix}
	0
	\\
	\vdots
	\\
	0	
	\end{matrix}
	&
	
	A
	
	&
	W
	\\
	\hline
			0
		 &
			 0
		  &
		  {}^t V
		 &
		 v
\end{array}
\right|_{\tau=2\pi/b_1}
+o(\eta^{2n+4}).
$$
Recall that $\det(A)=K'$ (see Lemma~\ref{L:detA}). To get the statement, let us show that 
$$
\left|
\begin{array}{c|c}	
	A	
	&
W
	\\
	\hline
		 ^{t}V
		 &
		 v
\end{array}
\right|=-\frac{2\pi}{b_1^2}
KK' .
$$
From Lemma~\ref{L:det(AW)} in Appendix~\ref{A:computational_lemmas}, we have
$$
\frac{1}{\det(A)}
\left|
\begin{array}{c|c}	
	A	
	&
W
	\\
	\hline
		 ^{t}V
		 &
		 v
\end{array}
\right|=
v+	
\\
\frac{1}{2}
\sum_{i=2}^{n}
b_i\left(
	V_{2i-1}W_{2i}-V_{2i}W_{2i-1}-\left(V_{2i-1}W_{2i-1}+V_{2i}W_{2i}\right)\cot \frac{b_{i}\pi}{b_1} 
	\right).
$$
In our case, for all $i\in \ll2,n\rr$,
$
(V_{2i-3},V_{2i-2})= (\h_{2i-1}, \h_{2i})\left(\frac{2\pi}{b_1}-\frac{1}{b_i}\sin \left(\frac{2 b_i\pi}{b_1}\right)\right)
$
and 
$$
\begin{pmatrix}
W_{2i-3}
\\
W_{2i-2}
\end{pmatrix}= 
\begin{pmatrix}
\frac{1}{b_i} \sin \frac{2 \pi  b_i}{b_1} -\frac{2 \pi  }{b_1}\cos  \frac{2 \pi  b_i}{b_1} 
&
\frac{1}{b_i}-\frac{2 \pi  }{b_1} \sin  \frac{2 \pi  b_i}{b_1} -\frac{1}{b_i} \cos \frac{2 \pi  b_i}{b_1} 
\\
\frac{2 \pi  }{b_1} \sin  \frac{2 \pi  b_i}{b_1} +\frac{1}{b_i} \cos  \frac{2 \pi  b_i}{b_1} -\frac{1}{b_i}
&
\frac{1}{b_i} \sin  \frac{2 \pi  b_i}{b_1} -\frac{2 \pi  }{b_1}\cos  \frac{2 \pi  b_i}{b_1} 
\end{pmatrix}
\begin{pmatrix}
\h_{2i-1}
\\
\h_{2i}
\end{pmatrix}.
$$
Finally,
$
v=\sum_{i=2}^{n}\left(\h_{2i-1}^2+\h_{2i}^2\right)\left( \frac{2 \pi  }{b_1} \cos ^2 \frac{\pi  b_i}{b_1}-\frac{1}{b_i} \sin  \frac{2 \pi  b_i}{b_1} \right)
$,
hence the result by summation.
\end{proof}

\begin{lemma}\label{P:mini_det_3}
We have
$$
\Phi
\left(
2\pi /b_1+\eta \delta t,h,\eta
\right)
=
\eta^{2n+5}
K'\left(d-\frac{2\pi}{b_1^2}Kd'\right)
+o(\eta^{2n+5}),
$$
where
$$
d=\left|
\begin{matrix}
			\frac{\partial}{\partial \h_1}
			\left(
			 G^{(5/2)}
			 +
			 \delta t
			 \frac{\partial G^{(3/2)}}{\partial \tau}
			 \right)_1
		 &
			 \frac{\partial}{\partial \h_2}
			\left(
			 G^{(5/2)}
			 +
			 \delta t
			 \frac{\partial G^{(3/2)}}{\partial \tau}
			 \right)_1
		 &
		 -\tau\left(\frac{\partial G^{(3/2)}}{\partial \tau}\right)_{1}
	\\
			\frac{\partial}{\partial \h_1}
			\left(
			 G^{(5/2)}
			 +
			 \delta t
			 \frac{\partial G^{(3/2)}}{\partial \tau}
			 \right)_2
		 &
			 \frac{\partial}{\partial \h_2}
			\left(
			 G^{(5/2)}
			 +
			 \delta t
			 \frac{\partial G^{(3/2)}}{\partial \tau}
			 \right)_2
		 &
		 -\tau\left(\frac{\partial G^{(3/2)}}{\partial \tau}\right)_{2}
	\\
			\frac{\partial}{\partial \h_1}
			\left(
			 G^{(3)}
			 \right)_{2n+1}
		 &
			 \frac{\partial}{\partial \h_2}
			\left(
			 G^{(3)}
			 \right)_{2n+1}
		 &
		 0
\end{matrix}
\right|_{\tau=2\pi/b_1}
$$
and
$$
d'=\left|
\begin{matrix}
			\frac{\partial}{\partial \h_1}
			\left(
			 G^{(2)}
			 +
			 \delta t
			 \frac{\partial G^{(1)}}{\partial \tau}
			 \right)_1
		 &
			 \frac{\partial}{\partial \h_2}
			\left(
			 G^{(2)}
			 +
			 \delta t
			 \frac{\partial G^{(1)}}{\partial \tau}
			 \right)_1
	\\
			\frac{\partial}{\partial \h_1}
			\left(
			 G^{(2)}
			 +
			 \delta t
			 \frac{\partial G^{(1)}}{\partial \tau}
			 \right)_2
		 &
			 \frac{\partial}{\partial \h_2}
			\left(
			 G^{(2)}
			 +
			 \delta t
			 \frac{\partial G^{(1)}}{\partial \tau}
			 \right)_2
\end{matrix}
\right|_{\tau=2\pi/b_1}.
$$
\end{lemma}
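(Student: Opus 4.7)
The proof extends the determinant-expansion scheme used for Lemmas~\ref{P:mini_det_1} and \ref{L:deltat1/2}, applied now one order higher. The preparatory step is to rewrite the last column of the defining matrix of $\Phi$ in terms of $G$ rather than $F$. Since $G(\tau,\h,\eta)=F(\tau,\sqrt{\eta}\Lambda\h+(I_{2n}-\Lambda)\h,\eta)$ gives $\partial_\eta F=\partial_\eta G-(2\eta)^{-1}\sum_{i=1,2}\h_i\,\partial_{\h_i}G$ and $\partial_\tau F=\partial_\tau G$, and since columns $1$ and $2$ of the matrix are proportional to $\partial_{\h_i}G$, column operations preserving the determinant replace the last column by $\eta\partial_\eta G-\tau\partial_\tau G$; every entry of the resulting matrix is then expressed through the coefficients of the half-integer-power series of $G$.

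Next, plugging in \eqref{E:G_sqrt_eta_bis} and exploiting the vanishings at $\tau=2\pi/b_1$ afforded by Proposition~\ref{P:expansion_r1_eta} --- namely $(G^{(1)})_1=(G^{(1)})_2=(G^{(1)})_{2n+1}=0$, $G^{(3/2)}|_{2\pi/b_1}=0$ together with $\partial_{\h_i}G^{(3/2)}|_{2\pi/b_1}=0$ for $i=1,2$, $(G^{(3/2)})_j=0$ for $j\geq 3$, $(G^{(5/2)})_{2n+1}=0$, and the absence of $\h_1,\h_2$ dependence in $G^{(1)}$ and $G^{(2)}$ --- one reads off the leading $\eta$-order of every entry. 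Columns $1$ and $2$ start at order $\eta^2$ with entries $\partial_{\h_i}[G^{(5/2)}+\delta t\,\partial_\tau G^{(3/2)}]$, columns $3$ through $2n$ at order $\eta$ with entries $\partial_{\h_i}G^{(1)}$, the $(2n+1,j)$-entry for $j\in\{1,2\}$ only at order $\eta^{5/2}$ through $\partial_{\h_j}G^{(3)}_{2n+1}$, and the modified last column begins at order $\eta$ with the piece $G^{(1)}-(2\pi/b_1)\partial_\tau G^{(1)}$, supported only in positions $3,\dots,2n$.

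The sum over permutations then yields contributions at orders $\eta^{2n+3}$, $\eta^{2n+7/2}$ and $\eta^{2n+4}$ that cancel via the same alternating-sign mechanism already at work in Lemma~\ref{L:deltat1/2}, where a similar cancellation moved the leading order from $\eta^{2n+3}$ to $\eta^{2n+4}$. The first surviving contribution is thus at order $\eta^{2n+5}$ and splits into two families via a Laplace expansion along the $(2n+1)$-th row. The first family uses the $G^{(5/2)}+\delta t\,\partial_\tau G^{(3/2)}$-pieces in columns $1,2$, the $G^{(3)}_{2n+1}$-piece in the last row, and the matrix $A$ of Lemma~\ref{L:detA} in the central $(2n-2)\times(2n-2)$ block, producing $K'\,d$. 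The second family borrows one extra order of $\eta$ from the last row and last column, pairing the $G^{(2)}+\delta t\,\partial_\tau G^{(1)}$-pieces in columns $1,2$ with a bordered submatrix of the form treated in Lemma~\ref{L:det(AW)}; the trigonometric identity of that lemma reproduces exactly the constant $K$ appearing in Lemma~\ref{L:deltat1/2} and yields the second term $-(2\pi/b_1^2)\,K K'\,d'$.

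The principal obstacle is the combinatorial bookkeeping required to verify that the cancellations at $\eta^{2n+3}$, $\eta^{2n+7/2}$ and $\eta^{2n+4}$ are complete and that the two surviving families at order $\eta^{2n+5}$ assemble with the signs and with the factor $-2\pi K/b_1^2$ as announced. Both verifications reduce to repeated applications of Lemma~\ref{L:detA} and Lemma~\ref{L:det(AW)} to bordered submatrices indexed by the assignment of $\eta$-orders to rows and columns, exactly as in the proof of Lemma~\ref{L:deltat1/2}.
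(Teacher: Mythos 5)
Your overall skeleton (rewrite the last column by a determinant-preserving column operation, read off entry orders from Proposition~\ref{P:expansion_r1_eta} and \eqref{E:G_sqrt_eta_bis}, then extract the first surviving coefficient by block/bordered expansions using Lemmas~\ref{L:detA} and \ref{L:det(AW)}) is the same as the paper's, but the two decisive steps are wrong. First, the mechanism you propose for the term $-\frac{2\pi}{b_1^2}Kd'$ cannot produce anything: your ``second family'' pairs the bordered block with the $G^{(2)}+\delta t\,\partial_\tau G^{(1)}$ entries \emph{in columns $1,2$}, i.e.\ with $\partial_{\h_1},\partial_{\h_2}$ of these coefficients; but $G^{(1)}$ and $G^{(2)}$ depend only on $(I_{2n}-\L)\h$ (this is stated and used in the proof of Lemma~\ref{L:deltat1/2}), so those derivatives vanish identically and your second family is zero. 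In the actual computation the second contribution comes from the permutations in which columns $1,2$ cover rows $1,2$ through the \emph{same} $\partial_{\h_j}\bigl(G^{(5/2)}+\delta t\,\partial_\tau G^{(3/2)}\bigr)_i$ block that sits in $d$, bordered by the $\bigl(\begin{smallmatrix}A&W\\ {}^tV&v\end{smallmatrix}\bigr)$ block whose determinant equals $-\frac{2\pi}{b_1^2}KK'$ by Lemma~\ref{L:det(AW)}; the $d'$ printed in the statement (with $G^{(2)},G^{(1)}$) is a typo for that $2\times2$ minor, as both the displayed determinants in the paper's proof and the quadratic dependence on $\delta\tau$ needed in Proposition~\ref{P:t_c_bu_eta1/2} show. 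By taking the misprinted $d'$ at face value and inventing a route to it (``borrowing one extra order from the last row and last column''), you derive a term that is identically zero instead of the correct one.

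Second, your order bookkeeping and the alleged cancellations do not hold together. With the entry orders you yourself state (columns $1,2$ at $\eta^{2}$, columns $3,\dots,2n$ and the modified last column at $\eta$, the $(2n+1)$-row entries of columns $1,2$ at $\eta^{5/2}$), the naive minimum is $2n+3$; the coefficients at $2n+3$ and $2n+7/2$ vanish for a purely structural reason (the order-$\eta$ parts of the $2n-1$ columns $3,\dots,2n,2n+1$ are all supported in the $2n-2$ rows $3,\dots,2n$, and $G^{(5/2)}_{2n+1}=0$), not by an alternating-sign cancellation among nonzero terms; and the coefficient at $2n+4$ does \emph{not} cancel --- it is precisely the sum of the two block determinants the lemma asserts (the exponent $2n+5$ in the statement corresponds to the paper's bookkeeping with the columns $\partial_{\h_i}G$ rather than $\partial_{h_i}F=\eta^{-1/2}\partial_{\h_i}G$ for $i=1,2$, a harmless global factor of $\eta$). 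Asserting, without proof, a cancellation at $\eta^{2n+4}$ ``by the same mechanism as in Lemma~\ref{L:deltat1/2}'' is therefore not a gap you could fill later: if it were true it would kill the very quantity the lemma computes. You need instead to enumerate the permutations achieving the minimal total order and check that exactly two families survive: (i) rows $1,2$ covered by columns $1,2$ via the $G^{(5/2)}+\delta t\,\partial_\tau G^{(3/2)}$ entries and the bordered $(A,W,{}^tV,v)$ block covering the rest, giving $-\frac{2\pi}{b_1^2}KK'$ times that $2\times2$ determinant; (ii) one of rows $1,2$ covered by the last column via $-\tau(\partial_\tau G^{(3/2)})_i$, row $2n+1$ covered by column $1$ or $2$ via $\partial_{\h_i}(G^{(3)})_{2n+1}$, the other of rows $1,2$ by the remaining column of $\{1,2\}$, and $A$ on rows $3,\dots,2n$, giving $K'd$.
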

\begin{proof}
The proof is similar to that of Lemma~\ref{L:deltat1/2}. From Proposition~\ref{P:expansion_r1_eta} and expression~\eqref{E:G_sqrt_eta_bis} we deduce
$$
C_1=\eta^{5/2}\partial_{\h_1}  \left(G^{(5/2)}+ \delta t  \partial_\tau G^{(3/2)}\right)+O(\eta^{3}) ,
$$
$$
C_2=\eta^{5/2}\partial_{\h_2}  \left(G^{(5/2)}+ \delta t  \partial_\tau G^{(3/2)}\right)+O(\eta^{3}).
$$
Again, similarly to Lemma~\ref{L:deltat1/2}, evaluating $\Phi$ at $\left(
2\pi /b_1 +\eta \delta t,  \sqrt{\eta}\L\h+ (I_{2n}-\L)\h  ,\eta
\right)$ and eliminating higher order terms, there exist $V,W\in \mathcal{M}_{1\times 2n-2}(\R)$, $v\in \R$ such that at $\tau=2\pi/b_1$, the term of order $2n+5$ is given by
\begin{multline*}
\left|
\begin{array}{cc|c|c}
		\frac{\partial}{\partial \h_1}
			\left(
			 G^{(5/2)}
			 +
			 \delta t
			 \frac{\partial G^{(3/2)}}{\partial \tau}
			 \right)_1
		 &
			 \frac{\partial}{\partial \h_2}
			\left(
			 G^{(5/2)}
			 +
			 \delta t
			 \frac{\partial G^{(3/2)}}{\partial \tau}
			 \right)_1
		 &
		 0\cdots 0
		 &
		 0
	\\
\frac{\partial}{\partial \h_1}
			\left(
			 G^{(5/2)}
			 +
			 \delta t
			 \frac{\partial G^{(3/2)}}{\partial \tau}
			 \right)_2
		 &
			 \frac{\partial}{\partial \h_2}
			\left(
			 G^{(5/2)}
			 +
			 \delta t
			 \frac{\partial G^{(3/2)}}{\partial \tau}
			 \right)_2
		 &
		 0 \cdots 0
		 &
		 0
	\\
	\hline
	\:\:
	\begin{matrix}
	0
	\\
	\vdots
	\\
	0	
	\end{matrix}
	&
	\:\:
	\begin{matrix}
	0
	\\
	\vdots
	\\
	0	
	\end{matrix}
	&	
	A	
	&
	W
	\\
	\hline
		0
		 &
		0
		  &
		  {}^t V
		 &
		 v
\end{array}
\right|
+\\
\left|
\begin{array}{cc|c|c}
		\frac{\partial}{\partial \h_1}
			\left(
			 G^{(5/2)}
			 +
			 \delta t
			 \frac{\partial G^{(3/2)}}{\partial \tau}
			 \right)_1
		 &
			 \frac{\partial}{\partial \h_2}
			\left(
			 G^{(5/2)}
			 +
			 \delta t
			 \frac{\partial G^{(3/2)}}{\partial \tau}
			 \right)_1
		 &
		 0\cdots 0
		 &
		 -\tau\left(\frac{\partial G^{(3/2)}}{\partial \tau}\right)_{1}
	\\
\frac{\partial}{\partial \h_1}
			\left(
			 G^{(5/2)}
			 +
			 \delta t
			 \frac{\partial G^{(3/2)}}{\partial \tau}
			 \right)_2
		 &
			 \frac{\partial}{\partial \h_2}
			\left(
			 G^{(5/2)}
			 +
			 \delta t
			 \frac{\partial G^{(3/2)}}{\partial \tau}
			 \right)_2
		 &
		 0 \cdots 0
		 &
		 -\tau\left(\frac{\partial G^{(3/2)}}{\partial \tau}\right)_{2}
	\\
	\hline
	\:\:
	\begin{matrix}
	0
	\\
	\vdots
	\\
	0	
	\end{matrix}
	&
	\:\:
	\begin{matrix}
	0
	\\
	\vdots
	\\
	0	
	\end{matrix}
	&	
	A	
	&
	\:\:
	\begin{matrix}
	0
	\\
	\vdots
	\\
	0
	\end{matrix}
	\\
	\hline
\frac{\partial}{\partial \h_1}
			\left(
			 G^{(3)}
			 \right)_{2n+1}
		 &
			 \frac{\partial}{\partial \h_2}
			\left(
			 G^{(3)}
			 \right)_{2n+1}
		  &
		 0\cdots  0
		 &
		 0
\end{array}
\right|
\end{multline*}
Hence the result since $\det(A)=K'$ and   $
\left|
\begin{array}{c|c}	
	A	
	&
W
	\\
	\hline
		 ^{t}V
		 &
		 v
\end{array}
\right|=-\frac{2\pi}{b_1^2}K K'
$
(as showed in the proof of Lemma~\ref{L:deltat1/2}).
\end{proof}

\subsubsection{On the third domain}

Then for all  $ i\in \ll 1,2n\rr$, let $C_i$ and $C_{2n+1}$ be the respective  evaluations at time $\tau=2\pi/b_1+\eta \delta t_1+\eta^2\delta t_2$ of the vectors $\dfrac{\partial G}{\partial{\h_i}}$ and $\eta \dfrac{\partial G}{\partial \eta}-\tau \dfrac{\partial G}{\partial \tau}$.
For all  $ i\in \ll 1,2n+1\rr$, the vector $C_i\in \R^{2n+1}$ also admits a formal power series in $\eta$,
$
C_i=\sum_{k=1}^{\infty}\eta^k C_i^{(k)}.
$
Notice that by definition of $(C_i)_{ i\in \ll 1,2n+1\rr}$ we have 
$$
C_i^{(0)}=C_i^{(1)}=0 \qquad \forall  i\in \ll 1,2n\rr.
$$
Hence we have a priori
$
\Phi
\left(
2\pi /b_1,  \L\h+\eta (I_{2n}-\L)\h  ,\eta
\right)=O(\eta^{4n+1})
$.
We can use these elements to give the following refinement on Lemma~\ref{P:mini_det_1}.

\begin{lemma}\label{L:mini_det_4}
For all $\h\in \R^{2n}$, 
 $\delta t_1=\tau_c^{(1)}(\L \h)$ is the only solution to
 $$
 \Phi
\left(
2\pi /b_1+\eta \delta t_1+\eta^2 \delta t_2,  \L\h+\eta (I_{2n}-\L)\h  ,\eta
\right)
= O(\eta^{4n+2}).
 $$
Furthermore
$$
\Phi
\left(
2\pi /b_1+\eta \tau_c^{(1)}(\L \h)+\eta^2 \delta t_2,  \L\h+\eta (I_{2n}-\L)\h  ,\eta
\right)
=
\eta^{4n+2}K'
\left(\sum_{i=1}^{2n+1}d_i\right)
+O(\eta^{4n +3}),
$$
where
$
K'=2^{2n-2}
\prod_{i=2}^{n}
\frac{
1
}{b_i^2}
 \sin^2\left(
\frac{\pi  b_i}{
b_1
}
\right)>0
$ and
$$
d_1=
\left|
\begin{array}{llc}
			\left(C^{(3)}_1\right)_1
		 &
		 	\left(C^{(3)}_2\right)_1
		 &
		 \left(C^{(1)}_{2n+1}\right)_1
	\\
		\left(C^{(3)}_1\right)_2
		 &
		 \left(C^{(3)}_2\right)_2
		 &
		 \left(C^{(1)}_{2n+1}\right)_2
	\\
		\left(C^{(2)}_1\right)_{2n+1}
		 &
		 \left(C^{(2)}_2\right)_{2n+1}
		 &
0
\end{array}
\right|,
$$
$$
d_2=
\left|
\begin{array}{llc}
			\left(C^{(2)}_1\right)_1
		 &
		 	\left(C^{(2)}_2\right)_1
		 &
		 \left(C^{(1)}_{2n+1}\right)_1
	\\
		\left(C^{(2)}_1\right)_2
		 &
		 \left(C^{(2)}_2\right)_2
		 &
		 \left(C^{(1)}_{2n+1}\right)_2
	\\
		\left(C^{(3)}_1\right)_{2n+1}
		 &
		 \left(C^{(3)}_2\right)_{2n+1}
		 &
0
\end{array}
\right|,
$$
$$
d_{2n+1}=
\left|
\begin{array}{lll}
			\left(C^{(2)}_1\right)_1
		 &
		 	\left(C^{(2)}_2\right)_1
		 &
		 \left(C^{(2)}_{2n+1}\right)_1
	\\
		\left(C^{(2)}_1\right)_2
		 &
		 \left(C^{(2)}_2\right)_2
		 &
		 \left(C^{(2)}_{2n+1}\right)_2
	\\
		\left(C^{(2)}_1\right)_{2n+1}
		 &
		 \left(C^{(2)}_2\right)_{2n+1}
		 &
		\left(C^{(2)}_{2n+1}\right)_{2n+1}
\end{array}
\right|
$$
and
$$
d_{k}=
\frac{2\pi^2}{b_1^2}e_k \left(-h_2 \partial_{h_{k}}\left(G^{(3)}\right)_1+h_1 \partial_{h_{k}}\left(G^{(3)}\right)_2\right)\qquad \forall k\in\ll 3,2n\rr,
$$
where $e\in \R^{2n-2}$ is the vector such that $Ae$ is given by the components $3$ through $2n$ of the vector $\left(h_2 C_1^{(2)}-h_1 C_2^{(2)}\right)$, with $A\in \mathcal{M}_{2n-2}(\R)$  the matrix introduced in Lemma~\ref{P:mini_det_1}.
\end{lemma}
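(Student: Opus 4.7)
The plan is to follow the same strategy used for Lemmas~\ref{P:mini_det_1} and \ref{P:mini_det_3}: substitute the expansion from Proposition~\ref{P:expansion_r2_eta} into the column vectors $C_i$ of the matrix whose determinant defines $\Phi$, Taylor expand about the perturbed time $2\pi/b_1+\eta\delta t_1+\eta^2\delta t_2$, and extract the leading nontrivial power of $\eta$ by block cofactor expansion.

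First I would record the power-series structure of each column. Because $h_i(0)=\eta \h_i$ for $i\in\ll 3,2n\rr$, the chain rule yields $\partial F/\partial h_i=\eta^{-1}\partial G/\partial\h_i$ there, while $\partial F/\partial h_i=\partial G/\partial\h_i$ for $i\in\ll 1,2\rr$. Since $G^{(1)}(\tau,\h)$ depends on $\h$ only through $\L\h$ and has zero vertical component, the columns $C_3,\dots,C_{2n}$ begin at order $\eta$, and at $\tau=2\pi/b_1$ their first two horizontal components and their $(2n{+}1)$-st component vanish; their surviving leading data populates exactly the $(2n-2)\times(2n-2)$ block $A$ of Lemma~\ref{P:mini_det_1}. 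The columns $C_1,C_2$ acquire their first nonzero horizontal entries in rows $1,2$ from $\eta^{2}\partial_{\h_i}G^{(2)}$ (because $(\partial_{\h_i}G^{(1)})_{1,2}$ vanishes at $\tau=2\pi/b_1$), and nonzero vertical entries from $\eta^{3}\partial_{\h_i}G^{(3)}$. The last column $C_{2n+1}=\eta\partial_\eta G-\tau\partial_\tau G$ contributes $-\tau\partial_\tau G^{(1)}$ in the horizontal block and an $\eta^{3}$-piece vertically.

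Next I would factor $\eta^{4n+2}$ out of the $(2n{+}1)$-st order determinant (two factors of $\eta^{2}$ from columns $C_1,C_2$, a factor of $\eta$ from each of the $2n-2$ middle columns, and appropriate pieces from $C_{2n+1}$) and use $\det A=K'$ from Lemma~\ref{L:detA} to collapse the central block. Expanding by complementary cofactors through Lemma~\ref{L:det(AW)} then reduces the computation to a principal $3\times 3$ determinant (rows $1,2,2n{+}1$, columns $1,2,2n{+}1$) plus correction terms coming from the nonvanishing rows $3,\dots,2n$ of $C_1$ and $C_2$. The principal part produces $d_1+d_2+d_{2n+1}$ exactly as in Lemma~\ref{P:mini_det_1}. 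The corrections are encoded by the unique vector $e\in\R^{2n-2}$ solving $Ae=(\h_2 C_1^{(2)}-\h_1 C_2^{(2)})_{3,\dots,2n}$, and when paired with the $(2n{+}1)$-st row contribution of $G^{(3)}$ and the last column's horizontal entries $-\tau\partial_\tau G^{(1)}|_{\tau=2\pi/b_1}$, they give precisely the stated $d_k$ (with the prefactor $2\pi^2/b_1^2$ reflecting $\tau=2\pi/b_1$ evaluated against the horizontal velocity at that time).

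Uniqueness of $\delta t_1=\tau_c^{(1)}(\L\h)$ would be established by examining the $\eta^{4n+1}$-coefficient of $\Phi$: all its contributions factor through the same $3\times 3$ determinant that appeared in Lemma~\ref{P:mini_det_1} applied to the initial covector $\L\h$, whose vanishing is equivalent, by Proposition~\ref{P:conjugate_time_simple_case}, to $\delta t_1=\tau_c^{(1)}(\L\h)$. The main obstacle is bookkeeping: one must show that the middle-column perturbations affect the leading coefficient only through the single auxiliary vector $e$ and not through more intricate combinations. This relies on combining the block structure of $A$ (which decouples the $2\times 2$ sub-blocks along the diagonal) with the explicit trigonometric entries of $\partial_\tau G^{(1)}$ at $\tau=2\pi/b_1$, and is most efficiently verified by the same symbolic computations already used to prove Lemmas~\ref{P:mini_det_1} and~\ref{P:mini_det_3}.
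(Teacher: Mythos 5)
Your proposal follows essentially the same route as the paper's proof: expand the determinant multilinearly in $\eta$, identify the vanishing of the degenerate leading coefficient with the equation defining $\tau_c^{(1)}(\L\h)$ (which forces $\delta t_1$), and reduce the next coefficient through the central block $A$ (with $\det A = K'$) and a Cramer-rule argument to the principal part $d_1+d_2+d_{2n+1}$ plus the corrections $d_k$ encoded by the auxiliary vector $e=A^{-1}\bigl(h_2C_1^{(2)}-h_1C_2^{(2)}\bigr)_{3,\dots,2n}$, exactly as the paper does. Only your $\eta$-bookkeeping sentence is imprecise (under the convention of the lemma each of the columns $C_1,\dots,C_{2n}$ contributes $\eta^{2}$ and $C_{2n+1}$ contributes $\eta$, giving order $4n+1$ before and $4n+2$ after the leading term is killed by the choice of $\delta t_1$), but this does not affect the method.
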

\begin{proof}
The first part of the statement is an application of Lemma~\ref{P:mini_det_1} in the case of an initial covector of the form  $h(0)=\L\h+\eta (I_{2n}-\L)\h$.
Indeed 
$$
\Phi
\left(
2\pi /b_1+\eta \tau_c^{(1)}+\eta^2 \delta t_2,  \L\h+\eta (I_{2n}-\L)\h  ,\eta
\right)
=
\eta^{4n+1}
\det
\left(
C_1^{(2)},
\dotsc,
C_{2n}^{(2)},
C_{2n+1}^{(1)}
\right)
+O(\eta^{4n+2}).
$$
The equation satisfied by $\tau_c^{(1)}$ comes down to 
$$
\eval{
\det
\left(
C_1^{(2)},
\dotsc,
C_{2n}^{(2)},C_{2n+1}^{(1)}
\right)}{\tau=2\pi /b_1+\eta \tau_c^{(1)}}=0
,$$
hence
\begin{multline*}
\Phi
\left(
2\pi /b_1+\eta \tau_c^{(1)}+\eta^2 \delta t_2,  \L\h+\eta (I_{2n}-\L)\h  ,\eta
\right)
=
\\
\eta^{4n+2}
\left[
\det
\left(
C_1^{(2)},
\dotsc,
C_{2n+1}^{(2)}
\right)
+
\sum_{k=1}^{2n}
\det
\left(
C_1^{(2)},
\dotsc,
C_{k-1}^{(2)},
C_{k}^{(3)},
C_{k+1}^{(2)},
\dotsc,
C_{2n}^{(2)},
C_{2n+1}^{(1)}
\right)\right]
\\+O(\eta^{4n+3}).
\end{multline*}

Setting 
$
d_k'=\det
\left(
C_1^{(2)},
\dotsc,
C_{k-1}^{(2)},
C_{k}^{(3)},
C_{k+1}^{(2)},
\dotsc,
C_{2n+1}^{(2)}
\right)$, for all $k\in \ll 3,2n\rr$, we first prove $d'_k=K' d_k$, for all $k\in \ll 3,2n\rr$.

We proceed to the following transformation on the columns $(C_i)_{i\in\ll1,2n+1\rr}$ of the Jacobian matrix. First, $C_1\leftarrow h_2C_1-h_1  C_2 $ and $C_2\leftarrow h_1 C_1 + h_2  C_2 $, then we transpose $C_k\leftrightarrow C_1$ and finally we cycle $C_{i+1}\leftarrow C_i$ for $i\in \ll3,2n\rr$ and $C_{3}\leftarrow C_{2n+1}$. This yields
$$
(h_1^2+h_2^2) d'_k
=
\det
\left(
C_{k}^{(3)},
h_1C_1^{(2)}+h_2C_2^{(2)},
C_{2n+1}^{(1)},
C_{3}^{(2)},
\dotsc,
C_{k-1}^{(2)},
h_2C_1^{(2)}-h_1C_2^{(2)},
C_{k+1}^{(2)},
\dotsc,
C_{2n}^{(2)}
\right) .
$$
Using Proposition~\ref{P:expansion_r2_eta}, $\left(C_{i}^{(2)}\right)_1=\left(C_{i}^{(2)}\right)_2=\left(C_{i}^{(2)}\right)_{2n+1}=0$, $i\in\ll3,2n\rr$.
All columns of this new matrix have zero $2n+1$ component except for $h_1C_1^{(2)}+h_2C_2^{(2)}$, and zero $1$ and $2$ component except for $C_{k}^{(3)}$,
$h_1C_1^{(2)}+h_2C_2^{(2)}$ and
$C_{2n+1}^{(1)}$. 
One can apply the Cramer rule for computing the $k$-th coefficient of $e=A^{-1}(h_2C_1^{(2)}-h_1C_2^{(2)})$ when computing
the determinant of the square submatrix of lines and columns 3 through $2n$.

Hence we have
$$
 d_k'
=
\frac{K' e_k}{h_1^2+h_2^2}
\det
\left(
\widetilde{C}_k^{(3)},
h_1\widetilde{C}_1^{(2)}+h_2\widetilde{C}_2^{(2)},
\widetilde{C}_{2n+1}^{(1)}
\right) 
$$
with $\widetilde{C}_i=\left({(C_i)}_1,{(C_i)}_2,{(C_i)}_{2n+1}\right)$, and we get the value of $d_k$ by computing the remaining determinant.

Similarly, we obtain the stated relation for $d_1$, $d_2$ and $d_{2n+1}$ by noticing that $C_{2n+1}^{(1)}=0$ and isolating the three $3\times 3$ matrices given by lines and columns $1$, $2$ and $2n+1$.
\end{proof}

The value of determinants $d_1$ through $d_{2n+1}$ can be explicitly stated in terms of second order invariants thanks to the computations in Appendix~\ref{A:third_order}.

\begin{lemma}\label{L:expression_d_i}
We have $d_{2n+1}=0$, $d_2=-\frac{2\pi}{ b_1}(\h_1^2+\h_2^2)(\beta \h_1-\alpha \h_2)^2$ and
$$
\begin{aligned}
d_1=&\frac{4 \pi^2}{b_1^2}
(\h_1^2+\h_2^2)\left(
\delta t_2+ 4b_1(\beta \h_1-\alpha \h_2)(\alpha \h_1 +\beta \h_2)
\right)
\\
&\frac{4 \pi^2}{b_1^2}\left(\h_1^2  \frac{\partial \left(
			 G^{(3)}
			 \right)_2}{\partial \h_2}			
+
\h_2^2  \frac{\partial \left(
			 G^{(3)}
			 \right)_1}{\partial \h_1}	
-
\h_1 \h_2
\left(  \frac{\partial \left(
			 G^{(3)}
			 \right)_1}{\partial \h_2}
			 +
			 \frac{\partial \left(
			 G^{(3)}
			 \right)_2}{\partial \h_1}
\right)\right).
\end{aligned}
$$
Furthermore, for all $i\in \ll2,n\rr$, we have 
$$
\begin{aligned}
d_{2i-1}=&
\frac{2 \pi^2 b_i }{b_1^2}\left(h_1 (h_1 \kappa^{1,{2i-1}}_2+h_2 \kappa^{2,{2i-1}}_2)-h_2 (h_1 \kappa^{1,{2i-1}}_1+h_2 \kappa^{2,{2i-1}}_1)\right) 
\\
&\qquad\left[
	\cot \left(\frac{\pi  b_i}{b_1}\right) 
	\left(
		\kappa^{1,2}_{2i-1}(h_2^2-h_1^2)+2 h_1 h_2 (\kappa^{1,1}_{2i-1}-\kappa^{2,2}_{2i-1})
	\right)
	\right.
	\\
	&
	\qquad\qquad \left. \vphantom{\left(\frac{\pi  b_i}{b_1}\right) }
	-\left(
		\kappa^{1,2}_{2i}(h_2^2-h_1^2)+2 h_1 h_2 (\kappa^{1,1}_{2i}-\kappa^{2,2}_{2i})
	\right)
\right],
\end{aligned}
$$
$$
\begin{aligned}
d_{2i}=&\frac{2 \pi ^2 b_i}{b_1^2} \left(   h_1 (h_1 \kappa^{1,2i}_2+h_2 \kappa^{2,2i}_2)-h_2 (h_1 \kappa^{1,2i}_1+h_2 \kappa^{2,2i}_1)   \right) 
\\
&\qquad\left[
	\cot \left(\frac{\pi  b_i}{b_1}\right) 
	\left(
		\kappa^{1,2}_{2i}(h_2^2-h_1^2)+2 h_1 h_2 (\kappa^{1,1}_{2i}-\kappa^{2,2}_{2i})
	\right)
	\right.
	\\
	&
	\qquad\qquad \left. \vphantom{\left(\frac{\pi  b_i}{b_1}\right) }+\left(
		\kappa^{1,2}_{2i-1}(h_2^2-h_1^2)+2 h_1 h_2 (\kappa^{1,1}_{2i-1}-\kappa^{2,2}_{2i-1})
	\right)\right].
\end{aligned}
$$
\end{lemma}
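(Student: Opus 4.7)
The plan is to compute each of the determinants $d_1, d_2, d_{2n+1}$ and $d_{2i-1}, d_{2i}$ ($i\geq 2$) by first extracting the coefficients $C_i^{(k)}$ of the power series of each column vector using Proposition~\ref{P:expansion_r2_eta}, then exploiting the block structure of the remaining $3\times 3$ (resp. $2\times 2$) matrices.

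First I will expand $C_1, C_2$ and $C_{2n+1}$ via the chain rule applied to $G(\tau,\h,\eta)=F(\tau,\L\h+\eta(I_{2n}-\L)\h,\eta)$. Because $\L$ only projects on coordinates $1,2$, the derivatives $\partial_{\h_1}G, \partial_{\h_2}G$ at $\eta=0$ agree with $\partial_{h_1}F,\partial_{h_2}F$, while for $i\in\ll 3,2n\rr$ we pick up the factor $\eta$ from $\partial_{\h_i}=\eta\partial_{h_i}$ used in Lemma~\ref{L:mini_det_4}. Reading the powers of $\eta$ from Proposition~\ref{P:expansion_r2_eta} then identifies each $C_i^{(k)}$: for $i\in\ll1,2\rr$, $(C_i^{(2)})_{1,2}$ comes from $\partial_{\h_i}\xhat(\tau,\L\h)$ and $(C_i^{(2)})_{2n+1}$ comes from $\partial_{\h_i}\zhat(\tau,\L\h)$, while $(C_i^{(3)})_{1,2}$ comes from $\partial_{\h_i}x^{(2)}(\tau,\L\h)$ plus a cross term from $\xhat(\tau,(I_{2n}-\L)\h)$, and $(C_i^{(3)})_{2n+1}=\partial_{\h_i}(z^{(3)}(\tau,\L\h))$, the latter supplied explicitly by Lemma~\ref{L:z3}.

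For $d_{2n+1}=0$: restricted to rows/columns $(1,2,2n+1)$ and evaluated at $\tau=2\pi/b_1$, the third column $C_{2n+1}^{(2)}$ consists of $\tau$-derivatives of $\xhat$ and $\zhat$ at $\tau=2\pi/b_1$; but $\xhat(2\pi/b_1,\L\h)=0$ already forces the first two components of $C_1^{(2)},C_2^{(2)}$ to be proportional to $\hhat(2\pi/b_1,\L\h)=\L\h$, while $(C_1^{(2)})_{2n+1},(C_2^{(2)})_{2n+1}$ are proportional to $(\h_2,-\h_1)$, and the bottom-right entry is $-2\pi/b_1$ times a combination that makes the $3\times 3$ matrix rank-deficient. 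For $d_2$, substituting Lemma~\ref{L:z3} into the expansion of the $3\times 3$ determinant and using that $(C_{2n+1}^{(1)})_{1,2}=-\tfrac{2\pi}{b_1}(\h_2,-\h_1)$ produces the quadratic factor $(\beta\h_1-\alpha\h_2)^2$ after expansion and use of the symmetries of Proposition~\ref{P:summary_kappa}. For $d_1$, the same determinant expansion but with the $C^{(3)}$ columns keeps the $\delta t_2$ contribution (from differentiating $\tau\mapsto\hhat$) and the $\partial_{\h_i}(G^{(3)})_j$ entries, giving the stated form after factoring $\h_1^2+\h_2^2$.

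For $d_{2i-1}$ and $d_{2i}$ with $i\in\ll 2,n\rr$: these are given by Lemma~\ref{L:mini_det_4} as
$$
d_k=\frac{2\pi^2}{b_1^2}\,e_k\bigl(-\h_2\partial_{\h_k}(G^{(3)})_1+\h_1\partial_{\h_k}(G^{(3)})_2\bigr),
$$
with $e=A^{-1}(\h_2 C_1^{(2)}-\h_1 C_2^{(2)})_{3,\dots,2n}$. The vector $\h_2 C_1^{(2)}-\h_1 C_2^{(2)}$ restricted to coordinates $2i-1,2i$ is obtained from Lemma~\ref{L:invariants_order_1} and Lemma~\ref{L:expression_blocks}, producing entries linear in the $\kappa^{kl}_m$ with $k,m\in\ll 1,2\rr$, $l\in\ll 2i-1,2i\rr$. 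Inverting the block $A_i$ of $A$ introduces the factor $\tfrac{b_i}{2\sin^2(\pi b_i/b_1)}$, and the $\cot(\pi b_i/b_1)$ terms in the final formula arise from the combination $A_i^{-1}\cdot(\cdot)_{2i-1,2i}$ together with the trigonometric identity $1-\cos(2\pi b_i/b_1)=2\sin^2(\pi b_i/b_1)$ and $\sin(2\pi b_i/b_1)=2\sin(\pi b_i/b_1)\cos(\pi b_i/b_1)$. The partial derivatives $\partial_{\h_k}(G^{(3)})_{1,2}$ restricted to $k\in\ll 2i-1,2i\rr$ are computed from the third-order expansion using the same invariants $\kappa^{\cdot\cdot}_{2i-1},\kappa^{\cdot\cdot}_{2i}$, producing the second bracket in the stated formula.

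The main obstacle is bookkeeping: tracking which power of $\eta$ each entry contributes to in a $3\times 3$ determinant expansion, while simultaneously using the many symmetries of the Agrachev--Gauthier frame to cancel spurious terms. The calculation is purely algebraic once the expansions of $C_i^{(k)}$ are laid out, and is best verified by symbolic computation as is done elsewhere in the paper.
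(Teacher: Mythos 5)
Your plan is the same as the paper's: expand the columns $C_i^{(k)}$ from Proposition~\ref{P:expansion_r2_eta} (with $z^{(3)}$ supplied by Lemma~\ref{L:z3}), evaluate the $3\times 3$ determinants for $d_1,d_2,d_{2n+1}$, and for $k\in\ll 3,2n\rr$ use the formula of Lemma~\ref{L:mini_det_4} together with the explicit block inverse of $A$. However, several of the concrete identifications you build the computation on are wrong, and in particular the mechanism you give for $d_{2n+1}=0$ would fail as stated. The $(2n+1)$-components of $C_1^{(2)},C_2^{(2)}$ are $\partial_{\h_i}\zhat(2\pi/b_1,\L\h)=\tfrac{2\pi}{b_1}\h_i$, hence proportional to $(\h_1,\h_2)$, not to $(\h_2,-\h_1)$; likewise $(C_{2n+1}^{(1)})_{1,2}=(\xhat-\tau\hhat)_{1,2}\big|_{\tau=2\pi/b_1}=-\tfrac{2\pi}{b_1}(\h_1,\h_2)$, since $\hhat(2\pi/b_1,\L\h)=\L\h$, again not $(\h_2,-\h_1)$. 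Moreover, the first two rows of $C_1^{(2)},C_2^{(2)}$ form the full matrix $A_1^{(2)}$ (second derivatives of $x^{(2)}$ plus the $\tau_c^{(1)}\Jbar$-correction, hence carrying $\alpha,\beta$), not something proportional to $\hhat(2\pi/b_1,\L\h)$; so there is no a priori rank deficiency of the kind you describe. The paper gets $d_{2n+1}=0$ by explicitly computing the last column through the identity $\eta\partial_\eta F_i-\tau\partial_\tau F_i=2x_i^{(2)}(2\pi/b_1,\L\h)-\tfrac{2\pi}{b_1}\bigl(h_i^{(1)}(2\pi/b_1,\L\h)+\tau_c^{(1)}(\L\h)(\Jbar\h)_i\bigr)$ and evaluating the determinant, and $d_2$ by the same explicit route with $z^{(3)}$ from Lemma~\ref{L:z3}; with your wrong column entries the same expansion would not reproduce the stated formulas.

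In the $d_{2i-1},d_{2i}$ part you also swap the roles of the two factors. The entries $(\h_2C_1^{(2)}-\h_1C_2^{(2)})_j$, $j\ge 3$, equal $\kappa^{1,2}_j(\h_2^2-\h_1^2)+2\h_1\h_2(\kappa^{1,1}_j-\kappa^{2,2}_j)$, i.e.\ they carry the invariants with upper indices in $\ll1,2\rr$ and lower index $j\in\ll 2i-1,2i\rr$; it is therefore $e=A^{-1}(\h_2C_1^{(2)}-\h_1C_2^{(2)})_{3,\dots,2n}$, after inverting the block $\tfrac{b_i}{2}\bigl(\begin{smallmatrix}\cot(\pi b_i/b_1) & -1\\ 1 & \cot(\pi b_i/b_1)\end{smallmatrix}\bigr)^{-1}$-type structure, that produces the $\cot(\pi b_i/b_1)$-bracket. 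Conversely, the factor $-\h_2\partial_{\h_k}(G^{(3)})_1+\h_1\partial_{\h_k}(G^{(3)})_2$ carries the family $\kappa^{a,k}_m$ with $a,m\in\ll1,2\rr$, $k\in\ll 2i-1,2i\rr$, i.e.\ the first factor of the stated formulas. Your text assigns these two families the other way around. All of this is repairable within your plan, but as written the derivations of $d_{2n+1}=0$, $d_2$ and the attribution of invariants in $d_{2i-1},d_{2i}$ do not go through.
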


\begin{proof}
First, recall that 
$$
\begin{aligned}
x^{(2)}_1(2\pi/b_1,\Lambda \h)=\alpha   (3 \h_1^2+\h_2^2)+2 \beta \h_1 \h_2,
\\
x^{(2)}_2(2\pi/b_1,\Lambda \h)=2 \alpha \h_1 \h_2+\beta  (\h_1^2+3 \h_2^2 )
\end{aligned}
$$
and  $\tau_c^{(1)}(\L\h)=-2 ( \alpha \h_1+\beta  \h_2 )$.
Using Lemma~\ref{L:z3} from the Appendix, we have the value of $z^{(3)}\left(\frac{2\pi}{b_1},\L\h\right)$ and we can compute  the $3\times 3$ determinant $d_2$. (Remark that $F_{2n+1}^{(3)}=z^{(3)}+\tau_c^{(1)}\partial_\tau z^{(2)}(2\pi/b_1)$ and that $\partial_\tau z^{(2)}(2\pi/b_1)=0$.) Similarly we can compute $d_{2n+1}$ by noticing, for $i\in \ll1,2\rr$, at $\tau=2\pi/b_1+\eta \tau_c^{(1)}$
$$
\left(\eta \partial_\eta F_i-\tau \partial_\tau F_i\right)=
2x_i^{(2)}\left(\frac{2\pi}{b_1},\L\h\right) -\frac{2\pi}{b_1}\left(h_i^{(1)}\left(\frac{2\pi}{b_1},\L\h\right)+\tau_c^{(1)}(\L\h)(\Jbar\h)_i\right).
$$
Regarding $d_k$, $k\in \ll 3,2n\rr$, we obtain the result by explicitly computing the vector $e\in \R^{2n-2}$. First, 
since $\partial_{\h_{k}}z^{(3)}=0$
$$
\det\left(
\widetilde{C}_k^{(3)},
h_1\widetilde{C}_1^{(2)}+h_2\widetilde{C}_1^{(2)},
\widetilde{C}_{2n+1}^{(1)}
\right) 
=
-\frac{4\pi^2}{b_1^2}(h_1^2+h_2^2)
\left[
h_1h_2(\kappa^{1,k}_1 -\kappa^{2,k}_2)+h_2^2 \kappa^{2,k}_1 - h_1^2\kappa^{1,k}_2  )
\right].
$$
On the other hand, 
we have  $Ae=h_2C_1^{(2)}-h_1C_2^{(2)}$ and for all $3\leq i\leq 2n$
$$
\left(h_2C_1^{(2)}-h_1C_2^{(2)}\right)_i= \kappa^{1,2}_{i}(h_2^2-h_1^2)+2 h_1 h_2 (\kappa^{1,1}_i-\kappa^{2,2}_i).
$$
We then get the stated result since $A^{-1}$ is block diagonal with blocks in position $i-1$ being, for all $i\in\ll2,n\rr$,
$$
\frac{b_i}{2}
\begin{pmatrix}
\cot  \pi b_i /b_1 &-1
\\
1 & \cot  \pi b_i /b_1
\end{pmatrix}.
$$
\end{proof}

\section{Singularity classification}
\label{SS:Kernel_comp}

On each domain, the first step of the classification is to properly describe the Jacobian matrix of the exponential.
Recall that the rank is lower semi-continuous as a map from $\mathcal{M}_5(\R)$ to $\N$.
This implies  that the Jacobian matrix can have a kernel of  dimension   at most  $2$ at times near $2\pi/b_1$, as it is the case for the first order approximation $\widehat{\sre}$.

We decompose the matrix $\Jac_{p_0} \sre_{q_0}$ into the following sub matrices:
$$
\left(
\begin{array}{c|c|c}
A_1&A_2&C_1
\\
\hline
A_3&A_4&C_2
\\
\hline
L_1&L_2&E
\end{array}
\right)
$$
with $A_1,A_2,A_3,A_4 \in \mathcal{M}_{2\times 2}(\R)$, $L_1,L_2\in \mathcal{M}_{1\times 2}(\R)$, $C_1,C_2\in \mathcal{M}_{2\times 1}(\R)$ and $E\in \mathcal{M}_{1\times 1}(\R)$.

A vector $v$ in the kernel of $\Jac_{p_0} \sre_{q_0}$ must the satisfy the equations
\begin{equation}\label{E:ker12}
A_1 
\begin{pmatrix}
v_1 
\\
v_2 
\end{pmatrix}
+
A_2
\begin{pmatrix}
v_3 
\\
v_4 
\end{pmatrix}
+C_1v_5 =0,
\end{equation}
\begin{equation}\label{E:ker34}
A_3 
\begin{pmatrix}
v_1 
\\
v_2 
\end{pmatrix}
+
A_4
\begin{pmatrix}
v_3
\\
v_4
\end{pmatrix}
+C_2v_5 =0,
\end{equation}
\begin{equation}\label{E:ker5}
L_1
\begin{pmatrix}
v_1
\\
v_2
\end{pmatrix}
+
L_2
\begin{pmatrix}
v_3
\\
v_4
\end{pmatrix}
+
E
v_5
 =0.
\end{equation}
In the following three sections, we compute approximations of elements of the kernel with initial covectors of the form 
$$
\left(h_1,h_2,h_3,h_4,\eta^{-1}\right),\qquad \left(\sqrt{\eta}h_1,\sqrt{\eta}h_2,h_3,h_4,\eta^{-1}\right)\quad \text{ and } \quad \left(h_1,h_2,\eta h_3,\eta h_4,\eta^{-1}\right).
$$
All expansions as $\eta\to 0$ are assumed uniform under the condition $h_1^2+h_2^2+h_3^3+h_4^4<R$ for some arbitrary $R>0$.

\begin{remark}
The following computations make abundant use of explicit expressions of the approximations of the exponential map obtained in Section~\ref{S:conjugate_time}. Readers wishing to precisely follow the computations are referred to Propositions \ref{P:expansion}, \ref{P:expansion_r1_eta} and \ref{P:expansion_r2_eta} for a general expression of the approximation of the exponential map, and the results of Section~\ref{S:conjugate_time} and Appendix~\ref{A:Computation_invariants} for expressions in terms of  invariants.

\end{remark}

\subsection{First domain: initial covectors in $T^*_{q_0}M\setminus (S_1\cup S_2)$}
\label{A:dom1}
\subsubsection{Jacobian matrix}\label{SSS:in_cov_T_setminS1S2}

From computations of the conjugate time, we know that $\ker \Jac_{p_0} \sre_{q_0}\neq \{0\}$ at $t=t_c(p_0)$. Let us compute a first approximation of the set of solutions of the equation
$\Jac_{p_0} \sre_{q_0}(t_c(p_0)) \cdot v=0$ (thanks to our approximation of $F(\tau)=\sre(\eta\tau)$).

\begin{proposition}\label{L:kernel_r1r2>0}
The kernel of $\Jac_{p_0} \sre_{q_0}(t_c(p_0))$ is $1$-dimensional and there exists $\nu(h_1,h_2,h_3,h_4)$ such that it is generated by the vector 
$$
(-h_2,h_1,0,0,\nu)+O(\eta).
 $$
\end{proposition}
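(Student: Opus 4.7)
The plan is to compute $J := \Jac_{p_0}\sre_{q_0}(t_c(p_0))$ column by column using Proposition~\ref{P:expansion}, together with the expansion $\tau_c(p_0) = \tfrac{2\pi}{b_1} + \eta\,\tau_c^{(1)}(h) + O(\eta^2)$ coming from Proposition~\ref{P:conjugate_time_simple_case}. The key structural fact driving the analysis is that $\Jbar^{-1}(\e^{(2\pi/b_1)\Jbar} - I_{4})$ is block-diagonal with vanishing first $2\times 2$ block and invertible second block
$$
B := \frac{1}{b_2}\begin{pmatrix} \sin(2\pi b_2/b_1) & 1-\cos(2\pi b_2/b_1) \\ \cos(2\pi b_2/b_1)-1 & \sin(2\pi b_2/b_1) \end{pmatrix}
$$
(Lemma~\ref{L:detA}, using $0 < b_2 < b_1$). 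Plugging $F_i = \eta\,\xhat_i + \eta^2\,x^{(2)}_i + O(\eta^3)$ for $i\in\ll1,4\rr$ and $F_5 = \eta^2\,\zhat + O(\eta^3)$ into the columns of $J$ evaluated at $\tau_c$, the leading orders are: $J_{ij} = \eta\, B_{i-2,j-2} + O(\eta^2)$ for $i,j \in \ll 3,4 \rr$; all remaining entries of rows $1$--$4$ are $O(\eta^2)$, with explicit coefficients involving $\tau_c^{(1)}$, the derivatives $\partial_{h_j} x^{(2)}_i|_{2\pi/b_1}$, and (for column $5$, rows $1,2$) $\tfrac{2\pi h_i}{b_1}$; while $J_{5,j} = \eta^2 \partial_{h_j}\zhat|_{2\pi/b_1} + O(\eta^3)$ for $j \leq 4$ (with $\partial_{h_1}\zhat = \tfrac{2\pi h_1}{b_1}$, $\partial_{h_2}\zhat = \tfrac{2\pi h_2}{b_1}$) and $J_{5,5} = O(\eta^3)$.

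I would then solve $Jv = 0$ hierarchically. Rows $3$ and $4$ are dominated by $\eta\, B(v_3, v_4)^{T}$, so invertibility of $B$ immediately yields $(v_3, v_4) = O(\eta)\|v\|$. Substituting into row $5$ at order $\eta^2$ leaves $h_1 v_1 + h_2 v_2 = O(\eta)\|v\|$; since $(h_1, h_2) \neq (0,0)$ in the first domain, this forces $(v_1, v_2)^{(0)} = \lambda(-h_2, h_1)$ for some $\lambda \in \R$. Plugging into rows $1$ and $2$ yields a linear system in $(\lambda, v_5)$. Multiplying the first equation by $h_2$ and subtracting $h_1$ times the second cancels the $v_5$-coefficient and reduces to $\lambda$ times the quantity
$$
(h_1^2+h_2^2)\tau_c^{(1)} + h_2^2\,\partial_1 x^{(2)}_1 - h_1 h_2 \bigl(\partial_1 x^{(2)}_2 + \partial_2 x^{(2)}_1\bigr) + h_1^2\,\partial_2 x^{(2)}_2,
$$
which vanishes identically by the defining identity \eqref{E:def_tauc1_r1neq0} for $\tau_c^{(1)}$. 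The two equations are therefore compatible, and the remaining one determines $v_5 = \lambda\,\nu$ uniquely with $\nu = \nu(h_1, h_2, h_3, h_4)$. Setting $\lambda = 1$ gives the claimed kernel generator $(-h_2, h_1, 0, 0, \nu) + O(\eta)$.

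To complete the proof, I would check that $J$ has rank exactly $4$, giving $\dim\ker J = 1$. Assuming $h_2 \neq 0$ (the case $h_2 = 0$, $h_1 \neq 0$ being symmetric), the $4\times 4$ minor of $J$ formed by rows $\{2,3,4,5\}$ and columns $\{2,3,4,5\}$ has leading order $\tfrac{4\pi^2 h_2^2}{b_1^2}\det(B)\,\eta^6$ (all other permutation contributions being $O(\eta^7)$), which is non-zero. By lower semi-continuity of rank, $\mathrm{rank}(J) \geq 4$ for small $\eta$, and the identity $\det J = 0$ at $\tau = \tau_c$ forces equality. The main obstacle is precisely the bookkeeping of these different $\eta$-scales, since the rows and columns of $J$ have three distinct orders ($\eta$, $\eta^2$, $\eta^3$); the algebraic cancellation in the second paragraph is automatic from the construction of $\tau_c^{(1)}$, but correctly pairing the orders in the $4\times 4$ minor to extract the leading non-zero term requires a careful case analysis.
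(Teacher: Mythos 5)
Your proposal is correct and follows essentially the same route as the paper: the same block decomposition of the Jacobian with the invertible $b_2$-block forcing $(v_3,v_4)=O(\eta)$, row $5$ forcing $(v_1,v_2)^{(0)}\propto(-h_2,h_1)$, and rows $1$--$2$ made compatible precisely by the defining relation \eqref{E:def_tauc1_r1neq0} for $\tau_c^{(1)}$, which pins down $v_5=\lambda\nu$. The only (harmless) divergence is the final dimension count, where you exhibit an explicit non-vanishing $4\times4$ minor of order $\eta^6$ (its sign is actually negative, which does not matter) instead of the paper's appeal to lower semi-continuity of the rank.
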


\begin{proof}
According to the computations carried  in Section \ref{SS:asymptotics_away_S}, we have 
$$
A_i=O(\eta^2), \quad A_4\neq O(\eta^2), \qquad i\in \ll1,3\rr ,
$$
$$
C_i=O(\eta^2), \quad L_i=O(\eta^2), \quad E=O(\eta^3),\qquad i\in \ll1,2\rr.
$$
Regarding $C_1,C_2,E$, this is in particular a consequence of $\partial_5 F=-\eta^2\partial_\eta F+\eta \tau \partial_\tau  F$. Then \eqref{E:ker34} implies $v_3=O(\eta)$ and $v_4=O(\eta)$ since $A_4^{(1)}$ is invertible, and from \eqref{E:ker5} we obtain
$$
L_1
\begin{pmatrix}
v_1
\\
v_2
\end{pmatrix}=O(\eta^3).
$$
That is $ h_1 v_1+h_2 v_2=O(\eta)$, hence there exists $\lambda\in \R$ such that $v_1=-\lambda h_2+O(\eta)$, $v_2=\lambda h_1+O(\eta)$.
Similarly, \eqref{E:ker12} yields
$$
A_1^{(2)}
\begin{pmatrix}
v_1^{(0)}
\\
v_2^{(0)}
\end{pmatrix}
+C_1^{(2)}v_5^{(0)} =0.
$$
Since $\tau_c^{(1)}$ corresponds to the fact that 
$
A_1^{(2)}
\begin{pmatrix}
-h_2 
\\
h_1 
\end{pmatrix}$
 is colinear to $C_1^{(2)}=\dfrac{2\pi}{b_1}\begin{pmatrix}
h_1
\\
h_2 
\end{pmatrix}
$, with $(v_1^{(0)},v_2^{(0)})= \lambda (-h_2,h_1)$,  $v_5^{(0)}$ is uniquely defined, linearly dependent on $\lambda$.
Similarly,
we compute 
$$
\begin{pmatrix}
v_3^{(1)}
\\
v_4^{(1)}
\end{pmatrix}=
-\left(A_4^{(1)}\right)^{-1}\left(v_5^{(0)} C_2^{(2)}+A_3^{(2)}
\begin{pmatrix}
v_1^{(0)}
\\
v_2^{(0)}
\end{pmatrix}\right).
$$
Hence the statement. The kernel of $\Jac_{p_0} \sre_{q_0}(t_c(p_0))$  is in particular $1$-dimensional as a consequence of the lower semi-continuity of the rank.
\end{proof}
Regarding the image space, we have can give a description as a consequence of Lemma~\ref{L:kernel_r1r2>0}.
\begin{lemma}\label{L:image_Jac_r1r2>0}
Let $p_0\in T^*_{q_0}M\setminus (S_1\cup S_2)$. The image of the Jacobian at $p_0$ of the exponential at the conjugate time admits the representation
$$
\im \Jac_{p_0} \sre_{q_0}(t_c(p_0))
=
\Span
\left\{
h_1 \partial_1F+h_2 \partial_2F,\partial_3F,\partial_4F,\partial_5F
\right\}.
$$
\end{lemma}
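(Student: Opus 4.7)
The plan is to deduce this representation directly from the structural description of the kernel provided by Proposition~\ref{L:kernel_r1r2>0}. Since that result asserts that $\ker \Jac_{p_0}\sre_{q_0}(t_c(p_0))$ is one-dimensional, the rank--nullity theorem forces $\im \Jac_{p_0}\sre_{q_0}(t_c(p_0))$ to be four-dimensional. The four proposed vectors $h_1\partial_1 F + h_2\partial_2 F$, $\partial_3 F$, $\partial_4 F$, $\partial_5 F$ are manifestly in the image (each is a linear combination of the columns of the Jacobian), so the content of the lemma reduces to checking their linear independence.

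To that end, I would suppose a nontrivial relation
\begin{equation*}
\alpha\bigl(h_1 \partial_1 F+h_2 \partial_2 F\bigr)+\beta\,\partial_3 F+\gamma\,\partial_4 F+\delta\,\partial_5 F=0,
\end{equation*}
which, rewritten as $(\alpha h_1)\partial_1 F+(\alpha h_2)\partial_2 F+\beta\,\partial_3 F+\gamma\,\partial_4 F+\delta\,\partial_5 F=0$, exhibits $(\alpha h_1,\alpha h_2,\beta,\gamma,\delta)$ as an element of $\ker \Jac_{p_0}\sre_{q_0}(t_c(p_0))$. By Proposition~\ref{L:kernel_r1r2>0}, such a kernel element must be a scalar multiple $\lambda K$ of the generator $K=(-h_2,h_1,0,0,\nu)+O(\eta)$. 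Forming the combination $h_1\cdot(\alpha h_2)-h_2\cdot(\alpha h_1)=0$ eliminates $\alpha$ and yields $\lambda\bigl(h_1^2+h_2^2+O(\eta)\bigr)=0$. Since $p_0\notin S_1$ guarantees $h_1^2+h_2^2>0$, for $\eta$ small enough this forces $\lambda=0$, whence $\alpha h_1=\alpha h_2=0$ gives $\alpha=0$, and then $\beta=\gamma=\delta=0$ follow from the remaining three coordinates.

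In short, the proof is a dimension count combined with the observation that the $2\times 2$ matrix $\bigl(\begin{smallmatrix} h_1 & -h_2\\ h_2 & h_1\end{smallmatrix}\bigr)$, whose determinant $h_1^2+h_2^2$ is precisely the quantity guaranteed positive by $p_0\notin S_1$, provides a change of basis replacing the pair $\{\partial_1 F,\partial_2 F\}$ by the complementary direction $h_1\partial_1 F+h_2\partial_2 F$ transverse to the kernel generator. There is no substantial obstacle; the only point requiring care is propagating the $O(\eta)$ correction in the expression for $K$ through the linear algebra, which is harmless as long as $h_1^2+h_2^2$ dominates it, i.e.\ for $\eta$ small uniformly on compacts away from $S_1$.
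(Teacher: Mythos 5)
Your proof is correct and follows essentially the same route as the paper: both arguments rest on Proposition~\ref{L:kernel_r1r2>0} (one-dimensional kernel with generator $(-h_2,h_1,0,0,\nu)+O(\eta)$) together with $h_1^2+h_2^2\neq 0$ away from $S_1$, the paper completing the kernel generator to a basis of $T_{p_0}(T_{q_0}^*M)$ while you equivalently verify independence of the four image vectors and count dimensions. The only point to keep in mind, as you note, is that the $O(\eta)$ term requires $\eta$ small (uniformly on compacts away from $S_1$), which matches the asymptotic setting of the paper.
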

\begin{proof}
Let $v_1$ be such that $\ker \Jac_{p_0} \sre_{q_0}(t_c(p_0))=\Span(v_1)$. For any $4$ vectors $v_2,v_3,v_4,v_5$ such that $\mathrm{rk}(v_1,v_2,v_3,v_4,v_5)=5$,
we have the property that 
$$
\im \Jac_{p_0} \sre_{q_0}(t_c(p_0))=\Span\left( \sum_{k=1}^5 {\left(v_i\right)}_k \partial_k F    \right)_{i\in \ll2, 5\rr}.
$$

One possible choice is then $v_2=(h_1,h_2,0,0,0)$, $v_3=(0,0,1,0,0)$,    $v_4=(0, 0, 0, 1, 0)$, and $v_5=(0,0,0,0,1)$.
\end{proof}

\subsubsection{Classification}

We first introduce a computational lemma approximate the $\phi$ functions  from Proposition~\ref{P:second_domain_func}.
\begin{lemma}\label{L:reduced_cond_Case1}
For all $i\in\ll 1,5\rr$,  let $U_i : \R^4\to \R$ and
let 
$$\Psi(u_1,u_2,u_5)=-
u_5  \left({}^t  e_{\theta_1}A_1^{(2)}  e_{r_1}\right)+\frac{2\pi}{b_1}(h_1^2+h_2^2)\left(h_1 u_2-h_2 u_1\right).
$$
Then  we have for $p_0=(h,\eta^{-1})$, uniformly with respect to $h\in B_R$ as $\eta \to 0$,
$$
\det\left( U(h),h_1 \partial_1F+h_2 \partial_2 F,\partial_3F,\partial_4F,\partial_5F\right) = 
\eta^6 \frac{8 \pi}{b_1b_2^2}\sin^2\left( \frac{\pi b_2}{b_1}\right)\Psi(U_1(h),U_2(h),U_5(h))+o(\eta^6).
$$

\end{lemma}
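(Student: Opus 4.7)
The plan is to read off, column by column, the leading $\eta$-order of each vector in the $5\times 5$ matrix and then perform a block reduction analogous to Lemma~\ref{P:mini_det_1}. Using Proposition~\ref{P:expansion} and the identity $\partial_5 F = -\eta^2 \partial_\eta F + \eta\tau\partial_\tau F$, I would establish the following orders at $\tau = 2\pi/b_1$: the columns $\partial_3 F$ and $\partial_4 F$ are $O(\eta)$ with leading term $\eta(\xhat(\tau, e_j),0)$ supported only in rows $3,4$; the column $\partial_5 F$ is $O(\eta^2)$ with leading term $\eta^2(\tfrac{2\pi}{b_1}\hhat - \xhat, 0)$, whose row-$5$ entry vanishes; and the combination $h_1\partial_1 F + h_2 \partial_2 F$ is $O(\eta^2)$, because its $O(\eta)$ piece equals $\eta(\xhat(\tau,(h_1,h_2,0,0)),0)$, which vanishes at $\tau = 2\pi/b_1$ thanks to $\e^{(2\pi/b_1)\Jbar_1}=I_2$.

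Factoring out $\eta, \eta, \eta^2, \eta^2$ from the four derivative columns yields an overall $\eta^6$ prefactor. At leading order, the columns $\partial_3 F/\eta$ and $\partial_4 F/\eta$ carry entries only in rows $3$ and $4$, forming a $2\times 2$ block whose determinant is exactly $\det A = \tfrac{4}{b_2^2}\sin^2(\pi b_2/b_1)$ (the $n=2$ instance of Lemma~\ref{L:detA}). The Laplace expansion therefore splits cleanly, and the full determinant reduces to $\eta^6 \det A$ times the $3\times 3$ minor taken from columns $U$, $(h_1\partial_1 F+h_2\partial_2 F)/\eta^2$, $\partial_5 F/\eta^2$ on rows $1, 2, 5$.

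On these rows I would identify the entries of the two remaining columns using the quadratic structure $F^{(2)}_i = \sum_{j\leq k} \kappa^{jk}_i h_j h_k$ and the explicit formulas $\partial_1 \zhat(2\pi/b_1,h) = 2\pi h_1/b_1$, $\partial_2 \zhat(2\pi/b_1,h) = 2\pi h_2/b_1$. This gives rows $1,2$ of $(h_1\partial_1 F + h_2\partial_2 F)/\eta^2$ equal to $A_1^{(2)} e_{r_1}$ and row $5$ equal to $\tfrac{2\pi}{b_1}(h_1^2+h_2^2)$; and rows $1,2$ of $\partial_5 F/\eta^2$ equal to $\tfrac{2\pi}{b_1} e_{r_1}$, with row $5$ equal to $0$. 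Expanding the $3\times 3$ determinant along its third column (the one with a zero in row $5$) produces exactly, up to sign, $\tfrac{2\pi}{b_1}\Psi(U_1, U_2, U_5)$; multiplying by $\det A$ yields the stated leading coefficient $\tfrac{8\pi}{b_1 b_2^2}\sin^2(\pi b_2/b_1)$.

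The main technical obstacle will be careful bookkeeping to ensure no $o(\eta^6)$ correction accidentally enters the leading coefficient: the contributions from $U_3, U_4$, together with the subleading $O(\eta^2)$ parts of $\partial_3 F$ and $\partial_4 F$ in rows $1, 2, 5$, must all be shown to produce only higher-order corrections. By multilinearity of the determinant, any such term pairs an $O(\eta)$ correction in one column with the leading entries of the others, and the combinatorics of rows $\{1,2,3,4,5\}$ forces at least one of the remaining columns to contribute at a higher-than-leading order, bounding the total contribution by $O(\eta^7)$. This last point warrants an explicit enumeration of the possible expansion patterns, but no new analytic input beyond Proposition~\ref{P:expansion}.
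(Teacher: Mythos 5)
Your proposal is correct and follows essentially the same route as the paper's proof: identify the leading $\eta$-orders of the five columns at the conjugate time, reduce by block structure to $\det(\eta A_4^{(1)})$ (the $n=2$ case of $\det A=\tfrac{4}{b_2^2}\sin^2(\pi b_2/b_1)$) times the $3\times 3$ minor on rows $1,2,5$, and recognize that minor as $\tfrac{2\pi}{b_1}\Psi(U_1,U_2,U_5)$, with the remainder controlled by the multilinearity/pattern-counting argument you sketch (all competing patterns are indeed $O(\eta^7)$). The only loose end is the phrase ``up to sign'': both the block splitting (rows $\{3,4\}$ against columns $\{\partial_3F,\partial_4F\}$) and the reordering of the $U$ column involve even permutations, so the sign is $+$, exactly as in the stated identity.
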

\begin{proof}
We compute the dominant term of 
$\det(h_1 \partial_1F+h_2 \partial_2F,\partial_3F,\partial_4F,\partial_5F,U(h))$.
Using notations from Section~\ref{SSS:in_cov_T_setminS1S2} and a similar reasoning to what can be found in Section~\ref{S:order_2_approx}, we obtain
$$
\det(h_1 \partial_1F+h_2 \partial_2F,\partial_3F,\partial_4F,\partial_5F,U(h))
=
\left|
\begin{array}{c|c|c|c}
\eta^2A_1^{(2)}e_{r_1} &	\:\:\begin{matrix} 0 &0\\ 0 &0\end{matrix}& \eta^2\frac{2\pi}{b_1}e_{r_1}&	\:\:\begin{matrix} U_1(h) \\ U_2(h) \end{matrix}
\\
\hline
	\:\:\begin{matrix} 0 \\ 0 \end{matrix} & \eta A_4^{(1)}&	\:\:\begin{matrix} 0 \\ 0 \end{matrix}  &	\:\:\begin{matrix} 0 \\ 0 \end{matrix}
\\
\hline
\eta^2\frac{2\pi}{b_1}(h_1^2+h_2^2)&0\quad 0&0 &U_5(h)
\end{array}
\right|
+
o(\eta^6).
$$
We have the result once observed that $\det A_4^{(1)}=\frac{4}{b_2^2}\sin^2\left( \frac{\pi b_2}{b_1}\right)$ and
$$
\begin{aligned}
\Psi(U_1,U_2,U_5)
&=
\det\left(
\begin{array}{c|c|c}
A_1^{(2)}e_{r_1} &e_{r_1} &	\:\:\begin{matrix} U_1 \\ U_2 \end{matrix}
\\
\hline
\frac{2\pi}{b_1}(h_1^2+h_2^2)&0&U_5
\end{array}
\right)
\\&=-U_5  \left({}^t  e_{\theta_1}A_1^{(2)}  e_{r_1}\right) +\frac{2\pi}{b_1}(h_1^2+h_2^2)\left(h_1 U_2-h_2 U_1\right).
\end{aligned}
$$
\end{proof}

Let $p_0\in T_{q_0}^*M\setminus (S_1\cup S_2)$ and $v$ be as in the statement of Proposition~\ref{L:kernel_r1r2>0} so that $\ker \Jac_{p_0} \sre_{q_0}(t_c(p_0))=\Span(v)$. As explained in Remark~\ref{R:method_derivatives}, we choose the first
coordinate $\x_1:M\to \R$ such that  $\partial_{\x_1} = \sum_{i=1}^{5}{v}_i\partial_i$. Since $v_3,v_4=O(\eta)$, we have that $\partial_{x_1}^k F=O(\eta^2)$ for all integer $k\geq 2$.

If we denote $V':\R^4\to \R^5$ such that $\partial_{x_1}^2 F=\eta^2 V'(h)+o(\eta^2)$ then let $\Psi_2(h)=\Psi(V'_1,V'_2,V'_5)$.
Similarly, define $V'':\R^4\to \R^5$ such that $\partial_{x_1}^3 F=\eta^2 V''(h)+o(\eta^2)$ and $V''':\R^4\to \R^5$ such that $\partial_{x_1}^4 F=\eta^2 V'''(h)+o(\eta^2)$; and define
$\Psi_3(h)=\Psi(V''_1,V''_2,V''_5)$, $\Psi_4(h)=\Psi(V'''_1,V'''_2,V'''_5)$.

Since the length of expressions is still manageable in this case, we can give the explicit form of $\Psi_2$, $\Psi_3$ and $\Psi_4$ (up to multiplication by $2\pi(h_1^2 + h_2^2) /b_1$):
$$
\begin{aligned}
\Psi_2(h_1,h_2,h_3,h_4)=&-(h_3 (\kappa^{1,3}_2+2 \kappa^{2,3}_1)+h_4 (\kappa^{1,4}_2+2 \kappa^{2,4}_1)) h_1^2 
\\&+3 (h_3 (\kappa^{1,3}_1-\kappa^{2,3}_2)+h_4 (\kappa^{1,4}_1-\kappa^{2,4}_2))h_1 h_2
\\&+ (h_3 (2  \kappa^{1,3}_2+\kappa^{2,3}_1)+h_4 (2 \kappa^{1,4}_2+\kappa^{2,4}_1))h_2^2,
\\
\Psi_3(h_1,h_2,h_3,h_4)=
&
+(h_3 (\kappa^{1,3}_1- \kappa^{2,3}_2)+h_4 (\kappa^{1,4}_1-\kappa^{2,4}_2)) h_1^2 
\\&
+2 (h_3 (\kappa^{1,3}_2+\kappa^{2,3}_1)+h_4 (\kappa^{1,4}_2-\kappa^{2,4}_1))h_1 h_2
\\&
-(h_3 (\kappa^{1,3}_1- \kappa^{2,3}_2)+h_4 (\kappa^{1,4}_1-\kappa^{2,4}_2)) h_2^2 ,
\\
\Psi_4(h_1,h_2,h_3,h_4)=
&
- ( h_3 (3 \kappa^{1,3}_2+4 \kappa^{2,3}_1)+ h_4 (3 \kappa^{1,4}_2+4 \kappa^{2,4}_1))
h_1^2
\\
&
+
7( h_3 (\kappa^{1,3}_1-\kappa^{2,3}_2)+ h_4 (\kappa^{1,4}_1-\kappa^{2,4}_2))
h_1 h_2 
\\
&
+
 ( h_3 (4 \kappa^{1,3}_2+3 \kappa^{2,3}_1)+ h_4 (4 \kappa^{1,4}_2+3 \kappa^{2,4}_1))
h_2^2.
\end{aligned}
$$

As an application of Lemma~\ref{L:reduced_cond_Case1}, and the analysis of the Jacobian matrix of $\sre_{q_0}(t_c(p_0))$ of Section~\ref{SSS:in_cov_T_setminS1S2}, we immediately obtain that  for $\eta$ small enough
$$
\Psi_2(h)\neq 0 \Rightarrow \phi_{11}(p_0)\neq 0,
\;
\Psi_3(h)\neq 0 \Rightarrow \phi_{111}(p_0)\neq 0,
\;
\Psi_4(h)\neq 0 \Rightarrow \phi_{1111}(p_0)\neq 0.
$$

\subsection{Second domain: initial covectors near $S_1$}
\label{A:dom2}
\subsubsection{Jacobian matrix}\label{SS:covec_near_S1}

The idea is the same as before, now we consider initial covectors of the form 
$$
p_0=\left(\sqrt{\eta} h_1,\sqrt{\eta} h_2, h_3,h_4,\eta^{-1}\right).
$$

\begin{proposition}\label{L:kernel_r1=0}
If there exist a time near $2\pi\eta/b_1$ that is conjugate for $p_0$ then the kernel of $\Jac_{p_0} \sre_{q_0}(t_c(p_0))$ is either $1$ or $2$-dimensional. If 
 $(h_1,h_2)\neq (0,0)$  then
there exist two vectors
$$
v_{\theta_1}=(-h_2,h_1,0,0,0)+O(\eta)\quad \text{ and } \quad v_{r_1}=\left(h_1,h_2,0,0,  -\frac{ (h_1^2+h_2^2)}{  K}\right)+O(\eta)
$$
such that 
the kernel of $\Jac_{p_0} \sre_{q_0}(t_c(p_0))$ is 
either $\Span\left(\lambda_{\theta_1}v_{\theta_1}+\lambda_{r_1}v_{r_1}\right)$ or $\Span\left(v_{\theta_1},v_{r_1}\right)$.
\end{proposition}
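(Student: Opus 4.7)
The plan is to mimic the structure of the proof of Proposition~\ref{L:kernel_r1r2>0}, but with the block-order estimates adjusted to the blow-up scaling $(h_1,h_2)\mapsto(\sqrt{\eta}h_1,\sqrt{\eta}h_2)$. First I would compute the leading orders of the blocks of $\Jac_{p_0}\sre_{q_0}(t_c(p_0))$ using Proposition~\ref{P:expansion_r1_eta}, remembering that $\partial_{h_i}F=\eta^{-1/2}\partial_{\h_i}G$ for $i\in\ll1,2\rr$ and $\partial_{h_i}F=\partial_{\h_i}G$ for $i\in\ll3,4\rr$. The key point is that $A_4=\eta A_4^{(1)}+O(\eta^{3/2})$ with $A_4^{(1)}$ invertible (as in Appendix~\ref{A:On_the_first_domain}, since $b_2\neq b_1$), while $A_1,A_2,A_3,C_1,C_2$ are of order $\eta^{3/2}$ or higher and $L_1,L_2,E$ of order $\eta^2$ or higher. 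Invertibility of $A_4^{(1)}$ lets me use~\eqref{E:ker34} to eliminate $(v_3,v_4)=O(\sqrt{\eta})\cdot(v_1,v_2,v_5)$, reducing the kernel equation to a $3\times 3$ system on $(v_1,v_2,v_5)$ whose leading terms I will compute in powers of $\sqrt{\eta}$.

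Second, I would identify the two candidate kernel vectors. The vector $v_{\theta_1}=(-h_2,h_1,0,0,0)+O(\eta)$ comes directly from the $\mathrm{SO}(2)$-symmetry of the first nilpotent block: it lies in the kernel of $A_1^{(3/2)}$ and produces no leading contribution to the reduced third equation. The vector $v_{r_1}$ is more delicate: its $v_5$-component should emerge from balancing the radial contribution $(h_1,h_2,0,0,0)$ against the $h_0$-column in the third (reduced) equation. Specifically, the coefficient of $v_5$ in that equation should be, up to normalization, exactly the quantity $K= \sum_{i=2}^{n}(h_{2i-1}^2+h_{2i}^2)(1-\tfrac{b_i}{b_1}\pi\cot\tfrac{b_i\pi}{b_1})$ identified in Theorem~\ref{T:big_expansion_theorem} (compare Lemma~\ref{L:deltat1/2} where the same $K$ appears as the coefficient of $\delta t^2$). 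The coefficient of $h_1 v_1+h_2 v_2$ in the same equation should be $\tfrac{2\pi}{b_1}(h_1^2+h_2^2)$, and demanding that the radial direction $(h_1,h_2)$ lie in the kernel forces $v_5=-(h_1^2+h_2^2)/K+O(\eta)$.

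Third, I would show that $v_{\theta_1}$ and $v_{r_1}$ are effectively \emph{the only} possible directions in the kernel modulo $O(\eta)$ corrections. Indeed, any kernel vector whose $(v_1,v_2,v_5)$-part is not a combination of $(-h_2,h_1,0)$ and $(h_1,h_2,-(h_1^2+h_2^2)/K)$ would violate either the leading order of~\eqref{E:ker12} (which enforces $(v_1,v_2)\in\mathrm{Span}\{(-h_2,h_1),(h_1,h_2)\}$ modulo lower order) or the reduced form of~\eqref{E:ker5}. The actual rank of the reduced $3\times 3$ leading-order matrix is then dictated by the discriminant $\Delta(\h)$ of the quadratic polynomial $P(\delta\!\tau)$ from Proposition~\ref{P:t_c_bu_eta1/2}: when $\Delta(\h)>0$ the two roots are distinct, the reduced matrix has rank $2$, and only one linear combination of $v_{\theta_1},v_{r_1}$ survives; when $\Delta(\h)=0$ the double root causes an extra rank drop and both vectors lie in the kernel.

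The main obstacle will be the careful bookkeeping of half-integer powers of $\eta$ in the reduced $3\times 3$ system: because the blow-up mixes orders $\eta^{3/2},\eta^2,\eta^{5/2},\ldots$ between the rows, one must be careful to isolate the genuine leading contribution of each of the three reduced equations before reading off the kernel, and to identify the coefficient $K$ there. The upper bound of $2$ on the kernel dimension follows automatically from lower semi-continuity of rank, since at $\eta=0$ and $(h_1,h_2)=(0,0)$ the nilpotent exponential already has a $2$-dimensional kernel at $\tau=2\pi/b_1$ (by the double zero in Lemma~\ref{L:lim_r1_to_0}), and perturbation cannot increase the kernel dimension beyond this value.
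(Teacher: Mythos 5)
Your proposal follows the paper's own route essentially step for step: decompose the Jacobian into the blocks of the appendix, use the invertibility of $A_4^{(1)}$ in \eqref{E:ker34} to eliminate $(v_3,v_4)$, reduce to a $3\times 3$ system on $(v_1,v_2,v_5)$, identify the coefficient of $v_5$ in the last reduced equation as $-\tfrac{2\pi}{b_1}K$ (with $L_1^{(3)}=\tfrac{2\pi}{b_1}(h_1,h_2)$, exactly as in Lemma~\ref{L:deltat1/2}), deduce that every leading-order kernel direction lies in $\Span\{(-h_2,h_1,0,0,0),(h_1,h_2,0,0,-(h_1^2+h_2^2)/K)\}$, and get the bound $\dim\ker\le 2$ from lower semicontinuity of the rank. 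The bookkeeping you postpone is precisely what the paper carries out: written in the blown-up variables $\h$ (columns $\partial G/\partial\h_i$), the sharp orders are $A_1,A_2,A_3,C_1=O(\eta^{5/2})$, $L_1,E=O(\eta^{3})$, $C_2,L_2=O(\eta^{2})$; the cancellation of the $\eta^{3/2}$ and $\eta^{2}$ contributions to $A_1,C_1$ (because $G^{(1)},G^{(2)}$ do not depend on $(\h_1,\h_2)$ and the first $2\times 2$ block of $D_{\h}\xhat$ vanishes at $\tau=2\pi/b_1$) is what makes the two reduced rows appear at orders $\eta^{5/2}$ and $\eta^{3}$ with the stated structure, so your plan goes through as in the paper.

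The one step that is genuinely wrong as written is the final rank dichotomy via the discriminant: the implication ``$\Delta(\h)=0$ $\Rightarrow$ the reduced matrix drops to rank $1$, hence a $2$-dimensional kernel'' is unjustified, and the paper deliberately avoids it — Remark~\ref{R:2dim_ker} proves only the reverse implication (a $2$-dimensional kernel forces a double root) and explicitly warns that the converse may fail; a double root of the determinant of an affine pencil is perfectly compatible with a $1$-dimensional kernel. Fortunately the proposition does not ask you to decide which case occurs: once the kernel is nontrivial (this is \eqref{E:detK'=0}, guaranteed by the existence of a conjugate time near $2\pi\eta/b_1$ via Propositions~\ref{P:mini_det_3} and \ref{P:t_c_bu_eta1/2}), contained to leading order in $\Span(v_{\theta_1},v_{r_1})$, and of dimension at most $2$, the stated alternative follows; the paper's case split is simply whether $A_1^{(5/2)}e_{\theta_1}$ and $A_1^{(5/2)}e_{r_1}+\tfrac{h_1^2+h_2^2}{K}C_1^{(5/2)}$ both vanish. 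A small additional slip: your parenthetical that the leading order of \eqref{E:ker12} forces $(v_1,v_2)\in\Span\{(-h_2,h_1),(h_1,h_2)\}$ is vacuous (that span is all of $\R^2$ when $(h_1,h_2)\neq(0,0)$); the real constraint comes from coupling \eqref{E:ker12} at order $\eta^{5/2}$ with the value of $v_5$ dictated by the reduced form of \eqref{E:ker5}.
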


\begin{proof}
From the computations in Section \ref{SS:asymptotics_near_S_1}, we have 
$$
A_i=O(\eta^{5/2}), \quad A_4\neq O(\eta^2), \qquad i\in \ll1,3\rr ,
$$
$$
C_1=O(\eta^{5/2}), \quad L_1=O(\eta^{3}),\quad C_2=O(\eta^{2}), \quad L_2=O(\eta^{2}), \quad E=O(\eta^3).
$$
As previously, \eqref{E:ker34} implies $v_3=O(\eta)$ and $v_4=O(\eta)$ and similarly to Section~\ref{SSS:in_cov_T_setminS1S2}, $\left(v_3^{(1)},v_4^{(1)}\right)$ can be computed as
$$
\begin{pmatrix}
v_3^{(1)}
\\
v_4^{(1)}
\end{pmatrix}
=
-v_5^{(0)}\left(A_4^{(1)}\right)^{-1}C_2^{(2)}.
$$

Hence the smallest non-vanishing order of the system \eqref{E:ker12}-\eqref{E:ker34}-\eqref{E:ker5} reduces  to the $3\times 3$ system 
\begin{equation}\label{E:reduced_r1->0}
A_1^{(5/2)}
\begin{pmatrix}
v_1^{(0)}
\\
v_2^{(0)}
\end{pmatrix}
+C_1^{(5/2)}v_5^{(0)} =0,
\end{equation}
\begin{equation}\label{E:reduced_r1->0bis}
L_1^{(3)}
\begin{pmatrix}
v_1^{(0)}
\\
v_2^{(0)}
\end{pmatrix}
+
\left(
E^{(3)}-L_2^{(2)}\left(A_4^{(1)}\right)^{-1}C_2^{(2)}
\right)
v_5^{(0)}
 =0.
\end{equation}
Now observe that $E^{(3)}-L_2^{(2)}\left(A_4^{(1)}\right)^{-1}C_2^{(2)}=-\frac{2 \pi}{b_1}K$, where $K$ is the constant introduced in Lemma~\ref{L:deltat1/2}. Furthermore from Propositions~\ref{P:mini_det_3} and \ref{P:t_c_bu_eta1/2}, we know that the first conjugate time is a perturbation of $2\pi\eta /b_1$ if
\begin{equation}\label{E:detK'=0}
\det 
\left(
\begin{array}{c|c}
A_1^{(5/2)}&C_1^{(5/2)}
\\
\hline
L_1^{(3)}&\frac{2 \pi}{b_1}K
\end{array}
\right)=0.
\end{equation}
When that is the case,  the set of solutions of \eqref{E:reduced_r1->0}-\eqref{E:reduced_r1->0bis} is at least $1$-dimensional, otherwise it is reduced to $\{0\}$.

Assume \eqref{E:detK'=0} holds and  that  $(h_1,h_2)\neq (0,0)$.  Let us denote $e_{r_1}=(h_1,h_2)$ and $e_{\theta_1}=(-h_2,h_1)$. There exist unique $\lambda_{r_1},\lambda_{\theta_1}\in \R$ such that $\left(v_1^{(0)},v_2^{(0)}\right)=\lambda_{r_1} e_{r_1}+\lambda_{\theta_1} e_{\theta_1}$. 
Since $\left(\partial_{h_1}F^{(3)}_5,\partial_{h_2}F^{(3)}_5\right)\in \Span (e_{r_1})$, we have from  \eqref{E:reduced_r1->0bis} that
 $$
v_5^{(0)}=-\lambda_{r_1} \frac{b_1  L_1^{(3)}e_{r_1}}{2 \pi K},
 $$
and from \eqref{E:reduced_r1->0} we get
$$
\lambda_{r_1} \left(A_1^{(5/2)} e_{r_1}- \frac{b_1 L_1^{(3)}e_{r_1}}{2 \pi K}C_1\right)+\lambda_{\theta_1} A_1^{(5/2)}e_{\theta_1}=0.
$$
Recall that $L_1^{(3)}=\frac{2\pi}{b_1}\begin{pmatrix}
h_1&h_2
\end{pmatrix}$, thus $\frac{b_1 L_1^{(3)}e_{r_1}}{2 \pi K}=\frac{h_1^2+h_2^2}{K}$.
Elements of the kernel must be linear combination of the vectors 
$$
v_{\theta_1}=(-h_2,h_1,0,0,0)+O(\eta)\quad \text{ and } \quad v_{r_1}=(h_1,h_2,0,0, -(h_1^2+h_2^2)/ K)+O(\eta).
$$
Assuming \eqref{E:detK'=0} holds, there are two cases: 
\begin{enumerate}
\item Either $A_1^{(5/2)} e_{r_1}+ \frac{h_1^2+h_2^2}{ K}C_1\neq 0$ or $A_1^{(5/2)}e_{\theta_1}\neq 0$, and  the kernel is a $1$-dimensional space generated by a linear combination of $v_{\theta_1}$ and $v_{r_1}$.

\item Both $A_1^{(5/2)} e_{r_1}+  \frac{h_1^2+h_2^2}{ K}C_1= 0$ and $A_1^{(5/2)}e_{\theta_1} = 0$, and the kernel is the  $2$-dimensional space $\Span(v_{\theta_1},v_{r_1})$.
\end{enumerate}
If $h_1=h_2=0$, assuming \eqref{E:detK'=0} holds implies that $v_5^{(0)}=0$ and the kernel is of the dimension of $\ker A_1^{(5/2)}$.
\end{proof}

\begin{remark}\label{R:2dim_ker}
Notice that
a $2$-dimensional kernel implies that the conjugate time is a zero of order 2, that is, $\Delta=0$. (The converse may not be true however.) Indeed, if $(h_1,h_2)\neq (0,0)$, $A_1^{(5/2)}e_{\theta_1}= 0$ implies  we must have for some $a,b\in \R$ 
$$
A_1^{(5/2)}=
\begin{pmatrix}
a h_1& a h_2
\\
b h_1 & b h_2
\end{pmatrix}.
$$
Then $  A_1^{(5/2)} e_{r_1}=  -\frac{h_1^2+h_2^2}{ K}C_1$ implies $a= -h_1 \frac{2\pi }{b_1 K}$, $b= -h_2 \frac{2\pi }{b_1 K}$. Under these conditions, one can check that the zero is of order 2.

If $(h_1,h_2)=(0,0)$ however, having a 2-dimensional kernel corresponds to $A_1^{(5/2)}=0$. However, in that case, using notations from Theorem~\ref{T:big_expansion_theorem}, this implies that $\gamma_{12}=\gamma_{21}=\gamma_{11}-\gamma_{22}=0$. From Proposition~\ref{P:def_S_2}, this is exactly stating that $q_0\in \mathfrak{S}_2$, hence the kernel of $\Jac_{p_0} \sre_{q_0}(t_c(p_0))$ for an initial covector $p_0$ in $S_1$ is of dimension at most $1$ at points of $M\setminus \mathfrak{S}_2$.

\end{remark}

Finally, let us give a useful description of the image set of the Jacobian matrix of $\sre_{q_0}(t_c(p_0))$ in the case of 1D kernel with initial covector such that $(h_1,h_2)\neq (0,0)$.

Let $\lambda_{r_1},\lambda_{\theta_1}$ be such that $\Span(\lambda_{r_1} v_{r_1}+\lambda_{\theta_1}v_{\theta_1})=\ker \Jac_{p_0} \sre_{q_0}(t_c(p_0))$, and let $V,W$ be two vectors in the image set of $\Jac_{p_0} \sre_{q_0}(t_c(p_0))$ such that
$$
W=\partial_5\bar{F}-\eta w_3\partial_3\bar{F}-\eta w_4\partial_4\bar{F},\text{ with }
\begin{pmatrix}
w_3
\\
w_4
\end{pmatrix}=
-\left(A_4^{(1)}\right)^{-1}C_2^{(2)}
$$
and
$$
V=-\lambda_{\theta_1} \left(h_1 \partial_1 \bar{F}+h_2 \partial_2 \bar{F}+\frac{(h_1^2+h_2^2)}{K} W\right)+\lambda_r \left(-h_2 \partial_1 \bar{F}+h_1 \partial_2 \bar{F} \right).
$$
They have been chosen to simplify low order terms in their expansions as $\eta\to 0$. Indeed 
$$
W_1=\eta^{5/2}2\pi/b_1 h_1+o(\eta^{5/2}),\quad W_2=\eta^{5/2}2\pi/b_1 h_2+o(\eta^{5/2}),
$$ 
$$
(W_3,W_4)=o(\eta^{5/2})\quad \text{ and } \quad W_5=-\eta^{3}\frac{2 \pi}{b_1}K+o(\eta^3).
$$
Likewise, $(V_1,V_2 )\neq o(\eta^{5/2})$ but $V_3,V_4=O(\eta^{5/2})$ and $V_5=o(\eta^3)$. (This observation is useful for the next section in particular.)

\begin{lemma}\label{L:image_Jac_r1=0}
Assume $p_0=\left(\sqrt{\eta} h_1,\sqrt{\eta} h_2, h_3,h_4,\eta^{-1}\right)$ is an initial covector such that $(h_1,h_2)\neq (0,0)$ and the kernel of $\Jac_{p_0} \sre_{q_0}(t_c(p_0))$ is of dimension $1$.
Then
$$
\im \Jac_{p_0} \sre_{q_0}(t_c(p_0))
=
\Span
\left\{
V,W,\partial_3F,\partial_4F
\right\}.
$$
\end{lemma}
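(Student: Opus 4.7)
The plan is to combine a dimension count with a short leading-order determinant computation. By Proposition~\ref{L:kernel_r1=0}, under the hypotheses of the lemma the kernel $\ker\Jac_{p_0}\sre_{q_0}(t_c(p_0))$ is $1$-dimensional, so the image has dimension $4$. Each of $V$, $W$, $\partial_3 F$, $\partial_4 F$ manifestly lies in this image (the last two are columns of the Jacobian, $W$ is $\partial_5 F$ plus a combination of $\partial_3 F$ and $\partial_4 F$, and $V$ is in turn a combination of $\partial_1 F$, $\partial_2 F$ and $W$), so the lemma reduces to proving that these four vectors are linearly independent for $\eta$ small enough.

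To obtain this independence, I would lift the four target vectors to the source $T_{q_0}^*M$. Setting $u_W=(0,0,-\eta w_3,-\eta w_4,1)$ and, by expanding the definition of $V$ in terms of the columns $\partial_i F$,
\[
u_V=\bigl(-\lambda_{\theta_1}h_1-\lambda_{r_1}h_2,\;\lambda_{r_1}h_1-\lambda_{\theta_1}h_2,\;\tfrac{\lambda_{\theta_1}(h_1^2+h_2^2)}{K}\eta w_3,\;\tfrac{\lambda_{\theta_1}(h_1^2+h_2^2)}{K}\eta w_4,\;-\tfrac{\lambda_{\theta_1}(h_1^2+h_2^2)}{K}\bigr),
\]
one has $\Jac_{p_0}\sre_{q_0}(t_c(p_0))\cdot u_V=V$ and $\Jac_{p_0}\sre_{q_0}(t_c(p_0))\cdot u_W=W$. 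Since $\Jac_{p_0}\sre_{q_0}(t_c(p_0))\cdot e_i = \partial_i F$, the images of the family $(u_V, u_W, e_3, e_4)$ are exactly $(V, W, \partial_3 F, \partial_4 F)$, and linear independence of the latter is equivalent to $(u_V, u_W, e_3, e_4, v_{\mathrm{ker}})$ forming a basis of $\R^5$, where $v_{\mathrm{ker}}=\lambda_{r_1}v_{r_1}+\lambda_{\theta_1}v_{\theta_1}$ generates the kernel.

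This basis property reduces to a single determinant check. Expanding the $5\times 5$ determinant along the $e_3$- and $e_4$-columns, which are standard basis vectors, leaves the $3\times 3$ determinant of the projections of $u_V$, $u_W$ and $v_{\mathrm{ker}}$ onto coordinates $(1,2,5)$. Using the leading-order expressions for $v_{r_1}$ and $v_{\theta_1}$ from Proposition~\ref{L:kernel_r1=0}, a direct computation yields at leading order in $\eta$
\[
\det = (\lambda_{\theta_1}h_1+\lambda_{r_1}h_2)^2+(\lambda_{r_1}h_1-\lambda_{\theta_1}h_2)^2 + O(\eta) = (\lambda_{r_1}^2+\lambda_{\theta_1}^2)(h_1^2+h_2^2)+O(\eta),
\]
strictly positive since $(h_1,h_2)\neq(0,0)$ and $(\lambda_{r_1},\lambda_{\theta_1})\neq(0,0)$. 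Hence for $\eta$ sufficiently small the determinant stays non-zero and the desired spanning property follows. The argument is essentially rank-nullity plus careful tracking of the leading orders of the lifted vectors; no substantial obstacle is expected beyond this bookkeeping.
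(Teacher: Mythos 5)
Your proposal is correct and follows essentially the same route as the paper: both arguments use rank–nullity together with explicit lifts of $V$, $W$, $\partial_3F$, $\partial_4F$ to the source, and reduce the lemma to the fact that these lifts complete the kernel generator $\lambda_{\theta_1}v_{\theta_1}+\lambda_{r_1}v_{r_1}$ to a basis of $\R^5$. The only difference is that the paper asserts this rank condition ``by construction'', whereas you verify it by the leading-order $3\times 3$ determinant $(\lambda_{r_1}^2+\lambda_{\theta_1}^2)(h_1^2+h_2^2)+O(\eta)\neq 0$, which is a welcome explicit check.
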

\begin{proof}
The proof is analogous to the proof of Lemma~\ref{L:image_Jac_r1r2>0}. 
The kernel is spanned by $\lambda_{\theta_1}v_{\theta_1}+\lambda_{r_1}v_{r_1}$.

Let $v_3=(0,0,1,0,0)$, $v_4=(0,0,0,1,0)$, $w=(0,0,-\eta w_3,-\eta w_4,1)$ and 
$$
v=-\lambda_{\theta_1}(v_{r_1}-\eta w_3v_3-\eta w_4 v_4)+\lambda_{r_1} v_{\theta_1}.
$$
By construction,
$$
\mathrm{rk}(\lambda_{\theta_1}v_{\theta_1}+\lambda_{r_1}v_{r_1},v,w,v_3,v_4)=5,
$$
Hence the result since $V=\Jac_{p_0} \sre_{q_0}(t_c(p_0))\cdot v$ and $W=\Jac_{p_0} \sre_{q_0}(t_c(p_0))\cdot w$.
\end{proof}

\subsubsection{Classification}
Again, we introduce a lemma to help us approximate the $\phi$ functions.
\begin{lemma}\label{L:reduced_cond_Case2}
Let $V,W$  be as in the statement of Lemma~\ref{L:image_Jac_r1=0}.
For all $i\in\ll 1,5\rr$,  let $U_i : \R^4\to \R$ and
let 
$$\Phi(u_1,u_2,u_5)=
u_5  \left(V_1^{(5/2)} h_2-V_2^{(5/2)}h_1\right)+K \left(V_2^{(5/2)}u_1-V_1^{(5/2)} u_2\right).
$$
Let also $\mathfrak{d}_\eta:\R^5\to \R^5$ be such that
$
\mathfrak{d}_\eta(u)=(\eta^{5/2} u_1, \eta^{5/2} u_2, \eta^{5/2} u_3, \eta^{5/2} u_4, \eta^{3} u_5)
$.

With $p_0=\left(\sqrt{\eta}h_1,\sqrt{\eta}h_2,h_3,h_4,\eta^{-1}\right)$, uniformly with respect to $h\in B_R$ as $\eta \to 0$, we have at $p_0$
$$
\det\left( \mathfrak{d}_\eta(U(h)),V,W,\partial_3F,\partial_4F\right) = \eta^{10} \frac{8\pi}{b_1 b_2^2}\sin^2\left( \frac{\pi b_2}{b_1}\right)\Phi(U_1(h),U_2(h),U_5(h))+o\left(\eta^{10}\right).
$$
\end{lemma}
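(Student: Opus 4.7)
The plan is to prove this by a cofactor expansion of the $5\times 5$ determinant along the last row, using the block-order data established in Section~\ref{SS:covec_near_S1} together with the explicit leading coefficients of $V$ and $W$ derived right before Lemma~\ref{L:image_Jac_r1=0}. First I would assemble the matrix $\left(\mathfrak{d}_\eta(U),V,W,\partial_3F,\partial_4F\right)$ at $p_0=\left(\sqrt{\eta}h_1,\sqrt{\eta}h_2,h_3,h_4,\eta^{-1}\right)$ in the block form of Section~\ref{SS:covec_near_S1}, recording for each entry the $\eta$-order implied by the estimates $A_i=O(\eta^{5/2})$ ($i=1,2,3$), $A_4=O(\eta)$, $C_1=O(\eta^{5/2})$, $C_2=O(\eta^2)$, $L_1=O(\eta^3)$, $L_2=O(\eta^2)$, $E=O(\eta^3)$, together with the explicit leading terms $W_1^{(5/2)}=\tfrac{2\pi}{b_1}h_1$, $W_2^{(5/2)}=\tfrac{2\pi}{b_1}h_2$, $(W_3,W_4)=o(\eta^{5/2})$, $W_5^{(3)}=-\tfrac{2\pi K}{b_1}$, as well as $V_5=o(\eta^3)$ and $V_i=O(\eta^{5/2})$ for $i\in\ll1,4\rr$.

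Second I would expand along row $5$: the coefficients $M_{5,1}=\eta^3 U_5$, $M_{5,2}=V_5=o(\eta^3)$, $M_{5,3}=\eta^3 W_5^{(3)}$, $M_{5,4},M_{5,5}=O(\eta^2)$ are compared against the corresponding $4\times 4$ minors. A small order-counting argument shows that each minor is of order at least $\eta^7$, with the orders achieved only by those terms that pair rows $3,4$ with the two ``low-order'' columns (those that restrict to $\eta A_4^{(1)}$ in rows $3,4$) and rows $1,2$ with the two ``high-order'' columns; in particular, the products $M_{5,2}\cdot\mathrm{minor}(5,2)$, $M_{5,4}\cdot\mathrm{minor}(5,4)$ and $M_{5,5}\cdot\mathrm{minor}(5,5)$ are all $o(\eta^{10})$, so only the two terms $j=1,3$ survive at order $\eta^{10}$.

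Third, for each surviving minor I would apply the generalised Laplace expansion along rows $1,2$ and check that the unique pair of column indices producing order $\eta^7$ is $\{1,2\}$ on rows $1,2$ (the two columns of order $\eta^{5/2}$) together with the complementary pair on rows $3,4$, which restricts to the matrix $\eta A_4^{(1)}$. Using $\det A_4^{(1)}=\tfrac{4}{b_2^2}\sin^2\!\bigl(\tfrac{\pi b_2}{b_1}\bigr)$, one obtains
\[
\mathrm{minor}(5,1)=\eta^7\frac{2\pi}{b_1}\bigl(V_1^{(5/2)}h_2-V_2^{(5/2)}h_1\bigr)\det A_4^{(1)}+o(\eta^7),
\]
\[
\mathrm{minor}(5,3)=\eta^7 \bigl(U_1V_2^{(5/2)}-U_2V_1^{(5/2)}\bigr)\det A_4^{(1)}+o(\eta^7).
\]
Multiplying by $M_{5,1}=\eta^3 U_5$ and $M_{5,3}=-\eta^3\tfrac{2\pi K}{b_1}$ respectively, and collecting the common prefactor $\tfrac{2\pi}{b_1}\det A_4^{(1)}=\tfrac{8\pi}{b_1 b_2^2}\sin^2\!\bigl(\tfrac{\pi b_2}{b_1}\bigr)$, yields exactly the two terms that make up $\Phi(U_1,U_2,U_5)$.

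The bulk of the work is simply an order-accounting bookkeeping exercise, and the only mild subtlety is tracking signs in the generalised Laplace expansion; this is where I expect most of the error-prone algebra to occur. No genuinely new input beyond Section~\ref{SS:covec_near_S1} is required: the identity $W_5^{(3)}=-\tfrac{2\pi K}{b_1}$ is what forces the second summand of $\Phi$ to carry the factor $K$, while $\det A_4^{(1)}=\tfrac{4}{b_2^2}\sin^2\!\bigl(\tfrac{\pi b_2}{b_1}\bigr)$ supplies the geometric constant in the prefactor.
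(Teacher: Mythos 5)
Your argument is essentially the paper's proof: both reduce the $5\times5$ determinant, via the order estimates of Section~\ref{SS:covec_near_S1} and the leading terms of $V$ and $W$, to the product of $\det\bigl(\eta A_4^{(1)}\bigr)=\eta^2\tfrac{4}{b_2^2}\sin^2\bigl(\tfrac{\pi b_2}{b_1}\bigr)$ with the $3\times3$ determinant formed by rows $1,2,5$ of the columns $\mathfrak{d}_\eta(U),V,W$ — the paper simply writes this simplified block determinant at once, whereas you organize the same computation as a cofactor expansion along the last row, and your order counting, surviving minors, and prefactor all match. One caveat: taking $W_5^{(3)}=-\tfrac{2\pi K}{b_1}$ literally, your two surviving terms give $u_5\bigl(V_1^{(5/2)}h_2-V_2^{(5/2)}h_1\bigr)-K\bigl(V_2^{(5/2)}u_1-V_1^{(5/2)}u_2\bigr)$, i.e.\ the $K$-term with the opposite sign to the stated $\Phi$; this is the same sign tension already present between the expansion of $W_5$ given before Lemma~\ref{L:image_Jac_r1=0} and the entry $+\eta^3\tfrac{2\pi}{b_1}K$ used in the paper's own block determinant, so it is a convention to fix consistently rather than a gap in your method.
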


\begin{proof}
We compute the dominant term of  $\det\left(\mathfrak{d}_\eta(U(h)),V,W,\partial_3F,\partial_4F\right)$.
Similarly to what is done in the proof of Lemma~\ref{L:reduced_cond_Case1}, we get from the assumptions and the construction of $V$ and $W$ in Section~\ref{SS:covec_near_S1}
$$
\det\left( \mathfrak{d}_\eta(U(h)),V,W,\partial_3F,\partial_4F\right)
=
\left|
\begin{array}{c c c|c}
\:\:\begin{matrix} \eta^{5/2}U_1 \\ \eta^{5/2}U_2 \end{matrix}&	\:\:\begin{matrix} \eta^{5/2}V_1^{(5/2)} \\ \eta^{5/2}V_2^{(5/2)} \end{matrix} & \:\:\begin{matrix} \eta^{5/2}\frac{2\pi}{b_1} h_1 \\  \eta^{5/2}\frac{2\pi}{b_1}h_2 \end{matrix}&	\:\:\begin{matrix} 0 &0\\ 0 &0\end{matrix}
\\
\hline
	\:\:\begin{matrix} 0 \\ 0 \end{matrix} & 	\:\:\begin{matrix} 0 \\ 0 \end{matrix} & 	\:\:\begin{matrix} 0 \\ 0 \end{matrix} & \eta A_4^{(1)}
\\
\hline
\eta^{3}U_5&0&\eta^3 \frac{2 \pi}{b_1}K&0\quad 0
\end{array}
\right|
+o(\eta^{10}).
$$
Hence the statement since 
$
\Phi(U_1,U_2,U_5)=
\begin{vmatrix}
	\:\:\begin{matrix} U_1 \\ U_2 \end{matrix}&	\:\:\begin{matrix} V_1 \\ V_2 \end{matrix} & \:\:\begin{matrix} h_1 \\  h_2 \end{matrix}
\\
U_5&0&K
\end{vmatrix}
$
and 
$
\det A_4^{(1)}=\frac{4}{b_2^2}\sin^2\left( \frac{\pi b_2}{b_1}\right)$.
\end{proof}

Let $q_0\in M\setminus \S$ and $p_0=\left(\sqrt{\eta}h_1,\sqrt{\eta}h_2,h_3,h_4,\eta^{-1}\right)\in T^*_{q_0}M$.
We can separate cases depending on the dimension of $\ker\Jac_{p_0} \sre_{q_0}(t_c(p_0))$.

Let us first treat the case of a $2$-dimensional kernel.
Let $S^+$ be the subset of $T_{q_0}^*M$ on which $\dim \ker\Jac_{p_0} \sre_{q_0}(t_c(p_0))=2$.
Following the analysis in Remark~\ref{R:2dim_ker}, singular points with dimension 2 kernel on $M\setminus \S$ correspond to covectors such that $(h_1,h_2)\neq (0,0)$ and 
$$
\gamma_{12}=-\frac{2 \pi  h_1 h_2}{b_1  },\quad
\gamma_{21}=-\frac{2 \pi  h_1 h_2}{b_1 K},\quad
\gamma_{22}-\gamma_{11}=\frac{2 \pi  \left(h_1^2-h_2^2\right)}{b_1 K}.
$$

Furthermore, $\ker\Jac_{p_0} \sre_{q_0}(t_c(p_0))$ is generated by $v_{\theta_1},v_{r_1}$, hence we choose the coordinates $\mathrm{x}_1,\mathrm{x}_2$ such that $\Span(\partial_{\x_1}\mathrm{id},\partial_{\x_2}\mathrm{id})=\Span(v_{\theta_1},v_{r_1})$, and we can check that the singularity is always of type $\D_4^+$ at covectors of $S^+$.

Assume now that the kernel of $\Jac\sre_{q_0}(t_c(p_0))$ is $1$-dimensional. As a consequence of Proposition~\ref{L:kernel_r1r2>0}, assuming $(h_1,h_2)\neq (0,0)$,  the kernel is generatedby $v=\lambda_{\theta_1}v_{\theta_1}+\lambda_{r_1}v_{r_1}$. We choose the first 
coordinate $\x_1:M\to \R$ such that  $\partial_{\x_1} = \sum_{i=1}^{5}{v}_i\partial_i$. Since $v_3,v_4=O(\eta)$, we have that $\partial_{x_1}^k F=O(\eta^{5/2})$ and $\partial_{x_1}^k F_5=O(\eta^{3})$ for all integer $k\geq 2$.

If we denote $V':\R^4\to \R^5$ such that (coordinate-wise) $\partial_{x_1}^2 F=\mathfrak{d}_\eta(V'(h))+o(\mathfrak{d}_\eta(1))$ then let $\Phi_2(h)=\Phi(V'_1,V'_2,V'_5)$.
Similarly, define $V'':\R^4\to \R^5$ such that $\partial_{x_1}^3 F=\mathfrak{d}_\eta(V''(h))+o(\mathfrak{d}_\eta(1))$, $V''':\R^4\to \R^5$ such that $\partial_{x_1}^4 F=\mathfrak{d}_\eta(V'''(h))+o(\mathfrak{d}_\eta(1))$ and  $V'''':\R^4\to \R^5$ such that $\partial_{x_1}^5 F=\mathfrak{d}_\eta(V''''(h))+o(\mathfrak{d}_\eta(1))$; and define
$\Phi_3(h)=\Phi(V''_1,V''_2,V''_5)$, $\Phi_4(h)=\Phi(V'''_1,V'''_2,V'''_5)$, $\Phi_5(h)=\Phi(V''''_1,V''''_2,V''''_5)$.

We numerically check that singular values of the exponential corresponding to covectors $p_0$ such that $(h_1,h_2)=(0,0)$ are of type $\A_3$ (it is immediate by passing to the limit if the conjugate time at $p_0$ is not double)
As an application of Lemma~\ref{L:reduced_cond_Case2}, and the analysis of the Jacobian matrix of $\sre_{q_0}(t_c(p_0))$ of Section~\ref{SS:covec_near_S1}, we  obtain that  for $\eta$ small enough
$$
\Phi_2(h)\neq 0 \Rightarrow \phi_{11}(p_0)\neq 0,
\qquad
\Phi_3(h)\neq 0 \Rightarrow \phi_{111}(p_0)\neq 0,
$$
$$
\Phi_4(h)\neq 0 \Rightarrow \phi_{1111}(p_0)\neq 0,
\qquad
\Phi_5(h)\neq 0 \Rightarrow \phi_{11111}(p_0)\neq 0.
$$

\subsection{Third domain: initial covectors near $S_2$}
\label{A:dom3}
\subsubsection{Jacobian matrix}\label{SS:in_cov_near_S2}

We now consider initial covectors of the form 
$$
p_0=\left( h_1, h_2, \eta h_3, \eta h_4,\eta^{-1}\right).
$$
The approach here is similar to Section~\ref{SSS:in_cov_T_setminS1S2}, however we need two orders of approximation. 
For two matrices $A,B\in \mathcal{M}_n(\R)$, and two vectors $u,v\in \R^n$, having $(A+\eta B)(u+\eta v)=0$ yields $Au=0$ and $Av+Bu=0$.
This relates to the computation of the conjugate time in Section~\ref{SS:third_order_conjugate}, but we only proved $\det (A+\eta B)=o(\eta)$, hence the existence {\it a priori} of $u\in \R^n$ such that $Au=0$ but not of $v\in \R^n$ such that $Av+Bu=0$.
\begin{lemma}\label{L:rank_kernel}
Let $A,B\in \mathcal{M}_n(\R)$. If $\mathrm{rank}(A)=n-1$ and $\det (A+\eta B)=o(\eta)$ as $\eta\to 0$ then $B \cdot \ker A\subset \im A$.
\end{lemma}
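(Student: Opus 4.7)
The plan is to reduce the statement to a single linear-algebra identity via the standard first-order expansion of $\det(A+\eta B)$ around a matrix of rank $n-1$.

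First I would expand
\[
\det(A+\eta B)=\det(A)+\eta\,\mathrm{tr}\bigl(\mathrm{adj}(A)\,B\bigr)+O(\eta^2).
\]
Since $\mathrm{rank}(A)=n-1$, we have $\det(A)=0$, so the hypothesis $\det(A+\eta B)=o(\eta)$ forces the first-order coefficient to vanish:
\[
\mathrm{tr}\bigl(\mathrm{adj}(A)\,B\bigr)=0.
\]

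Next I would use the structural description of the adjugate under the rank-$(n-1)$ assumption. Since $A\cdot\mathrm{adj}(A)=\det(A)\,I=0$, every column of $\mathrm{adj}(A)$ belongs to $\ker A$, which is one-dimensional by hypothesis. Similarly, from $\mathrm{adj}(A)\cdot A=0$ every row of $\mathrm{adj}(A)$ lies in $\ker A^T$, also one-dimensional. Moreover $\mathrm{adj}(A)\neq 0$ (since some $(n-1)\times(n-1)$ minor is nonzero). Hence one can write $\mathrm{adj}(A)=u\,v^T$ where $u$ spans $\ker A$ and $v$ spans $\ker A^T=(\im A)^\perp$.

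Combining, the condition becomes
\[
0=\mathrm{tr}(u v^T B)=v^T B u,
\]
that is, $Bu$ is orthogonal to $v$. Since $v$ generates $(\im A)^\perp$, this means exactly $Bu\in\im A$. As $\ker A=\mathrm{Span}(u)$, we conclude $B\cdot\ker A\subset \im A$, as claimed. There is no real obstacle here; the only point requiring a moment of care is the rank-one factorization $\mathrm{adj}(A)=uv^T$, which follows immediately from the two inclusions of column- and row-spaces into one-dimensional subspaces plus nonvanishing of $\mathrm{adj}(A)$.
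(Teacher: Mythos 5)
Your proof is correct, but it takes a different route from the paper's. You use the first-order expansion $\det(A+\eta B)=\det(A)+\eta\,\mathrm{tr}(\mathrm{adj}(A)B)+O(\eta^2)$ together with the rank-one structure of the adjugate of a rank-$(n-1)$ matrix, $\mathrm{adj}(A)=c\,uv^{T}$ with $u$ spanning $\ker A$ and $v$ spanning $\ker A^{T}=(\im A)^{\perp}$ (strictly speaking the factorization holds up to a nonzero scalar $c$, which is harmless), so the vanishing of the first-order coefficient becomes exactly $v^{T}Bu=0$, i.e.\ $Bu\in\im A$. The paper instead writes $A=PA'Q$ with $A'=\mathrm{diag}(0,1,\dots,1)$ and $P,Q$ invertible, normalizes a kernel vector so that $Qu=e_1$, sets $B'=P^{-1}BQ^{-1}$, and observes both that $Bu\in\im A$ is equivalent to $B'_{11}=0$ and that $\det(A'+\eta B')=\eta B'_{11}+o(\eta)$. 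The two arguments are essentially dual presentations of the same fact: your adjugate computation is coordinate-free and exhibits the first-order coefficient intrinsically as a pairing between $\ker A$ and the left kernel (so it also shows the converse implication immediately), while the paper's normal-form reduction trades the algebraic identity for an explicit coordinate computation in which both the determinant expansion and the membership criterion are read off from a single matrix entry. Either is complete and elementary; there is no gap in your argument.
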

\begin{proof}
Since $\mathrm{rank}(A)=n-1$, there exists $P,Q\in \mathrm{GL}_n(\R)$ such that $A=PA'Q$, with $A'$ the diagonal matrix with diagonal $(0,1,\dots,1)$. Let $u\in \ker A\setminus \{0\}$. Then $Qu$ is colinear to $e_1=(1,0,\dots ,0)$. Without loss of generality, we can assume $Qu=e_1$. Then, denoting $B'=P^{-1}BQ^{-1}$, $Bu\in \im A$ is equivalent to $B'e_1\in \im A'$, that is $B'_{11}=0$.

On the other hand $\det(A+\eta B)=o(\eta)$ implies $\det(A'+\eta B')=o(\eta)$, and developing the determinant with respect to $\eta$ yields
$\det(A'+\eta B')=\eta B'_{11}+o(\eta)$. Hence the result.
\end{proof}

\begin{proposition}\label{L:kernel_r2=0}
The kernel of $\Jac_{p_0} \sre_{q_0}(t_c(p_0))$ is $1$-dimensional and there exists $\nu(h_1,h_2)\in \R$, $\mu(h_1,h_2,h_3,h_4)\in \R$, such that $\ker\Jac_{p_0} \sre_{q_0}(t_c(p_0))$ is generated by the vector 
$$
\left(-h_2,h_1,*,*,\nu\right)
+
\eta 
\left[
\frac{-5 \nu}{4}
\left(h_1,h_2,*,*,0\right)
+
\mu
\left(-\nu h_2,\nu h_1,*,*,-\left( h_1^2+h_2^2\right)\right)
\right]
+O(\eta^2).
$$
\end{proposition}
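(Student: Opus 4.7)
The proof will follow the pattern of Propositions~\ref{L:kernel_r1r2>0} and \ref{L:kernel_r1=0}: I will first read off from Proposition~\ref{P:expansion_r2_eta} the orders of smallness of the sub-blocks $A_i, C_i, L_i, E$ in the Jacobian decomposition of Section~\ref{SS:Kernel_comp}, then solve the kernel system \eqref{E:ker12}--\eqref{E:ker5} order by order in $\eta$. The salient feature of the third domain is that the conjugate time already admits a non-trivial first-order correction $\tau_c^{(1)}(\L\h) = -2(\alpha h_1 + \beta h_2)$ from Section~\ref{SS:third_order_conjugate}, so the evaluation must be performed at this shifted time.

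Using the chain rule $\partial_{h_i} = \eta^{-1}\partial_{\h_i}$ for $i \in \{3, 4\}$ together with the vanishings $\xhat_1(2\pi/b_1, \L\h) = \xhat_2(2\pi/b_1, \L\h) = 0$, one establishes the block orders $A_1, A_2, A_3, C_1, L_1 = O(\eta^2)$, $A_4 = O(\eta)$ with $A_4^{(1)}$ invertible (it is essentially the matrix $A$ of Lemma~\ref{P:mini_det_1} restricted to the $(3,4)$-block since $q_0 \notin \S_1$), and $C_2, L_2, E = O(\eta^3)$. Writing $v = v^{(0)} + \eta v^{(1)} + O(\eta^2)$: equation \eqref{E:ker34} at $O(\eta)$ forces $v_3^{(0)} = v_4^{(0)} = 0$; \eqref{E:ker5} at $O(\eta^2)$ reduces to $h_1 v_1^{(0)} + h_2 v_2^{(0)} = 0$, so $(v_1^{(0)}, v_2^{(0)})$ is proportional to $(-h_2, h_1)$; fixing this proportionality to unity, \eqref{E:ker12} at $O(\eta^2)$ (which encodes the first-order conjugate-time condition, taking into account the shift by $\tau_c^{(1)}$) determines $v_5^{(0)} = \nu$ as a function of $h_1, h_2$ only. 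The starred entries at leading order are recovered from \eqref{E:ker34} at $O(\eta^2)$ as $(v_3^{(1)}, v_4^{(1)}) = -(A_4^{(1)})^{-1}[A_3^{(2)}(-h_2, h_1)^T + C_2^{(3)}\nu]$. Dimensionality of the kernel being exactly one is guaranteed by the lower semi-continuity of rank and the assumption $q_0 \notin \S$, which rules out the degeneracies of Proposition~\ref{P:def_S_2}.

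For the $\eta$-correction of the kernel generator, equation \eqref{E:ker5} at $O(\eta^3)$ pins down the component of $(v_1^{(1)}, v_2^{(1)})$ along $(h_1, h_2)$ in terms of $L_1^{(3)}$, $L_2^{(3)}$, $E^{(3)}$ and the already-known $v^{(0)}$; explicit evaluation at $\tau = 2\pi/b_1$, using $\zhat(2\pi/b_1, \L\h) = \pi(h_1^2+h_2^2)/b_1$ and the third-order invariants $\chi_{ij}, \xi$ of Appendix~\ref{A:third_order}, will produce the coefficient $-5\nu/4$. The component of $(v_1^{(1)}, v_2^{(1)})$ along $(-h_2, h_1)$ combined with $v_5^{(1)}$ is then determined by \eqref{E:ker12} at $O(\eta^3)$: the specific vector $(-\nu h_2, \nu h_1, *, *, -(h_1^2 + h_2^2))$ emerges because the contribution of the second-order conjugate-time correction $\tau_c^{(2)}(\L\h + \eta(I_{2n}-\L)\h)$ couples these two entries in exactly this combination, and its scalar multiplier $\mu(h_1, h_2, h_3, h_4)$ gains $h_3, h_4$ dependence through the $\xhat(\tau, (I_{2n}-\L)\h)$ contribution appearing in Proposition~\ref{P:expansion_r2_eta} at this order. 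The remaining starred entries in the correction come from \eqref{E:ker34} at $O(\eta^3)$.

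The main obstacle will be verifying that the coefficient $-5/4$ arises precisely as stated: this requires careful algebraic bookkeeping combining the third-order invariants $\chi_{ij}, \xi$ with the shift induced by $\tau_c^{(1)}(\L\h)$, parallel in spirit to the evaluation of the determinants $d_i$ in Lemma~\ref{L:expression_d_i}. Once this coefficient is fixed, the explicit identification of $\mu$ and of the four starred entries as functions of $h_1, \ldots, h_4$ reduces to direct substitution into the formulas of Propositions~\ref{P:expansion_r2_eta} and \ref{P:summary_kappa}.
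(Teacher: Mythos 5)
Your route is the same as the paper's: the block decomposition of Section~\ref{SS:Kernel_comp}, the order-by-order resolution of \eqref{E:ker12}--\eqref{E:ker5} at the shifted time, the leading kernel direction $(-h_2,h_1,*,*,\nu)$, and the coefficient $-5\nu/4$ extracted from the fifth-row equation at the next order. (Your convention $A_4=O(\eta)$ with $A_4^{(1)}$ invertible, versus the paper's $A_4^{(2)}$, only reflects that the paper differentiates the blown-up map $G$ with respect to $\h_3,\h_4$, i.e.\ rescales the third and fourth columns by $\eta$; this changes nothing but the starred components.)

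The genuine gap is the solvability of the order-$\eta$ correction. Once $(v_1^{(0)},v_2^{(0)},v_5^{(0)})$ is pinned to the direction $f_1=(-h_2,h_1,\nu)$, the next-order equations for the correction form an inhomogeneous linear system whose matrix is the \emph{same} rank-deficient leading matrix (kernel spanned by $f_1$), so a compatibility condition is needed: the inhomogeneous term, built from $v^{(0)}$ and the next-order blocks, must lie in the image of the leading matrix. This is not automatic, because the conjugate-time computation of Section~\ref{SS:third_order_conjugate} only yields that the determinant is $o(\eta)$ beyond leading order; the paper flags exactly this and proves Lemma~\ref{L:rank_kernel} ($\mathrm{rank}(A)=n-1$ and $\det(A+\eta B)=o(\eta)$ imply $B\cdot\ker A\subset \im A$) to justify it. Without that step, your assertion that \eqref{E:ker12} at $O(\eta^3)$ ``determines'' the correction is unjustified; moreover that equation only determines the combination along the direction $f_3=(-\nu h_2,\nu h_1,-(h_1^2+h_2^2))$, since adding multiples of $f_1$ at order $\eta$ is free --- the $f_3$ direction comes from the identity $A_1^{(2)}e_{\theta_1}=-\nu C_1^{(2)}$, not specifically from $\tau_c^{(2)}$, whose role (through the determinant vanishing) is precisely the solvability just mentioned. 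A smaller inaccuracy: the coefficient $-5\nu/4$ does not involve the third-order invariants $\chi_{ij},\xi$; it follows from $L_1^{(2)}e_{r_1}=\tfrac{2\pi}{b_1}(h_1^2+h_2^2)$, $E^{(3)}$ (i.e.\ $\zhat(2\pi/b_1,\L\h)$) and $L_1^{(3)}e_{\theta_1}$ computed from $z^{(3)}$ of Lemma~\ref{L:z3}, hence only from $\alpha,\beta$ and $\nu$; the third-order invariants enter only $\mu$ and the starred entries through $A_1^{(3)}$, $A_2^{(3)}$ and $C_1^{(3)}$.
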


\begin{proof}

From computations in Section \ref{SS:third_order}, we have 
$$
A_1=O(\eta^{2}), \quad A_3=O(\eta^{2}),\quad A_4=O(\eta^{2}), \quad \text{ and }\quad A_3= O(\eta^3),
$$
$$
C_1=O(\eta^{2}), \quad L_1=O(\eta^{2}),\quad C_2=O(\eta^{3}), \quad L_2=O(\eta^{3}), \quad E=O(\eta^3).
$$
Equation~\eqref{E:ker5} then implies 
$$
L_1
\begin{pmatrix}
v_1
\\
v_2
\end{pmatrix}=O(\eta^3).
$$ 
Hence, as previously, there exists $\lambda \in \R$ such that $(v_1 ,v_2 )=\lambda (-h_2,h_1)+O(\eta)$. Now however, since $A_4=O(\eta^{2})$ and $A_2=O(\eta^{3})$,
$$
A_3 
\begin{pmatrix}
v_1 
\\
v_2 
\end{pmatrix}
+
A_4
\begin{pmatrix}
v_3
\\
v_4
\end{pmatrix}
=O(\eta^3)
$$
and
$$
A_1
\begin{pmatrix}
v_1
\\
v_2
\end{pmatrix}
+ C_1 v_5=O(\eta^3).
$$
Hence we have 
$$
v_5^{(0)} C_1^{(2)}=-A_1^{(2)}
\begin{pmatrix}
v_1^{(0)}
\\
v_2^{(0)}
\end{pmatrix},
\quad
\text{ and }
\quad
\begin{pmatrix}
v_3^{(0)}
\\
v_4^{(0)}
\end{pmatrix}
=
-\left(A_4^{(2)}\right)^{-1}A_3^{(2)}
\begin{pmatrix}
v_1^{(0)}
\\
v_2^{(0)}
\end{pmatrix}.
$$
The lower semi-continuity of the rank  implies that the kernel is indeed $1$-dimensional. We can apply Lemma~\ref{L:rank_kernel} and compute $v^{(1)}\in \ker A^{\perp}$ such that (focusing on $v_1^{(1)},v_2^{(1)},v_5^{(1)}$)
\begin{equation}\label{E:order2r2=0}
A_1^{(2)}
\begin{pmatrix}
v_1^{(1)}
\\
v_2^{(1)}
\end{pmatrix}
+
\left(
A_1^{(3)}
-
A_2^{(3)}
\left(A_4^{(2)}\right)^{-1}A_3^{(2)}\right)
\begin{pmatrix}
v_1^{(0)}
\\
v_2^{(0)}
\end{pmatrix}
+
v_5^{(0)} C_1^{(3)}
+
v_5^{(1)} C_1^{(2)}
=0
\end{equation}
\begin{equation}\label{E:order2r2=0bis}
L_1^{(2)}
\begin{pmatrix}
v_1^{(1)}
\\
v_2^{(1)}
\end{pmatrix}
+
L_1^{(3)}
\begin{pmatrix}
v_1^{(0)}
\\
v_2^{(0)}
\end{pmatrix}
+
v_5^{(0)} E^{(3)}
=0.
\end{equation}

We can assume $(h_1,h_2)\neq (0,0)$, since we are considering covectors near $S_2$ but not $S_1$. Still focusing on $v_1^{(1)},v_2^{(1)},v_5^{(1)}$ and  looking for solutions in $\ker A^\perp$, we use a more suited basis of $\R^3$. We have $\nu$ such that 
$
\nu C_1^{(2)}=-A_1^{(2)}
\begin{pmatrix}
-h_2
\\
h_1
\end{pmatrix}$,
so that with
 $f_1=(-h_2,h_1,\nu)$, $\left(v_1^{(0)},v_2^{(0)},v_5^{(0)}\right)=\lambda f_1$. Then we set $f_2=(h_1,h_2,0)$ and $f_3=(-\nu h_2,\nu h_1,-(h_1^2+h_2^2))$, and  $ \left(v_1^{(1)},v_2^{(1)},v_5^{(1)}\right)=\mu_2 f_2+\mu_3 f_3$.

Then Equations~\eqref{E:order2r2=0}-\eqref{E:order2r2=0bis} yield
$$
\mu_2 A_1^{(2)}e_{r_1}
+
\mu_3 \nu  A_1^{(2)}e_{\theta_1}
+
\lambda
\left(
A_1^{(3)}
-
A_2^{(3)}
\left(A_4^{(2)}\right)^{-1}A_3^{(2)}\right)
e_{\theta_1}
+
\lambda \nu  C_1^{(3)}
-
\mu_3(h_1^2+h_2^2) C_1^{(2)}
=0,
$$
$$
\mu_2 L_1^{(2)}
e_{r_1}
+
\lambda
L_1^{(3)}
e_{\theta_1}
+
\lambda \nu E^{(3)}
=0.
$$
Then $\mu_2=-\frac{\lambda}{L_1^{(2)}e_{r_1}}\left( L_1^{(3)}e_{\theta_1}+ \nu E^{(3)}\right)=-5\lambda \nu/4$ (see the proof of Lemma \ref{L:expression_d_i} to find an explicit  expression of $L_1^{(3)}$ and $E^{(3)}$) and 
$$
-\lambda \frac{5}{4} \nu A_1^{(2)}e_{r_1}
+
\lambda
\left(
A_1^{(3)}
-
A_2^{(3)}
\left(A_4^{(2)}\right)^{-1}A_3^{(2)}\right)
e_{\theta_1}
+
\lambda \nu  C_1^{(3)}
=\mu_3(h_1^2+h_2^2+\nu^2) C_1^{(2)}.
$$
Hence the result with $\mu=\mu_3/\lambda$.
\end{proof}

Again, we end the section with a handy description of the image of $\Jac_{p_0} \sre_{q_0}(t_c(p_0))$.
Let 
$$
V'=h_1\partial_1\bar{F}+h_2\partial_2\bar{F}- w_3\partial_3\bar{F}- w_4\partial_4\bar{F},\text{ where }
\begin{pmatrix}
w_3
\\
w_4
\end{pmatrix}
=
\left(A_4^{(2)}\right)^{-1}A_3^{(2)}
\begin{pmatrix}
h_1
\\
h_2
\end{pmatrix},
$$
so that $(V'_3,V'_4)=O(\eta^3)$.

\begin{lemma}\label{L:image_Jac_r2=0}
Let $p_0=\left( h_1, h_2, \eta h_3, \eta h_4,\eta^{-1}\right)\in T^*_{q_0}M$.
The image of the Jacobian matrix at $p_0$ of the exponential at the conjugate time admits the representation
$$
\im \Jac_{p_0} \sre_{q_0}(t_c(p_0))
=
\Span
\left\{
V',\partial_3F,\partial_4F,\partial_5F
\right\}.
$$
\end{lemma}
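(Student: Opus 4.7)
The strategy follows the template set by the proofs of Lemmas~\ref{L:image_Jac_r1r2>0} and~\ref{L:image_Jac_r1=0}. By Proposition~\ref{L:kernel_r2=0}, the kernel of $\Jac_{p_0}\sre_{q_0}(t_c(p_0))$ is one-dimensional, so by rank-nullity the image is four-dimensional. Hence it suffices to exhibit four vectors $u_2,u_3,u_4,u_5\in\R^5$ such that, together with a generator $v^{\star}$ of the kernel described by Proposition~\ref{L:kernel_r2=0}, they span $\R^5$; the image is then the span of $\Jac_{p_0}\sre_{q_0}(t_c(p_0))\cdot u_i$ for $i\in\{2,3,4,5\}$.

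Concretely, I would take $u_2=(h_1,h_2,-w_3,-w_4,0)$, with $w_3,w_4$ as in the definition of $V'$ preceding the statement, and set $u_3=e_3$, $u_4=e_4$, $u_5=e_5$ (the standard basis vectors of the last three coordinates). By linearity, $\Jac_{p_0}\sre_{q_0}(t_c(p_0))\cdot u_2=V'$, while $\Jac_{p_0}\sre_{q_0}(t_c(p_0))\cdot u_k=\partial_k F$ for $k\in\{3,4,5\}$. The slightly elaborate form of $u_2$ (compared with the simpler $(h_1,h_2,0,0,0)$ used in Lemma~\ref{L:image_Jac_r1r2>0}) is chosen precisely so that $V'$ has $(V'_3,V'_4)=O(\eta^3)$, a property that will be useful for the subsequent stability analysis in Section~\ref{SS:in_cov_near_S2}.

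The only non-trivial step is to verify the linear independence of the family $(v^{\star},u_2,u_3,u_4,u_5)$. Using the leading-order expression of $v^{\star}$ from Proposition~\ref{L:kernel_r2=0}, whose first two components are $(-h_2,h_1)$ plus $O(\eta)$ corrections, the $5\times 5$ determinant of these vectors factors, by Laplace expansion along the three standard-basis columns $u_3,u_4,u_5$, into the determinant of the $2\times 2$ upper-left block
$$
\begin{vmatrix} -h_2 & h_1 \\ h_1 & h_2 \end{vmatrix} = -(h_1^2+h_2^2),
$$
which is non-zero on the third domain since $(h_1,h_2)\neq(0,0)$. Linear independence therefore holds at leading order, and by continuity of the determinant it persists for $\eta$ sufficiently small. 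No serious obstacle is anticipated: the argument is a direct adaptation of the two earlier lemmas, the only subtlety being the careful tuning of $u_2$ so that $V'$ carries the cancellation needed downstream.
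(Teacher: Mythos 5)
Your proof is correct and follows essentially the same route as the paper: complete a generator of the one-dimensional kernel (Proposition~\ref{L:kernel_r2=0}) to a basis of $\R^5$ using $(h_1,h_2,\mp w_3,\mp w_4,0)$ and the last three coordinate vectors, check the rank via the upper-left $2\times 2$ block $-(h_1^2+h_2^2)\neq 0$, and conclude that the image is spanned by the Jacobian applied to these vectors. Your sign choice in $u_2$ even makes $\Jac_{p_0}\sre_{q_0}(t_c(p_0))\cdot u_2$ equal to $V'$ exactly, which is harmless either way since the discrepancy lies in $\Span\{\partial_3F,\partial_4F\}$.
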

\begin{proof}
The proof is again straightforward.  With $v$ generating $\ker \Jac_{p_0} \sre_{q_0}(t_c(p_0))$, as given by Proposition~\ref{L:kernel_r2=0}, 
 $v'=(h_1,h_2,w_3,w_4,0)$, $v_3=(0,0,1,0,0)$, $v_4=(0,0,0,1,0)$, $v_5=(0,0,0,0,1)$,
it is immediate that 
$$
\mathrm{rk}(v,v',v_3,v_4,v_5)=5.
$$
Hence the result, similarly to Lemma~\ref{L:image_Jac_r1r2>0}.
\end{proof}

\subsubsection{Classification}

We repeat the process one last time, except we now need two orders of approximation.

\begin{lemma}\label{L:reduced_cond_Case3}
Let $V'$  be as in the statement of Lemma~\ref{L:image_Jac_r2=0}.
For all $i\in\ll 1,5\rr$,  let $U,U': \R^4\to \R^5$ and
 for $u,u'\in \R^5$, let
$$
\Psi'(u)=
b_1 u_5  \left(\alpha h_2- \beta h_1\right)+ \pi \left(h_1 u_2-h_2 u_1\right)
$$
and 
$$
\begin{aligned}
\Gamma(u,u')=
&
	\Psi'(u') +
\frac{27b_1}{4}  (\alpha  h_{1}+\beta  h_{2}) (h_{2} u_{1}-h_{1} u_{2})-\frac{b_1}{\pi}  (\alpha  h_1+\beta  h_2) \Psi'(u)
\\&
+\frac{b_1 \,u_5}{2(h_1^2+h_2^2)}  \left(h_2 {V'_1}^{(3)}-h_1 {V'_2}^{(3)}\right)
\\
&
+\frac{b_2}{2} \Psi'(u)
\left[\vphantom{\frac{\pi  b_2}{b_1}}
h_1 (\kappa_{14}^3-\kappa_{13}^4)
+
h_2 (\kappa_{24}^3-\kappa_{23}^4)
+\right.
\\
&
\hphantom{+\frac{b_2}{2} \psi'(u)}\qquad \left. \cot \left(\frac{\pi  b_2}{b_1}\right)
\left(2 \tau_c^{(1)}(h)+
h_1 (\kappa_{13}^3+\kappa_{14}^4)
+
h_2 (\kappa_{23}^3+\kappa_{24}^4)
\right)
\right]
\\
&+
\frac{ b_2}{ 2} 
\left(U_4-U_3\cot \left(\frac{\pi  b_2}{b_1}\right)\right)
\left(
\kappa^{1,3}_2 h_1^2-\kappa^{2,3}_1 h_2^2+( \kappa^{2,3}_2-\kappa^{1,3}_1)h_1h_2
\right) 
\\&
+
\frac{ b_2}{ 2}
 \left(  U_3 + U_4\cot \left(\frac{\pi  b_2}{b_1}\right)\right)
 \left(
\kappa^{2,4}_1 h_2^2-\kappa^{1,4}_2 h_1^2+(\kappa^{1,4}_1- \kappa^{2,4}_2)h_1h_2
\right).
\end{aligned}
$$

With $p_0=\left( h_1, h_2, \eta h_3,\eta h_4,\eta^{-1}\right)$, uniformly with respect to $h\in B_R$ as $\eta \to 0$, we have at $p_0$
\begin{multline*}
\det\left( U(h)+\eta U'(h),V, \partial_3F,\partial_4F,\partial_5F \right) =\\
 \eta^{8} \frac{16\pi(h_1^2+h_2^2)}{b_1^2b_2^2}\sin^2\left( \frac{\pi b_2}{b_1}\right) \left[\vphantom{2^{2}}\Psi'(U(h))+\eta \Gamma (U(h),U'(h))\right]
 +o(\eta^9).
\end{multline*}
\end{lemma}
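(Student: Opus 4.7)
The plan is to expand the $5\times 5$ determinant column by column using the coordinate-wise asymptotic orders of each block established in Section~\ref{SS:in_cov_near_S2}, then extract successively the $\eta^8$ and $\eta^9$ contributions. First, I would tabulate the leading orders of every column at the conjugate time $\tau_c = 2\pi/b_1 + \eta\tau_c^{(1)}(\L h) + O(\eta^2)$: the columns $\partial_3 F$, $\partial_4 F$ have coordinates 1,2 of order $\eta^3$ (from $A_2$), coordinates 3,4 of order $\eta^2$ with leading matrix $A_4^{(2)}$, and coordinate 5 of order $\eta^3$ (from $L_2$); the column $V'$, by the construction in Lemma~\ref{L:image_Jac_r2=0}, has coordinates 1,2 of order $\eta^2$, coordinates 3,4 of order $\eta^3$ (the defining cancellation), and coordinate 5 of order $\eta^3$; the column $\partial_5 F = -\eta^2\partial_\eta F + \eta\tau\partial_\tau F$ is controlled by Proposition~\ref{P:expansion_r2_eta}.

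Second, for the leading $\eta^8$ term, I would factor $\eta^2$ from each of $\partial_3 F, \partial_4 F$ (in rows 3,4 only) and use Laplace expansion along rows 3,4 to reduce the determinant to $\det A_4^{(2)} = \tfrac{4}{b_2^2}\sin^2(\pi b_2/b_1)$ times a $3\times 3$ determinant in coordinates $(1,2,5)$ with columns $U(h)$, $V'^{(2)}_{1,2,5}$, and the leading part of $\partial_5 F_{1,2,5}$. Direct computation of this $3\times 3$ determinant, using that $V'^{(2)}_{1,2} = A_1^{(2)}(h_1,h_2)^T$ and $V'^{(3)}_{5}$ coming from $L_1^{(2)}$, yields exactly the expression $\Psi'(U)$ up to the claimed constant $16\pi(h_1^2+h_2^2)/(b_1^2 b_2^2)\sin^2(\pi b_2/b_1)$; the appearance of $\alpha h_2 - \beta h_1$ comes from $A_1^{(2)} e_{\theta_1}$ via the second-order invariants of Proposition~\ref{P:summary_kappa}.

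Third, for the $\eta^9$ correction term $\Gamma(U,U')$, I would collect every source of an extra power of $\eta$: (a) replacing $U$ by $\eta U'$ in the first column, which gives $\Psi'(U')$ by the same reasoning as above; (b) the next-order contribution $V'^{(3)}$ in coordinates $1, 2, 5$, which, after a cofactor expansion, produces the term $\tfrac{b_1 u_5}{2(h_1^2+h_2^2)}(h_2 V'^{(3)}_1 - h_1 V'^{(3)}_2)$ and the $\alpha,\beta$-polynomial terms in $\Gamma$ via the explicit expressions for $A_1^{(3)}$ and $C_1^{(3)}$ computed in Appendix~\ref{A:third_order}; (c) the next-order $A_4^{(3)}$ and $A_3^{(3)}$ contributions in rows 3,4, which after inverting $A_4^{(2)}$ produce the $\cot(\pi b_2/b_1)$ combinations seen in $\Gamma$ — here I would use that for $\theta = 2\pi b_2/b_1$,
\[
\frac{1}{b_2}\begin{pmatrix} \sin\theta & 1-\cos\theta \\ \cos\theta-1 & \sin\theta \end{pmatrix}^{-1} = \frac{b_2}{2}\begin{pmatrix} \cot(\theta/2) & -1 \\ 1 & \cot(\theta/2) \end{pmatrix},
\]
so the cross-terms between the $V'^{(3)}_{3,4}$ values (expressible through $\kappa^{ij}_k$) and the $(U_3,U_4)$ components of the first column yield the two $\kappa$-bracket terms at the end of $\Gamma$; (d) the contribution from the next-order perturbation of $\tau_c$ itself in $\partial_5 F$, which contributes the $\kappa^{ij}_{3},\kappa^{ij}_{4}$ terms inside the bracket multiplying $\Psi'(u)$.

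The main obstacle is the bookkeeping in step (c)–(d): many independent expansions enter the $\eta^9$ coefficient simultaneously, and one must verify that all the mixed terms organize themselves into exactly the claimed expression for $\Gamma$. The key simplification is that, because $V'$ was designed so that $V'_{3,4}=O(\eta^3)$, the $\eta^9$ contribution from expanding rows 3 and 4 of $\partial_3 F, \partial_4 F$ only requires the inverse $(A_4^{(2)})^{-1}$ acting on well-defined combinations of $\kappa^{ij}_{3,4}$ and $(U_3,U_4)$, reducing what would otherwise be an unmanageable computation to the explicit $2\times 2$ inversion formula above. All other contributions are linear algebraic consequences of the previously computed quantities $h^{(2)}$, $x^{(3)}$ and $z^{(3)}$ from Appendix~\ref{A:third_order}, making the final identification with $\Gamma(U,U')$ a direct, if lengthy, verification.
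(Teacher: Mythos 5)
Your plan is essentially the paper's own proof: a multilinear expansion of the determinant in $\eta$, with the $\eta^8$ term factoring through $\det A_4^{(2)}=\frac{4}{b_2^2}\sin^2(\pi b_2/b_1)$ into a reduced $3\times 3$ determinant in coordinates $(1,2,5)$ equal (up to the stated constant) to $\Psi'(U)$, and the $\eta^9$ coefficient obtained as the sum of the five single-column replacements — your items (a)–(d) are exactly the paper's $d_1,\dots,d_5$, each evaluated via the block structure, the explicit $2\times 2$ inverse of $A_4^{(2)}$, and the invariants of the appendices. One correction to your order table in step 1: the fifth coordinate of $V'$ is of order $\eta^2$, not $\eta^3$, its leading entry being $L_1^{(2)}e_{r_1}=\frac{2\pi}{b_1}(h_1^2+h_2^2)$, which is precisely what produces the $\pi(h_1u_2-h_2u_1)$ part of $\Psi'$ in the reduced determinant — a quantity your step 2 in fact uses (despite the mislabel ``$V_5'^{(3)}$''), so the leading-order computation goes through as in the paper.
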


\begin{proof}
We compute the first two non-zero terms in  the expansion of 
$$
\det\left( U(h)+\eta U'(h),V, \partial_3F,\partial_4F,\partial_5F \right). 
$$
Observe that 
$$
V=\eta^2V^{(2)}+\eta^3V^{(3)}+o(\eta^3) 
\quad \text{ and} \quad 
\partial_iF=\eta^2\partial_iF^{(2)}+\eta^3\partial_iF^{(3)}+o(\eta^3)
\quad 
\forall i\in \ll3,5\rr.
$$
Notice that  $\det\left( U(h),V^{(2)}, \partial_3F^{(2)},\partial_4F^{(2)},\partial_5F^{(2)} \right) =\frac{4 \pi K'}{b_1^2}(h_1^2+h_2^2)\psi'(U(h))$ (recall $K'=\det\left(A_4^{(2)}\right)=\frac{4}{b_2^2}\sin^2\left( \frac{\pi b_2}{b_1}\right)$).
Then
$$
\begin{aligned}
\det\left( U(h) ,V, \partial_3F,\partial_4F,\partial_5F \right) =&
 \eta^{8}  \det\left( U(h),V^{(2)}, \partial_3F^{(2)},\partial_4F^{(2)},\partial_5F^{(2)} \right) 
 \\&
+ \eta^{9} K' (d_1+d_2+d_3+d_4+d_5)+o(\eta^9),
\end{aligned}
$$
 with
$$
K'd_1=  \det\left( U'(h),V^{(2)}, \partial_3F^{(2)},\partial_4F^{(2)},\partial_5F^{(2)} \right) =K'\left|
\begin{array}{c|c|c}
\:\:\begin{matrix} U_1' \\ U_2' \end{matrix} & A_1^{(2)}e_{r_1} & C_1^{(2)} 
\\
\hline
U_5'&\frac{2\pi}{b_1}(h_1^2+h_2^2)&0
\end{array}
\right|
$$
$$
K'd_2=\det\left( U(h),V^{(3)}, \partial_3F^{(2)},\partial_4F^{(2)},\partial_5F^{(2)} \right)=
K'
\left|
\begin{array}{c|c|c}
\:\:\begin{matrix} U_1 \\ U_2 \end{matrix} & A_1^{(3)}e_{r_1}-A_2^{(3)}\left(A_4^{(2)}\right)^{-1}A_3^{(2)}e_{r_1} & C_1^{(2)} 
\\
\hline
U_5&L_1^{(3)} e_{r_1}&0
\end{array}
\right|
$$

$$
K'd_5=\det\left( U(h),V^{(2)}, \partial_3F^{(2)},\partial_4F^{(2)},\partial_5F^{(3)} \right)=K'
\left|
\begin{array}{c|c|c}
\:\:\begin{matrix} U_1 \\ U_2 \end{matrix} & A_1^{(2)}e_{r_1} & C_1^{(3)} 
\\
\hline
U_5&\frac{2\pi}{b_1}(h_1^2+h_2^2)&E^{(3)}
\end{array}
\right|
$$
and
\begin{multline*}
K'd_3=\det\left( U(h),V^{(2)}, \partial_3F^{(3)},\partial_4F^{(2)},\partial_5F^{(2)} \right)=\frac{2\pi}{b_1}(h_1^2+h_2^2)\\
\left(
\left|
\begin{array}{cc}
U_3&\left(A_4^{(2)}\right)_{1,2} 
\\
U_4& \left(A_4^{(2)}\right)_{2,2} 
\end{array}
\right|
\left|
\begin{array}{cc}
\left(A_2^{(3)}\right)_{1,1} & \left(C_1^{(2)}\right)_1
\\
\left(A_2^{(3)}\right)_{2,1} & \left(C_1^{(2)}\right)_2
\end{array}
\right|
\right.
+
\left.
\frac{2\psi'(U)}{b_1} 
\left|
\begin{array}{cc}
\left(A_4^{(3)}\right)_{1,1} &\left(A_4^{(2)}\right)_{1,2} 
\\
\left(A_4^{(3)}\right)_{2,1} & \left(A_4^{(2)}\right)_{2,2} 
\end{array}
\right|
\right),
\end{multline*}
\begin{multline*}
K'd_4=\det\left( U(h),V^{(2)}, \partial_3F^{(2)},\partial_4F^{(3)},\partial_5F^{(2)} \right)=
-\frac{2\pi}{b_1}(h_1^2+h_2^2)
\\
\left(
\begin{vmatrix}
	U_3&\left(A_4^{(2)}\right)_{1,1} 
	\\
	U_4& \left(A_4^{(2)}\right)_{2,1} 
\end{vmatrix}
\begin{vmatrix}
	\left(A_2^{(3)}\right)_{1,2} & \left(C_1^{(2)}\right)_1
	\\
	\left(A_2^{(3)}\right)_{2,2} & \left(C_1^{(2)}\right)_2
\end{vmatrix}
\right.
+
\left.
\frac{2\psi'(U)}{b_1} 
\left|
\begin{array}{cc}
\left(A_4^{(3)}\right)_{1,2} &\left(A_4^{(2)}\right)_{1,1} 
\\
\left(A_4^{(3)}\right)_{2,2} & \left(A_4^{(2)}\right)_{2,1} 
\end{array}
\right|
\right).
\end{multline*}

Hence the statement by summation.
\end{proof}

Let $q_0\in M\setminus \S$ and $p_0=\left( h_1, h_2,\eta h_3,\eta h_4,\eta^{-1}\right)\in T^*_{q_0}M$.
Let $p_0\in T_{q_0}^*M$ and $v$ be as in the statement of Proposition~\ref{L:kernel_r2=0} so that $\ker \Jac_{p_0} \sre_{q_0}(t_c(p_0))=\Span(v)$. As explained in Remark~\ref{R:method_derivatives}, we choose the first
coordinate $\x_1:M\to \R$ such that  $\partial_{\x_1} = \sum_{i=1}^{5}{v}_i\partial_i$ and we have that $\partial_{x_1}^k F=O(\eta^2)$ for all integer $k\geq 2$.

If we denote $V',W':\R^4\to \R^5$ such that $\partial_{x_1}^2 F=\eta^2 V'(h)+\eta^3W'(h)+o(\eta^3)$ then let $\Psi'_2(h)=\Psi'(V')$ and $\Gamma_2(h)=\Gamma(V',W')$.
Similarly, define $V'',W'':\R^4\to \R^5$ such that $\partial_{x_1}^3 F=\eta^2 V''(h)+\eta^3W''(h)+o(\eta^2)$, $V''',W''':\R^4\to \R^5$ such that $\partial_{x_1}^4 F=\eta^2 V'''(h)+\eta^3 W'''(h)+o(\eta^2)$ and $V'''',W'''':\R^4\to \R^5$ such that $\partial_{x_1}^5 F=\eta^2 V''''(h)+\eta^3 W''''(h)+o(\eta^2)$; and define
$\Psi'_3(h)=\Psi'(V'')$, $\Gamma_3(h)=\Gamma(V'',W'')$,
$\Psi'_4(h)=\Psi'(V''')$, $\Gamma_4(h)=\Gamma(V''',W''')$, and 
$\Psi'_5(h)=\Psi'(V'''')$, $\Gamma_5(h)=\Gamma(V'''',W'''')$.

We would like to replicate what has been done in the previous two sections in regard of the functions $\Psi'_i$. However we can check that $\Psi'_i=0$ for $i\in \ll2,5\rr$ and we should instead focus on the functions $\Gamma_i$.
As an application of Lemma~\ref{L:reduced_cond_Case3}, and the analysis of the Jacobian matrix of $\sre_{q_0}(t_c(p_0))$ of Section~\ref{SS:in_cov_near_S2},
we immediately obtain that  for $\eta$ small enough
$$
\Gamma_2(h)\neq 0 \Rightarrow \phi_{11}(p_0)\neq 0,
\qquad
\Gamma_3(h)\neq 0 \Rightarrow \phi_{111}(p_0)\neq 0,
$$
$$
\Gamma_4(h)\neq 0 \Rightarrow \phi_{1111}(p_0)\neq 0,
\qquad
\Gamma_5(h)\neq 0 \Rightarrow \phi_{11111}(p_0)\neq 0.
$$

\newpage


\bibliographystyle{abbrv}
\bibliography{biblio}

\end{document}